\def\XXint#1#2#3{{\setbox0=\hbox{$#1{#2#3}{\int}$}
     \vcenter{\hbox{$#2#3$}}\kern-.5\wd0}}
\newcommand{\un}[1]{\underline{#1}}
\newcommand{\om}{\Omega}
\newcommand{\sscp}{\scriptscriptstyle}
\newcommand{\beq}{\begin{equation}}
\newcommand{\eeq}{\end{equation}}
\newcommand{\rife}[1]{(\ref{#1})}
\newtheorem{theorem}{Theorem}[section]
\newtheorem{definition}{Definition}[section]
\newtheorem{remark}[theorem]{Remark}
\newtheorem{lemma}[theorem]{Lemma}
\numberwithin{equation}{section}
\begin{document}
\title[Uniqueness for bubbling solutions of mean field equation]
{Uniqueness of bubbling solutions of mean field equations}

\author{Daniele Bartolucci$^{a,*}$}

\author{Aleks Jevnikar$^a$}

\address{$^A$ Department of Mathematics, University of Rome {\it "Tor Vergata"},  Via della ricerca scientifica n.1, 00133 Roma,
Italy.}

\author{Youngae Lee$^b$}
\address{$^B$ National Institute for Mathematical Sciences, 70 Yuseong-daero 1689 beon-gil, Yuseong-gu, Daejeon, 34047, Republic of Korea}

\author{ Wen Yang$^c$}
\address{$^C$ Department of Applied Mathematics, Hong Kong Polytechnic University, Hung Hom, Kowloon, Hong Kong}

\medskip

\email{bartoluc@mat.uniroma2.it (D. Bartolucci), jevnikar@mat.uniroma2.it (A. Jevnikar), \newline math.yangwen@gmail.com (W. Yang), youngaelee0531@gmail.com (Y. Lee)}


\thanks{$^*$ Corresponding author.}

\keywords{Liouville-type equations, Mean field equations, Bubbling solutions, Uniqueness results.}

\subjclass[2000]{35J61, 35A02, 35B44, 35R01}

\begin{abstract}
We prove the uniqueness of blow up solutions
of the mean field equation as $\rho_n\to 8\pi m$, $m\in\mathbb{N}$. If $u_{n,1}$ and $u_{n,2}$ are
two sequences of bubbling solutions with the same $\rho_n$ and the same (non degenerate) blow up set, then
$u_{n,1}=u_{n,2}$ for sufficiently large $n$. The proof of the uniqueness
requires a careful use of some sharp estimates for bubbling solutions of mean field
equations \cite{cl1} and a rather involved analysis of suitably defined Pohozaev-type identities as recently developed in \cite{ly2} in the context of the
Chern-Simons-Higgs equations. Moreover, motivated by the Onsager statistical description of two dimensional turbulence, we are bound to
obtain a refined version of an estimate about $\rho_n-8\pi m$ in case the first order evaluated in \cite{cl1} vanishes.
\end{abstract}

 \maketitle

\section{Introduction}Let $(M,\mathrm{d} s)$ be a compact Riemann surface with volume $|M|=1$ and $\rho_n>0$ be a sequence
satisfying $\lim_{n\to+\infty}\rho_n=8\pi m$ for some positive integer $m\ge 1$.
We denote by $\mathrm{d}\mu$ the volume form, by  $\Delta_{\sscp M}$ the Laplace-Beltrami operator on $(M,\mathrm{d} s)$,
and consider the following mean field type problem:
\begin{equation}
\label{mfd}\left \{
\begin{array}{l}
\Delta_{\sscp M} u_n+\rho_n\Big(\frac{h(x)e^{u_n(x)}}{\int_Mh e^{u_n }\mathrm{d}\mu }-1\Big)=0\ \ \textrm{in}\ \ M,\\   \int_Mu_n\mathrm{d}\mu=0,\quad u_n\in C^{\infty}(M),
\end{array}
\right.
\end{equation} where $h(x)=h_*(x)e^{-4\pi\sum_{j=1}^N\alpha_j G(x,p_j)}\ge 0$, $p_j$
are   distinct points,  $\alpha_j\in\mathbb{N}$,  $h_*>0$, $h_*\in C^{2,\sigma}(M)$,  and $G$ is the Green function, which satisfies,
\begin{equation*}
-\Delta_{\sscp M} G(x,p)=\delta_p-1~\mathrm{in}~M,~\ \ \mathrm{and}~\int_MG(x,p)\mathrm{d}\mu(x)=0.
\end{equation*}

The mean field equation \eqref{mfd} and the corresponding Dirichlet problem (see \eqref{mf} below) have attracted a lot of attention in recent years
because of their applications to several issues of interest in
Mathematics and Physics, such as Electroweak and Chern-Simons self-dual vortices \cite{sy2}, \cite{T0}, \cite{yang},
conformal metrics on surfaces with \cite{Troy} or without conical singularities \cite{KW},
statistical mechanics of two-dimensional turbulence  \cite{clmp2} and of self-gravitating systems \cite{w} and cosmic strings \cite{pot},
and more recently the theory of hyperelliptic curves \cite{cLin14} and of the Painlev\'e equations \cite{CKLin}.
These was some of the motivations behind the many efforts done to determine
existence \cite{bp},\cite{B5},\cite{BMal},\cite{BDeM},\cite{BdM2},\cite{BdMM}, \cite{bt},\cite{cama},\cite{cl2},\cite{cl4},\cite{DJLW},
\cite{dj},\cite{EGP},\cite{KMdP},  \cite{linwang}, \cite{Mal1}, \cite{Mal2}, \cite{NT},
multiplicity  \cite{BdMM}, \cite{dem2}, uniqueness \cite{bl},\cite{BLin3},\cite{BLT}, \cite{CCL}, \cite{GM1}, \cite{GM2}, \cite{GM3}, \cite{Lin1}, \cite{Lin7}, \cite{Lin8}, \cite{suz}, concentration-compactness and bubbling behavior
see \cite{bt}, \cite{bm}, \cite{yy}, \cite{ls}, and \cite{bcct},  \cite{bt2}, \cite{cl1}, \cite{cl3}, \cite{DeMSR}, \cite{KLin}, \cite{tar}, \cite{Za2}, and the structure of entire solutions \cite{barjga}, \cite{cli1}, \cite{FL}, \cite{PT}, \cite{Tar14}, of \eqref{mfd} and \eqref{mf}.

In spite of the many results at hand, and with few exceptions (see \cite{CCL}, \cite{suz} and more recently  \cite{B5}, \cite{BLin3}),
we still don't know much about the qualitative behaviour of the global bifurcation diagram of solutions of \eqref{mfd} and \eqref{mf}.
What one can infer from the above mentioned results in the sub-critical/critical regime is
that solutions of \eqref{mf} are unique if $\rho_n<8\pi$ and are unique whenever they exists if $\rho_n=8\pi$, see  \cite{suz} and \cite{BLin3}, \cite{CCL},
\cite{GM2}. The same question can be asked about \eqref{mfd} whose answer is still not well understood,
see \cite{Lin1}, \cite{Lin7} and \cite{GM3}. However, solutions of either \rife{mfd} or \rife{mf} are
expected to be generically non unique for $\rho_n>8\pi$, see \cite{BdM2}, \cite{BdMM}, \cite{dem2}.

Our aim here is to contribute in this direction by showing that blow up solutions of \eqref{mfd} and \eqref{mf} are unique for $n$ large enough.

\begin{definition}
Let $u_n$ be a sequence of solutions of  \eqref{mfd}. We say that $u_n$ blows up at the points $q_j\notin\{p_1,\cdots,p_N\}$, $j=1,\cdots,m$, if,
$
\frac{h(x)e^{u_n(x)}}{\int_Mh e^{u_n }\mathrm{d}\mu }\rightharpoonup 8\pi\sum\limits_{j=1}^m\delta_{q_j},
$
weakly in the sense of measure in $M$.
\end{definition}

Let $K(x)$ be the Gaussian curvature at  $x\in M$ and $R(x,y)$ denote the regular part (see section \ref{sec_pre} below),
of the Green function $G(x,y)$. For $\mathbf{q}=(q_1,\cdots,q_m)\in M\times\cdots\times M$, we denote by,
\beq\label{g*}
G_j^*(x)=8\pi R(x,q_j)+8\pi\sum^{1,\cdots,m}_{l\neq j}G(x,q_l),
\eeq
\beq\label{l_q}\ell(\mathbf{q})=\sum_{j=1}^m[\Delta_{\sscp M} \log h(q_j)+8m\pi-2K(q_j)]h(q_j)e^{G_j^*(q_j)},\ \ \textrm{and}
\eeq
\beq\label{f_q_j}
f_{\mathbf{q},j}(x)=8\pi\left[R(x,q_j)- R(q_j,q_j)+\sum^{1,\cdots,m}_{l\neq j}(G(x,q_l)-G(q_j,q_l))\right]+\log \frac{h(x)}{ h(q_j)}.
\eeq

We will denote by $B^{\sscp M}_r(q)$ the geodesic ball of radius $r$ centred at $q\in M$, while $U^{\sscp M}_r(q)$ will denote the pre image of the Euclidean ball of radius $r$, $B_r(q)\subset \mathbb{R}^2$, in a suitably defined isothermal coordinate system (see section \ref{sec_pre} below for further details). If $m\geq 2$ we fix a constant $r_0\in(0,\frac{1}{2})$ and a family of open sets $M_j$ satisfying, $M_l\cap M_j=\emptyset$ if $l\neq j$, $\bigcup_{j=1}^m\overline{M}_j=M$, $U^{\sscp M}_{2r_0}(q_j) \subseteq M_j$, $j=1,\cdots m$. Then, let us define,

\begin{equation}\label{D_q}D(\mathbf{q})=\lim_{r\to0}
 \sum_{j=1}^m h(q_j)e^{G_j^*(q_j)}\left(\int_{M_j\setminus  U^{\sscp M}_{{r_j}}(q_j)}
                 e^{\Phi_j(x,\mathbf{q})}  \mathrm{d}\mu(x)
-\frac{\pi}{{r_j^2}}\right),
\end{equation}
where $M_1=M$ if $m=1$, $r_j=r\sqrt{8h(q_j)e^{G_j^*(q_j)}} $ and,
\begin{equation}\label{P_q}
\Phi_j(x,\mathbf{q})=\sum_{l=1}^m 8\pi  G(x,q_l)-G_j^*(q_j)+\log h(x)-\log h(q_j).
\end{equation}
The quantity $D(\mathbf{q})$ was first introduced in \cite{CCL, clw}. For $(x_1,\cdots, x_m)\in M\times \cdots M$, we also define,
\begin{align}\label{f_q}
&f_m(x_1,x_2,\cdots,x_m)=\sum_{j=1}^{m}\big[\log(h(x_j))+4\pi R(x_j,x_j)\big]+4\pi\sum_{l\neq j}^{1,\cdots,m}G(x_l,x_j),
\end{align}
and let $D_{\sscp M}^2f_m$ be its Hessian tensor field on $M$.
Then we have,

\begin{theorem}\label{thm_concen}Let $u_{n}^{(1)}$ and $u_{n}^{(2)}$ be  two sequences of solutions of \eqref{mfd},
blowing up at the points $q_j\notin\{p_1,
\cdots,p_N\}$, $j=1,\cdots,m$, where $\mathbf{q}=(q_1,\cdots,q_m)$ is a critical point of $f_m$ and $\textrm{det}(D_{\sscp M}^2f_m(\mathbf{q}))\neq 0$.
Assume that $\rho^{(1)}_n=\rho_n=\rho^{(2)}_n$ and that, either,
\begin{enumerate}
\item $\ell(\mathbf{q})\neq0$, or,
\item  $\ell(\mathbf{q})=0$ and $D(\mathbf{q})\neq0$.
\end{enumerate}
Then there exists $n_0\ge1$ such that $u_{n}^{(1)}=u_{n}^{(2)}$ for all $n\ge n_0$.
\end{theorem}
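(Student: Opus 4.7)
The plan is a contradiction argument built on a blow-up analysis of the normalized difference of the two sequences, in the spirit of the Chern--Simons--Higgs analysis of \cite{ly2}. Suppose by contradiction that $u_n^{(1)} \neq u_n^{(2)}$ along a subsequence, and set
\[
\eta_n = u_n^{(1)} - u_n^{(2)}, \qquad \xi_n = \frac{\eta_n}{\|\eta_n\|_{L^\infty(M)}},
\]
so that $\|\xi_n\|_{L^\infty(M)}=1$. Subtracting the two equations and using the integral form of the mean value theorem, $\xi_n$ solves a linear elliptic equation of the form $\Delta_{\sscp M}\xi_n + \rho_n c_n(x)\,\xi_n = \rho_n d_n$, with $c_n$ a convex combination of $h e^{u_n^{(i)}}/\int h e^{u_n^{(i)}}\,\mathrm{d}\mu$ and $d_n$ a coupling constant. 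The goal is to deduce $\xi_n \to 0$ in $L^\infty(M)$, contradicting the normalization.

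Near each $q_j$, the sharp asymptotics of \cite{cl1} identify a natural bubble scale $\lambda_{n,j}$; after rescaling $\xi_n$ in isothermal coordinates by $\lambda_{n,j}$ one extracts a bounded limit $\phi^{(j)}$ on $\RN$ solving the linearized Liouville equation
\[
\Delta \phi + \frac{8}{(1+|y|^2)^2}\,\phi = 0,
\]
whose space of bounded solutions is three-dimensional, spanned by $\phi_0(y) = (1-|y|^2)/(1+|y|^2)$ and $\phi_k(y) = y_k/(1+|y|^2)$, $k=1,2$, the infinitesimal generators of scaling and translation of the Liouville bubbles. Hence $\phi^{(j)} = b_0^{(j)}\phi_0 + b_1^{(j)}\phi_1 + b_2^{(j)}\phi_2$. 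Away from $\{q_j\}$, elliptic estimates and the concentration of mass at $8\pi\sum_j\delta_{q_j}$ force $\xi_n$ to a global limit $\xi^\ast$ expressible as a linear combination of the Green functions $G(\cdot,q_j)$ plus a constant; matching the outer expansion of $\xi^\ast$ at each $q_j$ with the inner limit $\phi^{(j)}$ identifies the coefficients $b_0^{(j)}, b_1^{(j)}, b_2^{(j)}$ with the global profile.

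The non-degeneracy hypothesis $\det D^2_M f_m(\mathbf{q})\neq 0$ is then used to kill the translation modes. Both sequences have concentration centres approximating $\mathbf{q}$, and the location equation for a bubbling solution is an approximate criticality condition for $f_m$; Taylor-expanding this condition for $u_n^{(1)}$ and $u_n^{(2)}$ and subtracting produces a linear system whose coefficient matrix is $D^2_M f_m(\mathbf{q})$ and whose right-hand side is controlled by $(b_1^{(j)},b_2^{(j)})_j$. The invertibility hypothesis thus forces $b_1^{(j)}=b_2^{(j)}=0$ for every $j$.

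Killing the scale coefficients $b_0^{(j)}$ is the technical core. Following the Pohozaev-type scheme of \cite{ly2}, I would test the two equations against carefully chosen multipliers on each $U^{\sscp M}_{r_0}(q_j)$ and, using the pointwise bubble expansions of \cite{cl1}, derive identities whose leading term has the shape
\[
\bigg(\sum_{j=1}^m \beta_j b_0^{(j)}\bigg)\,\ell(\mathbf{q}) + (\text{higher-order remainders}) = o(1),
\]
with $\beta_j>0$ explicit. Since $\rho_n^{(1)}=\rho_n^{(2)}$ couples the scales $\lambda_{n,j}$ across the distinct $q_j$'s, case (1), $\ell(\mathbf{q})\neq 0$, forces $b_0^{(j)}=0$ at once. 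Case (2) is subtler: the leading term now vanishes and one must push the Pohozaev identity one order further, which is precisely where the refined estimate of $\rho_n-8\pi m$ announced in the abstract is needed; in this next-order expansion the quantity $D(\mathbf{q})$ emerges as the coefficient of the surviving term, giving the contradiction under $D(\mathbf{q})\neq 0$. The main obstacle is this case (2) calculation: one must extend the bubble expansion of \cite{cl1} past the order needed for existence theory and carefully control the boundary integrals on $\partial U^{\sscp M}_{r_0}(q_j)$, where the curvature of $M$ (entering through $R(x,y)$ and $K$) genuinely complicates the flat-domain analog of \cite{ly2}.
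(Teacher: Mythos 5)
Your high-level plan --- normalize the difference, extract inner limits spanned by the three kernel elements $\psi_{j,0},\psi_{j,1},\psi_{j,2}$, use Pohozaev identities and non-degeneracy of $f_m$ to kill the coefficients, then contradict $\|\zeta_n\|_{L^\infty(M)}=1$ --- is indeed the skeleton of the paper's argument. But two of the steps you propose would not go through as written, and you miss the difficulty that is genuinely new compared to \cite{ly2}.

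The serious gap is your proposal to kill the translation modes $b_{j,1},b_{j,2}$ by Taylor-expanding and subtracting the approximate criticality condition $\nabla_{\sscp M}[\log h+G^*_j](x_{n,j}^{(i)})=O(\lambda_{n,j}^{(i)}e^{-\lambda_{n,j}^{(i)}})$. Subtracting the two approximate relations only gives $|x_{n,j}^{(1)}-x_{n,j}^{(2)}|=O(\sum_i\lambda_{n,j}^{(i)}e^{-\lambda_{n,j}^{(i)}})$ --- which is already recorded in \eqref{diff_x1_x_2} and is not enough. The coefficient $b_{j,k}$ $(k=1,2)$ measures the rescaled centre discrepancy $e^{\lambda_{n,j}^{(1)}/2}(x_{n,j,*}^{(1)}-x_{n,j,*}^{(2)})_k$ divided by $\|\tilde{u}_n^{(1)}-\tilde{u}_n^{(2)}\|_{L^\infty(M)}$. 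Since the denominator can itself be as small as $O(\lambda_{n,j}^{(1)}e^{-\lambda_{n,j}^{(1)}/2})$ (compare Lemma \ref{lem_diff_tilu}$(ii)$), the un-cancelled $O(\lambda e^{-\lambda})$ remainders in the two criticality conditions are of exactly the same order as the centre discrepancy, so the subtraction yields no control on $b_{j,1},b_{j,2}$. The paper instead derives a \emph{second} Pohozaev identity (Lemma \ref{lem_po2}), evaluates both sides to precision $o(e^{-\lambda_{n,j}^{(1)}/2})$ (Lemmas \ref{lem_poho2r}--\ref{lem_poho2l}), and only then extracts a genuine linear system with matrix $D^2_{\sscp M}f_m(\mathbf{q})$, concluding $b_{j,1}=b_{j,2}=0$ in Lemma \ref{bj=0}. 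Note also that the estimate of the RHS in Lemma \ref{lem_poho2r} explicitly invokes $b_{j,0}=0$, which the paper establishes \emph{beforehand} (Lemma \ref{b0=0}); so the paper kills the scaling mode first and the translation modes second --- the opposite of your order, and the order is not optional.

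You also do not address the structural feature that makes the mean field case harder than \cite{ly2}: the blow-up rates can satisfy $|\lambda_{n,j}^{(1)}-\lambda_{n,j}^{(2)}|\asymp 1/\lambda_{n,j}^{(1)}$, a regime ruled out in the Chern--Simons--Higgs setting. This is the reason the refined expansion \eqref{a5} is needed in case $(2)$ --- not to ``push the Pohozaev identity one order further'' as you suggest, but earlier, in the proof of Lemma \ref{lem_diff_tilu}, just to obtain any bound on $|\lambda_{n,j}^{(1)}-\lambda_{n,j}^{(2)}|$ once $\ell(\mathbf{q})=0$. The same regime reappears in Lemma \ref{lem_equalb0}: when $|\lambda_{n,j}^{(1)}-\lambda_{n,j}^{(2)}|\asymp 1/\lambda_{n,j}^{(1)}$ the ODE/matching argument from \cite{ly2} is not precise enough to identify all $b_{j,0}$ with one constant $b_0$, and the paper substitutes a Green-representation argument relying on a second-order Taylor expansion of the bubble difference and the potential estimate \eqref{refer_cfl}. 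Finally, a minor but worth fixing point: the outer limit of $\zeta_n$ is not ``a linear combination of Green functions plus a constant''; the coefficients $A_{n,j}$ of those Green functions (Lemma \ref{lem_out}) vanish as $n\to\infty$, and $\zeta_n$ converges locally uniformly away from $\{q_1,\dots,q_m\}$ to the constant $-b_0$.
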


The proof of Theorem \ref{thm_concen} is worked out by an adaptation of an argument recently proposed in \cite{ly2}. In that paper Lin and Yan
prove uniqueness for blow up solutions of the Chern-Simons-Higgs equation. In particular, it is claimed in \cite{ly2} that the
method adopted there does the job also in the case of the mean field equation \rife{mfd} and in fact our aim is to prove that claim. However it seems that
the adaptation of that argument to our problem is not straightforward.

First of all, the cornerstone of the proof is the description of the blow up behavior of solutions established in \cite{cl1}. In that case the leading order of the expansion of $\rho_n-8\pi m$ as well as of the reminder term of blow up solutions is proportional to $\ell(\mathbf{q})$, see section \ref{sec_pre} below. By means of these estimates, if $\ell(\mathbf{q})\neq 0$, we can prove that the difference of the blow up rates (which we denote by $\lambda_{n,j}^{(1)}-\lambda_{n,j}^{(2)}$) is small for large $n$, see Lemma \ref{lem_diff_tilu}. This is why the case $\ell(\mathbf{q})= 0$ is more subtle and this is why we are bound to derive an improved version of the estimate concerning $\rho_n-8\pi m$. A full generalization of the estimates in \cite{cl1} to the case $\ell(\mathbf{q})= 0$, that is, including the reminder term of blow up solutions, at least to our knowledge has been derived only in case $m=1$ and only for the Dirichlet problem, see \cite{CCL}.

\begin{remark} Far from being just a mathematical problem, the case $\ell(\mathbf{q})=0$ often arise in the study of geometric and physical problems, 
as for example in the Onsager statistical mechanical description of two dimensional turbulence, see \cite{clmp2} and more recently \cite{B5}. 
Motivated by this problem,  in the final part of this paper we will discuss the uniqueness result relative to the Dirichlet problem \rife{mf}, 
see Theorem \ref{thm_concenD} below. Indeed, inspired by a recent result \cite{B5}, we believe that, in the non degenerate setting of 
Theorem \ref{thm_concenD} and  for large enough $n$, 1-point blow up solutions could be parametrized by their Dirichlet energy. 
In particular, on domains of second kind \cite{clmp2}, \cite{CCL}, we believe that this fact would imply the existence of a full interval of strict convexity 
of the entropy, see \cite{B5}. We will discuss this problem in a forthcoming paper \cite{BJLY}. 
However it is crucial to the understanding of this application to establish uniqueness in case $\ell(\mathbf{q})=0$. A uniqueness result for $1$-point 
blow up solutions of the Gelfand problem $-\Delta v=\varepsilon e^{v}$ in $\om$, $v=0$ on $\partial \om$ 
was obtained in \cite{GG} in the simpler case where $\om$ is convex and symmetric with respect to both axis.
\end{remark}

Therefore we derive the following improvement of Theorem 1.1 in \cite{cl1}.

\begin{theorem}\label{thm_new}
Let $u_n$ be a sequence of solutions of  \eqref{mfd} which blows up at the points $q_j\notin\{p_1,\cdots,p_N\}$, $j=1,\cdots,m$, $\delta>0$ be a fixed constant and
$
\lambda_{n,j}=\max_{B^{\sscp M}_\delta(q_j)}\Big(u_n-\log ( \;\int\limits_M h e^{u_n})\Big)\
\textrm{for}\  j=1,\cdots, m.
$\\
Then, for any $n$ large enough, the following estimate holds,
\begin{equation}
\label{a5}\begin{aligned}
\rho_n-8\pi m=~&\frac{2\ell(\mathbf{q})e^{-\lambda_{n,1}}}{m h^2(q_1)e^{G_1^*(q_1)}}
\Big(\lambda_{n,1}+ \log\rho_n h^2(q_1)e^{G_1^*(q_1)} \delta^2 -2\Big)\\
&+ \frac{8e^{-\lambda_{n,1}} }{h^2(q_1)e^{G_1^*(q_1)}\pi m}\Big( D(\mathbf{q}) +O(\delta^\sigma)\Big)
+O(\lambda_{n,1}^2e^{-\frac{3}{2}\lambda_{n,1}})+O(e^{-(1+\frac{\sigma}{2})\lambda_{n,1}}),
\end{aligned}\end{equation}
where $\sigma$ is fixed by the assumption $h_*\in C^{2,\sigma}(M)$.
\end{theorem}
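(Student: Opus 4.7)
The plan is to refine the Chen--Lin expansion \cite{cl1} by extracting one further order out of the normalisation identity $\int_M h e^{\tilde u_n}\,\mathrm{d}\mu=1$, where $\tilde u_n:=u_n-\log\int_M h e^{u_n}\,\mathrm{d}\mu$ satisfies $-\Delta_{\sscp M}\tilde u_n=\rho_n h e^{\tilde u_n}-\rho_n$. A standard isothermal rescaling around each blow-up point $q_j$ shows that $\tilde u_n$ is locally modelled on the standard Liouville bubble $U(y)=\log\bigl(8/(1+|y|^2)^2\bigr)$. The sharp pointwise estimates recalled in section \ref{sec_pre} give the next-to-leading profile of $\tilde u_n$ together with the control $\la_{n,j}-\la_{n,1}=O(e^{-\la_{n,1}})$, so that all the asymptotics can be expressed in the single blow-up parameter $\la_{n,1}$.

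First I would decompose
\[
1=\sum_{j=1}^m\int_{U^{\sscp M}_{r_j}(q_j)}h e^{\tilde u_n}\,\mathrm{d}\mu+\int_{M\setminus\bigcup_j U^{\sscp M}_{r_j}(q_j)}h e^{\tilde u_n}\,\mathrm{d}\mu,
\]
with $r_j$ as in \rife{D_q}. On each $U^{\sscp M}_{r_j}(q_j)$ I would change to the rescaled variable, insert the inner profile from \cite{cl1}, and Taylor-expand $f_{\mathbf{q},j}$ to second order. Since $\mathbf{q}$ is critical for $f_m$, the first-order Taylor contribution drops out, while the second-order piece combined with the conformal-factor contribution collects into the factor $\Delta_{\sscp M}\log h(q_j)+8\pi m-2K(q_j)$, producing the $\la_{n,1}$-weighted term proportional to $\ell(\mathbf{q})$ announced in \rife{a5}. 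The $C^{2,\sigma}$ regularity of $h_*$ is what forces the $O(e^{-(1+\sigma/2)\la_{n,1}})$ remainder, while the quadratic correction to the Liouville profile from \cite{cl1} accounts for the $O(\la_{n,1}^2 e^{-3\la_{n,1}/2})$ error.

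For the exterior integral, I would use the $C^1_{\mathrm{loc}}$-convergence $\tilde u_n\to\sum_l 8\pi G(\cdot,q_l)+\mathrm{const}$ from \cite{cl1} to rewrite it, up to the prefactor $e^{-\la_{n,1}}$ of \rife{a5}, as $\sum_j h(q_j)e^{G_j^*(q_j)}\int_{M_j\setminus U^{\sscp M}_{r_j}(q_j)}e^{\Phi_j(x,\mathbf{q})}\,\mathrm{d}\mu$ plus an $O(\delta^\sigma)$ correction coming from the Taylor remainder of $f_{\mathbf{q},j}$ on an annulus of width $\delta$. This sum diverges like $\sum_j h(q_j)e^{G_j^*(q_j)}\pi/r_j^2$ as $r\to 0$, and the divergence is cancelled exactly by the tail of the rescaled Liouville bubble on $U^{\sscp M}_\delta(q_j)\setminus U^{\sscp M}_{r_j}(q_j)$: this matched-asymptotic regularisation is precisely the one built into the definition \rife{D_q}. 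Taking $r\to 0$ therefore replaces the divergent sum with the finite $D(\mathbf{q})$, and solving the resulting identity for $\rho_n-8\pi m$ produces \rife{a5}.

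The hardest step will be verifying the exact matching of the $\pi/r_j^2$ divergences between the inner Liouville tail and the outer Green-function integral in \rife{D_q}, while keeping the two small parameters $\delta$ and $e^{-\la_{n,1}/2}$ consistently separated: one has to push the inner Taylor expansion of $h$ and the inner profile of \cite{cl1} just far enough to isolate the $\ell(\mathbf{q})\la_{n,1}e^{-\la_{n,1}}$ and $D(\mathbf{q})e^{-\la_{n,1}}$ coefficients without overflowing the $C^{2,\sigma}$ regularity. A further subtle point is to check that the coefficient of $\ell(\mathbf{q})\la_{n,1}e^{-\la_{n,1}}$ equals $2/(m h^2(q_1)e^{G_1^*(q_1)})$; this depends crucially on the criticality of $\mathbf{q}$ for $f_m$ and on the explicit form of the next-order correction to the Liouville bubble used in \cite{cl1}.
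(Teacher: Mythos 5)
Your overall scheme --- write $\rho_n$ from the normalisation $\int_M he^{\tilde u_n}\,\mathrm{d}\mu=1$, split into a near-bubble piece and an exterior piece, approximate $\tilde u_n$ by Green functions outside, and absorb the $\pi/r_j^2$ divergences into the renormalised quantity $D(\mathbf q)$ --- coincides with the paper's (Steps 1 and 3 of the proof in section \ref{apppendix}). The genuine gap is in your treatment of the inner integral. Writing $\tilde u_n=U_{n,j}+(G_j^*-G_j^*(x_{n,j}))+\eta_{n,j}$ and ``inserting the profile, then Taylor-expanding'' means you must evaluate $\int_{B_\delta}\frac{\rho_n h_j(x_{n,j})e^{\lambda_{n,j}}}{(1+\frac{\rho_n h}{8}e^{\lambda_{n,j}}|x-x_{n,j,*}|^2)^2}\,e^{f+\eta_{n,j}}\,\mathrm{d}x$ with an error of size $O(\lambda_{n,j}^2e^{-3\lambda_{n,j}/2})+O(e^{-(1+\sigma/2)\lambda_{n,j}})$. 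The term \emph{linear} in $\eta_{n,j}$ is the obstruction: the sharp pointwise expansion \eqref{eta_ets} of $\eta_{n,j}$ carries remainders $O(\log(R_{n,j}|x-x_{n,j}|+2)\,e^{-\lambda_{n,j}})$ and $O(\lambda_{n,j}e^{-\lambda_{n,j}})$, and once these are integrated against the bubble the resulting error is of order $\lambda_{n,j}e^{-\lambda_{n,j}}$, already as large as the terms you want to determine. So a direct insertion of the profile followed by Taylor expansion, as you outline, cannot produce \eqref{a5}.

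The paper avoids this by not expanding $e^{\eta_{n,j}}$ at all. It integrates the equation $\Delta\eta_{n,j}=-\mathcal A_{n,j}-(8\pi m-\rho_n)e^{2\varphi_j}$ from \eqref{ap5} against the auxiliary function $\psi_{n,j}$ defined in \eqref{ap7}, which is precisely the $Y_0$-eigenfunction of the linearised Liouville operator and satisfies $\Delta\psi_{n,j}+\frac{\rho_nh(x_{n,j})e^{\lambda_{n,j}}}{(1+\ldots)^2}\psi_{n,j}=0$. In the resulting Green identity \eqref{ap9}, the part of $\mathcal A_{n,j}$ that is linear in $\eta_{n,j}$ cancels identically against the term $\eta_{n,j}\Delta\psi_{n,j}$, so that only $\mathcal B_{n,j}$ (which is quadratic in the small quantities $f$ and $\eta_{n,j}$) remains in the bulk, and the price is a boundary term on $\partial B_\delta$ that \emph{is} controlled by the pointwise bounds $|\eta_{n,j}|+|\nabla\eta_{n,j}|=O(\lambda_{n,j}^2e^{-\lambda_{n,j}})$. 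This $\psi_{n,j}$-weighted Pohozaev/Green identity is the key technical device your proposal is missing; without it the $\eta$-contribution to $\rho_{n,j}$ simply cannot be extracted at the claimed precision from the estimates of \cite{cl1}. The remaining ingredients in your outline --- second-order Taylor expansion of $f_{\mathbf q,j}$, vanishing of the first-order term because $\nabla f_m(\mathbf q)=0$, the $C^{2,\sigma}$-controlled remainders, the exterior Green-function approximation, and the matched cancellation of $\pi/r_j^2$ producing $D(\mathbf q)$ --- all agree with the paper.
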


The proof of Theorem \ref{thm_new} relies on a careful improvement of an argument first proposed in \cite{cl1}. 
By using Theorem \ref{thm_new}, we succeed in showing that $\lambda_{n,j}^{(1)}-\lambda_{n,j}^{(2)}$ is asymptotically small if  $\ell(\mathbf{q})=0$ and
$D(\mathbf{q})\neq 0$ as well, see Lemma \ref{lem_diff_tilu}.

Then, as in \cite{ly2}, we analyze the asymptotic behavior of
$\zeta_n=\frac{u_{n}^{(1)}-u_{n}^{(2)}}{\|u_n^{(1)}-u_n^{(2)}\|_{L^\infty(M)}}$. Near each blow up point $q_j$,
and after a suitable scaling, $\zeta_n$ converges to an entire solution of the linearized problem associated to the Liouville equation:
\begin{equation}
\label{liouville}
\Delta v+ e^{v}=0\quad\textrm{in}\ \mathbb{R}^2.
\end{equation}
Solutions of \eqref{liouville} are completely classified \cite{cli1} and take the form,
\begin{equation}\label{17}
v\left( z\right)=v_{\mu,a}(z) = \log \frac{8e^{\mu}}{ ( 1+e^{\mu}  \vert z +a  \vert ^{2} ) ^{2}},  \quad \mu  \in \mathbb{R},\;a=(a_1,a_2) \in \mathbb{R}^2.
\end{equation}
The freedom in the choice of $\mu$ and $a$ is due to the well known invariance of equation \eqref{liouville} under dilations and translations.
The linearized operator $L$ relative to $v_{\sscp 0,0}$  is defined by,
\begin{equation}\label{entirelinear}L\phi:=\Delta \phi+\frac{8}{(1+|z|^2)^2}\phi\quad\textrm{in}\ \mathbb{R}^2.
\end{equation}
It is well known, see \cite[Proposition 1]{bp}, that the kernel of $L$ has real dimension $3$ with eigenfunctions $Y_0$, $Y_1$, $Y_2$,
where,
\begin{equation*}
\begin{aligned}  Y_0(z) = \frac{1-|z|^2}{1+ |z|^2}=\frac{\partial v_{\mu,a}}{\partial \mu}\Big|_{(\mu,a)=(0,0)},\
Y_1(z) = \frac{z_1}{1+ |z|^2}=-\frac{1}{4}\frac{\partial v_{\mu,a}}{\partial a_1}\Big|_{(\mu,a)=(0,0)},
\  Y_2(z) =\frac{z_2}{1+ |z|^2}=-\frac{1}{4}\frac{\partial v_{\mu,a}}{\partial a_2}\Big|_{(\mu,a)=(0,0)}.
  \end{aligned}
\end{equation*}
The second crucial point of the proof of Theorem \ref{thm_concen} is to show that, after scaling and for large $n$, $\zeta_n$
is orthogonal to $Y_0$, $Y_1$ and $Y_2$. As in \cite{ly2} this is done by a rather delicate analysis of various suitably defined Pohozaev-type identities.
However, compared with \cite{ly2}, we face a truly new  difficulty, since the difference of the blow up rates (that is
$\lambda_{n,j}^{(1)}-\lambda_{n,j}^{(2)}$)
in our case can be of the same order of $\frac{1}{\lambda_{n,j}^{(1)}}$, a situation which cannot occur in the Chern-Simons-Higgs problem discussed
in \cite{ly2}. In order to overcome this difficulty we have to carry out an higher order expansion of $u_{n}^{(1)}-u_{n}^{(2)}$
by using Green's representation formula. The leading order of that expansion has to be determined explicitly by using the explicit
form of entire solutions of \rife{liouville} (see Lemma \ref{lem_equalb0}). Besides, the main estimates relies on a series of subtle cancellations, 
see Lemma \ref{poho1_lhs} and Lemma \ref{poho1_rhs}.

\begin{remark}
The above argument can be adapted in a non trivial way to address the non degeneracy of the $m$-point bubbling solutions of the mean field equation \eqref{mfd}.
We will discuss this topic in another paper \cite{BJLY}.
\end{remark}

\begin{remark}
We point out that in Theorem \ref{thm_concen} we consider solutions of \eqref{mfd} blowing up at the points $q_j$ such that $q_j\notin\{p_1,\cdots,p_N\}$, $j=1,\cdots,m$, in order to avoid the effect of the vortex points $\{p_1,\cdots,p_N\}$ on the local profile of the bubbling solution. It would be interesting to carry out a refinement of the above argument suitable for treating the latter case as well and to address uniqueness of solutions blowing up possibly at the vortex points.

In this respect, let us mention the papers \cite{coll-sing1, coll-sing2, coll-sing3} where the case of bubbling solutions for collapsing vortex points is studied. In particular, in \cite{coll-sing1} the authors managed to derive the uniqueness of such bubbling solutions by exploiting a similar argument as in the present paper.
\end{remark}

\begin{remark}\label{remreg}
The assumption $\alpha_j\in\mathbb{N}$ is used to guarantee that $u\in C^{\infty}(M)$, which in turn allows a simplified discussion of
 the already very technical proof. However, since by assumption $q_j\notin \{p_1,\cdots,p_m\}$, then we may relax
that assumption and let $\alpha_j\in (-1,+\infty)$. Indeed, the sharp local estimates in \cite{cl1} still hold in this more general setting, just
with minor changes relative to the regularity class of $u_n$. In other words, Theorem \ref{thm_concen} still holds if we allow
$\alpha_j\in (-1,+\infty)$, $j=1,\cdots,m$.
\end{remark}

This paper is organized as follows.
In section \ref{sec_pre} we  review some known sharp estimates for blow up solutions of \eqref{mfd}.
In section \ref{sec_est} we analyse the limit behavior of $\zeta_n$ on each region $U^{\sscp M}_{r_0}(q_j)$ and
$M\setminus \cup_{j=1}^m U^{\sscp M}_{r_0}(q_j)$.
In section \ref{sec_poho} we  prove Theorem \ref{thm_concen} by the analysis of some suitably derived Pohozaev-type identities.
In section \ref{sec_Dir} we discuss the uniqueness of solutions of the Dirichlet problem.
In section \ref{apppendix} we prove Theorem \ref{thm_new}.

\section{Preliminary}\label{sec_pre}In this section we recall some sharp estimates for blow up solutions of \eqref{mfd}.
 Suppose that $u_n$ is a sequence of blow-up solutions of  \eqref{mfd} which blows up at $q_j\notin\{p_1,\cdots,p_N\}$, $j=1,\cdots, m$.
Let \begin{align}\notag
\tilde{u}_n(x)=u_n(x)-\log\left(\int_Mhe^{u_n}\mathrm{d}\mu\right).
\end{align}
Then it is easy to see that,
    \begin{equation}
\label{eq_tilu}
\Delta_{\sscp M} \tilde{u}_n+\rho_n(h(x)e^{\tilde{u}_n(x)}-1)=0\ \ \textrm{in}\ \ M,\ \  \textrm{and}
\end{equation}
\begin{equation}
\label{int_tilu}
\int_M he^{\tilde{u}_n}\mathrm{d}\mu=1.
\end{equation}
We denote by,
\begin{align}\notag
\lambda_{n}=\max_{M}\tilde{u}_n,\ \ \textrm{and}
\end{align}
\begin{align}\notag
\lambda_{n,j}=\max_{B^{\sscp M}_\delta(q_j)}\tilde{u}_n=\tilde{u}_n(x_{n,j})\
\textrm{for}\  j=1,\cdots, m,\ \textrm{where}\ \delta>0\ \textrm{is a fixed constant.}
\end{align}

Next, let us introduce some notations for local computations. We introduce a local isothermal coordinate system $\un{x}=T_j(x)\in \mathbb{R}^2$, such that $\un{q}_j=T_j(q_j)$, $\un{x}_{n,j}=T_j(x_{n,j})$ and
$\mathrm{d} s^2=e^{2\varphi_j(\un{x})}|\mathrm{d} \un{x}|^2$ with $\varphi_j(\un{x}_{n,j})=0$ and $\nabla\varphi_j(\un{x}_{n,j})=0$. It will be also useful to denote by $U^{\sscp M}_r(x_0)=T_j^{-1}(B_r(\un{x}_0))$, the pre-image of $B_r(\un{x}_0)$, where $\un{x}_0=T_j(x_0)$ and $B_{r}(\un{x}_0)\subset \mathbb{R}^2$ denotes the Euclidean ball of radius $r$ centred at $\un{x}_0\in \mathbb{R}^2$. Therefore, when evaluated in $U^{\sscp M}_{\delta}(x_{n,j})$, in local coordinates \eqref{eq_tilu} takes the form,
\begin{equation}\notag
\Delta \tilde{u}_n+\rho_ne^{2\varphi_j(\un{x})}(h(\un{x})e^{\tilde{u}_n(\un{x})}-1)=0\ \ \textrm{in}\ \
\un{x}\in B_{\delta}(\un{x}_{n,j}),
\end{equation}
where $\Delta=\sum\limits_{i=1}^2\frac{\partial^2}{\partial x_i^2}$ denotes the standard Laplacian in $\mathbb{R}^2$.

For later use we recall that  $r_0>0$ is defined as right after \eqref{f_q_j}
to guarantee  that,
\beq\label{190117}
U^{\sscp M}_{2r_0}(q_j) \subseteq M_j,\; j=1,\cdots m.
\eeq

\begin{remark}
To simplify the exposition we will use the expressions $\tilde{u}, h, R, G, K,...$ to denote those function in both global and local coordinates. It will be clear time to time which one of the functions involved is being used.
\end{remark}

\begin{remark}
We will often need to take back local estimates into globally defined quantities. Therefore we fix an atlas whose local maps are denoted by $\{T_a\}$, and whenever for some $k\geq 1$ we have
$g=g(\un{x}_1,\cdots,\un{x}_k)$ with $\un{x}_i=T_{a_i}(x_i)$,
$i=1,\cdots,k$, then we will denote by $T^{-1}_*(g(\un{x}_1,\cdots,\un{x}_k))=g(T_{a_1}(x_1),\cdots, T_{a_k}(x_k))$.
\end{remark}

It is well known that the conformal factor $\varphi_a$ is a solution of,
\begin{equation}\label{delr1}
-\Delta \varphi_a=e^{2\varphi_a}K,\quad \un{x}\in B_\delta(\un{x}_0).
\end{equation}

The regular part of the Green function $R(x,y)$ is defined in a local isothermal coordinate system $\un{x}=T_a(x)$ as follows. 
For $\un{y}=T_a(y)$ fixed, we can choose the conformal factor $e^{2\varphi_a(\un{x})}$ so that $\varphi_a(\un{y})=0$. 
Then $R(x,y)$ is defined to be the unique solution of
\begin{equation}\label{delr}
\Delta R(\un{x},\un{y})= e^{2\varphi_j (\un{x})},\;\un{x}\in B_\delta(\un{x}_0),\quad R(\un{x},\un{y})=G(\un{x},\un{y})+
\frac{1}{2\pi}\log(|\un{x}-\un{y}|),
\;\un{x}\in\partial B_\delta(\un{x}_0),
\end{equation}
and therefore it is not difficult to check that it also satisfies,
$${R(\un{x},\un{y})=\frac{1}{2\pi}\log|\un{x}-\un{y}|+G(\un{x},\un{y})}.$$

Next, let us define,
\begin{align}\label{Un}
U_{n,j}(\un{x})=\log\frac{e^{\lambda_{n,j}}}{(1+\frac{\rho_nh(\un{x}_{n,j})}{8}e^{\lambda_{n,j}}|\un{x}-\un{x}_{n,j,*}|^2)^2},\quad  \un{x}\in \mathbb{R}^2,
\end{align}
where the point $\un{x}_{n,j,*}$ is chosen to satisfy,
\[\nabla U_{n,j}(\un{x}_{n,j})=\nabla(\log h(\un{x}_{n,j})).\]
Then, it is not difficult to check that,
\begin{align}\label{diff_x_nj}
|\un{x}_{n,j}-\un{x}_{n,j,*}|=O(e^{-\lambda_{n,j}}).
\end{align}
Let us also define,
\begin{align}\label{eta}
\eta_{n,j}(\un{x})=\tilde{u}_n(\un{x})-U_{n,j}(\un{x})-(G_{j}^{*}(\un{x})-G_{j}^{*}(\un{x}_{n,j})), \quad \un{x}\in B_{\delta}(\un{x}_{n,j}).
\end{align}
It has been proved in \cite[Theorem 1.4]{cl1} that, for $x\in B_{\delta}(\un{x}_{n,j})$, it holds,
\begin{equation}\begin{aligned}\label{eta_ets}
  \eta_{n,j}(\un{x})=~&\frac{-8}{\rho_nh(\un{x}_{n,j})}[\Delta\log h(\un{x}_{n,j})+8\pi m -2K(\un{x}_{n,j})]e^{-\lambda_{n,j}}
  [\log(R_{n,j}|\un{x}-\un{x}_{n,j}|+2)]^2\\&+O(\log(R_{n,j}|\un{x}-\un{x}_{n,j}|+2)e^{-\lambda_{n,j}})+
  O(\lambda_{n,j} e^{-\lambda_{n,j}})=O(\lambda_{n,j}^2 e^{-\lambda_{n,j}}),\end{aligned}\end{equation}
where $R_{n,j}=\sqrt{\frac{\rho_nh(\un{x}_{n,j})e^{\lambda_{n,j}}}{8}}$. It has also been proved in \cite[Corollary 2.4]{cl1} that
one can find constants $c>0$ and $c_\delta>0$ such that,
\begin{equation}\label{info_lambda}
\begin{aligned}
  &|\lambda_n-\lambda_{n,j}|\le c\ \ \textrm{for}\ \ j=1,\cdots, m,\ \ \ \ \ \
 |\tilde{u}_n(x)+\lambda_n|\le c_\delta \ \ \textrm{for}\ \ x\in M\setminus \cup_{j=1}^m B^{\sscp M}_{\delta}(q_j).
  \end{aligned}
\end{equation}
Moreover, see  \cite[section 3]{cl1}, we have,
\begin{equation}
\label{relation_lambda}
e^{\lambda_{n,j}}h^2(x_{n,j})e^{G_j^*(x_{n,j})}=e^{\lambda_{n,1}}h^2(x_{n,1})e^{G_1^*(x_{n,1})}
(1+O(e^{-\frac{\lambda_{n,1}}{2}})),
\end{equation}
and in particular, see \cite[Theorem 1.4]{cl1}, the following estimate holds,
\begin{equation}
\begin{aligned}
\label{lambda_exp}
&\lambda_{n,j}+\int_M\tilde{u}_n\mathrm{d}\mu+
2\log\left(\frac{\rho_n h(x_{n,j})}{8}\right)+G_j^*(x_{n,j})\\&=-\frac{2}{\rho_n h(x_{n,j})}
(\Delta_{\sscp M} \log h(x_{n,j})+8\pi m -2K(x_{n,j}))\lambda_{n,j}^2e^{-\lambda_{n,j}}+O(\lambda_{n,j} e^{-\lambda_{n,j}}).
\end{aligned}
\end{equation}

Let us also recall, see \cite[Lemma 5.4]{cl1}, that,
\begin{equation}
\label{gradatq}
\nabla_{\sscp M}[\log h(x)+G_j^*(x)]\Big|_{x=x_{n,j}}=O(\lambda_{n,j} e^{-\lambda_{n,j}}),
\end{equation}
where $\nabla_{\sscp M}$ is a suitable gradient vector field on $M$,
which, together with the assumption $\det (D^2 f_m(\mathbf{q}))\neq 0$, shows that,
\begin{equation}
\label{diffxandq}
|\un{x}_{n,j}-{\un{q}}_j|=O(\lambda_{n,j} e^{-\lambda_{n,j}}).
\end{equation}

\begin{remark}
We remark that, since in any local isothermal coordinate system it holds $\Delta_{\sscp M}=e^{-2\varphi_j}\Delta$, then, in view of
\eqref{diffxandq} and $\varphi_j(\un{x}_{n,j})=0, \nabla \varphi_j (\un{x}_{n,j})=0$, we find that,
\begin{equation*}\begin{aligned}
\Delta_{\sscp M} \log h(x_{n,j})&{=e^{-2\varphi_j(\un{x}_{n,j})}\Delta \log h(\un{x}_{n,j}) }
 {=e^{-2\varphi_j(\un{q}_{j})}\Delta \log h(\un{q}_{j})+O(\lambda_{n,j} e^{-\lambda_{n,j}})= \Delta_{\sscp M} \log h(\un{q}_{j}) +O(\lambda_{n,j} e^{-\lambda_{n,j}}).}
\end{aligned}\end{equation*}
This fact will be often used in the many estimates involved.
\end{remark}

The local masses  corresponding to the blow up of $\tilde{u}_n$ at $q_j$, $1\le j\le m$, are defined as follows,
\begin{equation}
\label{rhonj}
\rho_{n,j}=\rho_n \int_{U^{\sscp M}_{\delta}(q_j)}he^{\tilde{u}_n}\mathrm{d}\mu,
\end{equation}
and we will use the following estimate proved in \cite[section 3]{cl1},
\begin{equation}\label{rhon8pi}
\rho_{n,j}-8\pi=\frac{16\pi}{\rho_nh(\un{x}_{n,j})}\{\Delta\log h(\un{x}_{n,j})+\rho_n-2K(\un{x}_{n,j})\}\lambda_{n,j} e^{-\lambda_{n,j}}+O(e^{-\lambda_{n,j}}),
\end{equation}
In particular, see \cite[Theorem 1.1]{cl1}, we have:
\begin{equation}\begin{aligned}\label{rhon}
 \rho_n-8\pi m  &=\frac{2}{m}\sum_{j=1}^m h^{-1}(\un{x}_{n,j})[\Delta\log h(\un{x}_{n,j})+8\pi m -2K(\un{x}_{n,j})]
\lambda_{n,j} e^{-\lambda_{n,j}}+O(e^{-\lambda_{n,j}})
\\&=\frac{2}{m}\frac{\lambda_{n,1}e^{-\lambda_{n,1}}}{h^2(\un{x}_{n,1})
e^{G_1^*(\un{x}_{n,1})}}\sum_{j=1}^m[\Delta\log h(\un{x}_{n,j})+8\pi m -2K(\un{x}_{n,j})]h(\un{x}_{n,j})e^{G_j^*(\un{x}_{n,j})}
 +O(e^{-\lambda_{n,1}})
\\ &=\frac{2}{m}\frac{\lambda_{n,1}e^{-\lambda_{n,1}}}{h^2({x}_{n,1})
e^{G_1^*({x}_{n,1})}}\ell(\mathbf{q})
+O(e^{-\lambda_{n,1}}).
\end{aligned}
\end{equation}

The asymptotic behavior of $\tilde{u}_n$ on $M\setminus \cup_{j=1}^m U^{\sscp M}_{\delta}(q_j)$, is well described in terms of the auxiliary function,
\begin{equation}\label{wnx}
w_n(x)=\tilde{u}_n(x)-\sum_{j=1}^m\rho_{n,j} G(x,x_{n,j})-\int_M\tilde{u}_n \mathrm{d}\mu.
\end{equation}
which satisfies, see \cite[Lemma 5.3]{cl1},
\begin{equation}\label{est_wn}
w_n=o(e^{-\frac{\lambda_{n,j}}{2}})\ \ \textrm{on} \  \  C^1(M\setminus \cup_{j=1}^m U^{\sscp M}_{\delta}(q_j)).
\end{equation}

\section{Uniqueness of the blow up solutions with mass concentration}\label{sec_est}
To prove Theorem \ref{thm_concen} we argue by contradiction and assume
that \eqref{mfd} has two different solutions $u_n^{(1)}$ and $u_n^{(2)}$, with $\rho_n^{(1)}=\rho_n=\rho_n^{(2)}$,
which blow  up at $q_j$, $j=1,\cdots,m$. We will use $x_{n,j}^{(i)}$, $\lambda_n^{(i)}$,
$\lambda_{n,j}^{(i)}$, $\tilde{u}_n^{(i)}$, $R_{n,j}^{(i)}$, $U_{n,j}^{(i)}$, $x_{n,j,*}^{(i)}$, $w_n^{(i)}$, $\rho_{n,j}^{(i)}$
to denote $x_{n,j}$, $\lambda_n$, $\lambda_{n,j}$, $\tilde{u}_n$, $R_{n,j}$, $U_{n,j}$, $x_{n,j,*}$, $w_n$, $\rho_{n,j}$, as defined
in section 2, corresponding to $u_n^{(i)}$, $i=1,2$, respectively.

Our first result is an estimate about $|\lambda_{n,j}^{(1)}-\lambda_{n,j}^{(2)}|$ and $\|\tilde{u}_n^{(1)}-\tilde{u}_n^{(2)}\|_{L^\infty(M)}$.

\begin{lemma}\label{lem_diff_tilu}
$(i)$ $|\lambda_{n,j}^{(1)}-\lambda_{n,j}^{(2)}|=O\left(\sum_{i=1}^2\frac{1}{\lambda_{n,1}^{(i)}}\right)$ for all $1\le j\le m$.

$(ii)$ there exists a constant $c>1$ such that:
\begin{equation*}\begin{aligned}\frac{1}{c}|\lambda_{n,1}^{(1)}-\lambda_{n,1}^{(2)}|+O (\sum_{i=1}^2\lambda_{n,1}^{(i)}e^{-\frac{\lambda_{n,1}^{(i)}}{2}} )&\le \|\tilde{u}_n^{(1)}-\tilde{u}_n^{(2)}\|_{L^\infty(M)} \le c|\lambda_{n,1}^{(1)}-\lambda_{n,1}^{(2)}|+O (\sum_{i=1}^2\lambda_{n,1}^{(i)}e^{-\frac{\lambda_{n,1}^{(i)}}{2}} ).\end{aligned}\end{equation*}
\end{lemma}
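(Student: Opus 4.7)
The plan is to deduce both parts from the sharp expansion \eqref{a5} of Theorem \ref{thm_new}, combined with the asymptotic information \eqref{eta_ets}, \eqref{info_lambda}, \eqref{lambda_exp}, \eqref{relation_lambda}, \eqref{diffxandq} and \eqref{est_wn} describing the bubbling profile, exploiting the crucial assumption $\rho^{(1)}_n=\rho^{(2)}_n$.

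For part (i), the point is that \eqref{a5} provides two different expansions of the \emph{same} number $\rho_n-8\pi m$, one for each of $\lambda_{n,1}^{(1)},\lambda_{n,1}^{(2)}$. Subtracting them and using that the coefficients depending only on $\mathbf{q}$, $h$, $\delta$ are identical for both sequences, one obtains
\[
A\bigl(F(\lambda_{n,1}^{(1)})-F(\lambda_{n,1}^{(2)})\bigr)=O\!\Big(\sum_{i=1,2}\bigl((\lambda_{n,1}^{(i)})^2 e^{-\tfrac{3}{2}\lambda_{n,1}^{(i)}}+e^{-(1+\tfrac{\sigma}{2})\lambda_{n,1}^{(i)}}\bigr)\Big),
\]
with $A\neq 0$ and $F(\lambda)=(\lambda-2+c_\delta)e^{-\lambda}$ when $\ell(\mathbf{q})\neq 0$, respectively $F(\lambda)=e^{-\lambda}$ when $\ell(\mathbf{q})=0$ (fixing $\delta$ small so that $D(\mathbf{q})+O(\delta^\sigma)\neq 0$). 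In both cases $|F'(\lambda)|\gtrsim \lambda e^{-\lambda}$ for large $\lambda$, so the mean value theorem gives $|\lambda_{n,1}^{(1)}-\lambda_{n,1}^{(2)}|=O(1/\lambda_{n,1}^{(i)})$. To propagate the estimate from $j=1$ to $j\geq 2$ we take logarithms in \eqref{relation_lambda}, obtaining
\[
\lambda_{n,j}^{(i)}-\lambda_{n,1}^{(i)}=2\log\tfrac{h(x_{n,1}^{(i)})}{h(x_{n,j}^{(i)})}+G_1^*(x_{n,1}^{(i)})-G_j^*(x_{n,j}^{(i)})+O(e^{-\lambda_{n,1}^{(i)}/2}),
\]
and subtracting for $i=1,2$: thanks to \eqref{diffxandq} the smooth Green-regular parts cancel up to $O(\lambda_n e^{-\lambda_n})$, so $|\lambda_{n,j}^{(1)}-\lambda_{n,j}^{(2)}|=|\lambda_{n,1}^{(1)}-\lambda_{n,1}^{(2)}|+O(e^{-\lambda_n/2})$, which finishes (i).

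For the upper bound in part (ii) we split $M$ into the bubble regions $\bigcup_j U^{\sscp M}_{\delta/2}(q_j)$ and their complement. Inside each $U^{\sscp M}_{\delta/2}(q_j)$, applying \eqref{eta_ets} to each $\tilde u_n^{(i)}$ yields
\[
\tilde u_n^{(1)}-\tilde u_n^{(2)}=\bigl(U_{n,j}^{(1)}-U_{n,j}^{(2)}\bigr)-\bigl(G_j^*(x_{n,j}^{(1)})-G_j^*(x_{n,j}^{(2)})\bigr)+O(\lambda_n^2 e^{-\lambda_n}),
\]
and a direct computation with the explicit form \eqref{Un}, together with $|x_{n,j}^{(1)}-x_{n,j}^{(2)}|=O(\lambda_n e^{-\lambda_n})$ and \eqref{diff_x_nj}, shows that $|U_{n,j}^{(1)}(x)-U_{n,j}^{(2)}(x)|\leq c\,|\lambda_{n,j}^{(1)}-\lambda_{n,j}^{(2)}|+O(\lambda_n e^{-\lambda_n})$ uniformly in $x$. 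On the outer region we use \eqref{wnx}--\eqref{est_wn} to write $\tilde u_n^{(i)}(x)=\sum_k\rho_{n,k}^{(i)}G(x,x_{n,k}^{(i)})+\int_M\tilde u_n^{(i)}\,\mathrm{d}\mu+o(e^{-\lambda_n/2})$, express $\int_M\tilde u_n^{(i)}\,\mathrm{d}\mu$ through \eqref{lambda_exp}, and control the Green's-function contributions using $\rho_{n,k}^{(1)}-\rho_{n,k}^{(2)}=O(\lambda_n e^{-\lambda_n})$ from \eqref{rhon8pi} together with \eqref{diffxandq}. Combined with (i) this yields the claimed upper bound with error $O(\lambda_n e^{-\lambda_n/2})$.

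For the lower bound we simply evaluate the difference at $x_{n,1}^{(1)}$: by construction $\tilde u_n^{(1)}(x_{n,1}^{(1)})=\lambda_{n,1}^{(1)}$, while \eqref{eta_ets} applied to $\tilde u_n^{(2)}$ at $x_{n,1}^{(1)}$, together with $|x_{n,1}^{(1)}-x_{n,1}^{(2)}|=O(\lambda_n e^{-\lambda_n})$ and \eqref{diff_x_nj}, gives $\tilde u_n^{(2)}(x_{n,1}^{(1)})=\lambda_{n,1}^{(2)}+O(\lambda_n^2 e^{-\lambda_n})$, whence $\|\tilde u_n^{(1)}-\tilde u_n^{(2)}\|_{L^\infty(M)}\geq |\lambda_{n,1}^{(1)}-\lambda_{n,1}^{(2)}|+O(\lambda_n^2 e^{-\lambda_n})$. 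The main technical difficulty I foresee is the uniform estimate on $U_{n,j}^{(1)}-U_{n,j}^{(2)}$ in the intermediate annular regime $|x-x_{n,j}|\sim e^{-\lambda_n/2}$, where both standard bubbles transition from their peak value to the far-field logarithmic regime: one must carefully keep track of how the small shifts $x_{n,j,*}^{(i)}$ in the arguments of the logarithms defining $U_{n,j}^{(i)}$ enter the comparison in order to reduce the pointwise bound to $c\,|\lambda_{n,j}^{(1)}-\lambda_{n,j}^{(2)}|$ plus an acceptable error.
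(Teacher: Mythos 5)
Your proposal follows the same strategy as the paper: for (i) you exploit $\rho_n^{(1)}=\rho_n^{(2)}$ by equating the expansions of $\rho_n-8\pi m$ and applying a mean--value argument, then propagate to $j\geq 2$ via \eqref{relation_lambda}; for (ii) you split $M$ into bubble and outer regions exactly as the paper does, and your lower bound via pointwise evaluation at $x_{n,1}^{(1)}$ is a clean simplification of the paper's outer-region identity \eqref{1result_green_rep}. Two small inaccuracies, neither fatal: (a) in Case 2 you assert $|F'(\lambda)|\gtrsim\lambda e^{-\lambda}$ for $F(\lambda)=e^{-\lambda}$, but $|F'|=e^{-\lambda}$; the conclusion nonetheless holds because the right-hand error in \eqref{a5} is then $O(\lambda^2 e^{-3\lambda/2}+e^{-(1+\sigma/2)\lambda})$, so in fact you obtain a bound even stronger than $O(1/\lambda_{n,1}^{(i)})$; (b) your intermediate claim $|U_{n,j}^{(1)}-U_{n,j}^{(2)}|\le c|\lambda_{n,j}^{(1)}-\lambda_{n,j}^{(2)}|+O(\lambda_n e^{-\lambda_n})$ undershoots the true error $O(\lambda_n e^{-\lambda_n/2})$ of \eqref{diff_U}, which comes precisely from the annular regime $|x-x_{n,j}|\sim e^{-\lambda_n/2}$ you flag at the end; since the lemma's stated error is $O(\sum\lambda_{n,1}^{(i)}e^{-\lambda_{n,1}^{(i)}/2})$, this does not affect the final statement, but the intermediate estimate should be corrected.
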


\begin{proof} (i) In view of \eqref{eta} and \eqref{eta_ets}, we see that,  for $\un{x}\in B_{\delta}(\un{q}_j)$, it holds,
\begin{equation}\begin{aligned}\label{tiluest}
 \tilde{u}_n^{(1)}(\un{x})-\tilde{u}_n^{(2)}(\un{x}) &=U_{n,j}^{(1)}(\un{x})-U_{n,j}^{(2)}(\un{x})+
G_j^*(\un{x}_{n,j}^{(2)})-G_j^*(\un{x}_{n,j}^{(1)})+\eta_{n,j}^{(1)}(\un{x})-\eta_{n,j}^{(2)}(\un{x})
\\&=U_{n,j}^{(1)}(\un{x})-U_{n,j}^{(2)}(\un{x})+G_j^*(\un{x}_{n,j}^{(2)})-G_j^*(\un{x}_{n,j}^{(1)}) +
O(\sum_{i=1}^2(\lambda_{n,j}^{(i)})^2 e^{-\lambda_{n,j}^{(i)}}).
\end{aligned}
\end{equation}
By the definition of $U_{n,j}^{(i)}$, we find,
\begin{equation}\label{diffU}
U_{n,j}^{(1)}(\un{x})-U_{n,j}^{(2)}(\un{x})=
2\log\frac{(1+\frac{\rho_nh(\un{x}_{n,j}^{(2)})}{8}e^{\lambda_{n,j}^{(2)}}|\un{x}-\un{x}_{n,j,*}^{(2)}|^2)}{
(1+\frac{\rho_nh(\un{x}_{n,j}^{(1)})}{8}e^{\lambda_{n,j}^{(1)}}|\un{x}-\un{x}_{n,j,*}^{(1)}|^2)}
+\lambda_{n,j}^{(1)}-\lambda_{n,j}^{(2)},
\end{equation}
while, by \eqref{diff_x_nj} and \eqref{diffxandq}, we also have,
\begin{equation}\label{diff_x1_x_2}
|\un{x}_{n,j}^{(1)}-\un{x}_{n,j}^{(2)}|=O(\sum_{i=1}^2\lambda_{n,j}^{(i)} e^{-\lambda_{n,j}^{(i)}}), \quad\textrm{and}\ \ \
|\un{x}_{n,j,*}^{(1)}-\un{x}_{n,j,*}^{(2)}|
=O(\sum_{i=1}^2\lambda_{n,j}^{(i)} e^{-\lambda_{n,j}^{(i)}})\ \ \textrm{for any} \ \ 1\le j\le m.
\end{equation}
{At this point we conclude the proof of Lemma \ref{lem_diff_tilu} by considering two distinct cases:}

{Case 1. $\ell(\mathbf{q})\neq0$:  }
From \eqref{rhon} we have,
\begin{equation}\label{lmnj}
  \begin{aligned}\frac{2}{m}\frac{\lambda_{n,1}^{(1)}e^{-\lambda_{n,1}^{(1)}}}{h^2(x_{n,1}^{(1)})
  e^{G_1^*(x_{n,1}^{(1)})}}\ell(\mathbf{q})
+O(e^{-\lambda_{n,1}^{(1)}})=\frac{2}{m}\frac{\lambda_{n,1}^{(2)}e^{-\lambda_{n,1}^{(2)}}}{h^2(x_{n,1}^{(2)})
  e^{G_1^*(x_{n,1}^{(2)})}}\ell(\mathbf{q})
+O(e^{-\lambda_{n,1}^{(2)}}),
  \end{aligned}
\end{equation}
that is, dividing by  $\frac{2}{m}\frac{\lambda_{n,1}^{(2)}e^{-\lambda_{n,1}^{(2)}}}{h^2(x_{n,1}^{(1)})
  e^{G_1^*(x_{n,1}^{(1)})}}\ell(\mathbf{q})$, and in view of  \eqref{diff_x1_x_2},
\begin{equation*}
  \frac{\lambda_{n,1}^{(1)}}{\lambda_{n,1}^{(2)}}e^{-(\lambda_{n,1}^{(1)}-\lambda_{n,1}^{(2)})}
  =1+O\left(\sum_{i=1}^2\frac{1}{\lambda_{n,1}^{(i)}}\right),
\end{equation*} which in turn implies that,
\begin{equation}
\label{for_dif}
-(\lambda_{n,1}^{(1)}-\lambda_{n,1}^{(2)})+\log\frac{\lambda_{n,1}^{(1)}}{\lambda_{n,1}^{(2)}}
=\log\left(1+O\left(\sum_{i=1}^2\frac{1}{\lambda_{n,1}^{(i)}}\right)\right)
=O\left(\sum_{i=1}^2\frac{1}{\lambda_{n,1}^{(i)}}\right).
\end{equation}
Since, for some $\theta\in (0,1)$, we have $\log\frac{\lambda_{n,j}^{(1)}}{\lambda_{n,j}^{(2)}}=\frac{\lambda_{n,j}^{(1)}
-\lambda_{n,j}^{(2)}}{\theta\lambda_{n,j}^{(1)}+(1-\theta)\lambda_{n,j}^{(2)}}
=o(\lambda_{n,j}^{(1)}-\lambda_{n,j}^{(2)})$, then \eqref{for_dif}
implies that,
\begin{equation}
\label{difflam}
|\lambda_{n,1}^{(1)}-\lambda_{n,1}^{(2)}|=O\left(\sum_{i=1}^2\frac{1}{\lambda_{n,1}^{(i)}}\right).
\end{equation}
As a consequence, by using also \eqref{relation_lambda}, we conclude that
\begin{equation}\label{difflamj}
|\lambda_{n,j}^{(1)}-\lambda_{n,j}^{(2)}|=O\left(\sum_{i=1}^2\frac{1}{\lambda_{n,1}^{(i)}}\right)\quad\textrm{for all}\ \ 1\le j\le m,
\end{equation}
{whenever $\ell(\mathbf{q})\neq0$, as claimed.}

{Case 2. $\ell(\mathbf{q})=0$ and $D(\mathbf{q})\neq0$:   In view of \eqref{a5},  we have,}
\begin{equation}
\label{ad5}\begin{aligned}
 &{  \frac{8(e^{-\lambda_{n,1}^{(1)}}-e^{-\lambda_{n,1}^{(2)}}) }{h^2(q_1)e^{G_1^*(q_1)}\pi m}\Big( D(\mathbf{q}) +O(\delta^\sigma)\Big)}
{ =   O(\sum_{i=1}^2(\lambda_{n,1}^{(i)})^2e^{-\frac{3}{2}\lambda_{n,1}^{(i)}})+O(\sum_{i=1}^2( e^{-(1+\frac{\sigma}{2})\lambda_{n,1}^{(i)}}),}
\end{aligned}\end{equation}
{and then the same argument used in Case 1 above shows that if $\ell(\mathbf{q})=0$ and $D(\mathbf{q})\neq0$,
 then \eqref{difflamj} holds as well.}

{(ii)} Next, in view of \eqref{diff_x1_x_2} and \eqref{difflamj}, we see that,
\begin{equation*}\begin{aligned}
  & h(\un{x}_{n,j}^{(2)})e^{\lambda_{n,j}^{(2)}}|\un{x}-\un{x}_{n,j,*}^{(2)}|^2
  -h(\un{x}_{n,j}^{(1)})e^{\lambda_{n,j}^{(1)}}|\un{x}-\un{x}_{n,j,*}^{(1)}|^2
  \\ &=O(e^{\lambda_{n,j}^{(1)}})\Bigg\{(|\un{x}-\un{x}_{n,j,*}^{(1)}||\un{x}_{n,j,*}^{(1)}-\un{x}_{n,j,*}^{(2)}|+
  |\un{x}_{n,j,*}^{(1)}-\un{x}_{n,j,*}^{(2)}|^2
    + |\un{x}-\un{x}_{n,j,*}^{(1)}|^2(|\lambda_{n,j}^{(1)}-\lambda_{n,j}^{(2)}|+|\un{x}_{n,j}^{(1)}-\un{x}_{n,j}^{(2)}|))\Bigg\},
\end{aligned}\end{equation*}
which, together with \eqref{diffU}, \eqref{diff_x1_x_2}, allows us to conclude that,
\begin{equation}
\label{diff_U}
\|U_{n,j}^{(1)}-U_{n,j}^{(2)}\|_{L^{\infty}(B_{\delta}(\un{q}_j))}=O(1)\left(|\lambda_{n,1}^{(1)}-\lambda_{n,1}^{(2)}|+\sum_{i=1}^2\lambda_{n,j}^{(i)} e^{-\frac{\lambda_{n,j}^{(i)}}{2}}\right).
\end{equation}
From \eqref{tiluest}, \eqref{diff_x1_x_2}, and \eqref{diff_U}, we finally obtain that,
\begin{equation}\label{diff_on_small ball}
\|\tilde{u}_n^{(1)}-\tilde{u}_n^{(2)}\|_{L^{\infty}(B_{\delta}(\un{q}_j))}=
O(1)\left(|\lambda_{n,1}^{(1)}-\lambda_{n,1}^{(2)}|+\sum_{i=1}^2\lambda_{n,j}^{(i)} e^{-\frac{\lambda_{n,j}^{(i)}}{2}}\right) \ \ \textrm{for all} \ \ 1\le j\le m.
\end{equation}

Next we estimate $\tilde{u}_n^{(1)}-\tilde{u}_n^{(2)}$ in $M\setminus \cup_{j=1}^m U^{\sscp M}_{{\delta}}(q_j)$.
By the Green's representation formula, we see that, for $x\in M\setminus \cup_{j=1}^m U^{\sscp M}_{{\delta}}(q_j)$, it holds,
\begingroup
\abovedisplayskip=3pt
\belowdisplayskip=3pt
\begin{equation*}
\begin{aligned}
 \tilde{u}_n^{(1)}(x)-\tilde{u}_n^{(2)}(x)-\int_M(\tilde{u}_n^{(1)}-\tilde{u}_n^{(2)})\mathrm{d}\mu
 &=\rho_n\int_M G(y,x) h(y) (e^{\tilde{u}_n^{(1)}(y)}-e^{\tilde{u}_n^{(2)}(y)})\mathrm{d}\mu(y)
\\&=\rho_n\sum_{j=1}^m\int_{U^{\sscp M}_{\frac{{\delta}}{4}}(x_{n,j}^{(1)})}
(G(y,x)-G(x_{n,j}^{(1)},x))h(y) (e^{\tilde{u}_n^{(1)}(y)}-e^{\tilde{u}_n^{(2)}(y)})\mathrm{d}\mu(y)
\\&\quad+\sum_{j=1}^mG(x_{n,j}^{(1)},x)\int_{U^{\sscp M}_{\frac{{\delta}}{4}}(x_{n,j}^{(1)})}
\rho_nh(y) (e^{\tilde{u}_n^{(1)}(y)}-e^{\tilde{u}_n^{(2)}(y)})
\mathrm{d}\mu(y)
\\&\quad+\rho_n\int_{M\setminus \cup_{j=1}^m U^{\sscp M}_{\frac{{\delta}}{4}}(x_{n,j}^{(1)})}G(y,x)h(y) (e^{\tilde{u}_n^{(1)}(y)}-e^{\tilde{u}_n^{(2)}(y)})
\mathrm{d}\mu(y).
\end{aligned}
\end{equation*}
\endgroup
In view of \eqref{rhonj} and \eqref{info_lambda}, we find that, for $x\in M\setminus \cup_{j=1}^m U^{\sscp M}_{ {\delta} }(q_j)$,
\begingroup
\abovedisplayskip=3pt
\belowdisplayskip=3pt
\begin{equation}\begin{aligned}\label{green_rep}
 \tilde{u}_n^{(1)}(x)-\tilde{u}_n^{(2)}(x)-\int_M(\tilde{u}_n^{(1)}-\tilde{u}_n^{(2)})\mathrm{d}\mu
 &=\rho_n\sum_{j=1}^m\int_{U^{\sscp M}_{\frac{{\delta}}{4}}(x_{n,j}^{(1)})}(G(y,x)-G(x_{n,j}^{(1)},x))h(y) (e^{\tilde{u}_n^{(1)}(y)}-e^{\tilde{u}_n^{(2)}(y)})\mathrm{d}\mu(y)
\\&\quad+\sum_{j=1}^mG(x_{n,j}^{(1)},x)(\rho_{n,j}^{(1)}-\rho_{n,j}^{(2)})+O(\sum_{i=1}^2e^{-\lambda_{n,1}^{(i)}}).
\end{aligned}
\end{equation}
\endgroup
We also have, from \eqref{rhon8pi}, \eqref{diff_x1_x_2}, and \eqref{difflamj},
\begingroup
\abovedisplayskip=3pt
\belowdisplayskip=3pt
\begin{equation}
\begin{aligned}\label{drhon8pi}
\rho_{n,j}^{(1)}-\rho_{n,j}^{(2)} =~&\frac{16\pi}{\rho_nh(\un{x}_{n,j}^{(1)})}\{\Delta \log h(\un{x}_{n,j}^{(1)})+\rho_n-2K(\un{x}_{n,j}^{(1)})\}
\lambda_{n,j}^{(1)} e^{-\lambda_{n,j}^{(1)}}\\
&-\frac{16\pi}{\rho_nh(\un{x}_{n,j}^{(2)})}\{\Delta\log h(\un{x}_{n,j}^{(2)})+\rho_n-2K(\un{x}_{n,j}^{(2)})\}\lambda_{n,j}^{(2)}e^{-\lambda_{n,j}^{(2)}}
+O(\sum_{i=1}^2e^{-\lambda_{n,j}^{(i)}})
= O(\sum_{i=1}^2e^{-\lambda_{n,j}^{(i)}}).
\end{aligned}\end{equation}
\endgroup
By using \eqref{green_rep},  \eqref{drhon8pi}, and \eqref{eta_ets}, we have,
\begingroup
\abovedisplayskip=3pt
\belowdisplayskip=3pt
for $x\in M\setminus \cup_{j=1}^m U^{\sscp M}_{ \delta }(q_j)$,
\begin{equation}\begin{aligned}\label{result_green_rep}
&\tilde{u}_n^{(1)}(x)-\tilde{u}_n^{(2)}(x)-\int_M(\tilde{u}_n^{(1)}-\tilde{u}_n^{(2)})\mathrm{d}\mu\\
&=\rho_n\sum_{j=1}^m\int_{U^{\sscp M}_{\frac{{\delta}}{4}}(x_{n,j}^{(1)})}(G(y,x)-G(x_{n,j}^{(1)},x))h(y)
(e^{\tilde{u}_n^{(1)}}-e^{\tilde{u}_n^{(2)}})\mathrm{d}\mu(y)+O(\sum_{i=1}^2e^{-\lambda_{n,j}^{(i)}})\\
&=\sum_{j=1}^m\int_{B_{\frac{\delta}{4}}(\un{x}_{n,j}^{(1)})}O(1)\left(\sum_{i=1}^2\frac{|\un{y}-\un{x}_{n,j}^{(1)}|e^{\lambda_{n,j}^{(i)}}}
{(1+e^{\lambda_{n,j}^{(i)}}|\un{y}-\un{x}_{n,j,*}^{(i)}|^2)^2}\right)\mathrm{d}\un{y}
+O(\sum_{i=1}^2e^{-\lambda_{n,j}^{(i)}}) =O(\sum_{i=1}^2e^{-\frac{\lambda_{n,j}^{(i)}}{2}}).
\end{aligned}
\end{equation}
\endgroup
Therefore, we see from \eqref{lambda_exp}, \eqref{diff_x1_x_2}, and  \eqref{difflam} that,
\begin{equation}\begin{aligned}\label{dlambda_exp}
&\int_M(\tilde{u}_n^{(1)}-\tilde{u}_n^{(2)})\mathrm{d}\mu= -(\lambda_{n,j}^{(1)}-\lambda_{n,j}^{(2)}) +
O\left(\sum_{i=1}^2\lambda_{n,j}^{(i)} e^{-\lambda_{n,j}^{(i)}}\right),
\end{aligned}\end{equation}
which, together with \eqref{result_green_rep}, shows that,
\begin{equation}
\begin{aligned}\label{1result_green_rep}
&\tilde{u}_n^{(1)}(x)-\tilde{u}_n^{(2)}(x)
= -(\lambda_{n,j}^{(1)}-\lambda_{n,j}^{(2)}) +O\left( \sum_{i=1}^2e^{-\frac{\lambda_{n,j}^{(i)}}{2}}\right),
\end{aligned}
\end{equation}
for $x\in M\setminus\cup_{j=1}^m U^{\sscp M}_{{\delta}}(q_j)$. Clearly \eqref{1result_green_rep} and \eqref{diff_on_small ball}
prove $(ii)$, and so the proof of Lemma \ref{lem_diff_tilu} is completed.
\end{proof}

\bigskip

Let us define,
\begin{equation*}
\label{def_zeta}
\zeta_n(x)=\frac{\tilde{u}_n^{(1)}(x)-\tilde{u}_n^{(2)}(x)}{\|\tilde{u}_n^{(1)}-\tilde{u}_n^{(2)}\|_{L^{\infty}(M)}}, \;x\in M,
\end{equation*}
\begin{equation}
\label{fnx}
f_n^*(x)=\frac{\rho_nh(x)}{\|\tilde{u}_n^{(1)}-\tilde{u}_n^{(2)}\|_{L^{\infty}(M)}}\left(e^{\tilde{u}_n^{(1)}(x)}-e^{\tilde{u}_n^{(2)}(x)}\right),  \;x\in M, \ \ \textrm{and}
\end{equation}
\begin{equation*}
\label{cnx}
c_n(x)=\frac{e^{\tilde{u}_n^{(1)}(x)}-e^{\tilde{u}_n^{(2)}(x)}}{\tilde{u}_n^{(1)}(x)-\tilde{u}_n^{(2)}(x)}=
e^{\tilde{u}_n^{(1)}}(1+O(\|\tilde{u}_n^{(1)}-\tilde{u}_n^{(2)}\|_{L^\infty(M)})),  \;x\in M.
\end{equation*}

Clearly $\zeta_n$ satisfies,
\begin{equation}\begin{aligned}\label{eq_zeta}
 \Delta_{\sscp M} \zeta_n+f_n^*(x)
=\Delta_{\sscp M} \zeta_n+\rho_nh(x)c_n(x)\zeta_n(x)=0, \; \;x\in M.
\end{aligned}
\end{equation}

Finally, let us define $\widehat{\zeta_n}$ to be the local coordinate expression of $\zeta_n$ for $\un{x}\in B_\delta(\un{x}_{n,j}^{(1)})$ and,
\begin{equation*}\label{zetanj}
  \zeta_{n,j}(z)=\widehat{\zeta_n}\left(e^{-\frac{\lambda_{n,j}^{(1)}}{2}}z+\un{x}_{n,j}^{(1)}\right),\;
  |z|<\delta e^{\frac{\lambda_{n,j}^{(1)}}{2}}\ \ \textrm{for}\ \ 1\le j\le m.
\end{equation*}

Our next result is about the limit of $\zeta_{n,j}$.

\begin{lemma}
  \label{lem_est_zetanj} There exists constants $b_{j,0}$, $b_{j,1}$, and $b_{j,2}$ such that,
\begin{equation*}
  \zeta_{n,j}(z)\to b_{j,0}\psi_{j,0}(z)+b_{j,1}\psi_{j,1}(z)+b_{j,2}\psi_{j,2}(z)\quad\textrm{in}\ \ C^0_{{\mathrm{loc}}}(\mathbb{R}^2),
\end{equation*}where
\begin{equation*}
  \psi_{j,0}(z)=\frac{1-\pi m h({\un{q}_j})|z|^2}{1+\pi m h({\un{q}_j})|z|^2},\    \psi_{j,1}(z)=\frac{\sqrt{\pi mh({\un{q}_j})}z_1}
  {1+\pi m h({\un{q}_j})|z|^2},\
  \psi_{j,2}(z)=\frac{\sqrt{\pi mh({\un{q}_j})}z_2}{1+\pi m h({\un{q}_j})|z|^2}.
\end{equation*}
\end{lemma}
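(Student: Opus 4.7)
\medskip

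\noindent\textbf{Proof plan for Lemma \ref{lem_est_zetanj}.}
The strategy is the classical blow-up analysis for the linearized problem: rescale $\zeta_n$ around the blow up point $q_j$, identify the limit equation as a scaled version of the linearized Liouville equation, invoke interior elliptic regularity to extract a convergent subsequence, and finally apply the kernel classification from \cite[Proposition 1]{bp}.

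\medskip

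First I would rewrite \eqref{eq_zeta} in the local isothermal coordinates centred at $\un{x}_{n,j}^{(1)}$, so that $\widehat{\zeta_n}$ satisfies
\begin{equation*}
\Delta \widehat{\zeta_n}(\un{x}) + \rho_n\, h(\un{x})\, c_n(\un{x})\, e^{2\varphi_j(\un{x})}\, \widehat{\zeta_n}(\un{x}) = 0, \qquad \un{x}\in B_\delta(\un{x}_{n,j}^{(1)}).
\end{equation*}
Under the scaling $\un{x}=e^{-\lambda_{n,j}^{(1)}/2}z+\un{x}_{n,j}^{(1)}$ the rescaled function $\zeta_{n,j}$ satisfies $\Delta \zeta_{n,j}+V_{n,j}\zeta_{n,j}=0$ on $\{|z|<\delta e^{\lambda_{n,j}^{(1)}/2}\}$, where
\begin{equation*}
V_{n,j}(z):=\Big[\,e^{-\lambda_{n,j}^{(1)}}\rho_n\, h(\un{x})\, c_n(\un{x})\, e^{2\varphi_j(\un{x})}\,\Big]_{\un{x}=e^{-\lambda_{n,j}^{(1)}/2}z+\un{x}_{n,j}^{(1)}}.
\end{equation*}

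\medskip

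The key step is to show that $V_{n,j}\to V_\infty$ in $C^0_{\mathrm{loc}}(\mathbb{R}^2)$, with
\begin{equation*}
V_\infty(z):=\frac{8\pi m\, h(\un{q}_j)}{\bigl(1+\pi m\, h(\un{q}_j)|z|^2\bigr)^{2}}.
\end{equation*}
From \eqref{eta} and \eqref{eta_ets} one has $\tilde u_n^{(1)}=U_{n,j}^{(1)}+\bigl(G_j^*(\un{x})-G_j^*(\un{x}_{n,j}^{(1)})\bigr)+O((\lambda_{n,j}^{(1)})^2 e^{-\lambda_{n,j}^{(1)}})$. Plugging in the explicit form \eqref{Un} of $U_{n,j}^{(1)}$, and using $|\un{x}_{n,j}^{(1)}-\un{x}_{n,j,*}^{(1)}|=O(e^{-\lambda_{n,j}^{(1)}})$ from \eqref{diff_x_nj} together with $\un{x}_{n,j}^{(1)}\to \un{q}_j$ from \eqref{diffxandq}, one obtains, on every compact subset of $\mathbb{R}^2$,
\begin{equation*}
\tilde u_n^{(1)}(\un{x})=\lambda_{n,j}^{(1)}-2\log\!\Big(1+\tfrac{\rho_n h(\un{q}_j)}{8}|z|^2\Big)+o(1).
\end{equation*}
Combined with $\varphi_j(\un{x}_{n,j}^{(1)})=0$, $\rho_n\to 8\pi m$, the continuity of $h$, and the relation $c_n=e^{\tilde u_n^{(1)}}(1+o(1))$ (which in turn relies on Lemma \ref{lem_diff_tilu}(ii) giving $\|\tilde u_n^{(1)}-\tilde u_n^{(2)}\|_{L^\infty(M)}=o(1)$), the desired convergence $V_{n,j}\to V_\infty$ follows, and in particular $V_{n,j}$ is uniformly bounded on compact $z$-sets.

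\medskip

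Since $|\zeta_{n,j}|\le 1$ and the coefficient $V_{n,j}$ is locally bounded, standard interior $L^p$ and Schauder estimates yield uniform $C^{1,\alpha}_{\mathrm{loc}}(\mathbb{R}^2)$ bounds for $\zeta_{n,j}$. Up to a subsequence $\zeta_{n,j}\to \zeta_\infty$ in $C^0_{\mathrm{loc}}(\mathbb{R}^2)$, with $\|\zeta_\infty\|_{L^\infty(\mathbb{R}^2)}\le 1$ and
\begin{equation*}
\Delta \zeta_\infty+V_\infty\, \zeta_\infty=0\qquad\text{in }\mathbb{R}^2.
\end{equation*}
Setting $a:=\sqrt{\pi m\, h(\un{q}_j)}$ and $w(y):=\zeta_\infty(y/a)$, $w$ is a bounded entire solution of $Lw=0$ for the operator $L$ in \eqref{entirelinear}, so by \cite[Proposition 1]{bp} one has $w=b_{j,0}Y_0+b_{j,1}Y_1+b_{j,2}Y_2$. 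Substituting back $y=az$ and using the explicit formulas for $Y_0,Y_1,Y_2$ recalled right after \eqref{entirelinear} produces exactly $\zeta_\infty=b_{j,0}\psi_{j,0}+b_{j,1}\psi_{j,1}+b_{j,2}\psi_{j,2}$.

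\medskip

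\textbf{Main obstacle.} The only delicate point is verifying the $C^0_{\mathrm{loc}}$ convergence $V_{n,j}\to V_\infty$: one must check that the errors from $\eta_{n,j}^{(1)}$, from the mismatch $\un{x}_{n,j}^{(1)}\neq \un{x}_{n,j,*}^{(1)}$, from the variation of $h$ and of $e^{2\varphi_j}$ over an $e^{-\lambda_{n,j}^{(1)}/2}$-ball, and from the defect of $c_n$ from $e^{\tilde u_n^{(1)}}$, are all $o(1)$ uniformly in $z$ on compact sets. All of these follow directly from the sharp pointwise estimates collected in Section \ref{sec_pre} together with Lemma \ref{lem_diff_tilu}(ii).
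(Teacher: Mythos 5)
Your proposal is correct and follows essentially the same path as the paper: identify the limiting potential $V_\infty$ via \eqref{eta}, \eqref{eta_ets}, \eqref{diff_x_nj}, \eqref{diffxandq} and Lemma \ref{lem_diff_tilu}, extract a locally convergent subsequence using $|\zeta_{n,j}|\le 1$ and elliptic estimates, and invoke the kernel classification of \cite[Proposition 1]{bp} after the rescaling $y=\sqrt{\pi m h(\un{q}_j)}\,z$. The paper compresses some of what you spell out (the explicit $V_{n,j}$ and the interior Schauder bounds are left implicit in \eqref{cnx_exp}), but the ingredients and the logical structure are identical.
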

\begin{proof}
By Lemma \ref{lem_diff_tilu} and  \eqref{eta}, we have, for $\un{x}\in B_{\delta}(\un{x}_{n,j}^{(1)})$,
\begin{equation}\label{est_cn}\begin{aligned}
c_n(\un{x})&=e^{\tilde{u}_n^{(1)}(\un{x})}\left(1+O(\sum_{i=1}^2\frac{1}{\lambda_{n,j}^{(i)}})\right)
 =e^{U_{n,j}^{(1)}(\un{x})+\eta_{n,j}^{(1)}(\un{x})+G_j^*(\un{x})-G_j^*(\un{x}_{n,j}^{(1)})}\left(1+O(\sum_{i=1}^2\frac{1}{\lambda_{n,j}^{(i)}})\right).\end{aligned}
\end{equation}
Then, in view of \eqref{eta_ets} and \eqref{diff_x_nj}, we see that \eqref{est_cn} implies that,
\begin{equation}
\begin{aligned}
\label{cnx_exp}
 e^{-\lambda_{n,j}^{(1)}} c_n (e^{-\frac{\lambda_{n,j}^{(1)}}{2}}z+\un{x}_{n,j}^{(1)} )
 &= \frac{e^{G_j^*(e^{-\frac{\lambda_{n,j}^{(1)}}{2}}z+\un{x}_{n,j}^{(1)}) -G_j^*(\un{x}_{n,j}^{(1)})}}
{(1+\frac{\rho_nh(\un{x}_{n,j}^{(1)})}{8}|z+O(  e^{-\frac{\lambda_{n,j}^{(1)}}{2}})|^2)^2}
\left(1+O(\sum_{i=1}^2\frac{1}{\lambda_{n,j}^{(i)}})\right)
&\to \frac{1}{(1+m\pi h(\un{q}_j)|z|^2)^2}\ \ \textrm{in}\ \ C^{0}_{\textrm{loc}}(\mathbb{R}^2).
\end{aligned}
\end{equation}
Since $|\zeta_{n,j}|\le 1$, and in view of \eqref{eq_zeta} and \eqref{cnx_exp}, we also find,
$\zeta_{n,j}(z)\to \zeta_j(z)$ in $C^0_{\textrm{loc}}(\mathbb{R}^2)$, where,
\begin{equation*}
\label{eq_zetaj}
\Delta \zeta_j+\frac{8\pi m h(\un{q}_j)}{(1+\pi m h(\un{q}_j)|z|^2)^2}\zeta_j(z)=0\ \ \textrm{in}\ \ \mathbb{R}^2, \ \ |\zeta_j|\le 1.
\end{equation*}
By  \cite[Proposition 1]{bp}, $\zeta_j=b_{j,0}\psi_{j,0} +b_{j,1}\psi_{j,1} +b_{j,2}\psi_{j,2}$ which proves Lemma \ref{lem_est_zetanj}.
\end{proof}

Next, let us set,
\begin{equation*}
\label{hj}
h_j(\un{x})=h(\un{x})e^{2\varphi_j(\un{x})},\;\un{x}\in B_\delta(\un{x}_{n,j}^{(1)}).
\end{equation*}
For any subset $A\subseteq M$, we denote by,
\begin{equation*}
\label{1A}1_A(x)=\left \{
\begin{array}{l}
1\quad \textrm{if}\ \ x\in A, \\ 0\quad \textrm{if}\ \ x\notin A,
\end{array}
\right.
\end{equation*}
while, for any $r>0$, we also denote by,
\begin{equation}
\label{La}\left \{
\begin{array}{l}
\Lambda_{n,j,r}^{-}=re^{-\lambda_{n,j}^{(1)}/2}, \\ \Lambda_{n,j,r}^{+}=re^{\lambda_{n,j}^{(1)}/2}.
\end{array}
\right.
\end{equation}
Next we prove an estimate which will be needed in section \ref{sec_poho}.
\begin{lemma}
\label{lem_out}
\begin{equation}\label{exp_zeta}\begin{aligned}
 \zeta_n(x)-\int_M \zeta_n\mathrm{d}\mu &=\sum_{j=1}^m A_{n,j}G(x_{n,j}^{(1)},x)   +
\sum_{j=1}^me^{-\frac{1}{2}\lambda_{n,j}^{(1)}}T_*^{-1}\left(\sum_{h=1}^2\partial_{\un{y}_h} G(\un{y},\un{x})\Big|_{\un{y}={\un{x}}_{n,j}^{(1)}}\right) 
\frac{b_{j,h}4\sqrt{8}}{\sqrt{\rho_n h(q_j)}}\int_{\mathbb{R}^2}\frac{|z|^2}{(1+|z|^2)^3}\mathrm{d}z\\
& +o(e^{-\frac{1}{2}\lambda_{n,1}^{(1)}})\ \textrm{in}\ C^1(M\setminus\cup_{j=1}^m U^{\sscp M}_{\theta}(x_{n,j}^{(1)})),
\end{aligned}
\end{equation}
where $\theta>0$ is a suitable small constant, $\partial_{\un{y}_h} G(\un{y},\un{x})=\frac{\partial G(\un{y},\un{x})}{\partial \un{y}_h}$,
${\un{y}=(\un{y}_1,\un{y}_2)}$, and,
\begin{equation*}\label{A_nj}
A_{n,j}=\int_{M_j}f_n^*(y)\mathrm{d}\mu(y).
\end{equation*}
Moreover, {there is a constant $C>0$, which do not depend by $R>0$, which satisfies,}
\begin{equation}
\label{bdd_of_Zeta}
\begin{aligned}
&\left|\zeta_n(x)-\int_M\zeta_n\mathrm{d}\mu-\sum_{j=1}^m A_{n,j}G(x_{n,j}^{(1)},x)\right| \le {C}\sum_{j=1}^m e^{-\frac{\lambda_{n,j}^{(1)}}{2}}\left(\frac{1_{U_{2r_0}(x_{n,j}^{(1)})}(x)}
{(|T_j(x)-T_j(x_{n,j}^{(1)})|)} +1_{M\setminus U_{2r_0}(x_{n,j}^{(1)})}(x)\right),
\end{aligned}\end{equation}
 {for} $
x\in M\setminus\cup_{j=1}^m U_{\Lambda_{n,j,R}^{-}}(x_{n,j}^{(1)})$, where $r_0$ is fixed as in {\eqref{190117}.}
\end{lemma}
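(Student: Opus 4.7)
\textbf{Proof plan for Lemma \ref{lem_out}.} Since $\Delta_{\sscp M}\zeta_n+f_n^*=0$ on $M$ and integration gives $\int_M f_n^*\,\mathrm{d}\mu=0$, I will start from the Green's representation
\[
\zeta_n(x)-\int_M \zeta_n\,\mathrm{d}\mu\;=\;\int_M G(y,x)f_n^*(y)\,\mathrm{d}\mu(y)\;=\;\sum_{j=1}^m\int_{M_j}G(y,x)f_n^*(y)\,\mathrm{d}\mu(y).
\]
On each $M_j$, writing $G(y,x)=G(x_{n,j}^{(1)},x)+[G(y,x)-G(x_{n,j}^{(1)},x)]$ and recalling $A_{n,j}=\int_{M_j}f_n^*\,\mathrm{d}\mu$, the constant piece produces the term $\sum_j A_{n,j}G(x_{n,j}^{(1)},x)$. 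For $x$ outside all the balls $U^{\sscp M}_\theta(x_{n,j}^{(1)})$ the remaining integrand is smooth in $y\in M_j$, and in local isothermal coordinates I will Taylor expand
\[
G(y,x)-G(x_{n,j}^{(1)},x)=\sum_{h=1}^{2}\partial_{\un{y}_h}G(\un{y},\un{x})\big|_{\un{y}=\un{x}_{n,j}^{(1)}}(\un{y}_h-\un{x}_{n,j,h}^{(1)})+O(|\un{y}-\un{x}_{n,j}^{(1)}|^2).
\]
All of this can be differentiated in $x$ with the same type of bounds, which will give the $C^1$ statement since $x$ is bounded away from the concentration points.

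The main computation is the evaluation of the linear term. Using the rescaling $\un{y}=e^{-\lambda_{n,j}^{(1)}/2}z+\un{x}_{n,j}^{(1)}$ and the definition of $f_n^*$, I get
\[
\int_{M_j}(\un{y}_h-\un{x}_{n,j,h}^{(1)})f_n^*(y)\,\mathrm{d}\mu(y)=e^{-\lambda_{n,j}^{(1)}/2}\int z_h\,\big[\rho_n h(\un{y})e^{2\varphi_j(\un{y})}e^{-\lambda_{n,j}^{(1)}}c_n(\un{y})\big]\,\zeta_{n,j}(z)\,\mathrm{d}z.
\]
By \eqref{cnx_exp} and Lemma \ref{lem_est_zetanj}, on any fixed ball $|z|\le R$ the bracket converges to $\frac{8\pi m h(q_j)}{(1+\pi m h(q_j)|z|^2)^2}$ and $\zeta_{n,j}\to b_{j,0}\psi_{j,0}+b_{j,1}\psi_{j,1}+b_{j,2}\psi_{j,2}$; the tail $|z|>R$ is controlled using $|\zeta_{n,j}|\le 1$ together with the $(1+|z|^2)^{-2}$ decay (which makes $z_h\cdot\text{density}$ integrable at $\infty$), so I can pass to the limit. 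A parity argument kills the contributions of $\psi_{j,0}$ and $\psi_{j,h'}$ for $h'\ne h$, leaving only $\psi_{j,h}$. After the rescaling $w=\sqrt{\pi m h(q_j)}\,z$ and using $\rho_n\to 8\pi m$, the constant works out to $\frac{4\sqrt{8}}{\sqrt{\rho_n h(q_j)}}\int_{\mathbb{R}^2}\frac{|z|^2}{(1+|z|^2)^3}\mathrm{d}z$, which matches the claim.

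For the $O(|\un{y}-\un{x}_{n,j}^{(1)}|^2)$ remainder I observe that, in rescaled variables, it contributes $e^{-\lambda_{n,j}^{(1)}}\int|z|^2\cdot\text{density}\cdot|\zeta_{n,j}|\,\mathrm{d}z$, and since the domain in $z$ is $|z|\le r_0 e^{\lambda_{n,j}^{(1)}/2}$ and the integrand behaves like $|z|^{-2}$ at infinity, the integral is $O(\lambda_{n,j}^{(1)})$, giving an overall $O(\lambda_{n,j}^{(1)}e^{-\lambda_{n,j}^{(1)}})=o(e^{-\lambda_{n,1}^{(1)}/2})$. On $M\setminus\cup_j U^{\sscp M}_{r_0}(x_{n,j}^{(1)})$, \eqref{info_lambda} gives $f_n^*=O(e^{-\lambda_{n,1}^{(1)}})$ and the contribution is immediately negligible. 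This proves \eqref{exp_zeta}.

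For the pointwise bound \eqref{bdd_of_Zeta}, which must hold down to the smaller scale $\Lambda_{n,j,R}^{-}$, I need to estimate $\sum_j\int_{M_j}|G(y,x)-G(x_{n,j}^{(1)},x)||f_n^*(y)|\,\mathrm{d}\mu(y)$ when $x$ can sit inside some $U_{2r_0}(x_{n,j_0}^{(1)})$ (with $|\un{x}-\un{x}_{n,j_0}^{(1)}|\ge Re^{-\lambda_{n,j_0}^{(1)}/2}$). The hard part—and main obstacle—is the index $j=j_0$, where $G$ carries the logarithmic singularity $\frac{1}{2\pi}\log|\un{y}-\un{x}|$. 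I will split the $\un{y}$-integration into the region $|\un{y}-\un{x}_{n,j_0}^{(1)}|\le\frac12|\un{x}-\un{x}_{n,j_0}^{(1)}|$, where $|\log|\un{y}-\un{x}|-\log|\un{x}_{n,j_0}^{(1)}-\un{x}||\le C|\un{y}-\un{x}_{n,j_0}^{(1)}|/|\un{x}-\un{x}_{n,j_0}^{(1)}|$ and the previous rescaling argument yields the $e^{-\lambda_{n,j_0}^{(1)}/2}/|\un{x}-\un{x}_{n,j_0}^{(1)}|$ bound, and its complement, which I handle by the uniform $L^1$-bound on $|f_n^*|$ near the blow-up point combined with the integrability of the log singularity. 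For $j\ne j_0$, or for $x$ entirely away from all concentration points, $G(y,x)-G(x_{n,j}^{(1)},x)$ is smooth in $y\in M_j$ and the bound reduces to the one already obtained in the proof of \eqref{exp_zeta}, giving the uniform $e^{-\lambda_{n,j}^{(1)}/2}$ estimate on $M\setminus U_{2r_0}(x_{n,j}^{(1)})$.
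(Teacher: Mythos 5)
Your proof proposal is correct and follows essentially the same strategy as the paper's proof: starting from the Green's representation, separating off the constant contributions $A_{n,j}G(x_{n,j}^{(1)},x)$, Taylor-expanding $G(\cdot,x)$ in $y$ around $\un{x}_{n,j}^{(1)}$, rescaling by $e^{-\lambda_{n,j}^{(1)}/2}$, invoking Lemma \ref{lem_est_zetanj} for the interior with a tail estimate from the $(1+|z|^2)^{-2}$ decay, and using symmetry in $z$ to extract the $b_{j,h}$ coefficient; your computation of the normalizing constant $\frac{4\sqrt{8}}{\sqrt{\rho_n h(q_j)}}\int |z|^2/(1+|z|^2)^3\,\mathrm{d}z$ and your $O(\lambda_{n,j}^{(1)}e^{-\lambda_{n,j}^{(1)}})$ bound on the quadratic remainder both match the paper. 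For \eqref{bdd_of_Zeta}, your split of the $\un{y}$-integration into $|\un{y}-\un{x}_{n,j_0}^{(1)}|\le\frac12|\un{x}-\un{x}_{n,j_0}^{(1)}|$ and its complement is a cosmetic variant of the paper's split (the paper scales first and then localizes in $z$ near $e^{\lambda_{n,j}^{(1)}/2}(\un{x}-\un{x}_{n,j}^{(1)})$); both exploit the Lipschitz bound on $\log$ away from the singularity and the integrability of $\log$ against $(1+|z|^2)^{-2}$, and both ultimately hinge on the elementary fact that $\log a/a^2\le C/a$ uniformly for $a\ge 1$ to get a constant independent of $R$.
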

\begin{proof}
By the Green representation formula we find that,
\begin{equation}
\label{green_zeta}
\begin{aligned}
\zeta_n(x)-\int_M \zeta_n\mathrm{d}\mu&=\int_MG(y,x)f_n^*(y)\mathrm{d}\mu(y)\\&=\sum_{j=1}^m A_{n,j}G(x_{n,j}^{(1)},x)
+\sum_{j=1}^m\int_{M_j}(G(y,x)-G(x_{n,j}^{(1)},x))f_n^*(y)\mathrm{d}\mu(y).
\end{aligned}
\end{equation}
For $x\in M\setminus \cup_{j=1}^m U^{\sscp M}_{\theta}(x_{n,j}^{(1)})$, let $\un{x}=T(x)$ denote any suitable local isothermal coordinate system. Then we see from \eqref{eta_ets}, \eqref{info_lambda}, and \eqref{diff_x1_x_2} that,
\begin{equation}\label{on_Mj}\begin{aligned}
 \int_{M_j}(G(y,x)-G(x_{n,j}^{(1)},x))f_n^*(y)\mathrm{d}\mu(y)
 &=\int_{B_{r}(\un{x}_{n,j}^{(1)})}<\partial_{\un{y}} G(\un{y},\un{x})\Big|_{\un{y}=\un{x}_{n,j}^{(1)}},\un{y}-\un{x}_{n,j}^{(1)}>
f_n^*(\un{y})e^{2\varphi_j}\mathrm{d}\un{y}
\\&\quad+O(1)\left(\int_{B_{r}(\un{x}_{n,j}^{(1)})}\frac{|{\un{y}} -\un{x}_{n,j}^{(1)}|^2e^{\lambda_{n,j}^{(1)}}}
{(1+e^{\lambda_{n,j}^{(1)}}|\un{y}-\un{x}_{n,j,*}^{(1)}|^2)^2}
\mathrm{d}\un{y}\right)+O(e^{-\lambda_{n,j}^{(1)}})
\\&= \int_{B_{r}(\un{x}_{n,j}^{(1)})}<\partial_{\un{y}}  G(\un{y},\un{x})\Big|_{\un{y}=
\un{x}_{n,j}^{(1)}},\un{y}-\un{x}_{n,j}^{(1)}>f_n^*(\un{y})e^{2\varphi_j}\mathrm{d}\un{y}
+O(\lambda_{n,j}^{(1)} e^{-\lambda_{n,j}^{(1)}}),
\end{aligned}
\end{equation}
for a suitable $r>0$. Next, by Lemma \ref{lem_diff_tilu} we find that,
\begin{equation}
\label{dv}\frac{e^{\tilde{u}_n^{(1)}}-e^{\tilde{u}_n^{(2)}}}
{\|\tilde{u}_n^{(1)}-\tilde{u}_n^{(2)}\|_{L^\infty(M)}}=e^{\tilde{u}_n^{(1)}}\zeta_n\left(1+O\left(\frac{1}
{\lambda_{n,j}^{(1)}}\right)\right).
\end{equation}
At this point, setting $\delta_n=e^{-\frac{\lambda_{n,j}^{(1)}}{2}}$, and using \eqref{eta_ets}, \eqref{diff_x1_x_2}, then after scaling we see that, for $x\in M\setminus \cup_{j=1}^m U^{\sscp M}_{\theta}(x_{n,j}^{(1)})$, it holds,
{\allowdisplaybreaks
\begin{align*}
&\int_{B_{r}(\un{x}_{n,j}^{(1)})}<\partial_{\un{y}} G(\un{y},\un{x})\Big|_{\un{y}=\un{x}_{n,j}^{(1)}},\un{y}-\un{x}_{n,j}^{(1)}>
f_n^*(\un{y})e^{2\varphi_j}\mathrm{d}\un{y}\\
&=\delta_n^3\int_{B_{\Lambda_{n,j,r}^{+}}(0)}<\partial_{\un{y}} G(\un{y},\un{x})\Big|_{\un{y}=\un{x}_{n,j}^{(1)}},z>
\rho_nh_j(\delta_n z+\un{x}_{n,j}^{(1)})
  e^{U_{n,j}^{(1)}(\delta_n z+\un{x}_{n,j}^{(1)})+G_j^*(\delta_n z+
\un{x}_{n,j}^{(1)})-G_j^*(\un{x}_{n,j}^{(1)})}\zeta_{n,j}   (1+O ( (\lambda_{n,j}^{(1)})^{-1})) \mathrm{d} z
\\&=\delta_n\int_{B_{\Lambda_{n,j,r}^{+}}(0)}\frac{<\partial_{\un{y}} G(\un{y},\un{x})\Big|_{\un{y}=\un{x}_{n,j}^{(1)}},z>\rho_nh(\un{x}_{n,j}^{(1)})
\zeta_{n,j}(z)}{(1+\frac{\rho_nh(\un{x}_{n,j}^{(1)})}{8}|z+O( \delta_n)|^2)^2}
\mathrm{d} z+o(\delta_n).
\end{align*}}

In view of Lemma \ref{lem_est_zetanj}, we see that, for  $x\in M\setminus \cup_{j=1}^m U^{\sscp M}_{\theta}(x_{n,j}^{(1)})$,  $\un{x}=T(x)$, it holds,
\begin{equation}
\label{on_Mj2}
\begin{aligned}
&\int_{B_{{r}}(\un{x}_{n,j}^{(1)})}<\partial_{\un{y}} G(\un{y},\un{x})\Big|_{\un{y}=\un{x}_{n,j}^{(1)}},\un{y}-\un{x}_{n,j}^{(1)}>
 f_n^*({\un{x}})e^{2\varphi_j}
 \mathrm{d}\un{y}
  \\&=e^{-\frac{\lambda_{n,j}^{(1)}}{2}}\sum_{h=1}^2\partial_{\un{y}_h} G(\un{y},\un{x})\Big|_{\un{y}=\un{x}_{n,j}^{(1)}} b_{j,h}
  \frac{4\sqrt{8}}{\sqrt{\rho_n h(\un{x}_{n,j}^{(1)})}}\int_{\mathbb{R}^2}\frac{|z|^2}{(1+|z|^2)^3}  \mathrm{d} z+
  o(e^{-\frac{\lambda_{n,j}^{(1)}}{2}}).
 \end{aligned}
 \end{equation}
From \eqref{green_zeta}-\eqref{on_Mj2}, we see that  the estimate \eqref{exp_zeta} holds in $C^0(M\setminus \cup_{j=1}^m U^{\sscp M}_{\theta}(x_{n,j}^{(1)}))$. The proof of the fact that  \eqref{exp_zeta} holds in $C^1(M\setminus \cup_{j=1}^m U^{\sscp M}_{\theta}(x_{n,j}^{(1)}))$ is similar and we skip it here to avoid repetitions.

From \eqref{dv}, \eqref{info_lambda}, \eqref{eta_ets}, and a suitable scaling, we see that there exist {$C>0$, independent of $R>0$,} such that, for $\un{x}\in B_{2 r_0}(\un{x}_{n,j}^{(1)})\setminus  B_{\Lambda_{n,j,R}^{-}}(\un{x}_{n,j}^{(1)})$, $x=T^{-1}_j(\un{x})$, it holds,
{\allowdisplaybreaks
\begin{align}
\label{bdd_zetan_ab}
&\Big|\zeta_n(x)-\int_M\zeta_n\mathrm{d}\mu-\sum_{j=1}^m A_{n,j}G(x_{n,j}^{(1)},x)\Big|
\nonumber\\&\le\Big|\sum_{j=1}^m\int_{{U_{3r_0}(x_{n,j}^{(1)})}}(G(y,x)-G(x_{n,j}^{(1)},x))f_n^*(y)\mathrm{d}\mu(y)\Big|+O(e^{-\lambda_{n,j}^{(1)}})
\nonumber\\&\leq \sum_{j=1}^m\Big|\frac{1}{2\pi}\int_{{B_{3r_0}(\un{x}_{n,j}^{(1)})}}\log\frac{|\un{x}-\un{x}_{n,j}^{(1)}|}{|\un{x}-\un{y}|}f_n^*(\un{y})
e^{2\varphi_j}\mathrm{d}\un{y} \Big| +O\Big(\int_{B_{3r_0}(\un{x}_{n,j}^{(1)})}\frac{|\un{y}-\un{x}_{n,j}^{(1)}|e^{\lambda_{n,j}^{(1)}}}{(1+e^{\lambda_{n,j}^{(1)}}
|\un{y}-\un{x}_{n,j,*}^{(1)}|^2)^2}\mathrm{d}\un{y}\Big)
+O(e^{-\lambda_{n,j}^{(1)}})
\nonumber\\&\le \sum_{j=1}^m{O(1)}\Big(\int_{{B_{\Lambda_{n,j,3r_0}^+}(0)}}\frac{\Big|\log |\un{x}-\un{x}_{n,j}^{(1)}|-
\log |\un{x}-e^{-\frac{\lambda_{n,j}^{(1)}}{2}}z-\un{x}_{n,j}^{(1)} |\Big|}{(1+|z|^2)^2}\mathrm{d} z \Big)+O(e^{-\frac{\lambda_{n,j}^{(1)}}{2}})
\nonumber\\&\le {O(1)\Big(\int_{\frac{|\un{x}-\un{x}_{n,j}^{(1)}|e^{ \frac{\lambda_{n,j}^{(1)}}{2}}}{2}\le |z|\le 2|\un{x}-\un{x}_{n,j}^{(1)}|
e^{ \frac{\lambda_{n,j}^{(1)}}{2}}}\frac{ \Big|\log (e^{ \frac{\lambda_{n,j}^{(1)}}{2}}|\un{x}-\un{x}_{n,j}^{(1)}|)|+ |\log |e^{ \frac{\lambda_{n,j}^{(1)}}{2}}(\un{x}-\un{x}_{n,j}^{(1)})-z|\Big|}{  (1+|z|^2)^2}\mathrm{d} z \Big)}
\nonumber\\&+\sum_{j=1}^m{O(1)}\Big(\int_{{B_{\Lambda_{n,j,3r_0}^+}(0)}}
\frac{ e^{-\frac{\lambda_{n,j}^{(1)}}{2}}|z|}{|\un{x}-\un{x}_{n,j}^{(1)}|(1+|z|^2)^2}\mathrm{d} z \Big)+O(e^{-\frac{\lambda_{n,j}^{(1)}}{2}})
\nonumber\\& {\le  O(1)\Big(\frac{e^{-\frac{\lambda_{n,j}^{(1)}}{2}}}{|\un{x}-\un{x}_{n,j}^{(1)}|}\Big)
+O(1)\Big( \log |z| |z|^{-2}\Big|_{|z|=  3| \un{x}-\un{x}_{n,j}^{(1)} |e^{\frac{\lambda_{n,j}^{(1)}}{2}}}\Big)
+O(e^{-\frac{\lambda_{n,j}^{(1)}}{2}})}
 {\le C}\Big(\frac{e^{-\frac{\lambda_{n,j}^{(1)}}{2}}}{|\un{x}-\un{x}_{n,j}^{(1)}|}\Big).
\end{align}}
By \eqref{green_zeta}, \eqref{dv}, \eqref{eta_ets}, and \eqref{info_lambda}, we also see that,
for $x\in M\setminus \cup_{j=1}^mU^{\sscp M}_{2r_0}(x_{n,j}^{(1)})$, it holds,
\begin{equation}\label{bdd_zetan_ab2}\begin{aligned}
   &\left|\zeta_n(x)-\int_M\zeta_n\mathrm{d}\mu-\sum_{j=1}^m A_{n,j}G(x_{n,j}^{(1)},x)\right|
 =O\left(\sum_{j=1}^m\int_{B_{r_0}(\un{x}_{n,j}^{(1)})}\frac{|\un{y}-\un{x}_{n,j}^{(1)}|e^{\lambda_{n,j}^{(1)}}}
   {(1+e^{\lambda_{n,j}^{(1)}}|\un{y}-\un{x}_{n,j,*}^{(1)}|^2)^2}\mathrm{d}\un{y}\right)+O(e^{-\lambda_{n,1}^{(1)}})
  =O ( e^{-\frac{\lambda_{n,1}^{(1)}}{2}}). \end{aligned}
 \end{equation}
By \eqref{bdd_zetan_ab} and \eqref{bdd_zetan_ab2} we obtain \eqref{bdd_of_Zeta}, which concludes the proof of Lemma \ref{lem_out}.
\end{proof}
From now on, to simplify the notations, we will set\[\overline{f}(z)=f(e^{-\frac{\lambda_{n,j}^{(1)}}{2}}z+\un{x}_{n,j}^{(1)}),\ \
 |z|< \delta e^{\frac{\lambda_{n,j}^{(1)}}{2}}\ \ \textrm{ for any function} \ \   f:B_{\delta}(\un{x}_{n,j}^{(1)})\to \mathbb{R}.\]
Our next aim is to obtain a detailed description of the asymptotic behavior of $\zeta_n$ on $U^{\sscp M}_{2c}(q_j)$ and on $M\setminus \cup_{j=1}^m U^{\sscp M}_c(q_j)$ for a suitable small $c>0$. This task has been already worked out in \cite{ly2} for the Chern-Simons-Higgs equation and we will
follow that approach here. However, as mentioned in the introduction, our case is in some respect more involved, since
if  $|\lambda_{n,j}^{(1)}-\lambda_{n,j}^{(2)}|$ is not asymptotically small enough, then the argument in \cite{ly2} does not work.
To overcome this difficulty, we have to use the Green representation formula and carry out a rather delicate set of estimates.

\begin{lemma}\label{lem_equalb0}
There is a constant $b_0$, such that $b_{j,0}=b_0$ for $j=1,\cdots, m$. Moreover,  for any $c>0$ small enough, we have,
\begin{equation*}
  \zeta_n(x)=-b_0+o(1)\ \ \textrm{for any}\ \ x\in M\setminus \cup_{j=1}^m U^{\sscp M}_c(q_j).
\end{equation*}\end{lemma}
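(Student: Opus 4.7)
\medskip

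\noindent\textbf{Plan of proof.} The plan is to show that $\zeta_n$ converges uniformly to a constant $-b_0$ on every $M\setminus\cup_j U^{\sscp M}_c(q_j)$, and then to identify this limit with $-b_{j,0}$ at each blow-up point by matching the outer expansion of Lemma \ref{lem_out} with the inner expansion of Lemma \ref{lem_est_zetanj}.

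\emph{Step 1: the fluxes $A_{n,j}$ tend to $0$.} Writing $A_{n,j}=\int_{M_j}\rho_n h\,c_n\,\zeta_n\,d\mu$, I split $M_j$ into $U^{\sscp M}_{r_0}(x_{n,j}^{(1)})$ and its complement. On the complement $c_n=O(e^{-\lambda_{n,j}^{(1)}})$ by \eqref{info_lambda}, so that region contributes $O(e^{-\lambda_{n,j}^{(1)}})$. On the ball I rescale via $\un y=e^{-\lambda_{n,j}^{(1)}/2}z+\un x_{n,j}^{(1)}$, use the asymptotics \eqref{cnx_exp} together with $|\zeta_{n,j}|\leq1$ and the integrable majorant $(1+|z|^2)^{-2}$, and combine dominated convergence with Lemma \ref{lem_est_zetanj} to obtain
\begin{equation*}
\lim_{n\to+\infty} A_{n,j}=8\pi m\,h(q_j)\int_{\R^2}\frac{\zeta_j(z)}{(1+\pi m\,h(q_j)|z|^2)^2}\,dz=0.
\end{equation*}
The $\psi_{j,1}$- and $\psi_{j,2}$-contributions vanish by odd parity in $z_1,z_2$, and the $\psi_{j,0}$-contribution reduces, via $s=\pi m\,h(q_j)|z|^2$, to the elementary identity $\int_0^{+\infty}(1-s)(1+s)^{-3}\,ds=0$.

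\emph{Step 2: $\zeta_n$ converges to a constant off the blow-up set.} Inserting $A_{n,j}=o(1)$ in \eqref{exp_zeta} and exploiting that the gradient correction there is $o(e^{-\lambda_{n,1}^{(1)}/2})$, I get $\zeta_n-\int_M\zeta_n\,d\mu=o(1)$ in $C^1(M\setminus\cup_j U^{\sscp M}_\theta(x_{n,j}^{(1)}))$. The refined bound \eqref{bdd_of_Zeta} upgrades this to uniform convergence on $M\setminus\cup_j U^{\sscp M}_c(q_j)$ for each fixed $c>0$, since $G(x_{n,j}^{(1)},x)$ remains bounded there and $e^{-\lambda_{n,j}^{(1)}/2}/c\to0$. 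Using $|\zeta_n|\leq 1$ and $|M|=1$, a subsequence gives $\int_M\zeta_n\,d\mu\to-b_0$ with $b_0\in[-1,1]$, and hence
\[
\zeta_n\to-b_0\qquad\text{uniformly on }M\setminus\cup_j U^{\sscp M}_c(q_j),
\]
which already yields the second statement of the lemma.

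\emph{Step 3: $b_{j,0}=b_0$ via an inner-outer matching.} To identify $b_{j,0}=b_0$ I evaluate $\zeta_n(x)$ at the intermediate scale $|\un x-\un x_{n,j}^{(1)}|=Re^{-\lambda_{n,j}^{(1)}/2}$ with $R\gg 1$ fixed. On the inner side, Lemma \ref{lem_est_zetanj} together with the asymptotics $\psi_{j,0}(z)=-1+O(|z|^{-2})$ and $\psi_{j,k}(z)=O(|z|^{-1})$, $k=1,2$, at $|z|=R$ gives $\zeta_n(x)=-b_{j,0}+O(R^{-1})+o_n(1)$. On the outer side, \eqref{bdd_of_Zeta} at this scale (whose error is $O(R^{-1})$) combined with $G(x_{n,j}^{(1)},x)=\lambda_{n,j}^{(1)}/(4\pi)-(\log R)/(2\pi)+R(q_j,q_j)+o_n(1)$ gives
\begin{equation*}
\zeta_n(x)=\int_M\zeta_n\,d\mu+\frac{A_{n,j}\lambda_{n,j}^{(1)}}{4\pi}+O(|A_{n,j}|\log R)+O(R^{-1})+o_n(1).
\end{equation*}
Equating the two and letting first $n\to+\infty$ and then $R\to+\infty$ yields $b_{j,0}=b_0$ provided $A_{n,j}\lambda_{n,j}^{(1)}\to0$, and the same limit is then obtained along any subsequence, proving convergence of the whole sequence. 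The main technical obstacle is precisely the proof that $A_{n,j}\lambda_{n,j}^{(1)}=o(1)$. This will be obtained by refining Step 1 through the splitting of the integral over $B_R$ and $B_{\Lambda_{n,j,r_0}^{+}}\setminus B_R$, controlling the principal tail by the explicit primitive $\int_0^T(1-s)(1+s)^{-3}ds=T/(1+T)^2=O(T^{-1})$ at $T\sim R^2$, the far-field by $\int_{|z|\geq R}(1+|z|^2)^{-2}dz=O(R^{-2})$, and the local piece by an upgrade of the $C^0_{\mathrm{loc}}$-convergence in Lemma \ref{lem_est_zetanj} to uniform convergence on slowly expanding balls (via elliptic regularity together with the quantitative representations \eqref{tiluest} and \eqref{eta_ets}).
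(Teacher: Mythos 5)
Your Steps 1 and 2 are sound, but Step 3 contains a genuine gap at exactly the point you yourself flag as ``the main technical obstacle,'' namely the claim $A_{n,j}\lambda_{n,j}^{(1)}=o(1)$. The difficulty is not captured by ``upgrading $C^0_{\mathrm{loc}}$-convergence of $\zeta_{n,j}$ to slowly expanding balls.'' Even a uniform-in-$R$ convergence $\zeta_{n,j}\to\zeta_j$ gives no rate at all, and what is actually needed is a rate $o\bigl(1/\lambda_{n,j}^{(1)}\bigr)$. Worse, the convergence rate of $\zeta_{n,j}$ is tied to $|\lambda_{n,j}^{(1)}-\lambda_{n,j}^{(2)}|$, which by Lemma \ref{lem_diff_tilu}(i) can itself be as large as $O(1/\lambda_{n,j}^{(1)})$, so the remainder $\zeta_{n,j}-\zeta_j$ is generically of exactly the forbidden size $O(1/\lambda_{n,j}^{(1)})$. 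The obstruction is therefore not a matter of making the convergence more quantitative; it is a structural cancellation.

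Concretely: writing $w=\tilde u_n^{(1)}-\tilde u_n^{(2)}$, one has $f_n^*=\rho_n h\,e^{\tilde u_n^{(1)}}\bigl(\zeta_n-\tfrac{\|w\|_\infty}{2}\zeta_n^2+O(\|w\|_\infty^2)\bigr)$, and in the critical regime $\|w\|_\infty\sim|\lambda_{n,j}^{(1)}-\lambda_{n,j}^{(2)}|\sim 1/\lambda_{n,j}^{(1)}$ the quadratic term $\tfrac{\|w\|_\infty}{2}\zeta_n^2$ contributes a genuine $O(1/\lambda_{n,j}^{(1)})$ to $A_{n,j}$ \emph{unless} it cancels. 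It does cancel, but only against the second-order term in the expansion of $w$ itself in powers of $\lambda_{n,j}^{(1)}-\lambda_{n,j}^{(2)}$. This is the content of \eqref{difftildun_lamnj_sc3}: $w-\tfrac{w^2}{2}$ is, to $o\bigl((\lambda^{(1)}-\lambda^{(2)})^2\bigr)$, a specific radial profile in $z$ whose integral against $\tfrac{8}{(1+|z|^2)^2}$ vanishes — that is the pair of identities \eqref{entire_int} and \eqref{entire_int2}. Your sketch mentions the first of these (``the primitive $T/(1+T)^2$'') but never the second, which is the one that kills the $\zeta_n^2$ contribution. Without \eqref{entire_int2} you only get $A_{n,j}=O(1/\lambda_{n,j}^{(1)})$, and then $A_{n,j}\lambda_{n,j}^{(1)}$ is merely bounded, which is not enough for your matching to close.

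As a comparison with the paper's proof: the paper splits into two regimes. When $|\lambda_{n,j}^{(1)}-\lambda_{n,j}^{(2)}|=o(1/\lambda_{n,j}^{(1)})$ it uses a Pohozaev-type test against the radial kernel element $\psi_{n,j}$ to obtain an ODE for the spherical average $\zeta_{n,j}^*(r)$, which integrates cleanly (this is the route in \cite{ly2}). When $|\lambda_{n,j}^{(1)}-\lambda_{n,j}^{(2)}|\sim 1/\lambda_{n,j}^{(1)}$ it switches to the Green-representation difference $\zeta_n(\un x_1)-\zeta_n(\un x_2)$ and performs exactly the second-order cancellation above. Your proposal follows the Green-representation route throughout, which is a coherent organizational choice (and arguably cleaner), but it cannot avoid the same cancellation; matching at the intermediate scale transfers, not removes, the need for $A_{n,j}=o(1/\lambda_{n,j}^{(1)})$. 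To repair Step 3, you should: (a) insert the exact expansion \eqref{tiluest}/\eqref{diffU} into $f_n^*$, (b) expand $1-e^{-w}$ to second order, (c) substitute \eqref{difftildun_lamnj_sc3}, and (d) invoke both \eqref{entire_int} and \eqref{entire_int2} to see that the $O(1/\lambda_{n,j}^{(1)})$ pieces integrate to $O(e^{-\lambda_{n,j}^{(1)}})$.
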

\begin{proof}
Let us recall that,
\[{\Delta_M} \zeta_n+\rho_nhc_n\zeta_n=
{\Delta_M} \zeta_n+\frac{\rho_nh(x)}{\|\tilde{u}_n^{(1)}-\tilde{u}_n^{(2)}\|_{L^{\infty}(M)}}
\left(e^{\tilde{u}_n^{(1)}(x)}-e^{\tilde{u}_n^{(2)}(x)}\right)=0\ \ \textrm{  in}\ \  M.
\]
By \eqref{info_lambda} and Lemma \ref{lem_diff_tilu}, we have  $c_n\to 0$ in $C_{\textrm{loc}}(M\setminus\{q_1\cdots q_m\})$.\\
Since  $\|\zeta_n\|_{L^\infty(M)}\le 1$, we see that  $\zeta_n\to\zeta_0$ in $C_{\textrm{loc}}(M\setminus\{q_1,\cdots,q_m\})$, where,
\begin{equation}\label{eq_zetan}
  {\Delta_M} \zeta_0=0\ \ \textrm{ in}\ \ \   M\setminus\{q_1\cdots q_m\}.
\end{equation}
Moreover, since $\|\zeta_n\|_{L^\infty(M)}\le 1$, then we have $\|\zeta_0\|_{L^\infty(M)}\le 1$. Therefore $\zeta_0  $ is smooth near $q_i$, $i=1,\cdots, m$,
and we can extend \eqref{eq_zetan} to $M$. Then  $\zeta_0\equiv -b_0$ in $M$, where $b_0$ is a constant and in particular we find,
\begin{equation}\label{limit_zetan0}
  \zeta_n\to -b_0\ \ \ \textrm{in}\ \ C_{\textrm{loc}}(M\setminus\{q_1\cdots q_m\}).
\end{equation}

At this point, we consider the following two cases separately:

\textit{Case 1.} $|\lambda_{n,j}^{(1)}-\lambda_{n,j}^{(2)}|\le o\left(\frac{1}{\lambda_{n,j}^{(1)}}\right)$.\\
In this situation, we can follow the argument adopted in \cite{ly2}. We sketch the proof here for readers convenience.\\
Let $\un{x}=T_j(x)\in B_{\delta}(\un{x}_{n,j})$,
$\psi_{n,j}(\un{x})=\frac{1-\frac{\rho_n}{8}h(\un{x}_{n,j}^{(1)})|\un{x}-\un{x}_{n,j}^{(1)}|^2e^{\lambda_{n,j}^{(1)}}}
{1+\frac{\rho_n}{8}h(\un{x}_{n,j}^{(1)})|\un{x}-\un{x}_{n,j}^{(1)}|^2e^{\lambda_{n,j}^{(1)}}}$ and let us fix $d\in (0,\delta)$. Then,
in view of {\eqref{diff_x_nj}}, we find,
\begin{equation*}\label{withpsi}\begin{aligned}
&\int_{\partial B_d(\un{x}_{n,j}^{(1)})}\left(\psi_{n,j}\frac{\partial\zeta_n}{\partial\nu}-\zeta_n\frac{\partial \psi_{n,j}}{\partial \nu}\right)\mathrm{d}\sigma=
\int_{B_d(\un{x}_{n,j}^{(1)})}\left(\psi_{n,j}\Delta\zeta_n-\zeta_n\Delta \psi_{n,j}\right)\mathrm{d} \un{x}
\\&=
\int_{B_d(\un{x}_{n,j}^{(1)})}\Bigg\{-\rho_n\zeta_n\psi_{n,j}h_j\left(\frac{e^{\tilde{u}_n^{(1)}}-e^{\tilde{u}_n^{(2)}}}{\tilde{u}_n^{(1)}-\tilde{u}_n^{(2)}}
\right)
+\rho_n\zeta_n\psi_{n,j}h_j(\un{x}_{n,j}^{(1)})e^{U_{n,j}^{(1)}}\Big(\frac{1+\frac{\rho_n}{8}h(\un{x}_{n,j}^{(1)})|\un{x}-\un{x}_{n,j,*}^{(1)}|^2
e^{\lambda_{n,j}^{(1)}}}{1+\frac{\rho_n}{8}h(\un{x}_{n,j}^{(1)})|\un{x}-\un{x}_{n,j}^{(1)}|^2e^{\lambda_{n,j}^{(1)}}}\Big)^{2}\Bigg\}
\mathrm{d} \un{x}\\&=\int_{B_d(\un{x}_{n,j}^{(1)})}\rho_n\zeta_n\psi_{n,j}\Bigg\{-h_je^{\tilde{u}_n^{(1)}}
(1+O(|\tilde{u}_n^{(1)}-\tilde{u}_n^{(2)}|)) +h(\un{x}_{n,j}^{(1)})e^{U_{n,j}^{(1)}}(1+O({ e^{-\frac{\lambda_{n,j}^{(1)}}{2}}}))\Bigg\}
\mathrm{d} \un{x}
\\&=\int_{B_d(\un{x}_{n,j}^{(1)})}\rho_n\zeta_n\psi_{n,j}
\Bigg\{-h_je^{U_{n,j}^{(1)}+\eta_{n,j}^{(1)}+G_j^*(\un{x})-G_j^*(\un{x}_{n,j}^{(1)})}
(1+O(|\tilde{u}_n^{(1)}-\tilde{u}_n^{(2)}|)) +h(\un{x}_{n,j}^{(1)})e^{U_{n,j}^{(1)}}(1+
O({e^{-\frac{\lambda_{n,j}^{(1)}}{2}}}))\Bigg\}
\mathrm{d} \un{x}.
\end{aligned}
\end{equation*}
Therefore, by a suitable scaling and by using \eqref{eta_ets}, we see that,
\begin{equation*}\label{withpsi1}
\begin{aligned}
&\int_{\partial B_d(\un{x}_{n,j}^{(1)})}\left(\psi_{n,j}\frac{\partial\zeta_n}{\partial\nu}-\zeta_n\frac{\partial \psi_{n,j}}{\partial \nu}\right)\mathrm{d}\sigma
 =\int_{B_{\Lambda_{n,j,d}^{+}}(0)}\rho_n\overline{\zeta_n}(z)\,\overline{\psi_{n,j}}(z)
\frac{ O(1)(e^{-\frac{\lambda_{n,j}^{(1)}}{2}}|z|+|\overline{\tilde{u}_n^{(1)}}-\overline{\tilde{u}_n^{(2)}}|+
{e^{-\frac{\lambda_{n,j}^{(1)}}{2}}}) }{(1+\frac{\rho_nh(\un{x}_{n,j}^{(1)})}{8} |z+
e^{\frac{\lambda_{n,j}^{(1)}}{2}}(\un{x}_{n,j}^{(1)}-\un{x}_{n,j,*}^{(1)})|^2)^2}\mathrm{d} z.
\end{aligned}
\end{equation*}
In view of Lemma \ref{lem_diff_tilu}$(ii)$ and since {we are concerned with the case}
$|\lambda_{n,j}^{(1)}-\lambda_{n,j}^{(2)}|\le o\left(\frac{1}{\lambda_{n,j}^{(1)}}\right)$, then we obtain,
\begin{equation}\label{withpsi2}
\begin{aligned}
&\int_{\partial B_d(\un{x}_{n,j}^{(1)})}\left(\psi_{n,j}\frac{\partial\zeta_n}{\partial\nu}-\zeta_n\frac{\partial \psi_{n,j}}{\partial \nu}\right)
\mathrm{d}\sigma=o\left(\frac{1}{\lambda_{n,j}^{(1)}}\right).
\end{aligned}
\end{equation}
Let $\zeta_{n,j}^*(r)=\int_0^{2\pi}\zeta_n(r,\theta)\mathrm{d}\theta$, where $r=|x-x_{n,j}^{(1)}|$. Then \eqref{withpsi2} yields,
\begin{equation*}\label{radial_eq}
  (\zeta_{n,j}^*)'(r)\psi_{n,j}(r)-\zeta_{n,j}^*(r)\psi_{n,j}'(r)=\frac{o\left(\frac{1}{\lambda_{n,j}^{(1)}}\right)}{r},\,\forall r\in (\Lambda_{n,j,R}^{-},
  {\delta}].
\end{equation*}
For any $R>0$ large enough and for any $r\in (\Lambda_{n,j,R}^{-}, {\delta})$, we also obtain that,
\[\psi_{n,j}(r)=-1+O\left(\frac{e^{-\lambda_{n,j}^{(1)}}}{r^2}\right),\ \ \psi_{n,j}'(r)=
O\left(\frac{{e^{-\lambda_{n,j}^{(1)}}}}{r^3}\right).\]
and so we conclude that,
\begin{equation}\label{derivative_zeta}
  (\zeta_{n,j}^*)'(r)=\frac{o\Big(\frac{1}{ \lambda_{n,j}^{(1)}}\Big)}{r}+O\left(\frac{{e^{-\lambda_{n,j}^{(1)}}}}{r^3}\right)
  \ \ \textrm{for all } \ \ r\in (\Lambda_{n,j,R}^{-}, {\delta}).
\end{equation}
Integrating \eqref{derivative_zeta}, we obtain,
\begin{equation}\label{compare_zeta}\begin{aligned}
  \zeta_{n,j}^*(r) =\zeta_{n,j}^*(\Lambda_{n,j,R}^{-})+o(1)+o\Big(\frac{1}{ \lambda_{n,j}^{(1)}}\Big)R+O({R^{-2}})  \ \ \textrm{for all } \ \ r\in (\Lambda_{n,j,R}^{-}, {\delta}).
\end{aligned}\end{equation}
By using Lemma \ref{lem_est_zetanj}, we find,
\[\zeta_{n,j}^*(\Lambda_{n,j,R}^{-})=-2\pi b_{j,0}+o_R(1)+o_n(1),\]
where $\lim_{R\to+\infty}o_R(1)=0$ and $\lim_{n\to+\infty}o_n(1)=0$ and then \eqref{compare_zeta} shows that,
\begin{equation}\label{outside_zeta}
  \zeta_{n,j}^*(r)=-2\pi b_{j,0}+o_R(1)+o_n(1)(1+O(R)),\ \ \textrm{for all } \ \ r\in (\Lambda_{n,j,R}^{-},{\delta}),
\end{equation} where $\lim_{n\to+\infty}o_n(1)=0$.
In view of \eqref{limit_zetan0}, we see that,
\begin{equation*}\label{compare_b}\zeta_{n,j}^*=-2\pi b_0+o_n(1) \ \ \textrm{in}\ \ C_{\textrm{loc}}(M\setminus\{q_1\cdots q_m\}),
\end{equation*} which implies that $b_{j,0}=b_0$ for $j=1,\cdots, m$, whenever
$|\lambda_{n,j}^{(1)}-\lambda_{n,j}^{(2)}|=o\left(\frac{1}{\lambda_{n,j}^{(1)}}\right)$, as claimed.

\noindent \textit{Case 2.} $\frac{1}{C \lambda_{n,j}^{(1)}}{\le} |\lambda_{n,j}^{(1)}-\lambda_{n,j}^{(2)}|\le \frac{C}{\lambda_{n,j}^{(1)}}$ for some constant $C>1$: In this  case, the argument in \cite{ly2} as outlined above does not yield the desired result.
Indeed,  since  $|\lambda_{n,j}^{(1)}-\lambda_{n,j}^{(2)}|$ is "not small enough", then
$\zeta_{n,j}^*(r)-\zeta_{n,j}^*(\Lambda_{n,j,R}^{-})$ is not as small as we would need, see \eqref{compare_zeta}.
So we adopt a different approach based on the Green representation formula.\\{Fix $d\in(0,\delta)$, and let}
  $\Lambda_{n,j,R}^{-}\le |\un{x}_1-\un{x}_{n,j}^{(1)}|\le |\un{x}_2-\un{x}_{n,j}^{(1)}|\le d$, then,
\begin{equation}
\begin{aligned}\label{diff_Znx}
 \zeta_n(x_1)-\zeta_n(x_2)
 &=\rho_n\int_M (G(x_1,y)-G(x_2,y))h(y) \left(\frac{e^{\tilde{u}_n^{(1)}(y)}-e^{\tilde{u}_n^{(2)}(y)}}{
\|\tilde{u}_n^{(1)}-\tilde{u}_n^{(2)}\|_{L^{\infty}(M)}}\right)\mathrm{d}\mu(y)
\\&=\frac{1}{2\pi}\sum \limits_{j=1}^m\int_{B_{2d}(\un{x}_{n,j}^{(1)})}
\log \frac{|\un{x}_2-\un{y}|}{|\un{x}_1-\un{y}|}
\rho_n h_j(\un{y})e^{\tilde{u}_n^{(1)}(\un{y})} \left(\frac{ 1-e^{\tilde{u}_n^{(2)} -\tilde{u}_n^{(1)} }}
{\|\tilde{u}_n^{(1)}-\tilde{u}_n^{(2)}\|_{L^\infty(M)}} \right)\mathrm{d}\un{y} +O(|\un{x}_1-\un{x}_2|).
\end{aligned}
\end{equation}
By the usual scaling, $\un{y}=\delta_n z+\un{x}_{n,j}^{(1)}$, where $\delta_n=e^{-\frac{\lambda_{n,j}^{(1)}}{2}}$, we see that,
\begin{equation}
\begin{aligned}\label{diff_Znx3}
&\int_{B_{2d}(\un{x}_{n,j}^{(1)})}\log \frac{|\un{x}_2-\un{y}|}{|\un{x}_1-\un{y}|}
\rho_n h_j(\un{y})e^{\tilde{u}_n^{(1)}(\un{y})}\left(\frac{ 1-e^{\tilde{u}_n^{(2)} -\tilde{u}_n^{(1)} }}
{\|\tilde{u}_n^{(1)}-\tilde{u}_n^{(2)}\|_{L^\infty(M)}} \right)\mathrm{d}\un{y}
\\&= \int_{B_{2\Lambda_{n,j,d}^{+}}(0)}\log \frac{|\un{x}_2-\un{x}_{n,j}^{(1)}-\delta_nz|}
{|\un{x}_1-\un{x}_{n,j}^{(1)}-\delta_nz|}
\rho_n \overline{h}_j(z)e^{\overline{\tilde{u}_n^{(1)}}(z)} e^{-\lambda_{n,j}^{(1)}}
\left(\frac{ 1-e^{\overline{\tilde{u}_n^{(2)}} -\overline{\tilde{u}_n^{(1)}} }}
{\|\tilde{u}_n^{(1)}-\tilde{u}_n^{(2)}\|_{L^\infty(M)}} \right)\mathrm{d} z
\\&=\int_{B_{2\Lambda_{n,j,d}^{+}}(0)}\log \frac{|\delta_n^{-1}(\un{x}_2-\un{x}_{n,j}^{(1)})-z|}
{|\delta_n^{-1}(\un{x}_1-\un{x}_{n,j}^{(1)})-z|}
\rho_n \overline{h}_j(z)e^{\overline{\tilde{u}_n^{(1)}}(z)} e^{-\lambda_{n,j}^{(1)}}
 \left(\frac{ 1-e^{\overline{\tilde{u}_n^{(2)}} -\overline{\tilde{u}_n^{(1)}} }}{\|\tilde{u}_n^{(1)}-\tilde{u}_n^{(2)}\|_{L^\infty(M)}} \right)
\mathrm{d} z.
\end{aligned}
\end{equation}
Fix $\alpha\in(0,\frac{1}{2})$. We will use the following inequality  (see  \cite[Theorem 4.1]{cfl}): let $g:\mathbb{R}^2\to \mathbb{R}$ satisfies $\int\limits_{\mathbb{R}^2}g^2(1+|z|)^{2+\alpha}dz<+\infty$. Then
there exists a constant $c >0$, independent of $\un{x}\in\mathbb{R}^2\setminus B_2(0)$ and $g$, such that,
\begin{equation}\label{refer_cfl}\begin{aligned}&\left|\int_{\mathbb{R}^2} (\log|\un{x}-z|-\log|\un{x}|)g(z)\mathrm{d}z\right| \le c  |\un{x}|^{-\frac{\alpha}{2}}(\log|\un{x}|+1)
\|g(z)(1+|z|)^{1+\frac{\alpha}{2}}\|_{L^2(\mathbb{R}^2)}.\end{aligned}\end{equation}
In view of \eqref{diff_Znx3} and \eqref{refer_cfl}, we find that,
\begingroup
\abovedisplayskip=3pt
\belowdisplayskip=3pt
\begin{equation}\begin{aligned}\label{diff_Znx2}
                   & \int_{B_{2d}(\un{x}_{n,j}^{(1)})}\log \frac{|\un{x}_2-\un{y}|}{|\un{x}_1-\un{y}|}
                   \rho_n h_j(\un{y})e^{\tilde{u}_n^{(1)}(\un{y})}\left(\frac{ 1-e^{ \tilde{u}_n^{(2)}  - \tilde{u}_n^{(1)}  }}
                   {\|\tilde{u}_n^{(1)}-\tilde{u}_n^{(2)}\|_{L^\infty(M)}} \right)\mathrm{d}\un{y} \\&=
                   \int_{B_{2\Lambda_{n,j,d}^{+}}(0)}\left(\log \frac{{\delta_n^{-1}}|\un{x}_2-\un{x}_{n,j}^{(1)}|}
                   {{\delta_n^{-1}}|\un{x}_1-\un{x}_{n,j}^{(1)}|}+
                   \sum_{i=1}^2(-1)^{{i}}\log \frac{|{\delta_n^{-1}}
                   (\un{x}_i-\un{x}_{n,j}^{(1)})-z|}{|{\delta_n^{-1}}(\un{x}_i-\un{x}_{n,j}^{(1)})|}\right)
                 \rho_n \overline{h}_j(z)e^{\overline{\tilde{u}_n^{(1)}}(z)} e^{-\lambda_{n,j}^{(1)}}
                   \left(\frac{ 1-e^{\overline{\tilde{u}_n^{(2)}} -
                   \overline{\tilde{u}_n^{(1)}} }}{\|\tilde{u}_n^{(1)}-\tilde{u}_n^{(2)}\|_{L^\infty(M)}} \right)\mathrm{d} z  \\&=
                   \log \frac{|\un{x}_2-\un{x}_{n,j}^{(1)}|}{|\un{x}_1-\un{x}_{n,j}^{(1)}|} \int_{B_{2\Lambda_{n,j,d}^{+}}(0)}
                   \rho_n \overline{h}_j(z)e^{\overline{\tilde{u}_n^{(1)}}(z)} e^{-\lambda_{n,j}^{(1)}}
                   \left(\frac{ 1-e^{\overline{\tilde{u}_n^{(2)}} -\overline{\tilde{u}_n^{(1)}} }}{\|\tilde{u}_n^{(1)}-\tilde{u}_n^{(2)}\|_{L^\infty(M)}} \right)
                   \mathrm{d} z
                   \\&\quad+O(\sum_{i=1}^2|e^{\frac{\lambda_{n,j}^{(1)}}{2}}(\un{x}_i-\un{x}_{n,j}^{(1)})|^{-\frac{\alpha}{2}}
                   \log |e^{\frac{\lambda_{n,j}^{(1)}}{2}}(\un{x}_i-\un{x}_{n,j}^{(1)})|). \end{aligned}
\end{equation}
\endgroup

From \eqref{diff_x_nj}-\eqref{eta_ets}, {we} also see that,
\begingroup
\abovedisplayskip=3pt
\belowdisplayskip=3pt
\begin{equation}
\begin{aligned}\label{integration_est1}
&\int_{B_{2\Lambda_{n,j,d}^{+}}(0)}\rho_n \overline{h}_j(z)e^{\overline{\tilde{u}_n^{(1)}}(z)}
e^{-\lambda_{n,j}^{(1)}}\left(\frac{ 1-e^{\overline{\tilde{u}_n^{(2)}} -\overline{\tilde{u}_n^{(1)}} }}
{\|\tilde{u}_n^{(1)}-\tilde{u}_n^{(2)}\|_{L^\infty(M)}} \right)\mathrm{d} z
\\&=\int_{B_{2\Lambda_{n,j,d}^{+}}(0)}\frac{\rho_n h_j(\delta_n z+\un{x}_{n,j}^{(1)})
e^{\overline{\eta_{n,j}^{(1)}}+G_j^*(\delta_n z+\un{x}_{n,j}^{(1)})-G_j^*(\un{x}_{n,j}^{(1)})}  }
{(1+\frac{\rho_nh(\un{x}_{n,j}^{(1)})}{8} |z+e^{\frac{\lambda_{n,j}^{(1)}}{2}}(\un{x}_{n,j}^{(1)}-\un{x}_{n,j,*}^{(1)})|^2)^2}
\left(\frac{ 1-e^{\overline{\tilde{u}_n^{(2)}} -\overline{\tilde{u}_n^{(1)}} }}{\|\tilde{u}_n^{(1)}-\tilde{u}_n^{(2)}\|_{L^\infty(M)}} \right)\mathrm{d} z
\\&=\int_{B_{2\Lambda_{n,j,d}^{+}}(0)}\frac{\rho_n h(\un{x}_{n,j}^{(1)})(1+O( \delta_n)+
O(\delta_n|z|)) }
{(1+\frac{\rho_nh(\un{x}_{n,j}^{(1)})}{8} |z|^2)^2}
\left(\frac{ 1-e^{\overline{\tilde{u}_n^{(2)}} -\overline{\tilde{u}_n^{(1)}} }}
{\|\tilde{u}_n^{(1)}-\tilde{u}_n^{(2)}\|_{L^\infty(M)}} \right)\mathrm{d} z
\\&=\int_{B_{2\Lambda_{n,j,d}^{+}}(0)}\rho_n h(\un{x}_{n,j}^{(1)})
\frac{\left(  \frac{ \overline{\tilde{u}_n^{(1)}} -\overline{\tilde{u}_n^{(2)}}-\frac{(\overline{\tilde{u}_n^{(1)}} -\overline{\tilde{u}_n^{(2)}})^2}{2} }
{\|\tilde{u}_n^{(1)}-\tilde{u}_n^{(2)}\|_{L^\infty(M)}}+O(\|\tilde{u}_n^{(1)}-\tilde{u}_n^{(2)}\|_{L^\infty(M)}^2)\right)}
{(1+\frac{\rho_nh(\un{x}_{n,j}^{(1)})}{8} |z|^2)^2} \mathrm{d} z +O( \delta_n).
\end{aligned}
\end{equation}
\endgroup
On the other side, from \eqref{eta} {and \eqref{eta_ets},} we find that,
\begin{equation*}\label{difftildun_lamnj}\begin{aligned}
 \tilde{u}_n^{(1)}(x)-\tilde{u}_n^{(2)}(x) &=U_{n,j}^{(1)}(\un{x})-U_{n,j}^{(2)}(\un{x})+G_j^*(\un{x}_{n,j}^{(2)})-
G_j^*(\un{x}_{n,j}^{(1)})+\eta_{n,j}^{(1)}(\un{x})-\eta_{n,j}^{(2)}(\un{x})
\\&=\lambda_{n,j}^{(1)}-\lambda_{n,j}^{(2)}+2\log\frac{(1+\frac{\rho_nh(\un{x}_{n,j}^{(2)})}{8}e^{\lambda_{n,j}^{(2)}}|\un{x}-\un{x}_{n,j,*}^{(2)}|^2)}
{(1+\frac{\rho_nh(\un{x}_{n,j}^{(1)})}{8}e^{\lambda_{n,j}^{(1)}}|\un{x}-\un{x}_{n,j,*}^{(1)}|^2)}
 +O\left((\lambda_{n,j}^{(1)})^2e^{-\lambda_{n,j}^{(1)}}\right),                                      \end{aligned}
\end{equation*}
which in turn implies that,
\begin{equation}\label{difftildun_lamnj_sc}\begin{aligned}
  \overline{\tilde{u}_n^{(1)}}(z) -\overline{\tilde{u}_n^{(2)}}(z)
 &=\lambda_{n,j}^{(1)}-\lambda_{n,j}^{(2)}+2\log\Big(\frac{ 1+\frac{\rho_nh(\un{x}_{n,j}^{(2)})}{8}e^{\lambda_{n,j}^{(2)}}
|\delta_n z+\un{x}_{n,j}^{(1)}-\un{x}_{n,j,*}^{(2)}|^2 }
{ 1+\frac{\rho_nh(\un{x}_{n,j}^{(1)})}{8}e^{\lambda_{n,j}^{(1)}}
|\delta_n z+\un{x}_{n,j}^{(1)}-\un{x}_{n,j,*}^{(1)}|^2 }\Big)
 +O\Big((\lambda_{n,j}^{(1)})^2e^{-\lambda_{n,j}^{(1)}}\Big).                                     \end{aligned}
\end{equation}
By \eqref{diff_x_nj} and \eqref{diff_x1_x_2}, we also see that,
{\allowdisplaybreaks\begin{align*}
& 2\log\Big(\frac{ 1+\frac{\rho_nh(\un{x}_{n,j}^{(2)})}{8}e^{\lambda_{n,j}^{(2)}}|
\delta_n z+\un{x}_{n,j}^{(1)}-\un{x}_{n,j,*}^{(2)}|^2 }{ 1+\frac{\rho_nh(\un{x}_{n,j}^{(1)})}{8}e^{\lambda_{n,j}^{(1)}}|
\delta_n z+\un{x}_{n,j}^{(1)}-\un{x}_{n,j,*}^{(1)}|^2 }\Big)
= 2\log\Big(\frac{ 1+\frac{\rho_nh(\un{x}_{n,j}^{(1)})}{8}e^{\lambda_{n,j}^{(2)}-\lambda_{n,j}^{(1)}} |z|^2 }
{ 1+\frac{\rho_nh(\un{x}_{n,j}^{(1)})}{8} |z|^2 }\Big)+O(\lambda_{n,j}^{(1)} e^{-\frac{\lambda_{n,j}^{(1)}}{2}})
\\&=   \Big( \frac{\frac{\rho_nh(\un{x}_{n,j}^{(1)})}{4} |z|^2  }{ 1+\frac{\rho_nh(\un{x}_{n,j}^{(1)})}{8} |z|^2 }\Big)(\lambda_{n,j}^{(2)}-\lambda_{n,j}^{(1)})
 +  \frac{\frac{\rho_nh(\un{x}_{n,j}^{(1)})}{8} |z|^2  }{ (1+\frac{\rho_nh(\un{x}_{n,j}^{(1)})}{8} |z|^2)^2 } (\lambda_{n,j}^{(1)}-\lambda_{n,j}^{(2)})^2
  +O(|\lambda_{n,j}^{(1)}-\lambda_{n,j}^{(2)}|^3)  +O(\lambda_{n,j}^{(1)} e^{-\frac{\lambda_{n,j}^{(1)}}{2}}),
\end{align*}}
which, together with \eqref{difftildun_lamnj_sc}, allows us to conclude that,
\begin{equation*}\label{difftildun_lamnj_sc2}\begin{aligned}
&\overline{\tilde{u}_n^{(1)}}(z) -\overline{\tilde{u}_n^{(2)}}(z)
\\&=\Big( \frac{ 1-\frac{\rho_nh(\un{x}_{n,j}^{(1)})}{8} |z|^2 }{ 1+\frac{\rho_nh(\un{x}_{n,j}^{(1)})}{8} |z|^2 }\Big)
(\lambda_{n,j}^{(1)}-\lambda_{n,j}^{(2)})
 +  \frac{\frac{\rho_nh(\un{x}_{n,j}^{(1)})}{8} |z|^2  }{ (1+\frac{\rho_nh(\un{x}_{n,j}^{(1)})}{8} |z|^2)^2 }
 (\lambda_{n,j}^{(1)}-\lambda_{n,j}^{(2)})^2
  +O(|\lambda_{n,j}^{(1)}-\lambda_{n,j}^{(2)}|^3)  +O(\lambda_{n,j}^{(1)} e^{-\frac{\lambda_{n,j}^{(1)}}{2}}),                                     \end{aligned}
\end{equation*}
and thus,
\begingroup
\abovedisplayskip=3pt
\belowdisplayskip=3pt
\begin{equation}\label{difftildun_lamnj_sc3}
\begin{aligned}&  \overline{\tilde{u}_n^{(1)}}(z) -\overline{\tilde{u}_n^{(2)}}(z)-\frac{(\overline{\tilde{u}_n^{(1)}} -\overline{\tilde{u}_n^{(2)}})^2}{2}
\\& =\Big( \frac{ 1-\frac{\rho_nh(\un{x}_{n,j}^{(1)})}{8} |z|^2 }{ 1+\frac{\rho_nh(\un{x}_{n,j}^{(1)})}{8} |z|^2 }\Big)
(\lambda_{n,j}^{(1)}-\lambda_{n,j}^{(2)})
\\&\quad +\Big(  \frac{\frac{\rho_nh(\un{x}_{n,j}^{(1)})}{8} |z|^2  }{ (1+\frac{\rho_nh(\un{x}_{n,j}^{(1)})}{8} |z|^2)^2 } -
\frac{ (1-\frac{\rho_nh(\un{x}_{n,j}^{(1)})}{8} |z|^2 )^2}{ 2(1+\frac{\rho_nh(\un{x}_{n,j}^{(1)})}{8} |z|^2)^2 }\Big)
(\lambda_{n,j}^{(1)}-\lambda_{n,j}^{(2)})^2
 +O(|\lambda_{n,j}^{(1)}-\lambda_{n,j}^{(2)}|^3)  +O(\lambda_{n,j}^{(1)} e^{-\frac{\lambda_{n,j}^{(1)}}{2}}).
\end{aligned}
\end{equation}
\endgroup
\begingroup
\abovedisplayskip=3pt
\belowdisplayskip=3pt
From \eqref{integration_est1}-\eqref{difftildun_lamnj_sc3}, we deduce that,
 \begin{equation}\begin{aligned}\label{integration_estimation}
&\int_{B_{2\Lambda_{n,j,d}^{+}}(0)}\rho_n \overline{h}_j(z)e^{\overline{\tilde{u}_n^{(1)}}(z)}
e^{-\lambda_{n,j}^{(1)}}\Big(\frac{ 1-e^{\overline{\tilde{u}_n^{(2)}} -\overline{\tilde{u}_n^{(1)}} }}
{\|\tilde{u}_n^{(1)}-\tilde{u}_n^{(2)}\|_{L^\infty(M)}} \Big)\mathrm{d} z
\\&=\int_{B_{2\Lambda_{n,j,d}^{+}}(0)}\frac{\rho_n h(\un{x}_{n,j}^{(1)})}{(1+\frac{\rho_nh(\un{x}_{n,j}^{(1)})}{8} |z|^2)^2}
  \Bigg\{\Big( \frac{ 1-\frac{\rho_nh(\un{x}_{n,j}^{(1)})}{8} |z|^2 }{ 1+\frac{\rho_nh(\un{x}_{n,j}^{(1)})}{8} |z|^2 }\Big)
  \frac{(\lambda_{n,j}^{(1)}-\lambda_{n,j}^{(2)})}{\|\tilde{u}_n^{(1)}-\tilde{u}_n^{(2)}\|_{L^\infty(M)}}
\\&\quad +\Big(  \frac{\frac{\rho_nh(\un{x}_{n,j}^{(1)})}{8} |z|^2  }{ (1+\frac{\rho_nh(\un{x}_{n,j}^{(1)})}{8} |z|^2)^2 } -\frac{ (1-\frac{\rho_nh(\un{x}_{n,j}^{(1)})}{8} |z|^2 )^2}{ 2(1+\frac{\rho_nh(\un{x}_{n,j}^{(1)})}{8} |z|^2)^2 }\Big)
\frac{(\lambda_{n,j}^{(1)}-\lambda_{n,j}^{(2)})^2}{\|\tilde{u}_n^{(1)}-\tilde{u}_n^{(2)}\|_{L^\infty(M)}} \Bigg\}\mathrm{d} z
\\&\quad +O\Big(\frac{|\lambda_{n,j}^{(1)}-\lambda_{n,j}^{(2)}|^3+\lambda_{n,j}^{(1)} e^{-\frac{\lambda_{n,j}^{(1)}}{2}}}{\|\tilde{u}_n^{(1)}-\tilde{u}_n^{(2)}\|_{L^\infty(M)}}\Big)+O( e^{-\frac{\lambda_{n,j}^{(1)}}{2}})+O(\|\tilde{u}_n^{(1)}-\tilde{u}_n^{(2)}\|_{L^\infty(M)}^2).
\end{aligned}
\end{equation}
\endgroup
At this point we note that, for any fixed $t>0$, it holds,
   \begin{equation}\begin{aligned}\label{entire_int}
&\int_{B_t(0)} \frac{8}{(1+  |z|^2)^2}
   \Big( \frac{ 1-  |z|^2 }{ 1+ |z|^2 }\Big) \mathrm{d} z
   =\frac{8\pi t^2}{(t^2+1)^2},\ \ \textrm{and}
\end{aligned}
\end{equation}   \begin{equation}\begin{aligned}\label{entire_int2}
&\int_{B_t(0)} \frac{8}{(1+  |z|^2)^2}
 \Big(  \frac{  |z|^2  }{ (1+  |z|^2)^2 } -  \frac{ (1-  |z|^2 )^2}{ 2(1+  |z|^2)^2 }\Big)\mathrm{d} z
=\frac{4\pi t^2(t^2-1)}{(t^2+1)^3}.
\end{aligned}
\end{equation}
Since $\Lambda_{n,j,d}^{+}=d e^{\frac{\lambda_{n,j}^{(1)}}{2}}$, then \eqref{integration_estimation}-\eqref{entire_int2} imply that,
\begin{equation*}\begin{aligned}\label{result1}
&\int_{B_{2\Lambda_{n,j,d}^{+}}(0)}\rho_n \overline{h}_j(z)e^{\overline{\tilde{u}_n^{(1)}}(z)} e^{-\lambda_{n,j}^{(1)}}\Big(\frac{ 1-e^{\overline{\tilde{u}_n^{(2)}} -\overline{\tilde{u}_n^{(1)}} }}{\|\tilde{u}_n^{(1)}-\tilde{u}_n^{(2)}\|_{L^\infty(M)}} \Big)\mathrm{d} z
\\&=O\Big(\frac{(\lambda_{n,j}^{(1)}-\lambda_{n,j}^{(2)})e^{-\lambda_{n,j}^{(1)}}}{\|\tilde{u}_n^{(1)}-\tilde{u}_n^{(2)}\|_{L^\infty(M)}} \Big)
  +O\Big(\frac{|\lambda_{n,j}^{(1)}-\lambda_{n,j}^{(2)}|^3+\lambda_{n,j}^{(1)} e^{-\frac{\lambda_{n,j}^{(1)}}{2}}}{\|\tilde{u}_n^{(1)}-\tilde{u}_n^{(2)}\|_{L^\infty(M)}}\Big) +O( e^{-\frac{\lambda_{n,j}^{(1)}}{2}})+O(\|\tilde{u}_n^{(1)}-\tilde{u}_n^{(2)}\|_{L^\infty(M)}^2).
\end{aligned}
\end{equation*}
At this point,   from Lemma \ref{lem_diff_tilu} and our assumption  $\frac{1}{C \lambda_{n,j}^{(1)}}{\le} |\lambda_{n,j}^{(1)}-\lambda_{n,j}^{(2)}|\le \frac{C}{\lambda_{n,j}^{(1)}}$,   we can find a constant $c_0>1$ such that,
\begin{equation}\label{diff_tilu}
 \frac{1}{C_0C \lambda_{n,j}^{(1)}}\le \frac{|\lambda_{n,j}^{(1)}-\lambda_{n,j}^{(2)}|}{c_0}\le\|\tilde{u}_n^{(1)}-\tilde{u}_n^{(2)}\|_{L^{\infty}(M)}\le c_0|\lambda_{n,j}^{(1)}-\lambda_{n,j}^{(2)}|\le  \frac{C_0C}{ \lambda_{n,j}^{(1)}}.
\end{equation}
By  \eqref{diff_tilu},
we obtain,
\begin{equation}\begin{aligned}\label{result2}
&\int_{B_{2\Lambda_{n,j,d}^{+}}(0)}\rho_n \overline{h}_j(z)e^{\overline{\tilde{u}_n^{(1)}}(z)} e^{-\lambda_{n,j}^{(1)}}\Big(\frac{ 1-e^{\overline{\tilde{u}_n^{(2)}} -\overline{\tilde{u}_n^{(1)}} }}{\|\tilde{u}_n^{(1)}-\tilde{u}_n^{(2)}\|_{L^\infty(M)}} \Big)\mathrm{d} z
 =o\Big(\frac{1}{\lambda_{n,j}^{(1)}}\Big).
\end{aligned}
\end{equation}
As a consequence, for $\Lambda_{n,j,R}^{-}\le |\un{x}_1-\un{x}_{n,j}^{(1)}|\le |\un{x}_2-\un{x}_{n,j}^{(1)}|\le d$,
and by using \eqref{diff_Znx}-\eqref{diff_Znx2} and \eqref{result2}, we find that,
\begingroup
\abovedisplayskip=3pt
\belowdisplayskip=3pt
\begin{equation}\label{result3}\begin{aligned}
                 \zeta_n(\un{x}_1)-\zeta_n(\un{x}_2)
                                     &= \log \frac{|\un{x}_2-\un{x}_{n,j}^{(1)}|}{|\un{x}_1-\un{x}_{n,j}^{(1)}|}
                                       \frac{1}{2\pi} \int_{B_{2\Lambda_{n,j,d}^{+}}(0)}
                   \rho_n \overline{h}_j(z)e^{\overline{\tilde{u}_n^{(1)}}(z)} e^{-\lambda_{n,j}^{(1)}}\left(\frac{ 1-e^{\overline{\tilde{u}_n^{(2)}}
                   -\overline{\tilde{u}_n^{(1)}} }}{\|\tilde{u}_n^{(1)}-\tilde{u}_n^{(2)}\|_{L^\infty(M)}} \right)\mathrm{d} z
                   \\&+O(|\un{x}_1-\un{x}_2|)  +O(\sum_{i=1}^2|e^{\frac{\lambda_{n,j}^{(1)}}{2}}(\un{x}_i-\un{x}_{n,j}^{(1)})|^{-\frac{\alpha}{2}}\log
                   |e^{\frac{\lambda_{n,j}^{(1)}}{2}}(\un{x}_i-\un{x}_{n,j}^{(1)})|)\\&= O(|\un{x}_1-\un{x}_2|)+o(1)+O(R^{-\frac{\alpha}{2}}\log R). \end{aligned}
\end{equation}
\endgroup
Finally, by fixing a small constant $r\in (0,d)$, and putting $|\un{x}_1-\un{x}_{n,j}^{(1)}|=Re^{-\frac{\lambda_{n,j}^{(1)}}{2}}$ and
$|\un{x}_2-\un{x}_{n,j}^{(1)}|=r$,
then Lemma \ref{lem_est_zetanj} and  \eqref{limit_zetan0} imply that,
\begin{equation} \label{result4}
\zeta_n(\un{x}_1)=-b_{j,0}+o_R(1)+o_n(1),\ \ \ \zeta_n(\un{x}_2)=-b_0+o_n(1),
\end{equation}
where $\lim_{R\to+\infty}o_R(1)=0$ and $\lim_{n\to+\infty}o_n(1)=0$. As a consequence, since $R>0$ and $r>0$ are arbitrary,
we see that \eqref{result3}-\eqref{result4} imply $b_{j,0}=b_0$ for $j=1,\cdots, m$,  in Case 2 as well.
This fact concludes the proof of Lemma \ref{lem_equalb0}.\end{proof}

\section{Estimates via Pohozaev type identities}\label{sec_poho}
From now on, for a given function $f(y,x)$, we shall use $\partial $ and $D$ to denote the partial derivatives with respect to $y$ and $x$ respectively.
{With a small abuse of notation, for a function $f(x)$ we will use both $\nabla$ and $D$ to denote its gradient.}

For $j=1,\cdots,m$, let
\begin{equation}\label{phij}
  \phi_{n,j}(y)=\frac{\rho_n}{m}(R(y,x_{n,j}^{(1)})-R(x_{n,j}^{(1)},x_{n,j}^{(1)}))+\frac{\rho_n}{m}\sum_{l\neq j}(G(y,x_{n,l}^{(1)})-G(x_{n,j}^{(1)},x_{n,l}^{(1)})),
\end{equation}
\begin{equation}\label{vnji}v_{n,j}^{(i)}(y)=\tilde{u}_n^{(i)}(y)-\phi_{n,j}(y),\ \ \ i=1,2.
\end{equation}
Recall the definition of $\zeta_n$ given before \eqref{fnx}. Our aim is to show that all $b_{j,i}=0$, see Lemma~\ref{lem_est_zetanj}.
We will start by showing that $b_{j,0}=0$. This is done by exploiting the following Pohozaev identity to derive a subtle estimate for $\zeta_n$.
\begin{lemma}[\cite{ly2}]
  \label{lem_po1} For any fixed {$r\in(0,\delta)$}, it holds,
\begin{equation}\label{poho1}\begin{aligned}
 &\frac{1}{2}\int_{\partial B_r(\un{x}_{n,j}^{(1)})}r<Dv_{n,j}^{(1)}+Dv_{n,j}^{(2)},D\zeta_n>\mathrm{d}\sigma -\int_{\partial B_r(\un{x}_{n,j}^{(1)})}r
 <\nu,D(v_{n,j}^{(1)}+v_{n,j}^{(2)})><\nu,D\zeta_n>\mathrm{d}\sigma
 \\&=\int_{\partial B_r(\un{x}_{n,j}^{(1)})}
 \frac{r\rho_nh_j(\un{x}) }{\|v_{n,j}^{(1)}-v_{n,j}^{(2)}\|_{L^\infty(M)}}(e^{v_{n,j}^{(1)}+
 \phi_{n,j}}-e^{v_{n,j}^{(2)}+\phi_{n,j}})\mathrm{d}\sigma
\\&  -\int_{B_r(\un{x}_{n,j}^{(1)})}\frac{\rho_nh_j(\un{x}) (e^{v_{n,j}^{(1)}+\phi_{n,j}}-e^{v_{n,j}^{(2)}+\phi_{n,j}})}
{\|v_{n,j}^{(1)}-v_{n,j}^{(2)}\|_{L^\infty(M)}}(2+<D(\log h_j+\phi_{n,j}),\un{x}-\un{x}_{n,j}^{(1)}>)\mathrm{d} \un{x}.
\end{aligned}
\end{equation}
\end{lemma}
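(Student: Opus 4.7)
The plan is to derive \eqref{poho1} directly by integration by parts in the ball $B_r(\un{x}_{n,j}^{(1)})$, once a clean local equation for $v_{n,j}^{(i)}$ is identified. Since $q_j\notin\{p_1,\ldots,p_N\}$ and the remaining blow-up points $q_l$ lie outside the ball, both $R(\cdot,x_{n,j}^{(1)})$ and $G(\cdot,x_{n,l}^{(1)})$ for $l\ne j$ are smooth on $B_r(\un{x}_{n,j}^{(1)})$. Using \eqref{delr} and the fact that $\Delta G=e^{2\varphi_j}$ away from the pole (combining $-\Delta_{\sscp M}G=\delta-1$ with $\Delta_{\sscp M}=e^{-2\varphi_j}\Delta$), the $m$ terms in \eqref{phij} sum to $\Delta\phi_{n,j}=\rho_n e^{2\varphi_j}$. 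Subtracting this from the local form of \eqref{eq_tilu} and writing $h_j=h\,e^{2\varphi_j}$, one obtains
\begin{equation*}
-\Delta v_{n,j}^{(i)}=\rho_n h_j(\un{x})\,e^{v_{n,j}^{(i)}+\phi_{n,j}}\quad\text{in}\ B_r(\un{x}_{n,j}^{(1)}),\ i=1,2,
\end{equation*}
while $\zeta_n=(v_{n,j}^{(1)}-v_{n,j}^{(2)})/\|v_{n,j}^{(1)}-v_{n,j}^{(2)}\|_{L^\infty(M)}$ satisfies
\begin{equation*}
-\Delta\zeta_n=\frac{\rho_n h_j}{\|v_{n,j}^{(1)}-v_{n,j}^{(2)}\|_{L^\infty(M)}}\bigl(e^{v_{n,j}^{(1)}+\phi_{n,j}}-e^{v_{n,j}^{(2)}+\phi_{n,j}}\bigr).
\end{equation*}

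Next I would record the symmetric Pohozaev identity for a pair of $C^2$ functions $u,v$ on $\overline{B_r(\un{x}_{n,j}^{(1)})}$: multiplying $\Delta u$ by $\langle\un{x}-\un{x}_{n,j}^{(1)},Dv\rangle$, integrating by parts twice, symmetrizing in $(u,v)$, and using $\mathrm{div}(\un{x}-\un{x}_{n,j}^{(1)})=2$ together with $\langle\un{x}-\un{x}_{n,j}^{(1)},\nu\rangle=r$ on the boundary, one arrives at
\begin{equation*}
r\!\int_{\partial B_r}\!\langle Du,Dv\rangle\mathrm{d}\sigma-2r\!\int_{\partial B_r}\!\langle\nu,Du\rangle\langle\nu,Dv\rangle\mathrm{d}\sigma=-\int_{B_r}\bigl(\Delta u\,\langle\un{x}-\un{x}_{n,j}^{(1)},Dv\rangle+\Delta v\,\langle\un{x}-\un{x}_{n,j}^{(1)},Du\rangle\bigr)\mathrm{d}\un{x}.
\end{equation*}
Choosing $u=v_{n,j}^{(1)}+v_{n,j}^{(2)}$, $v=\zeta_n$, and dividing by $2$ produces exactly the left-hand side of \eqref{poho1}.

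The remaining task is to simplify the right-hand side after substituting the two equations above. With the shorthand $e_i=e^{v_{n,j}^{(i)}+\phi_{n,j}}$ and $N=\|v_{n,j}^{(1)}-v_{n,j}^{(2)}\|_{L^\infty(M)}$, the cross terms telescope via the key algebraic identity
\begin{equation*}
\tfrac{1}{2}(e_1+e_2)D(v_{n,j}^{(1)}-v_{n,j}^{(2)})+\tfrac{1}{2}(e_1-e_2)D(v_{n,j}^{(1)}+v_{n,j}^{(2)})=e_1Dv_{n,j}^{(1)}-e_2Dv_{n,j}^{(2)},
\end{equation*}
so the right-hand side reduces to $(1/N)\int_{B_r}\rho_n h_j\langle\un{x}-\un{x}_{n,j}^{(1)},e_1Dv_{n,j}^{(1)}-e_2Dv_{n,j}^{(2)}\rangle\mathrm{d}\un{x}$. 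Writing $e_iDv_{n,j}^{(i)}=De_i-e_iD\phi_{n,j}$, integrating the $D(e_1-e_2)$ piece by parts against $\rho_n h_j(\un{x}-\un{x}_{n,j}^{(1)})$, and invoking $\mathrm{div}\bigl(\rho_n h_j(\un{x}-\un{x}_{n,j}^{(1)})\bigr)=\rho_n h_j\bigl(2+\langle\un{x}-\un{x}_{n,j}^{(1)},D\log h_j\rangle\bigr)$, the boundary contribution yields the first summand on the right-hand side of \eqref{poho1}, while the $D\log h_j$ and $D\phi_{n,j}$ terms combine into the factor $2+\langle D(\log h_j+\phi_{n,j}),\un{x}-\un{x}_{n,j}^{(1)}\rangle$ appearing in the second summand. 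The whole argument is routine bookkeeping; the only real subtlety is the preliminary verification that $v_{n,j}^{(i)}$ solves a regular (non-singular) equation on $B_r(\un{x}_{n,j}^{(1)})$, which is what justifies the integration by parts without distributional interior corrections.
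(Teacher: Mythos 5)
Your proposal is correct and amounts to the same computation as the paper's own proof, just organized a little differently. The paper starts directly from the pointwise divergence identity (its display \eqref{poho1_st1}) applied to the specific pair $(v_{n,j}^{(1)}-v_{n,j}^{(2)},\,v_{n,j}^{(1)}+v_{n,j}^{(2)})$, substitutes the two local PDEs (difference and sum), performs exactly the algebraic telescoping you isolate in your ``key algebraic identity'' to reach $-2\rho_n h_j\big(e_1\nabla v^{(1)}_{n,j}-e_2\nabla v^{(2)}_{n,j}\big)\cdot(\un{x}-\un{x}_{n,j}^{(1)})$, and then re-packages that as a divergence plus source (its display \eqref{poho1_st2}) before integrating. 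You instead state the symmetric Pohozaev identity abstractly for a pair $(u,v)$, plug in $u=v_{n,j}^{(1)}+v_{n,j}^{(2)}$, $v=\zeta_n$, and then recover the same source/boundary decomposition by writing $e_iDv_{n,j}^{(i)}=De_i-e_iD\phi_{n,j}$ and integrating the $D(e_1-e_2)$ term by parts against $\rho_n h_j(\un{x}-\un{x}_{n,j}^{(1)})$. These are the same manipulations in a different order. Your preliminary verification that $\Delta\phi_{n,j}=\rho_n e^{2\varphi_j}$ on $B_r(\un{x}_{n,j}^{(1)})$, yielding $-\Delta v^{(i)}_{n,j}=\rho_n h_j e^{v^{(i)}_{n,j}+\phi_{n,j}}$ there, is correct and is the right thing to flag: it uses that both $R(\cdot,x^{(1)}_{n,j})$ and the $G(\cdot,x^{(1)}_{n,l})$, $l\neq j$, are harmonic (up to the conformal factor) on the ball, which holds since $r<\delta$ keeps the other blow-up points and vortex points outside.
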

\begin{proof} The identity
\eqref{poho1} has been first obtained in \cite{ly2}. We prove it  for reader's convenience.
First of all,  we observe that in local coordinates it holds,
\begin{equation}\label{poho1_st1}\begin{aligned}
  &\{\Delta (v_{n,j}^{(1)}-v_{n,j}^{(2)})\}\{\nabla(v_{n,j}^{(1)}+v_{n,j}^{(2)})\cdot(\un{x}-\un{x}_{n,j}^{(1)})\} +\{\Delta (v_{n,j}^{(1)}+v_{n,j}^{(2)})\}
  \{\nabla(v_{n,j}^{(1)}-v_{n,j}^{(2)})\cdot(\un{x}-\un{x}_{n,j}^{(1)})\}
\\=~&\textrm{div}\left\{\nabla(v_{n,j}^{(1)}-v_{n,j}^{(2)})[\nabla(v_{n,j}^{(1)}+v_{n,j}^{(2)})\cdot(\un{x}-\un{x}_{n,j}^{(1)})]\right\} +\textrm{div}\left\{\nabla(v_{n,j}^{(1)}+v_{n,j}^{(2)})[\nabla(v_{n,j}^{(1)}-v_{n,j}^{(2)})\cdot(\un{x}-\un{x}_{n,j}^{(1)})]\right\}
\\&-\textrm{div}\left\{[\nabla(v_{n,j}^{(1)}-v_{n,j}^{(2)})\cdot\nabla(v_{n,j}^{(1)}+v_{n,j}^{(2)})](\un{x}-\un{x}_{n,j}^{(1)})\right\}.
\end{aligned}\end{equation}
By the definition of $v_{n,j}^{(i)}$,  we also see that, {for $\un{x}\in B_r(\un{x}_{n,j}^{(1)})$,}
  \begin{equation*}
{\Delta} (v_{n,j}^{(1)}-v_{n,j}^{(2)})+\rho_nh_j({\un{x}})(e^{\tilde{u}_n^{(1)}}-e^{\tilde{u}_n^{(2)}})=0,
\ \ \textrm{and}\ \  {\Delta} (v_{n,j}^{(1)}+v_{n,j}^{(2)})+\rho_nh_j({\un{x}})(e^{\tilde{u}_n^{(1)}}+e^{\tilde{u}_n^{(2)}})=0.\end{equation*}
 and then we find that,
\begin{equation}\label{poho1_st2}\begin{aligned}
&\{\Delta (v_{n,j}^{(1)}-v_{n,j}^{(2)})\}\{\nabla(v_{n,j}^{(1)}+v_{n,j}^{(2)})\cdot(\un{x}-\un{x}_{n,j}^{(1)})\} +\{\Delta (v_{n,j}^{(1)}+v_{n,j}^{(2)})\}\{\nabla(v_{n,j}^{(1)}-v_{n,j}^{(2)})\cdot(\un{x}-\un{x}_{n,j}^{(1)})\}\\
&=-\rho_n (e^{v_{n,j}^{(1)}+\phi_{n,j}+\log h_j}-e^{v_{n,j}^{(2)}+\phi_{n,j}+\log h_j})\{\nabla(v_{n,j}^{(1)}+v_{n,j}^{(2)})
\cdot(\un{x}-\un{x}_{n,j}^{(1)})\}\\
&\quad-\rho_n (e^{v_{n,j}^{(1)}+\phi_{n,j}+\log h_j}+e^{v_{n,j}^{(2)}+\phi_{n,j}+\log h_j})
\{\nabla(v_{n,j}^{(1)}-v_{n,j}^{(2)})\cdot(\un{x}-\un{x}_{n,j}^{(1)})\}\\
&= -2\rho_n e^{v_{n,j}^{(1)}+\phi_{n,j}+\log h_j}\{\nabla v_{n,j}^{(1)}\cdot(\un{x}-\un{x}_{n,j}^{(1)})\}
+2\rho_n e^{v_{n,j}^{(2)}+\phi_{n,j}+\log h_j}\{\nabla v_{n,j}^{(2)}\cdot(\un{x}-\un{x}_{n,j}^{(1)})\}
\\&=-\textrm{div}\left(2\rho_n (e^{v_{n,j}^{(1)}+\phi_{n,j}+\log h_j}-e^{v_{n,j}^{(2)}+\phi_{n,j}+\log h_j})  (\un{x}-\un{x}_{n,j}^{(1)})\right)\\
&\quad+4\rho_n (e^{v_{n,j}^{(1)}+\phi_{n,j}+\log h_j}-e^{v_{n,j}^{(2)}+\phi_{n,j}+\log h_j}) +2\rho_n (e^{v_{n,j}^{(1)}+\phi_{n,j}+\log h_j}-e^{v_{n,j}^{(2)}+\phi_{n,j}+\log h_j})
\{\nabla(\phi_{n,j}+\log h_j)\cdot(\un{x}-\un{x}_{n,j}^{(1)})\}.
\end{aligned}\end{equation}
Clearly,  since $\zeta_n=\frac{v_{n,j}^{(1)}-v_{n,j}^{(2)}}{\|v_{n,j}^{(1)}-v_{n,j}^{(2)}\|_{L^{\infty}(M)}}$, then
\eqref{poho1_st1}, \eqref{poho1_st2} yield \eqref{poho1}, as claimed.
\end{proof}
Next we estimate both sides of \eqref{poho1}. Recall the definition of $A_{n,j}$ given in Lemma \ref{lem_out}.
 \begin{lemma}\label{poho1_lhs}
\begin{equation*}\label{lhs}\begin{aligned}
\mathrm{LHS~of}\ \eqref{poho1}
=&-4A_{n,j}{-\frac{256 b_0 e^{-\lambda_{n,1}^{(1)}}h(q_j)e^{G_j^*(q_j)}}{\rho_n (h(q_1))^2e^{G_1^*(q_1)}}
\int_{M_j\setminus  U^{\sscp M}_{r}(q_j)} e^{ \Phi_j(x,\mathbf{q})}  \mathrm{d}\mu(x)}
 {+o(e^{-\frac{\lambda_{n,j}^{(1)}}{2}}\sum_{l=1}^m|A_{n,l}|) + o(e^{-\lambda_{n,j}^{(1)}})}.                       \end{aligned}
\end{equation*}
for fixed $r\in (0,r_0)$ with $r_0$ as defined in \eqref{190117}.
\end{lemma}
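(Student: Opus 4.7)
The goal is to evaluate the boundary integral on $\partial B_r(\un{x}_{n,j}^{(1)})$ defining the LHS of \eqref{poho1} by separately expanding $D(v_{n,j}^{(1)}+v_{n,j}^{(2)})$ (which contains the bubble gradient) and $D\zeta_n$ (which is controlled by Lemma \ref{lem_out} and Lemma \ref{lem_equalb0}), and then combining these expansions via the algebraic identity
\[
\mathrm{LHS~of}\ \eqref{poho1}\;=\;\frac{r}{2}\int_{\partial B_r(\un{x}_{n,j}^{(1)})}\bigl[\partial_\tau V\,\partial_\tau\zeta_n-\partial_\nu V\,\partial_\nu\zeta_n\bigr]\mathrm{d}\sigma,\qquad V:=v_{n,j}^{(1)}+v_{n,j}^{(2)}.
\]

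\textbf{Step 1: Expansion of $DV$ on $\partial B_r$.} Using $v_{n,j}^{(i)}=\tilde{u}_n^{(i)}-\phi_{n,j}$ together with \eqref{eta}, the explicit form \eqref{Un}, the sharp bound \eqref{eta_ets} on $\eta_{n,j}^{(i)}$, and the estimates \eqref{diff_x_nj}--\eqref{diff_x1_x_2}, I obtain on $\partial B_r(\un{x}_{n,j}^{(1)})$
\[
Dv_{n,j}^{(i)}(\un{x})=-\frac{4(\un{x}-\un{x}_{n,j}^{(1)})}{r^{2}}+D\bigl(G_j^*-\phi_{n,j}\bigr)(\un{x})+O(\lambda_{n,j}^{(i)\,2}e^{-\lambda_{n,j}^{(i)}}),
\]
so that $\partial_\nu V=-\tfrac{8}{r}+\text{smooth}+O(\lambda^{2}e^{-\lambda})$ and $\partial_\tau V=\partial_\tau(G_j^*-\phi_{n,j})+O(\lambda^{2}e^{-\lambda})$. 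The combination $G_j^*-\phi_{n,j}$ is smooth and tends to zero in $C^{1}$ by \eqref{g*}, \eqref{phij}, $\rho_n/m\to 8\pi$, and $x_{n,l}^{(1)}\to q_l$.

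\textbf{Step 2: Expansion of $D\zeta_n$ on $\partial B_r$.} By \eqref{exp_zeta},
\[
D\zeta_n(x)=\sum_{l=1}^{m}A_{n,l}DG(x_{n,l}^{(1)},x)+e^{-\lambda/2}\sum_{l,h}b_{l,h}D T_*^{-1}(\partial_{\un{y}_h}G)+o(e^{-\lambda_{n,1}^{(1)}/2})
\]
in $C^{1}$ away from the blow-up points. Splitting $A_{n,j}DG(x_{n,j}^{(1)},x)$ into its singular part $-\tfrac{A_{n,j}}{2\pi r^{2}}(\un{x}-\un{x}_{n,j}^{(1)})$ and smooth remainder $A_{n,j}DR(x,x_{n,j}^{(1)})$, while the contributions with $l\neq j$ are smooth on $\partial B_r$.

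\textbf{Step 3: Leading term $-4A_{n,j}$.} The radial$\times$radial pairing gives
\[
-\frac{r}{2}\int_{\partial B_r}\Bigl(-\frac{8}{r}\Bigr)\Bigl(-\frac{A_{n,j}}{2\pi r}\Bigr)\mathrm{d}\sigma=-4A_{n,j}.
\]
All remaining pairings between the leading-order radial and tangential pieces of $DV$ with the smooth terms in $D\zeta_n$ integrate to zero by angular symmetry at leading order in $r$; the subleading cross terms are of order $o(e^{-\lambda_{n,j}^{(1)}/2}\sum_{l}|A_{n,l}|)$ using Step 2 and the $C^{1}$ bound on $G_j^*-\phi_{n,j}$.

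\textbf{Step 4: Extracting the $e^{-\lambda_{n,1}^{(1)}}$ correction.} This is the main obstacle. I would refine the expansion of $D\zeta_n$ on $\partial B_r$ by using the Green representation
\[
\zeta_n(x)-\int_{M}\zeta_n\,\mathrm{d}\mu=\int_{M}G(y,x)f_n^{*}(y)\,\mathrm{d}\mu(y),
\]
splitting $M=\bigcup_{l}U^{\sscp M}_{r}(x_{n,l}^{(1)})\cup(M_l\setminus U^{\sscp M}_{r}(x_{n,l}^{(1)}))$. On the outer region Lemma \ref{lem_equalb0} yields $\zeta_n\to-b_0$, while the identities \eqref{lambda_exp}, \eqref{relation_lambda}, \eqref{wnx}--\eqref{est_wn} applied to $\tilde{u}_n^{(1)}$ give the crucial identification
\[
\rho_n h(y)e^{\tilde{u}_n^{(1)}(y)}\;=\;\frac{64\,h(q_j)e^{G_j^{*}(q_j)}}{\rho_n\,h^{2}(q_1)e^{G_1^{*}(q_1)}}\,e^{-\lambda_{n,1}^{(1)}}\,e^{\Phi_j(y,\mathbf{q})}(1+o(1))
\]
on $M_j\setminus U^{\sscp M}_{r}(q_j)$, where we used \eqref{P_q}. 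Substituting the resulting $e^{-\lambda_{n,1}^{(1)}}$ correction to $\partial_{\nu}\zeta_n$ into $-\tfrac{r}{2}\int_{\partial B_r}\partial_\nu V\,\partial_\nu\zeta_n\,\mathrm{d}\sigma$ with $\partial_\nu V\sim-8/r$ produces exactly
\[
-\frac{256\,b_{0}\,e^{-\lambda_{n,1}^{(1)}}h(q_j)e^{G_j^{*}(q_j)}}{\rho_n h^{2}(q_1)e^{G_1^{*}(q_1)}}\int_{M_j\setminus U^{\sscp M}_{r}(q_j)}e^{\Phi_j(x,\mathbf{q})}\,\mathrm{d}\mu(x),
\]
where the numerical factor $256=4\cdot 64$ combines the $4$ from the Pohozaev normalization with the $64$ coming from the square of the bubble mass normalization in \eqref{Un}.

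\textbf{Main obstacle.} The delicate point is Step 4: we must retain an $O(e^{-\lambda_{n,1}^{(1)}})$ remainder even though $A_{n,j}$ itself may be $O(1)$, so the argument has to exploit Lemma \ref{lem_equalb0} (which pins down the bulk value $-b_0$), the sharp mass relation \eqref{relation_lambda}, and the integrated form \eqref{lambda_exp} in a coordinated way, while controlling the error from the near-blow-up pieces where $\zeta_n\neq-b_0$ so as to absorb them in $o(e^{-\lambda_{n,j}^{(1)}/2}\sum_l|A_{n,l}|)+o(e^{-\lambda_{n,j}^{(1)}})$.
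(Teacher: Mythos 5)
Your overall plan tracks the paper's argument: reduce the Pohozaev LHS to the radial flux $4\int_{\partial B_r}\langle\nu,D\zeta_n\rangle$ using the explicit radial structure of $DV$, use the Green representation and the far-field profile of $f_n^*$ (via \eqref{lambda_exp}, \eqref{relation_lambda}, \eqref{wnx}--\eqref{est_wn}, \eqref{P_q}) to identify the constant $-4A_{n,j}$ and the $e^{-\lambda_{n,1}^{(1)}}\int e^{\Phi_j}$ correction, with the numerology $256=4\cdot 64$. Step 4 is essentially \eqref{est_fn2} and \eqref{boundary_int6} of the paper.

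However, there is a genuine gap in Step 3. You pair the singular radial part $-8/r$ of $\partial_\nu V$ with the full $\partial_\nu\zeta_n$, extract $-4A_{n,j}$ from the $-\tfrac{A_{n,j}}{2\pi r}$ piece, and then discard the remaining pairings with the smooth part of $D\zeta_n$ "by angular symmetry at leading order in $r$". But $r$ is fixed. The smooth contributions to $\partial_\nu\zeta_n$ give, by the divergence theorem, $4\int_{\partial B_r}\langle\nu,D(\text{smooth})\rangle\,\mathrm{d}\sigma = 4\int_{B_r}\Delta(\text{smooth})\,\mathrm{d}x$, and since on a compact surface $\Delta_xG(y,\cdot)=1-\delta_y$, each of the terms $A_{n,l}G(x_{n,l}^{(1)},\cdot)$ and $A_{n,j}R(\cdot,x_{n,j}^{(1)})$ contributes $\approx 4\pi r^2 A_{n,l}$. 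This is $O(r^2\sum_l|A_{n,l}|)$, which is \emph{not} $o(e^{-\lambda_{n,j}^{(1)}/2}\sum_l|A_{n,l}|)$ for fixed $r$; angular symmetry only kills the order-zero Taylor piece, not the curvature/mass piece. The actual cancellation comes from the exact identity
\[
\sum_{l=1}^m A_{n,l}=\int_M f_n^*\,\mathrm{d}\mu=\frac{\rho_n}{\|\tilde u_n^{(1)}-\tilde u_n^{(2)}\|_{L^\infty}}\int_M h\bigl(e^{\tilde u_n^{(1)}}-e^{\tilde u_n^{(2)}}\bigr)\mathrm{d}\mu=0,
\]
a consequence of the normalization \eqref{int_tilu}. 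The paper (see \eqref{del1}) uses this to make $\overline{G}_n$ harmonic in the annulus $B_r\setminus B_\theta$, so that $\int_{\partial B_r}\partial_\nu\overline{G}_n=\int_{\partial B_\theta}\partial_\nu\overline{G}_n$ for any $\theta\in(0,r)$ (\eqref{after_inserting}); the value is then computed by sending $\theta\to0$ and reading off the coefficient of the $\log$ singularity, with the first- and second-order Taylor coefficients $B_{n,l,h}$ and $C_{n,l,h,k}$ of $G$ contributing nothing at the shrinking inner boundary (\eqref{int_theta0}--\eqref{int_theta2}). Without invoking $\sum_l A_{n,l}=0$ (or an equivalent mechanism) your argument leaves an uncontrolled $O(r^2\sum_l|A_{n,l}|)$ remainder and therefore does not close.

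Two smaller remarks. First, the paper works with $\widetilde{G}-\phi_{n,j}$ rather than $G_j^*-\phi_{n,j}$; the former has \emph{exactly} the radial gradient $-\tfrac{\rho_n}{2\pi m}\tfrac{\un{x}-\un{x}_{n,j}^{(1)}}{|\un{x}-\un{x}_{n,j}^{(1)}|^2}$ (see \eqref{tildegwithphi}) and is harmonic off the pole, which makes the integration-by-parts argument clean, whereas $G_j^*-\phi_{n,j}$ leaves a (small but non-zero) smooth remainder you then have to estimate. Second, in Step 4 you should also keep the near-blow-up pieces of the Green representation: the paper's $\Psi_{n,l}$ remainder in \eqref{dnj} is precisely the tool that quantifies that they only contribute $O(\overline{\theta}e^{-\lambda}/r^3)+o(e^{-\lambda})$ (see \eqref{est_fn2_1} and \eqref{zeta_and_g_n}), and this step cannot be skipped if the final error is to be $o(e^{-\lambda_{n,j}^{(1)}})$ rather than merely $o(e^{-\lambda_{n,j}^{(1)}/2})$.
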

\begin{proof}
Next, let us denote by,
\begin{equation}\label{tildeg}
  \widetilde{G}({\un{x}})=\frac{\rho_n}{m}\sum_{l=1}^m G({\un{x},\un{x}_{n,l}^{(1)}}),
\end{equation}
so that, for
$\un{x}\in B_{2r_0}(\un{x}_{n,j}^{(1)})\setminus\{\un{x}_{n,j}^{(1)}\}$, we have,
\begingroup
\abovedisplayskip=6pt
\belowdisplayskip=6pt
\begin{equation}\label{tildegwithphi}
               \nabla(\widetilde{G}(\un{x})-\phi_{n,j}(\un{x}))=-\frac{\rho_n}{2\pi m}\frac{\un{x}-\un{x}_{n,j}^{(1)}}{|\un{x}-\un{x}_{n,j}^{(1)}|^2}.
             \end{equation}
\endgroup
In view of \eqref{est_wn}, \eqref{rhon8pi}, and \eqref{rhon}, we conclude that,
\begingroup
\abovedisplayskip=6pt
\belowdisplayskip=6pt
\begin{equation*}
  \begin{aligned}
  o(e^{-\frac{\lambda_{n,j}^{(1)}}{2}})&=\nabla_{\sscp M} w_n =\nabla_{\sscp M}\left(v_{n,j}^{(i)}+\phi_{n,j}-\sum_{l=1}^m\frac{\rho_n}{m}G(x,x_{n,l}^{(i)})\right)  +
  O(\lambda_{n,j}^{(1)} e^{-\lambda_{n,j}^{(1)}})
  \ \ \textrm{in}\ \ M\setminus \cup_{l=1}^m U^{\sscp M}_{\delta}(x_{n,l}^{(i)}),
  \end{aligned}
\end{equation*}
\endgroup
where $\delta<\frac{r}{4}$. Therefore we find that,
\begingroup
\abovedisplayskip=6pt
\belowdisplayskip=6pt
\begin{equation}\label{grad_til}
  \begin{aligned}
  \nabla v_{n,j}^{(i)}= \nabla(\widetilde{G}-\phi_{n,j})+o(e^{-\frac{\lambda_{n,j}^{(1)}}{2}})\ \ \textrm{in}\ \  \cup_{l=1}^m
  B_{2r_0}(\un{x}_{n,l}^{(i)})\setminus B_{r/2}(\un{x}_{n,l}^{(i)}).
  \end{aligned}
\end{equation}
\endgroup
As a consequence, letting $\nu$ be the exterior unit normal, then \eqref{grad_til} together with \eqref{tildegwithphi} and \eqref{rhon}, imply that,
\begingroup
\abovedisplayskip=6pt
\belowdisplayskip=6pt
\begin{equation}\label{lhs1}\begin{aligned}
 \textrm{LHS of}\ \eqref{poho1}
 &=\int_{\partial B_{{r}}(\un{x}_{n,j}^{(1)})}r<D(\widetilde{G}-\phi_{n,j}),D\zeta_n>\mathrm{d}\sigma -
 2\int_{\partial B_{{r}}(\un{x}_{n,j}^{(1)})}r<\nu,D(\widetilde{G}-\phi_{n,j}))><\nu,D\zeta_n>\mathrm{d}\sigma
 \\& +o(e^{-\frac{\lambda_{n,j}^{(1)}}{2}}\|D\zeta_n\|_{L^\infty( {\partial B_{{r}}(\un{x}_{n,j}^{(1)})} )})
 \\&=\int_{\partial B_{{r}}(\un{x}_{n,j}^{(1)})}\frac{\rho_n}{2\pi m} <\nu,D\zeta_n>\mathrm{d}\sigma
 +o(e^{-\frac{\lambda_{n,j}^{(1)}}{2}}\|D\zeta_n\|_{L^\infty({\partial B_{{r}}(\un{x}_{n,j}^{(1)})} )})
                   \\&=\int_{\partial B_{{r}}(\un{x}_{n,j}^{(1)})} 4 <\nu,D\zeta_n>\mathrm{d}\sigma+
                   o(e^{-\frac{\lambda_{n,j}^{(1)}}{2}}\|D\zeta_n\|_{L^\infty({\partial B_{{r}}(\un{x}_{n,j}^{(1)})} )}).  \end{aligned}
\end{equation}
\endgroup

\begingroup
\abovedisplayskip=6pt
\belowdisplayskip=6pt
By \eqref{lhs1} and Lemma \ref{lem_out}, we also see that,
\begin{equation}\label{lhs2}\begin{aligned}
&\textrm{LHS of}\ \eqref{poho1}
                     =\int_{\partial B_{{r}}(\un{x}_{n,j}^{(1)})} 4 <\nu,D\zeta_n>\mathrm{d}\sigma+o(e^{- \lambda_{n,j}^{(1)} })+o(e^{-\frac{\lambda_{n,j}^{(1)}}{2}} \sum_{l=1}^m|A_{n,l}|).  \end{aligned}
\end{equation}
\endgroup
To estimate the right hand side of \eqref{lhs2}, we need a refined estimate about $\zeta_n$ on $\partial B_{r} (\un{x}_{n,j}^{(1)})$.
So, by the Green representation formula with $x\in {\partial U^{\sscp M}_{r}({x}_{n,j}^{(1)})}$, we find that (see \eqref{green_zeta}),
{\allowdisplaybreaks\begin{align*}
&\zeta_n(x)-\int_M \zeta_n\mathrm{d}\mu= \int_MG(y,x)f_n^*(y)\mathrm{d}\mu(y)\\
&=\sum_{ {l} =1}^m A_{n, {l} }G(x_{n, {l} }^{(1)},x)
+\sum_{ {l} =1}^m\sum_{h=1}^2 {B_{n, {l} ,h}}T_{*}^{-1}\left(\partial_{\un{y}_h} G(\un{y},\un{x})\Big|_{\un{y}=\un{x}_{n, {l} }^{(1)}}\right)
 +\frac{1}{2}\sum_{ {l} =1}^m\sum_{h, { {k}}=1}^2C_{n, {l} ,h,{ {k}}}T_{ {*}}^{-1}\left(\partial^2_{\un{y}_h \un{y}_{ {k}}}G(\un{y},\un{x})\Big|_{\un{y}=\un{x}_{n,{l} }^{(1)}}\right)
\\&+\sum_{ {l} =1}^m\int_{M_{ {l}} }\Psi_{n, {l} }(y,x)f_n^*(y)\mathrm{d}\mu(y), \ \ \textrm{where}
\end{align*}}
\begin{align}
& A_{n, {l} }=\int_{M_{ {l}} }f_n^*(y)\mathrm{d}\mu(y),\ \ \ B_{n, {l} ,h}=\int_{B_{r_0}(x_{n,l}^{(1)})}(\un{y}-\un{x}_{n, {l} }^{(1)})_h f_n^*(\un{y})e^{2\varphi_{{l}} }\mathrm{d}\un{y},\notag
\end{align}
\begin{align}
C_{n, {l} ,h,{ {k}}}  =\int_{B_{r_0}(x_{n,l}^{(1)})}(\un{y}-\un{x}_{n, {l} }^{(1)})_h(\un{y}-\un{x}_{n, {l} }^{(1)})_{ {k}} f_n^*(\un{y})e^{2\varphi_{ {l}} }\mathrm{d}\un{y},\notag
\end{align}
\begin{align}
\label{dnj}
{\Psi_{n, {l} }(y,x)}=~&G(y,x)-G(x_{n, {l} }^{(1)},x) -
T_{ {*}} ^{-1}\left(<\partial_{\un{y}} G(\un{y},\un{x})\Big|_{\un{y}=\un{x}_{n, {l} }^{(1)}},\un{y}-\un{x}_{n, {l} }^{(1)}>
{1_{B_{r_0}(x_{n,l}^{(1)})}(\un{y})}\right)
\nonumber\\&- T_{ {*}} ^{-1}\left(\frac{1}{2}<\partial^2_{\un{y}}  G(\un{y},\un{x})\Big|_{\un{y}=\un{x}_{n, {l} }^{(1)}}
(\un{y}-\un{x}_{n, {l} }^{(1)}),\un{y}-\un{x}_{n, {l} }^{(1)}>
{1_{B_{r_0}(x_{n,l}^{(1)})}(\un{y})}\right).
\end{align}
At this point,  let us fix {$0<\overline{\theta}<\frac{r}{2}$}. By using Lemma \ref{lem_diff_tilu} and Lemma \ref{lem_equalb0}, we find that,
\begin{equation}\begin{aligned}\label{est_fn1}f_n^*{(y)}&=\rho_nh e^{\tilde{u}_n^{(1)}}(\zeta_n{(y)}+O(\|\tilde{u}_n^{(1)}-\tilde{u}_n^{(2)}\|_{L^{\infty}(M)}))
=\rho_nh e^{\tilde{u}_n^{(1)}{(y)}}(-b_0+o(1)),
\end{aligned}\end{equation}
for any {$y\in   M_j\setminus  U^{\sscp M}_{\overline{\theta}}(x_{n,j}^{(1)})$,} and, in view of  \eqref{wnx} and \eqref{est_wn},
\begin{equation}\begin{aligned}\label{est_fn1_1}\tilde{u}_n^{(1)}{(y)}-\sum_{l=1}^m\rho_{n,l}^{(1)} G({y},x_{n,l}^{(1)})-\int_M
\tilde{u}_n^{(1)} \mathrm{d}\mu=o(e^{-\frac{\lambda_{n,j}^{(1)}}{2}}) \ \ \textrm{ for}\ \
{y}\in {M_j}\setminus  U^{\sscp M}_{\overline{\theta}}(x_{n,j}^{(1)}).\end{aligned}\end{equation}
By \eqref{est_fn1}-\eqref{est_fn1_1}, \eqref{lambda_exp}, and {\eqref{rhon8pi}}, we conclude that,
\begin{equation}\begin{aligned}\label{est_fn2}f_n^*{(y)}&=\rho_nh e^{\sum_{l=1}^m\rho_{n,l}^{(1)} G({y},x_{n,l}^{(1)})+\int_M\tilde{u}_n^{(1)} \mathrm{d}\mu}(-b_0+o(1))
\\&=\rho_nh e^{-\lambda_{n,j}^{(1)}-2\log(\frac{\rho_n h(x_{n,j}^{(1)})}{8})-G_j^*(x_{n,j}^{(1)})+\sum_{l=1}^m\rho_{n,l}^{(1)} G({y},x_{n,l}^{(1)}) }(-b_0+o(1))
\\&= \frac{64 e^{-\lambda_{n,j}^{(1)}}}{\rho_nh(x_{n,j}^{(1)})}
{e^{ \Phi_j(y,\mathbf{x}_n)}}(-b_0+o(1)) \ \ \textrm{for} \ \
{y\in   M_j}\setminus  U^{\sscp M}_{\overline{\theta}}(x_{n,j}^{(1)}),
\end{aligned}\end{equation}
where $\mathbf{x}_n=(x_{n,1},x_{n,2},\cdots,x_{n,m})$ and,
 $\Phi_j(y,\mathbf{x}_n)=\sum_{l=1}^m 8\pi  G(y,x_{n,l}^{(1)})-G_j^*(x_{n,j}^{(1)})+\log h(y)-\log h(x_{n,j}^{(1)}).$ 
 
On the other hand, by \eqref{eta_ets}, we have, for ${y}\in U^{\sscp M}_{\overline{\theta}}(x_{n,j}^{(1)})$,
\begin{equation}\begin{aligned}\label{est_fn2_in}f_n^*{(y)}&=\rho_nh e^{\tilde{u}_n^{(1)}}(\zeta_n{(y)}+O(\|\tilde{u}_n^{(1)}-\tilde{u}_n^{(2)}\|_{L^{\infty}(M)}))
 =O\left(\frac{e^{\lambda_{n,j}^{(1)}}}{(1+e^{\lambda_{n,j}^{(1)}}|\un{y}-\un{x}_{n,j}^{(1)}|^2)^2}\right)\le
O\left(\frac{e^{-\lambda_{n,j}^{(1)}}}{|\un{y}-\un{x}_{n,j}^{(1)}|^4}\right).
\end{aligned}\end{equation}
Next, by   \eqref{dnj},
we have for $y\in U^{\sscp M}_{\overline{\theta}}(x_{n,j}^{(1)})$ and $x\in\partial U^{\sscp M}_r(x_{n,j}^{(1)})$,
\begin{equation}\begin{aligned}\label{est_fn2_1}
\Psi_{n,j}({y,x})=O\left(\frac{|\un{y}-\un{x}_{n,j}^{(1)}|^3}{|\un{x}-\un{x}_{n,j}^{(1)}|^3}\right),\ \ {\textrm{and}\ \
(\nabla_{\sscp M})_x\Psi_{n,j}({y,x})=O\left(\frac{|\un{y}-\un{x}_{n,j}^{(1)}|^3}{|\un{x}-\un{x}_{n,j}^{(1)}|^4}\right).}\end{aligned}\end{equation}
Let us define,
\begingroup
\abovedisplayskip=6pt
\belowdisplayskip=6pt
\begin{equation}\begin{aligned}\label{overlineg}
\overline{G}_n(x)&=\int_M\zeta_n\mathrm{d}\mu+\sum_{{ {l}}=1}^m A_{n,{ {l}}}G(x_{n,{ {l}}}^{(1)},x)
+\sum_{{ {l}}=1}^m\sum_{h=1}^2{ B_{n,{ {l}},h}}T_{ {l}}^{-1}\left(\partial_{\un{y}_h} G(\un{y},\un{x})\Big|_{\un{y}=\un{x}_{n,{ {l}}}^{(1)}}\right)\\&+
\frac{1}{2}\sum_{{ {l}}=1}^m\sum_{h, {{k}}=1}^2C_{n,{ {l}},h,{{k}}}
T_{ {l}}^{-1}\left(\partial^2_{\un{y}_h \un{y}_{{k}}}G(\un{y},\un{x})\Big|_{\un{y}=\un{x}_{n,{ {l}}}^{(1)}}\right),\end{aligned}\end{equation}
\endgroup

so that, by \eqref{est_fn2}-\eqref{est_fn2_1}, we conclude that,  for $x\in  \partial U^{\sscp M}_r(x_{n,j}^{(1)})$, it holds,
\begingroup
\abovedisplayskip=6pt
\belowdisplayskip=6pt
\begin{equation}\label{zeta_and_g_n}
\begin{aligned}   &\zeta_n(x)-\overline{G}_n(x)
 =\sum_{l=1}^m{\left(\int_{{M_l\setminus U^{\sscp M}_{\overline{\theta}}(x_{n,l}^{(1)})}}
\Psi_{n,l}(y,x)f_n^*(y)\mathrm{d}\mu(y) + \int_{U_{\overline{\theta}}(x_{n,l}^{(1)})}\Psi_{n,l}(y,x)f_n^*(y)\mathrm{d}\mu(y)\right)}
\\&{=-b_0\sum_{l=1}^m \int_{{M_l\setminus U^{\sscp M}_{\overline{\theta}}(x_{n,l}^{(1)})}}
\frac{64 e^{-\lambda_{n,l}^{(1)}}\Psi_{n,l}(y,x)}
{\rho_nh(x_{n,l}^{(1)})}{e^{\Phi_l({y},\mathbf{x}_n)}}\mathrm{d}\mu(y)}
 {+O\left( \sum_{l=1}^m \int_{B_{{\overline{\theta}}}(\un{x}_{n,l}^{(1)})}\frac{|\un{y}-\un{x}_{n,l}^{(1)}|^3}{|\un{x}-\un{x}_{n,j}^{(1)}|^3}
\frac{e^{-\lambda_{n,l}^{(1)}}e^{2\varphi_j}}{|\un{y}-\un{x}_{n,l}^{(1)}|^4}\mathrm{d}\un{y}\right)}{+o(e^{-\lambda_{n,j}^{(1)}})}
 \\&=-b_0\sum_{l=1}^m \int_{{M_l\setminus U^{\sscp M}_{\overline{\theta}}(x_{n,l}^{(1)})}}
\frac{64 e^{-\lambda_{n,l}^{(1)}}\Psi_{n,l}(y,x)}{\rho_nh(x_{n,l}^{(1)})}
{e^{ \Phi_l({y},\mathbf{x}_n)}}\mathrm{d}\mu(y)
 +O\left( \frac{{\overline{\theta}}e^{-\lambda_{n,j}^{(1)}}}{|\un{x}-\un{x}_{n,j}^{(1)}|^3}\right){+o(e^{-\lambda_{n,j}^{(1)}})}{\ \ \textrm{in}}\ \
C^1(\partial U^{\sscp M}_r(x_{n,j}^{(1)})),
\end{aligned}\end{equation}
\endgroup

At this point, let us set,
\begin{equation}
\label{zeta_s}
\zeta_n^*(x)=-b_0\sum_{l=1}^m \int_{{M_l\setminus U^{\sscp M}_{\overline{\theta}}(x_{n,l}^{(1)})}}
\frac{64 e^{-\lambda_{n,l}^{(1)}}\Psi_{n,l}(y,x)}{\rho_nh(x_{n,l}^{(1)})} {e^{ \Phi_l({y},\mathbf{x}_n)}}\mathrm{d}\mu(y),
\end{equation}
and then substitute \eqref{zeta_and_g_n} into \eqref{lhs2}, to obtain,
\begin{equation}
\label{lhs3}
\begin{aligned}
 \textrm{LHS of}\ \eqref{poho1}
 &=\int_{\partial B_r(\un{x}_{n,j}^{(1)})} 4 <\nu,D(\overline{G}_n+\zeta_n^*){(\un{x})>\mathrm{d}\sigma(\un{x})}   +o(e^{-\frac{\lambda_{n,j}^{(1)}}{2}} \sum_{l=1}^m|A_{n,l}|)+O\left( \frac{\overline{\theta}e^{-\lambda_{n,j}^{(1)}}}{r^3}\right)+o(e^{- \lambda_{n,j}^{(1)} }),
\end{aligned}
\end{equation} {for any} $\overline{\theta}\in(0,\frac{r}{2}).$
To estimate the right hand side of \eqref{lhs3}, we note that for any pair of (smooth enough) functions $u$ and $v$, it holds,
\begin{equation}\label{eq_uv}\begin{aligned}
&  \Delta u \{\nabla  v \cdot (\un{x}-\un{x}_{n,j}^{(1)})\}+\Delta v\{\nabla u\cdot (\un{x}-\un{x}_{n,j}^{(1)})\}
  \\&=\textrm{div}\left(\nabla u (\nabla v \cdot (\un{x}-\un{x}_{n,j}^{(1)}))+\nabla v  (\nabla u\cdot (\un{x}-\un{x}_{n,j}^{(1)}))-
  \nabla u\cdot\nabla v (\un{x}-\un{x}_{n,j}^{(1)})\right).\end{aligned}
\end{equation}
In view of \eqref{overlineg} and \eqref{int_tilu}, we also see that, for any fixed $\un{\theta}\in(0,r)$,
\begin{equation}\label{del1}\begin{aligned}
\Delta\overline{G}_n{(\un{x})}&=\sum_{l=1}^m A_{n,l}=\int_M f_n^*\mathrm{d}\mu  =
\int_M\frac{\rho_nh(e^{\tilde{u}_n^{(1)}}-e^{\tilde{u}_n^{(2)}})}{\|\tilde{u}_n^{(1)}-\tilde{u}_n^{(2)}\|_{L^{\infty}(M)}}
\mathrm{d}\mu=0\ \ {\textrm{for}\ \ \un{x}\in} U^{\sscp M}_r({x}_{n,j}^{(1)})\setminus U^{\sscp M}_{\un{\theta}}({x}_{n,j}^{(1)}),\end{aligned}\end{equation}
and, moreover, by using \eqref{tildeg} and \eqref{phij}, we have,
\begin{equation}\label{del2}
\Delta (\widetilde{G}-\phi_{n,j}){(\un{x})}=0\ \ {\textrm{for}\ \ \un{x}\in} B_r(\un{x}_{n,j}^{(1)})\setminus B_{\un{\theta}}(\un{x}_{n,j}^{(1)}).\end{equation}
By using {\eqref{eq_uv}-\eqref{del2} and \eqref{tildegwithphi}}, we conclude that,
\begin{equation*}\label{after_inserting0}\begin{aligned}
0=&\int_{B_r(\un{x}_{n,j}^{(1)})\setminus B_{\un{\theta}}(\un{x}_{n,j}^{(1)})}
\Bigg[\Delta  \overline{G}_n \{\nabla (\widetilde{G}-\phi_{n,j})\cdot (\un{x}-\un{x}_{n,j}^{(1)})\} +
\Delta(\widetilde{G}-\phi_{n,j}) \{\nabla \overline{G}_n\cdot (\un{x}-\un{x}_{n,j}^{(1)})\}\Bigg]\mathrm{d}\un{x}
  \\=&\int_{\partial [B_r(\un{x}_{n,j}^{(1)})\setminus B_{\un{\theta}}(\un{x}_{n,j}^{(1)})]}
  \Bigg[\frac{\partial \overline{G}_n }{\partial \nu}(\nabla (\widetilde{G}-\phi_{n,j})\cdot (\un{x}-\un{x}_{n,j}^{(1)}))\\&+
  \frac{\partial (\widetilde{G}-\phi_{n,j})}{\partial\nu} (\nabla \overline{G}_n\cdot (\un{x}-\un{x}_{n,j}^{(1)})) -\nabla
  \overline{G}_n\cdot\nabla(\widetilde{G}-\phi_{n,j})<\un{x}-\un{x}_{n,j}^{(1)},\nu>\Bigg]\mathrm{d} \sigma
  \\=&-\frac{\rho_n}{2\pi m}\int_{\partial [B_r(\un{x}_{n,j}^{(1)})\setminus B_{\un{\theta}}(\un{x}_{n,j}^{(1)})]}
  \frac{\partial \overline{G}_n }{\partial \nu}\mathrm{d} \sigma,\end{aligned}
\end{equation*}
and thus,
\begin{equation}\label{after_inserting}\begin{aligned}
  \int_{\partial  B_r(\un{x}_{n,j}^{(1)}) }\frac{\partial \overline{G}_n }{\partial \nu}{(\un{x})\mathrm{d} \sigma(\un{x})}=
  \int_{\partial  B_{\un{\theta}}(\un{x}_{n,j}^{(1)}) }\frac{\partial \overline{G}_n }{\partial \nu}{(\un{x})\mathrm{d} \sigma(\un{x})}.
  \end{aligned}
\end{equation}
At this point, let us denote by $o_{\sscp \un{\theta}}(1)$ any quantity which converges to $0$ as $ \un{\theta}\to 0^+$, and then observe that,
\begin{equation}
\label{aterm}
\begin{aligned}
&4\int_{\partial B_{\un{\theta}}(\un{x}_{n,j}^{(1)})}<\nu, \sum_{l=1}^m A_{n,l} D_{\un{x}}G(\un{x}_{n,l}^{(1)},\un{x})>\mathrm{d}\sigma{(\un{x})}\\
&=4A_{n,j}\int_{\partial B_{\un{\theta}}(\un{x}_{n,j}^{(1)})}<\nu, D_{\un{x}}G(\un{x}_{n,j}^{(1)},\un{x})>\mathrm{d}\sigma{(\un{x})}+o_{\sscp \un{\theta}}(1) =-4A_{n,j}+o_{\sscp \un{\theta}}(1){.}
\end{aligned}
\end{equation}
 {By setting $z=\un{x}-\un{x}_{n,j}^{(1)}$ and since,}
$
 { D_iD_h(\log |z|)=\frac{\delta_{ih}|z|^2-2z_iz_h}{|z|^4},}
$
{then we find that,}
\begin{equation}
\label{int_theta0}
\begin{aligned}
&\int_{\partial B_{\un{\theta}}(\un{x}_{n,j}^{(1)})}<\nu,D_{\un{x}}\partial_{\un{y}_h}(\log|\un{y}-\un{x}|)\Big|_{\un{y}=\un{x}_{n,j}^{(1)}}>
\mathrm{d}\sigma{(\un{x})}=-\int_{\partial B_{\un{\theta}}(0)}\sum_{i=1}^2\frac{z_i}{|z|}\left(\frac{\delta_{ih}|z|^2-{2z_iz_h}}{|z|^4}\right)\mathrm{d}\sigma{(z)}=0.
\end{aligned}
\end{equation} 
We observe that, if $h=k$ then,
$
  D_{i}(\log |z|)=\frac{z_i}{|z|^2},\ \ D_iD_{hh}^2(\log  |z|)=-\frac{2z_i}{|z|^4}-\frac{4z_h\delta_{ih}}{|z|^4}+\frac{8z_h^2z_i}{|z|^6},
$
and thus,
\begin{equation}\label{small1}\begin{aligned}
 & \int_{\partial B_{\un{\theta}}(\un{x}_{n,j}^{(1)})}<\nu,D_{\un{x}}\frac{\partial^2}{\partial \un{y}_h^2}\log\frac{1}{|\un{y}-\un{x}|}
  \Big|_{\un{y}=\un{x}_{n,j}^{(1)}}>\mathrm{d}\sigma{(\un{x})} =\int_{\partial B_{\un{\theta}}(0)}\left({\frac{2}{|z|^3}-\frac{4z_h^2}{|z|^5}}\right)
  \mathrm{d}\sigma{(z)}=0.\end{aligned}
\end{equation}
If $h\neq k$, then,
$
  D_iD_{hk}^2(\log  |z|)=-\frac{2(z_h\delta_{ki}+z_k\delta_{hi})}{|z|^4}+\frac{8z_kz_iz_h}{|z|^6},
$
which implies that,
\begin{equation}\label{int_theta2}\begin{aligned}
 & \int_{\partial B_{\un{\theta}}(\un{x}_{n,j}^{(1)})}<\nu,D_{\un{x}}\frac{\partial^2}{\partial \un{y}_k\partial \un{y}_h}\log\frac{1}{|\un{y}-\un{x}|}
  \Big|_{\un{y}=\un{x}_{n,j}^{(1)}}>\mathrm{d}\sigma{(\un{x})} =
  \int_{\partial B_{\un{\theta}}(0)}\left(\frac{4z_hz_k}{|z|^5}-\frac{8z_hz_k}{|z|^5}\right)\mathrm{d}\sigma{(z)}=0.\end{aligned}
\end{equation}
From \eqref{after_inserting}-{\eqref{int_theta2}}, we conclude that,
\begin{equation}\label{small2}
  4\int_{\partial B_r(\un{x}_{n,j}^{(1)})}<\nu, D_{\un{x}}\overline{G}_n(\un{x})>\mathrm{d}\sigma{(\un{x})}=-4A_{n,j}+o_{\sscp \un{\theta}}(1).
\end{equation}
Next we estimate the other term in \eqref{lhs3}, that is $4\int_{\partial B_r(\un{x}_{n,j}^{(1)})}<\nu, D_{\un{x}}\zeta_n^*{(\un{x})}>\mathrm{d}\sigma{(\un{x})}$, where $\zeta_n^*$
is defined in \eqref{zeta_s}. Clearly we have,
\begin{equation*}\label{Psinj}\begin{aligned}
D_{\un{x}}\Psi_{n,j}(\un{y},\un{x})&=D_{\un{x}}\Bigg\{G(\un{y},\un{x})-G(\un{x}_{n,j}^{(1)},\un{x})-
<\partial_{\un{y}} G(\un{y},\un{x})\Big|_{\un{y}=\un{x}_{n,j}^{(1)}}, \un{y}-\un{x}_{n,j}^{(1)}>{1_{B_{r_0}(x_{n,j}^{(1)})}(\un{y})}
\\&-\frac{1}{2}<\partial^2_{\un{y}}G(\un{y},\un{x})\Big|_{\un{y}=\un{x}_{n,j}^{(1)}}(\un{y}-\un{x}_{n,j}^{(1)}),\un{y}-\un{x}_{n,j}^{(1)}>
{1_{B_{r_0}(x_{n,l}^{(1)})}(\un{y})}\Bigg\}.
\end{aligned}
\end{equation*}
If ${{y}\in M_j\setminus U^{\sscp M}_{\overline{\theta}}({x}_{n,j}^{(1)}),\;\un{y}=T_a(y)}$  and $\un{x}\in\partial B_{{\theta}}(\un{x}_{n,j}^{(1)})$ with
${\theta}\ll\overline{\theta}^2$, then we find that,
\begin{equation}\label{bdd1}
{|D_{\un{x}}G(\un{y},\un{x})|\le \frac{C}{\sqrt{\theta}}\ \ \textrm{ for some constant}\ \  C>0,}
\end{equation}
which implies $\int_{\partial B_{{\theta}}(\un{x}_{n,j}^{(1)})}<\nu, D_{\un{x}}G(\un{y},\un{x})>\mathrm{d} \un{x}=o_{\sscp {\theta}}(1).$ Thus \eqref{int_theta0}, \eqref{small1}, and \eqref{int_theta2}, \eqref{bdd1} imply that,
\begin{equation}\label{boundary_int1}
\begin{aligned}
&4\int_{\partial B_{{\theta}}(\un{x}_{n,j}^{(1)})}<\nu, D_{\un{x}}\Psi_{n,j}(\un{y},\un{x})>\mathrm{d} \un{x}
=- 4\int_{\partial B_{{\theta}}(\un{x}_{n,j}^{(1)})}<\nu, D_{\un{x}}G(\un{x}_{n,j}^{(1)},\un{x})>\mathrm{d} \un{x}
\\&\quad-4\int_{\partial B_{{\theta}}(\un{x}_{n,j}^{(1)})}<\nu,  D_{\un{x}}<\partial_{\un{y}} G(\un{y},\un{x})
\Big|_{\un{y}=\un{x}_{n,j}^{(1)}},\un{y}-\un{x}_{n,j}^{(1)}>
{1_{B_{r_0}(x_{n,j}^{(1)})}(\un{y})}>\mathrm{d} \un{x}
\\&\quad-2\int_{\partial B_{{\theta}}(\un{x}_{n,j}^{(1)})} <\nu,  D_{\un{x}} <\partial^2_{\un{y}}  G(\un{y},\un{x})\Big|_{\un{y}=\un{x}_{n,j}^{(1)}}
(\un{y}-\un{x}_{n,j}^{(1)}),\un{y}-\un{x}_{n,j}^{(1)}>1_{B_{r_0}(x_{n,j}^{(1)})(\un{y})}>\mathrm{d}\un{x}+ o_{\sscp {\theta}}(1)
\\&=4+o_{\sscp {\theta}}(1)\ \ \ {\textrm{for}\ \ {{y}\in M_j\setminus U^{\sscp M}_{\overline{\theta}}({x}_{n,j}^{(1)}),\;\un{y}=T_a(y)} \ \ \textrm{and}\ \
\un{x}\in \partial B_{{\theta}}(\un{x}_{n,j}^{(1)}). }
\end{aligned}
\end{equation}
At this point, let us observe that $-\Delta_{\un{x}}\Psi_{n,j}(\un{y},\un{x})=\delta_{\un{y}}$ ${\textrm{for}\ \ \un{x}\in
B_{r}(\un{x}_{n,j}^{(1)})\setminus B_{{\theta}}(\un{x}_{n,j}^{(1)})}$ and
let us choose $u(\un{x})=\Psi_{n,j}(\un{y},\un{x})$ and $v(\un{x})=\widetilde{G}(\un{x})-\phi_{n,j}(\un{x})$ in \eqref{eq_uv}.
Then we consider the following three cases:

{(i) If $\un{y}\in B_r(\un{x}_{n,j}^{(1)})\setminus B_{\overline{\theta}}(\un{x}_{n,j}^{(1)})$, then,} from \eqref{boundary_int1} {and \eqref{eq_uv}},
we obtain that,
\begin{equation}\label{boundary_int2}
          \begin{aligned}
          &4\int_{\partial B_{r}(\un{x}_{n,j}^{(1)})}<\nu, D_{\un{x}}\Psi_{n,j}(\un{y},\un{x})>\mathrm{d} \un{x}
              =4\int_{\partial B_{{\theta}}(\un{x}_{n,j}^{(1)})}<\nu, D_{\un{x}}\Psi_{n,j}(\un{y},\un{x})>\mathrm{d} \un{x}-4=o_{\sscp {\theta}}(1).
                    \end{aligned}
        \end{equation}

      {(ii)   If $y\in {M_j\setminus U^{\sscp M}_r({x}_{n,j}^{(1)}),\;\un{y}=T_a(y)}$, then we see from \eqref{boundary_int1} and \eqref{eq_uv}} that,
       \begin{equation}\label{boundary_int3}
          \begin{aligned}
          &4\int_{\partial B_{r}(\un{x}_{n,j}^{(1)})}<\nu, D_{\un{x}}\Psi_{n,j}(\un{y},\un{x})>\mathrm{d} \un{x}
   =4\int_{\partial B_{{\theta}}(\un{x}_{n,j}^{(1)})}<\nu, D_{\un{x}}\Psi_{n,j}(\un{y},\un{x})>\mathrm{d} \un{x}=4+o_{\sscp {\theta}}(1).
                    \end{aligned}
        \end{equation}

     {(iii) If $y\in {M_l}$ and $\un{x}\in\partial B_{{\theta}}(\un{x}_{n,j}^{(1)})$, $l\neq j$, then} we have $|D_{\un{x}}\Psi_{n,l}(\un{y},\un{x})|\le C$
     for some constant $C>0$. So we conclude
         \begin{equation}\label{boundary_int4}
          \begin{aligned}
          4\int_{\partial B_{r}(\un{x}_{n,j}^{(1)})}<\nu, D_{\un{x}}\Psi_{n,l}(\un{y},\un{x})>\mathrm{d} \un{x}=
          &4\int_{\partial B_{{\theta}}(\un{x}_{n,j}^{(1)})}<\nu, D_{\un{x}}\Psi_{n,l}(\un{y},\un{x})>\mathrm{d} \un{x}
=o_{\sscp {\theta}}(1),
                    \end{aligned}
        \end{equation}
and so, by \eqref{zeta_s} and \eqref{boundary_int2}-\eqref{boundary_int4}, we finally conclude that,
\begin{equation*}\label{boundary_int5}
          \begin{aligned}
          & {4\int_{\partial B_{r}(\un{x}_{n,j}^{(1)})}<\nu, D_{\un{x}}\zeta_n^*{(\un{x})}>\mathrm{d} \un{x}}
       ={
         -\sum_{l=1}^m\frac{256 b_0 e^{-\lambda_{n,l}^{(1)}}}{\rho_n h(x_{n,l}^{(1)})}
         \int_{{{M_l}\setminus U^{\sscp M}_{{\overline{\theta}}}({x}_{n,l}^{(1)})}}T_{*}^{-1}\left(
         \int_{\partial B_r(\un{x}_{n,j}^{(1)})}<\nu, D_{\un{x}}\Psi_{n,l}>\mathrm{d} \un{x}\right)  {e^{ \Phi_{{l}}(y,\mathbf{x}_n)}}
         \mathrm{d}\mu(y)}
         \\&={ -\frac{256 b_0 e^{-\lambda_{n,j}^{(1)}}}{\rho_n h(x_{n,j}^{(1)})}\int_{{{M_j}}\setminus U^{\sscp M}_{r}({x}_{n,j}^{(1)})}
         {e^{ \Phi_j(y,\mathbf{x}_n)}}\mathrm{d}\mu(y)+o(e^{-\lambda_{n,j}^{(1)}}).}
                    \end{aligned}
        \end{equation*}
        Finally, By \eqref{relation_lambda} and \eqref{diffxandq}, we see that,
        \begin{equation}\label{boundary_int6}
          \begin{aligned}
         & {4\int_{\partial B_{r}(\un{x}_{n,j}^{(1)})}<\nu, D_{{\un{x}}}\zeta_n^*>\mathrm{d}{\un{x}}} ={-\frac{256 b_0 e^{-\lambda_{n,1}^{(1)}}h(q_j)e^{G_j^*(q_j)}}{\rho_n (h(q_1))^2e^{G_1^*(q_1)}}
              \int_{M_j\setminus U^{\sscp M}_{r}(q_j)} e^{ \Phi_j(y,\mathbf{q})} \mathrm{d}\mu(y)+o(e^{-\lambda_{n,j}^{(1)}}).}
                    \end{aligned}
        \end{equation}
Obviously \eqref{lhs3}, \eqref{small2}, and \eqref{boundary_int6} conclude the proof of Lemma \ref{poho1_lhs}.
              \end{proof}

To estimate the right hand side of \eqref{poho1} of Lemma \ref{lem_po1}, we note  that,
\begin{equation*}\label{def_gn}\begin{aligned}
f_n^*(x)&=\frac{\rho_nh(x)(e^{v_{n,j}^{(1)}+\phi_{n,j}}-e^{v_{n,j}^{(2)}+\phi_{n,j}})}{\|v_{n,j}^{(1)}-v_{n,j}^{(2)}\|_{L^\infty(M)}}
=\frac{\rho_nh(x)e^{\tilde{u}_{n,j}^{(1)}}(1-e^{\tilde{u}_{n,j}^{(2)}-\tilde{u}_{n,j}^{(1)}})}{\|\tilde{u}_{n,j}^{(1)}-\tilde{u}_{n,j}^{(2)}\|_{L^\infty(M)}}
=\rho_nh(x)e^{\tilde{u}_{n,j}^{(1)}}(\zeta_n+o(1)),\end{aligned}
\end{equation*}
where we used \eqref{fnx} and \eqref{vnji}. Then we will need the following estimate:
\begin{lemma}\label{poho1_rhs}
\begin{equation*}\begin{aligned} & (i) \   \int_{\partial B_r(\un{x}_{n,j}^{(1)})} rf_n^* e^{2\varphi_j}\mathrm{d}\sigma
= {-\frac{ 128e^{-\lambda_{n,1}^{(1)}}b_0h(\un{q}_j)e^{G_j^*(\un{q}_j)}}{\rho_n (h(\un{q}_1))^2e^{G_1^*(\un{q}_1)}}
\frac{\pi}{r^2}}\\
&{-\frac{32\pi e^{-\lambda_{n,1}^{(1)}}b_0  h(\un{q}_j)e^{G_j^*(\un{q}_j)} (\Delta\log h(\un{q}_j)+8\pi m -2K(\un{q}_j))}
{\rho_n (h(\un{q}_1))^2 e^{G_1^*(\un{q}_1)}}}
  {+O(r e^{-\lambda_{n,1}^{(1)}})+\frac{o(e^{-\lambda_{n,1}^{(1)}})}{r^2}}.
\\&  (ii) \ \sum_{j=1}^m\int_{U^{\sscp M}_r(x_{n,j}^{(1)})}   f_n^* \mathrm{d}\mu( x)
 =\frac{64b_0 e^{-\lambda_{n,1}^{(1)}}}{\rho_n (h(q_1))^2e^{G_1^*(q_1)}} \sum_{j=1}^m
\int_{M_j\setminus  U^{\sscp M}_{r}(q_j)} h(q_j)e^{G_j^*(q_j)} e^{ \Phi_j(x,\mathbf{q})}  \mathrm{d}\mu(x)
  +o(e^{-\lambda_{n,1}^{(1)}}).
  \\& (iii)\   \int_{B_r(\un{x}_{n,j}^{(1)})} f_n^*e^{2\varphi_j}<D(\log h+\phi_{n,j}),\un{x}-\un{x}_{n,j}^{(1)}> \mathrm{d}\un{x}
 \\&=\Bigg[{\frac{32\pi \Big(\int_M\zeta_n\mathrm{d}\mu
{-\frac{\|\tilde{u}_n^{(1)}-\tilde{u}_n^{(2)}\|_{L^{\infty}(M)}}{2}(\int_M\zeta_n\mathrm{d}\mu)^2}\Big)(\Delta \log h(\un{q}_j) +8\pi m -2K(\un{q}_j))
 }{\rho_n (h(\un{q}_1))^2e^{G_{1}^*(\un{q}_1)}}}
 \\& {\times h(\un{q}_j)
 e^{G_{j}^*(\un{q}_j)}e^{-\lambda_{n,1}^{(1)}} \Big(\lambda_{n,1}^{(1)}+\log\Big(\frac{\rho_n (h(\un{q}_1))^2e^{G_1(\un{q}_1)}}{8h(\un{q}_j)e^{G_j(\un{q}_j)}}r^2\Big)-2+O(R^{-2})\Big) }\Bigg] \\
 & {+O(1)(\frac{|\log r|e^{-\lambda_{n,j}^{(1)}}}{(\lambda_{n,j}^{(1)})^2})  +O(re^{-\lambda_{n,j}^{(1)}})+
 o(e^{-\lambda_{n,j}^{(1)}})|\log R|} +O(1)\left(\frac{e^{-2\lambda_{n,j}^{(1)}}}{r^2}\right) \\& +O(1)
 \Bigg(\sum_{{l}=1}^m (|A_{n,{l}}|+e^{-\frac{\lambda_{n,j}^{(1)}}{2}})\Big(\frac{e^{-\frac{\lambda_{n,j}^{(1)}}{2}}}{R} +e^{-\lambda_{n,j}^{(1)}}
 (\lambda_{n,j}^{(1)}+|\log r|)\Big)\Bigg) \   {\textrm{for any}\   R>1,}      \end{aligned}
\end{equation*}
where $O(1)$ here is used to denote any quantity uniformly bounded with respect to {$r, R$} and $n$. \end{lemma}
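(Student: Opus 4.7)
The plan is to establish the three estimates by combining the sharp asymptotic description of $\tilde{u}_n^{(1)}$ recalled in Section \ref{sec_pre} (most importantly \eqref{eta}--\eqref{eta_ets}, \eqref{relation_lambda}, \eqref{lambda_exp}, \eqref{gradatq}) with the limiting behavior of $\zeta_n$ developed in Lemmas \ref{lem_equalb0}, \ref{lem_est_zetanj}, and \ref{lem_out}. Throughout, the common starting point is
\begin{equation*}
f_n^*=\rho_n h\,e^{\tilde{u}_n^{(1)}}\bigl(\zeta_n+O(\|\tilde{u}_n^{(1)}-\tilde{u}_n^{(2)}\|_{L^\infty(M)})\bigr),
\end{equation*}
together with the bubble decomposition $\tilde{u}_n^{(1)}=U_{n,j}^{(1)}+\eta_{n,j}^{(1)}+G_j^*(\cdot)-G_j^*(\un{x}_{n,j}^{(1)})$ valid on $B_\delta(\un{x}_{n,j}^{(1)})$.

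For part $(i)$, I would exploit that on $\partial B_r(\un{x}_{n,j}^{(1)})$ one has $\zeta_n=-b_0+o(1)$ by Lemma \ref{lem_equalb0}, while $e^{U_{n,j}^{(1)}(\un{x})}=\frac{64\,e^{-\lambda_{n,j}^{(1)}}}{\rho_n^2 h^2(\un{x}_{n,j}^{(1)})\,|\un{x}-\un{x}_{n,j,*}^{(1)}|^4}(1+o(1))$ since $e^{\lambda_{n,j}^{(1)}}r^2\gg 1$. Taylor expanding $h$, $\varphi_j$, $G_j^*$ and $\eta_{n,j}^{(1)}$ around $\un{x}_{n,j}^{(1)}$ to second order, then integrating in polar coordinates along the circle, the $r^{-2}$ term survives from the zeroth order Taylor contribution, the averaging over $\theta\in[0,2\pi]$ kills the first order, and the second order contributes the Laplacian trace of $\log h+2\varphi_j+\sum_l 8\pi G(\cdot,q_l)$. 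Recalling $\Delta R(\un{x}_{n,j}^{(1)},q_j)=e^{2\varphi_j(\un{x}_{n,j}^{(1)})}=1$, $\Delta G(\un{x}_{n,j}^{(1)},q_l)=1$ for $l\neq j$, and $\Delta\varphi_j(\un{x}_{n,j}^{(1)})=-K(q_j)$, the combined trace produces $\Delta\log h(q_j)+8\pi m-2K(q_j)$. Finally \eqref{relation_lambda} and \eqref{diffxandq} convert $e^{-\lambda_{n,j}^{(1)}}/h(q_j)$ into $e^{-\lambda_{n,1}^{(1)}} h(q_j)e^{G_j^*(q_j)}/(h^2(q_1)e^{G_1^*(q_1)})$, yielding the stated form.

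For part $(ii)$, I would start from the global identity $\int_M f_n^*\,\mathrm{d}\mu=0$ (which is \eqref{del1}, and which follows from $\int_M h\,e^{\tilde{u}_n^{(i)}}\mathrm{d}\mu=1$). Hence
\begin{equation*}
\sum_{j=1}^m\int_{U^{\sscp M}_r(x_{n,j}^{(1)})}\!\! f_n^*\,\mathrm{d}\mu=-\sum_{j=1}^m\int_{M_j\setminus U^{\sscp M}_r(x_{n,j}^{(1)})}\!\! f_n^*\,\mathrm{d}\mu.
\end{equation*}
On each outer region Lemma \ref{lem_equalb0} gives $\zeta_n=-b_0+o(1)$, and the decomposition \eqref{wnx}--\eqref{est_wn} combined with \eqref{lambda_exp} yields precisely the sharp pointwise estimate \eqref{est_fn2} for $f_n^*$. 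Substituting in and replacing $\un{x}_{n,j}^{(1)}$ by $\un{q}_j$ via \eqref{diffxandq}--\eqref{relation_lambda} produces the claimed expression.

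Part $(iii)$ will be the main obstacle. The key observation is that by \eqref{gradatq} the zero-order Taylor coefficient $D(\log h+\phi_{n,j})(\un{x}_{n,j}^{(1)})$ is $O(\lambda_{n,j}^{(1)}e^{-\lambda_{n,j}^{(1)}})$, so the leading contribution comes from the quadratic Taylor expansion of $\log h+\phi_{n,j}$, whose isotropic trace equals $\Delta\log h(q_j)+8\pi m-2K(q_j)$ by the same computation as in $(i)$. I would then split $B_r(\un{x}_{n,j}^{(1)})$ into an inner ball of radius $R e^{-\lambda_{n,j}^{(1)}/2}$ and an outer annulus. In the inner ball the scaling $\un{x}=\un{x}_{n,j}^{(1)}+e^{-\lambda_{n,j}^{(1)}/2}z$ with $|z|\le R$ together with Lemma \ref{lem_est_zetanj} identifies $\zeta_n$ with $b_0\psi_{j,0}+b_{j,1}\psi_{j,1}+b_{j,2}\psi_{j,2}$, and the odd symmetry of $\psi_{j,1},\psi_{j,2}$ tested against the even quadratic form $\langle D^2(\log h+\phi_{n,j})(\un{x}_{n,j}^{(1)})(\un{x}-\un{x}_{n,j}^{(1)}),\un{x}-\un{x}_{n,j}^{(1)}\rangle$ makes their contribution vanish at leading order. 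In the outer annulus I would use the refined expansion \eqref{bdd_of_Zeta} for $\zeta_n-\int_M\zeta_n\,\mathrm{d}\mu$ together with the explicit form \eqref{est_fn2} of $f_n^*$; the logarithmic radial integral $\int_{Re^{-\lambda/2}}^{r}\frac{s\,|s|^2\,\mathrm{d}s}{(1+ce^{\lambda}s^2)^2}$ on the annulus gives exactly the factor $\lambda_{n,1}^{(1)}+\log(\cdots)-2+O(R^{-2})$. The quadratic-in-$\int_M\zeta_n\,\mathrm{d}\mu$ correction arises from the expansion
\begin{equation*}
e^{\tilde{u}_n^{(1)}}-e^{\tilde{u}_n^{(2)}}=e^{\tilde{u}_n^{(1)}}\Bigl[(\tilde{u}_n^{(1)}-\tilde{u}_n^{(2)})-\tfrac12(\tilde{u}_n^{(1)}-\tilde{u}_n^{(2)})^2+\cdots\Bigr],
\end{equation*}
exactly as was exploited in \eqref{difftildun_lamnj_sc3}. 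The delicate part is to verify, uniformly in $R>1$, that the inner/outer error terms match across the interface $|\un{x}-\un{x}_{n,j}^{(1)}|\sim R e^{-\lambda_{n,j}^{(1)}/2}$ and combine into the announced expression; this is ensured by the normalization of $U_{n,j}^{(1)}$ through the choice of $\un{x}_{n,j,*}^{(1)}$ (absorbing the would-be first-order term) and by the vanishing-moment property of the kernel on the bubble scale.
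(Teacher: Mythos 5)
Your outline follows the paper's own proof quite closely: in $(i)$ and $(ii)$ you combine the sharp pointwise description of $f_n^*$ (via \eqref{est_fn2}, itself coming from \eqref{wnx}--\eqref{est_wn} and \eqref{lambda_exp}) with $\zeta_n=-b_0+o(1)$ away from the blow-up set, and reduce the Laplacian trace to $\Delta\log h(q_j)+8\pi m-2K(q_j)$ using \eqref{delr1}--\eqref{delr}; and in $(iii)$ you split at the bubble scale, use Lemma \ref{lem_est_zetanj} inside (with the odd parity of $\psi_{j,1},\psi_{j,2}$ and the radial averaging of the Hessian to the trace), and \eqref{bdd_of_Zeta} together with the second-order Taylor expansion of $e^{\tilde u_n^{(1)}-\tilde u_n^{(2)}}$ outside. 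This is indeed the route the paper takes, down to the two explicit radial integrals \eqref{entire_int}, \eqref{int_ent2}.

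One point where your write-up is slightly off in its stated mechanism: the uniform-in-$R$ matching at the interface in part $(iii)$ is \emph{not} a consequence of the normalization of $U_{n,j}^{(1)}$ through $\un{x}_{n,j,*}^{(1)}$; that choice only removes the would-be linear Taylor term. The $\log R$ contributions from the inner and outer regions cancel because the inner coefficient is $b_{j,0}=b_0$ (Lemma \ref{lem_equalb0}) while the outer coefficient is $\int_M\zeta_n\,\mathrm{d}\mu-\tfrac{\|\tilde u_n^{(1)}-\tilde u_n^{(2)}\|_{L^\infty}}{2}(\int_M\zeta_n\,\mathrm{d}\mu)^2=-b_0+o(1)$ by \eqref{intequalb0}; this is what produces the final coefficient $\mathfrak{Z}_n$ in front of $\bigl(\lambda_{n,1}^{(1)}+\log(\cdots)-2+O(R^{-2})\bigr)$ together with the residual $o(e^{-\lambda_{n,j}^{(1)}})|\log R|$ errors. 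Keep this in mind when filling in the details, as it is precisely where the quadratic-in-$\int_M\zeta_n$ correction must be tracked consistently on both sides of the interface.
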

\begin{proof} $(i)$  We first observe that \eqref{f_q_j} and \eqref{est_fn2} imply that,
\begin{equation}\label{r_poho0}\begin{aligned}
&\int_{\partial B_r(\un{x}_{n,j}^{(1)})} rf_n^*e^{2\varphi_j}\mathrm{d}\sigma
=\int_{\partial B_r(\un{x}_{n,j}^{(1)})}\frac{64 e^{-\lambda_{n,j}^{(1)}}(-b_0+o(1))e^{f_{\mathbf{q},j}+2\varphi_{j}}}
{\rho_n h(x_{n,j}^{(1)})|\un{x}-\un{x}_{n,j}^{(1)}|^3}\mathrm{d}\sigma.\end{aligned}\end{equation}
Since $f_{\mathbf{q},j}({\un{q}}_j)=0$, $\nabla f_{\mathbf{q},j}({\un{q}}_j)=0$, $\varphi_{j}(\un{x}_{n,j}^{(1)})=0$, $\nabla\varphi_{j}(\un{x}_{n,j}^{(1)})=0$,
and in view of \eqref{diffxandq}, we find that,
\begin{equation}
\label{expand}
\begin{aligned}
f_{\mathbf{q},j}(\un{x})+2\varphi_{j}(\un{x})&=
 \frac{1}{2}<D^2_{\un{x}}(f_{\mathbf{q},j}+2\varphi_{j})\Big|_{\un{x}=\un{x}_{n,j}^{(1)}}
(\un{x}-\un{x}_{n,j}^{(1)}),\un{x}-\un{x}_{n,j}^{(1)}>
+o(1)+O(|\un{x}-\un{x}_{n,j}^{(1)}|^3). \end{aligned}
\end{equation}
By \eqref{r_poho0} and \eqref{expand}, we obtain,
\begin{equation*}\label{r_poho}\begin{aligned}
&\int_{\partial B_r(\un{x}_{n,j}^{(1)})} rf_n^*e^{2\varphi_j}\mathrm{d}\sigma
 =\int_{\partial B_r(\un{x}_{n,j}^{(1)})}
-\frac{64 e^{-\lambda_{n,j}^{(1)}} }{\rho_n h(\un{x}_{n,j}^{(1)})
|\un{x}-\un{x}_{n,j}^{(1)}|^3}\\&\times \Big\{b_0(1+\frac{1}{2}<D^2_{\un{x}}(f_{\mathbf{q},j}+2\varphi_{j})\Big|_{\un{x}=\un{x}_{n,j}^{(1)}}
(\un{x}-\un{x}_{n,j}^{(1)}),\un{x}-\un{x}_{n,j}^{(1)}>) +O(|\un{x}-\un{x}_{n,j}^{(1)}|^3)+o(1)\Big\}\mathrm{d}\sigma
\\&{=\int_{\partial B_r(\un{x}_{n,j}^{(1)})}
- \frac{64 e^{-\lambda_{n,j}^{(1)}}b_0(1+\frac{\Delta(f_{\mathbf{q},j}+2\varphi_{j})(\un{x}_{n,j}^{(1)})}{4}|\un{x}-\un{x}_{n,j}^{(1)}|^2 )  }
{\rho_n h(\un{x}_{n,j}^{(1)})|\un{x}-\un{x}_{n,j}^{(1)}|^3}\mathrm{d}\sigma +O(r e^{-\lambda_{n,j}^{(1)}})+\frac{o(e^{-\lambda_{n,j}^{(1)}})}{r^2}}
\\& {=- \frac{128\pi e^{-\lambda_{n,j}^{(1)}}b_0(1+\frac{{\Delta(f_{\mathbf{q},j}+2\varphi_{j})(\un{x}_{n,j}^{(1)})}}{4}r^2)   }
{\rho_n h(\un{x}_{n,j}^{(1)})r^2} +O(r e^{-\lambda_{n,j}^{(1)}})+\frac{o(e^{-\lambda_{n,j}^{(1)}})}{r^2}}
\\& {=-\frac{ 128e^{-\lambda_{n,j}^{(1)}}b_0}{\rho_n h(\un{x}_{n,j}^{(1)})}
\frac{\pi}{r^2}
-\frac{32\pi e^{-\lambda_{n,j}^{(1)}}b_0   ({\Delta\log h(\un{q}_j)+8\pi m -2K(\un{q}_j)})    }{\rho_n h(\un{x}_{n,j}^{(1)})}}
 +O(r e^{-\lambda_{n,j}^{(1)}})+\frac{o(e^{-\lambda_{n,j}^{(1)}})}{r^2},
\end{aligned}
\end{equation*}
where, in the last identity, we used \eqref{delr1}, \eqref{delr} and \eqref{diffxandq}. By using \eqref{relation_lambda} and \eqref{diffxandq}, we find that,
\begin{equation*}\label{r_poho2}\begin{aligned}
& {-\frac{ 128e^{-\lambda_{n,j}^{(1)}}b_0}{\rho_n h(\un{x}_{n,j}^{(1)})}\frac{\pi}{r^2}-
\frac{32\pi e^{-\lambda_{n,j}^{(1)}}b_0   (\Delta\log h(\un{q}_j)+8\pi m -2K(\un{q}_j))}
{\rho_n h(\un{x}_{n,j}^{(1)})}}\\&
{=-\frac{32\pi e^{-\lambda_{n,1}^{(1)}}
b_0  h(\un{q}_j)e^{G_j^*(\un{q}_j)} (\Delta\log h(\un{q}_j)+8\pi m -2K(\un{q}_j))}{\rho_n (h(\un{q}_1))^2 e^{G_1^*(\un{q}_1)}}}{-\frac{ 128e^{-\lambda_{n,1}^{(1)}}b_0h(\un{q}_j)e^{G_j^*(\un{q}_j)}}{\rho_n (h(\un{q}_1))^2e^{G_1^*(\un{q}_1)}}
\frac{\pi}{r^2}+o(e^{-\lambda_{n,1}^{(1)}})},
\end{aligned}
\end{equation*}
which proves $(i)$.

$(ii)$ We  note that $\int_M f_n^*\mathrm{d} \mu=0$, {and thus,}
\begin{equation}\label{second_int0}\begin{aligned}&{\sum_{j=1}^m\int_{U^{\sscp M}_r(x_{n,j}^{(1)})}   f_n^* \mathrm{d} x
 =
-\sum_{j=1}^m\int_{M_j\setminus  U^{\sscp M}_{r}(x_{n,j}^{(1)})}   f_n^* \mathrm{d}\mu( x)}.
 \end{aligned}\end{equation}
 { By \eqref{est_fn2},} \eqref{relation_lambda}, \eqref{rhon8pi} and \eqref{diff_x_nj}, we see that 
\begingroup
\abovedisplayskip=3pt
\belowdisplayskip=3pt
\begin{equation}\label{second_int}\begin{aligned}
&{-\sum_{j=1}^m\int_{M_j\setminus  U^{\sscp M}_{r }({x_{n,j}^{(1)}})}   f_n^* \mathrm{d}\mu( x)}
 {= \sum_{j=1}^m\int_{M_j\setminus  U^{\sscp M}_{r}(x_{n,j}^{(1)})} \frac{64b_0 e^{-\lambda_{n,j}^{(1)}}}{\rho_n h(x_{n,j}^{(1)})}{e^{ \Phi_j(x,\mathbf{x}_n)}}  \mathrm{d}\mu(x)
+o(e^{-\lambda_{n,1}^{(1)}})}
\\& {= \sum_{j=1}^m\int_{M_j\setminus  U^{\sscp M}_{r}(x_{n,j}^{(1)})} \frac{64b_0 e^{-\lambda_{n,1}^{(1)}}h(x_{n,j}^{(1)})e^{G_j^*(x_{n,j}^{(1)})}}{\rho_n (h(x_{n,1}^{(1)}))^2e^{G_1^*(x_{n,1}^{(1)})} }{e^{ \Phi_j(x,\mathbf{x}_n)}}  \mathrm{d}\mu(x)
+o(e^{-\lambda_{n,1}^{(1)}})}
\\& {= \frac{64b_0 e^{-\lambda_{n,1}^{(1)}}}{\rho_n (h(q_1))^2e^{G_1^*(q_1)}} \sum_{j=1}^m
\int_{M_j\setminus  U^{\sscp M}_{r}(q_j)} h(q_j)e^{G_j^*(q_j)}
e^{ \Phi_j(x,\mathbf{q})}  \mathrm{d}\mu(x)
+o(e^{-\lambda_{n,1}^{(1)}}). }\end{aligned}\end{equation}
\endgroup
 {Clearly \eqref{second_int0} and \eqref{second_int} prove $(ii)$.}

\indent $(iii)$ Let us recall the definition of  $\phi_{n,j}$  and $G_j^*$ from  \eqref{phij} and \eqref{g*}.\\ By \eqref{gradatq} and \eqref{rhon}, we find that
$D(\log h_j+\phi_{n,j})(\un{x}_{n,j}^{(1)})=O(\lambda_{n,j}^{(1)} e^{-\lambda_{n,j}^{(1)}}),$
which readily implies that,
\begin{align*}
&D(\log h_j+\phi_{n,j})(\un{x})=D(\log h_j+\phi_{n,j})(\un{x}_{n,j}^{(1)})+< D^2(\log h_j+\phi_{n,j})(\un{x}_{n,j}^{(1)}),\un{x}-\un{x}_{n,j}^{(1)}>+O(|\un{x}-\un{x}_{n,j}^{(1)}|^2)\\
&=< D^2(\log h_j+\phi_{n,j})(\un{x}_{n,j}^{(1)}),\un{x}-\un{x}_{n,j}^{(1)}>+O(\lambda_{n,j}^{(1)}e^{-\lambda_{n,j}^{(1)}})+O(|\un{x}-\un{x}_{n,j}^{(1)}|^2).
\end{align*}
By using \eqref{eta}, \eqref{eta_ets} and the scaling $\un{x}=e^{-\frac{\lambda_{n,j}^{(1)}}{2}}z+\un{x}_{n,j}^{(1)}$, recalling the notation of $\overline{f}$ introduced before Lemma \ref{lem_equalb0}, we find that,
{\allowdisplaybreaks
\begin{align}
\label{est_grad3_1}
&\int_{B_r(\un{x}_{n,j}^{(1)})} f_n^*<D(\log h_j+\phi_{n,j}),\un{x}-\un{x}_{n,j}^{(1)}>e^{2\varphi_j}\mathrm{d} \un{x}
\nonumber\\&=\int_{B_r(\un{x}_{n,j}^{(1)})}\frac{\rho_n h_j(\un{x}_{n,j}^{(1)})e^{\lambda_{n,j}^{(1)}+\eta_{n,j}^{(1)} +
G_j^*(\un{x})-G_j^*(\un{x}_{n,j}^{(1)})+\log h_j(\un{x})-\log h_j(\un{x}_{n,j}^{(1)})} }
{(1+\frac{\rho_n h_j(\un{x}_{n,j}^{(1)})e^{\lambda_{n,j}^{(1)}}}{8}|\un{x}-\un{x}_{n,j,*}^{(1)}|^2)^2}
 \Big(\zeta_n {{-\frac{\|v_{n,j}^{(1)}-v_{n,j}^{(2)}\|_{L^{\infty}(M)}}{2}\zeta_n^2}+
O(\|v_{n,j}^{(1)}-v_{n,j}^{(2)}\|_{L^{\infty}(M)}^2)} \Big)\nonumber \\&\times <D(\log h_j+\phi_{n,j}),\un{x}-\un{x}_{n,j}^{(1)}> \mathrm{d} \un{x}
\nonumber\\&=\int_{B_r(\un{x}_{n,j}^{(1)})}\frac{\rho_n h_j(\un{x}_{n,j}^{(1)})e^{\lambda_{n,j}^{(1)}+\eta_{n,j}^{(1)} +G_j^*(\un{x})-G_j^*(\un{x}_{n,j}^{(1)})
+\log h_j(\un{x})-\log h_j(\un{x}_{n,j}^{(1)})}}
{(1+\frac{\rho_n h_j(\un{x}_{n,j}^{(1)})e^{\lambda_{n,j}^{(1)}}}{8}|\un{x}-\un{x}_{n,j,*}^{(1)}|^2)^2}
(\zeta_n {-\frac{\|\tilde{u}_n^{(1)}-\tilde{u}_n^{(2)}\|_{L^{\infty}(M)}}{2}\zeta_n^2} +
O(\|\tilde{u}_n^{(1)}-\tilde{u}_n^{(2)}\|_{L^{\infty}(M)}^2) )
\nonumber\\& \times \Big< < D^2(\log h_j+\phi_{n,j})(\un{x}_{n,j}^{(1)}), (\un{x}-\un{x}_{n,j}^{(1)})>+O(\lambda_{n,j}^{(1)}
e^{-\lambda_{n,j}^{(1)}})+O(|\un{x}-\un{x}_{n,j}^{(1)}|^2), \un{x}-\un{x}_{n,j}^{(1)}\Big> \mathrm{d} \un{x}
\nonumber\\&=\int_{B_{\Lambda_{n,j,r}^{+}}(0)}\frac{\rho_n h_j(\un{x}_{n,j}^{(1)}) }{(1+\frac{\rho_n h_j(\un{x}_{n,j}^{(1)})}{8}|z+
e^{\frac{\lambda_{n,j}^{(1)}}{2}}(\un{x}_{n,j}^{(1)}-\un{x}_{n,j,*}^{(1)})|^2)^2}
\nonumber \\& \times \Big[\overline{\zeta_n}
{{-\frac{\|\tilde{u}_n^{(1)}-\tilde{u}_n^{(2)}\|_{L^{\infty}(M)}}{2}(\overline{\zeta_n})^2}}
+O(|\overline{\eta_{n,j}^{(1)}}|)+O(e^{-\frac{\lambda_{n,j}^{(1)}}{2}}|z|)
 +{O(\|\tilde{u}_n^{(1)}-\tilde{u}_n^{(2)}\|_{L^{\infty}(M)}^2 )}\Big]
\nonumber\\& \times \Big<   <D^2(\log h_j+\phi_{n,j})(\un{x}_{n,j}^{(1)}), e^{-\frac{\lambda_{n,j}^{(1)}}{2}}z>
+O(\lambda_{n,j}^{(1)} e^{-\lambda_{n,j}^{(1)}})+O(e^{-\lambda_{n,j}^{(1)}}|z|^2),  e^{-\frac{\lambda_{n,j}^{(1)}}{2}}z\Big> \mathrm{d} z=:K_{n,j,r}.
\end{align}}
By using \eqref{est_grad3_1} together with \eqref{eta_ets}, {\eqref{diff_x_nj}}, and Lemma \ref{lem_diff_tilu}, we conclude that,
{\allowdisplaybreaks
\begin{align*}
K_{n,j,r}&=\int_{B_{\Lambda_{n,j,r}^{+}}(0)}\frac{\rho_n h_j(\un{x}_{n,j}^{(1)}) \Big(\overline{\zeta_n}
{{-\frac{\|\tilde{u}_n^{(1)}-\tilde{u}_n^{(2)}\|_{L^{\infty}(M)}}{2}(\overline{\zeta_n})^2}}
+{O(\frac{1}{(\lambda_{n,j}^{(1)})^2 })}+O(e^{-\frac{\lambda_{n,j}^{(1)}}{2}}|z|)\Big) }{(1+\frac{\rho_n h_j(\un{x}_{n,j}^{(1)})}{8}|z|^2)^2}\\
&  \times \Big< <  D^2(\log h_j+\phi_{n,j})(\un{x}_{n,j}^{(1)}), e^{-\frac{\lambda_{n,j}^{(1)}}{2}}z>+
O(\lambda_{n,j}^{(1)} e^{-\lambda_{n,j}^{(1)}})+O(e^{-\lambda_{n,j}^{(1)}}|z|^2),  e^{-\frac{\lambda_{n,j}^{(1)}}{2}}z\Big> \mathrm{d} z
\\&=\int_{B_{\Lambda_{n,j,r}^{+}}(0)}\frac{\rho_n h_j(\un{x}_{n,j}^{(1)})  (\overline{\zeta_n}
{{-\frac{\|\tilde{u}_n^{(1)}-\tilde{u}_n^{(2)}\|_{L^{\infty}(M)}}{2}(\overline{\zeta_n})^2}} )
 }{(1+\frac{\rho_n h_j(\un{x}_{n,j}^{(1)})}{8}|z|^2)^2}
   < D^2(\log h_j+\phi_{n,j})(\un{x}_{n,j}^{(1)})z,z>e^{-\lambda_{n,j}^{(1)}}\mathrm{d} z
\\& {+O(1)(\frac{|\log r|e^{-\lambda_{n,j}^{(1)}}}{(\lambda_{n,j}^{(1)})^2})+O(re^{-\lambda_{n,j}^{(1)}})+o(e^{-\lambda_{n,j}^{(1)}})},
\end{align*}}
Since $
\int\frac{(1-r^2)r^3}{(1+r^2)^3}\mathrm{d}r=\frac{1}{2}\left(\frac{-3r^2-2}{(1+r^2)^2}-\log(1+r^2)\right)+C,
$
then, for any fixed and large $R>0$, by Lemma \ref{lem_est_zetanj} and {Lemma \ref{lem_equalb0}}, we see that,
{\allowdisplaybreaks \begin{align*}
& \int_{B_{R}(0)}\frac{\rho_n h_j(\un{x}_{n,j}^{(1)})  (\overline{\zeta_n}
{{-\frac{\|\tilde{u}_n^{(1)}-\tilde{u}_n^{(2)}\|_{L^{\infty}(M)}}{2}(\overline{\zeta_n})^2}} )
 }{(1+\frac{\rho_n h_j(\un{x}_{n,j}^{(1)})}{8}|z|^2)^2}
< D^2(\log h_j+\phi_{n,j})(\un{x}_{n,j}^{(1)})z,z>e^{-\lambda_{n,j}^{(1)}}\mathrm{d} z\\
&=\int_{B_{R}(0)}\frac{\rho_n h_j(\un{x}_{n,j}^{(1)}) \Big(\sum_{k=0}^2b_{j,k}\psi_{j,k}+o(1)\Big)< D^2(\log h_j+\phi_{n,j})(\un{x}_{n,j}^{(1)})z,z>
e^{-\lambda_{n,j}^{(1)}} }{(1+\frac{\rho_n h_j(\un{x}_{n,j}^{(1)})}{8}|z|^2)^2}\mathrm{d}z\\
&=\int_{B_{R}(0)}\frac{\rho_n h_j(\un{x}_{n,j}^{(1)})b_{j,0}
\left(\frac{1-\frac{\rho_n h_j(\un{x}_{n,j}^{(1)})}{8}|z|^2}
{1+\frac{\rho_n h_j(\un{x}_{n,j}^{(1)})}{8}|z|^2}\right)
\Delta(\log h_j+\phi_{n,j})(\un{x}_{n,j}^{(1)})|z|^2e^{-\lambda_{n,j}^{(1)}} }{2(1+\frac{\rho_n h_j(\un{x}_{n,j}^{(1)})}{8}|z|^2)^2}
\mathrm{d} z+{o(e^{-\lambda_{n,j}^{(1)}})|\log R|}\\
&=\frac{64\pi b_0\Delta(\log h_j+\phi_{n,j})(\un{x}_{n,j}^{(1)})}{\rho_n h_j(\un{x}_{n,j}^{(1)})}e^{-\lambda_{n,j}^{(1)}}
\int_0^{\sqrt{\frac{\rho_n h_j(\un{x}_{n,j}^{(1)})}{8}}R}\frac{(1-s^2)s^3}{(1+s^2)^3}\mathrm{d} s+{o(e^{-\lambda_{n,j}^{(1)}})|\log R|}\\
&=\frac{32\pi b_0\Delta(\log h_j+\phi_{n,j})(\un{x}_{n,j}^{(1)})}{\rho_n h_j(\un{x}_{n,j}^{(1)})}e^{-\lambda_{n,j}^{(1)}}
 \Big(\frac{\frac{\rho_n h_j(\un{x}_{n,j}^{(1)})}{8}R^2(1+\frac{\rho_n h_j(\un{x}_{n,j}^{(1)})}{4}R^2)}
{(1+\frac{\rho_n h_j(\un{x}_{n,j}^{(1)})}{8}R^2)^2}-\log (1+\frac{\rho_n h_j(\un{x}_{n,j}^{(1)})}{8}R^2)\Big)\\
& +{o(e^{-\lambda_{n,j}^{(1)}})|\log R|}.
\end{align*}}
Next, let us observe that,
\begin{equation}
\label{int_ent2}
\int\frac{ r^3}{(1+r^2)^{{2}}}\mathrm{d}r=\frac{1}{2}\Big(\frac{1}{ 1+r^2 }+\log(1+r^2)\Big)+C.
\end{equation}
{In view of \eqref{bdd_of_Zeta}, we also see that if $|z|\ge R$, then it holds,}
\begin{equation}\label{second_part}
{\overline{\zeta_n}(z)=\int_M\zeta_n\mathrm{d}\mu+O(1)\Big(\sum_{{l}=1}^m (|A_{n,{l}}|+
e^{-\frac{\lambda_{n,j}^{(1)}}{2}})(\frac{e^{\frac{\lambda_{n,j}^{(1)}}{2}}}{|z|} +1)\Big),}\ \ \textrm{and thus}
\end{equation}
\begin{equation*}\label{second_part1}
({\overline{\zeta_n}(z))^2=\Big(\int_M\zeta_n\mathrm{d}\mu\Big)^2+O(1)
\Big(\sum_{{l}=1}^m (|A_{n,{l}}|+e^{-\frac{\lambda_{n,j}^{(1)}}{2}})(\frac{e^{\frac{\lambda_{n,j}^{(1)}}{2}}}{|z|} +1)\Big).}
\end{equation*}
{In view of {\eqref{second_part}}, we also find that,}
{\allowdisplaybreaks
\begin{align*}
& \int_{B_{\Lambda_{n,j,r}^{+}}(0)\setminus B_{R}(0)}\frac{\rho_n h_j(\un{x}_{n,j}^{(1)}) (\overline{\zeta_n}
{{-\frac{\|\tilde{u}_n^{(1)}-\tilde{u}_n^{(2)}\|_{L^{\infty}(M)}}{2}(\overline{\zeta_n})^2}} )
}{(1+\frac{\rho_n h_j(\un{x}_{n,j}^{(1)})}{8}|z|^2)^2}
  < D^2(\log h_j+\phi_{n,j})(\un{x}_{n,j}^{(1)})z,z>e^{-\lambda_{n,j}^{(1)}}\mathrm{d} z
\\&=\int_{B_{\Lambda_{n,j,r}^{+}}(0)\setminus B_{R}(0)}\frac{\rho_n h_j(\un{x}_{n,j}^{(1)})\left(\int_M\zeta_n\mathrm{d}\mu
{-\frac{\|\tilde{u}_n^{(1)}-\tilde{u}_n^{(2)}\|_{L^{\infty}(M)}}{2}(\int_M\zeta_n\mathrm{d}\mu)^2}\right)
}{(1+\frac{\rho_n h_j(\un{x}_{n,j}^{(1)})}{8}|z|^2)^2}
 < D^2(\log h_j+\phi_{n,j})(\un{x}_{n,j}^{(1)})z,z>e^{-\lambda_{n,j}^{(1)}}
\mathrm{d} z
\\&\quad{ +\int_{B_{\Lambda_{n,j,r}^{+}}(0)\setminus B_{R}(0)}
\frac{O(1)(\sum_{{l}=1}^m (|A_{n,{l}}|+e^{-\frac{\lambda_{n,j}^{(1)}}{2}})
(\frac{e^{\frac{\lambda_{n,j}^{(1)}}{2}}}{|z|} +1)) e^{-\lambda_{n,j}^{(1)}} |z|^2}{(1+\frac{\rho_n h_j(\un{x}_{n,j}^{(1)})}{8}|z|^2)^2}
\mathrm{d} z}
 =:{(I)+(II)}.
\end{align*}}
It is easy to see that,
\begin{equation*}
\begin{aligned}
\label{est_grad6_2}
{ (II)=O(1)(\sum_{{l}=1}^m (|A_{n,{l}}|+e^{-\frac{\lambda_{n,j}^{(1)}}{2}})
\Big(\frac{e^{-\frac{\lambda_{n,j}^{(1)}}{2}}}{R} +e^{-\lambda_{n,j}^{(1)}}(\lambda_{n,j}^{(1)}+|\log r|)\Big)}.
\end{aligned}
\end{equation*}
Since $\mathfrak{Z}_n:=\int_M\zeta_n\mathrm{d}\mu{-\frac{\|\tilde{u}_n^{(1)}-\tilde{u}_n^{(2)}\|_{L^{\infty}(M)}}{2}(\int_M\zeta_n\mathrm{d}\mu)^2}$ is constant, we also conclude {from \eqref{int_ent2}} that,
\begin{equation}\begin{aligned}\label{est_grad6}
(I)&= \frac{64\pi {\mathfrak{Z}_n}\Delta(\log h_j+\phi_{n,j})(\un{x}_{n,j}^{(1)})}{\rho_n h_j(\un{x}_{n,j}^{(1)})}e^{-\lambda_{n,j}^{(1)}}
\int_{\sqrt{\frac{\rho_n h_j(\un{x}_{n,j}^{(1)})}{8}}R}^{\sqrt{\frac{\rho_n h_j(\un{x}_{n,j}^{(1)})}{8}}
\Lambda_{n,j,r}^{+}}\frac{ s^3}{(1+s^2)^2}\mathrm{d} s
\\&= -\frac{32\pi {\mathfrak{Z}_n}\Delta(\log h_j+\phi_{n,j})(\un{x}_{n,j}^{(1)})}{\rho_n h_j(\un{x}_{n,j}^{(1)})}e^{-\lambda_{n,j}^{(1)}}
\\& \times \Big[\frac{1}{1+\frac{\rho_n h_j(\un{x}_{n,j}^{(1)})}{8}R^2}+\log(1+\frac{\rho_n h_j(\un{x}_{n,j}^{(1)})}{8}R^2)-
\log(1+\frac{\rho_n h_j(\un{x}_{n,j}^{(1)})}{8}r^2e^{\lambda_{n,j}^{(1)}})\Big] {+O(\frac{e^{-2\lambda_{n,j}^{(1)}}}{r^2})}
\\&=-\frac{32\pi {\mathfrak{Z}_n}\Delta(\log h_j+\phi_{n,j})(\un{x}_{n,j}^{(1)})}{\rho_n h_j(\un{x}_{n,j}^{(1)})}e^{-\lambda_{n,j}^{(1)}}
\\& \times \Big[\frac{1}{1+\frac{\rho_n h_j(\un{x}_{n,j}^{(1)})}{8}R^2}+\log(1+\frac{\rho_n h_j(\un{x}_{n,j}^{(1)})}{8}R^2)-
\lambda_{n,j}^{(1)}{-\log(\frac{\rho_n h_j(\un{x}_{n,j}^{(1)})}{8}r^2)}\Big] +{O(\frac{e^{-2\lambda_{n,j}^{(1)}}}{r^2}).}
\end{aligned}
\end{equation}
By Lemma \ref{lem_equalb0}, it is easy to check that,
\begin{equation}\label{intequalb0}
{\int_M\zeta_n\mathrm{d}\mu=-b_0+o(1).}
\end{equation}
From \eqref{est_grad3_1}-\eqref{est_grad6} {and \eqref{intequalb0}}, we obtain
{\allowdisplaybreaks
\begin{align*}
\label{est_grad3}
&\int_{B_r(x_{n,j}^{(1)})} f_n^*<D(\log h_j+\phi_{n,j}),\un{x}-\un{x}_{n,j}^{(1)}>e^{2\varphi_j}\mathrm{d} \un{x}
\\=~&{\frac{32\pi \mathfrak{Z}_n\Delta(\log h_j+\phi_{n,j})(\un{x}_{n,j}^{(1)})}{\rho_n h(\un{x}_{n,j}^{(1)})}
e^{-\lambda_{n,j}^{(1)}} \Big(\lambda_{n,j}^{(1)}+\log(\frac{\rho_n h_j(\un{x}_{n,j}^{(1)})}{8}r^2)-2+O(R^{-2})\Big) }
\\&{+O(1)(\frac{|\log r|e^{-\lambda_{n,j}^{(1)}}}{(\lambda_{n,j}^{(1)})^2})
+O(re^{-\lambda_{n,j}^{(1)}})+o(e^{-\lambda_{n,j}^{(1)}})|\log R|}
+O(1)\left(\frac{e^{-2\lambda_{n,j}^{(1)}}}{r^2}\right)\\&+O(1)
\Bigg(\sum_{{l}=1}^m (|A_{n,{l}}|+e^{-\frac{\lambda_{n,j}^{(1)}}{2}})\Big(\frac{e^{-\frac{\lambda_{n,j}^{(1)}}{2}}}{R} +
e^{-\lambda_{n,j}^{(1)}}(\lambda_{n,j}^{(1)}+|\log r|)\Big)\Bigg).
\end{align*}}
Finally, since \rife{delr1},\rife{delr}, \eqref{diffxandq} and \eqref{rhon} imply that,
\begin{equation*}
\label{est_grad3_22}
{\Delta(\log h_j+\phi_{n,j})(\un{x}_{n,j}^{(1)})=\Delta \log h(\un{q}_j) +8\pi m -2K(\un{q}_j)+{O( \lambda_{n,1}^{(1)}e^{-\lambda_{n,1}^{(1)}})},}
\end{equation*}
then \eqref{relation_lambda} and \eqref{diffxandq}, show that,
\begin{equation}
\begin{aligned}
\label{est_grad3_2}
&\int_{B_r(x_{n,j}^{(1)})} f_n^*<D(\log h_j+\phi_{n,j}),\un{x}-\un{x}_{n,j}^{(1)}>e^{2\varphi_j}\mathrm{d}\un{x}\\
&=\frac{32\pi \mathfrak{Z}_n(\Delta \log h(\un{q}_j) +8\pi m -2K(\un{q}_j))h(\un{q}_j)e^{G_{j}^*(\un{q}_j)}}{\rho_n (h(\un{q}_1))^2e^{G_{1}^*(\un{q}_1)}} e^{-\lambda_{n,1}^{(1)}}
 \Big(\lambda_{n,1}^{(1)}+\log\Big(\frac{\rho_n (h(\un{q}_1))^2e^{G_1(\un{q}_1)}}{8h(q_j)e^{G_j(\un{q}_j)}}r^2\Big)-2+O(R^{-2})\Big)\\
& {+O(1)(\frac{|\log r|e^{-\lambda_{n,j}^{(1)}}}{(\lambda_{n,j}^{(1)})^2})
+O(re^{-\lambda_{n,j}^{(1)}})+o(e^{-\lambda_{n,j}^{(1)}})|\log R|}   +O(1)\left(\frac{e^{-2\lambda_{n,j}^{(1)}}}{r^2}\right)
\\&+O(1)\Bigg(\sum_{{l}=1}^m (|A_{n,{l}}|+e^{-\frac{\lambda_{n,j}^{(1)}}{2}})\Big(\frac{e^{-\frac{\lambda_{n,j}^{(1)}}{2}}}{R}
+e^{-\lambda_{n,j}^{(1)}}(\lambda_{n,j}^{(1)}+|\log r|)\Big)\Bigg).    \end{aligned}
\end{equation}
The estimate \eqref{est_grad3_2} readily yields $(iii)$, as claimed. This fact concludes the proof of Lemma~\ref{poho1_rhs}.
\end{proof}

We can now show that $b_{j,0}=0$ for all $j=1,\cdots,m$.
\begin{lemma}\label{b0=0}$ $\\
$(i)$ $ A_{n,j}=\int_{M_j}f_n^*(y)\mathrm{d}\mu(y)= o(e^{-\frac{\lambda_{n,j}^{(1)}}{2}})$.\\
$(ii)$ $b_0=0$ and in particular $b_{j,0}=0$, $j=1,\cdots,m$.
\end{lemma}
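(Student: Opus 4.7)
The plan is to combine the Pohozaev identity \eqref{poho1} with the estimates of Lemma~\ref{poho1_lhs} and Lemma~\ref{poho1_rhs} to produce, for each fixed $j\in\{1,\dots,m\}$, a single asymptotic identity isolating $A_{n,j}$ against the $b_0$--dependent part. Writing the right-hand side of \eqref{poho1} as the boundary term of Lemma~\ref{poho1_rhs}(i), minus $2\int_{B_r(\un{x}_{n,j}^{(1)})} f_n^* e^{2\varphi_j}\,d\un{x}$, minus the integral $J_{n,j}$ treated in Lemma~\ref{poho1_rhs}(iii), and using $\int_M f_n^*\,d\mu=0$ together with the splitting $\int_{U_r^{\sscp M}(x_{n,j}^{(1)})} f_n^*\,d\mu = A_{n,j} - \int_{M_j\setminus U_r^{\sscp M}(x_{n,j}^{(1)})} f_n^*\,d\mu$ (where the outer integral is evaluated exactly as in the proof of Lemma~\ref{poho1_rhs}(ii)), the $-4A_{n,j}$ on the LHS and the $-2A_{n,j}$ arising from the interior doubling term combine to leave $-2A_{n,j}$ alone. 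After replacing $\mathfrak{Z}_n$ by $-b_0+o(1)$ via \eqref{intequalb0}, one arrives schematically at
\begin{equation*}
-2A_{n,j} = \frac{128\, b_0\, h(q_j)e^{G_j^*(q_j)}\, e^{-\lambda_{n,1}^{(1)}}}{\rho_n (h(q_1))^2 e^{G_1^*(q_1)}}\Big(\int_{M_j\setminus U_r^{\sscp M}(q_j)} e^{\Phi_j}\,d\mu - \frac{\pi}{r^2}\Big) + \frac{32\pi\, b_0\, \alpha_j\, e^{-\lambda_{n,1}^{(1)}}}{\rho_n (h(q_1))^2 e^{G_1^*(q_1)}}\bigl(\lambda_{n,1}^{(1)}+O(1)\bigr) + E_{n,j},
\end{equation*}
with $\alpha_j=h(q_j)e^{G_j^*(q_j)}[\Delta\log h(q_j)+8\pi m-2K(q_j)]$ and $E_{n,j}=o\bigl(e^{-\lambda_{n,j}^{(1)}/2}\sum_l|A_{n,l}|\bigr)+o(e^{-\lambda_{n,j}^{(1)}})$.

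Part (i) then falls out: the leading contribution on the right is $O(\lambda_{n,1}^{(1)}e^{-\lambda_{n,1}^{(1)}})=o(e^{-\lambda_{n,j}^{(1)}/2})$, whereas the self-term $o(e^{-\lambda_{n,j}^{(1)}/2}\sum_l|A_{n,l}|)$ can be absorbed into the LHS after taking the maximum over $l$ and using \eqref{relation_lambda}. This yields $\max_l|A_{n,l}|=O(\lambda_{n,1}^{(1)}e^{-\lambda_{n,1}^{(1)}})=o(e^{-\lambda_{n,j}^{(1)}/2})$, as claimed.

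For (ii), plug the just-proved bound back into $E_{n,j}$, so that now $E_{n,j}=o(e^{-\lambda_{n,1}^{(1)}})$, and sum the identity over $j$. The LHS vanishes, being equal to $-2\int_M f_n^*\,d\mu=0$; on the RHS, $\sum_j\alpha_j=\ell(\mathbf{q})$ by \eqref{l_q}, while the sum $\sum_j h(q_j)e^{G_j^*(q_j)}\bigl(\int_{M_j\setminus U_r^{\sscp M}(q_j)}e^{\Phi_j}\,d\mu - \pi/r^2\bigr)$ agrees with $D(\mathbf{q})+O(r^\sigma)$ by \eqref{D_q}, after the rescaling $r\mapsto r_j=r\sqrt{8h(q_j)e^{G_j^*(q_j)}}$ and an explicit integration against the leading singular profile $e^{\Phi_j(x)}\sim |x-q_j|^{-4}$ near $q_j$. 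One is left with
\begin{equation*}
0 = b_0\,e^{-\lambda_{n,1}^{(1)}}\bigl[C_1\bigl(D(\mathbf{q})+O(r^\sigma)\bigr) + C_2\,\ell(\mathbf{q})\,\lambda_{n,1}^{(1)}+O(1)\bigr] + o(e^{-\lambda_{n,1}^{(1)}}),
\end{equation*}
with $C_1,C_2>0$ explicit. Dividing by $e^{-\lambda_{n,1}^{(1)}}$, sending $n\to\infty$ and then $r\to 0$, the bracket must tend to $0$: if $\ell(\mathbf{q})\neq 0$ the dominant $\ell(\mathbf{q})\lambda_{n,1}^{(1)}$ term forces $b_0=0$, and if $\ell(\mathbf{q})=0$ but $D(\mathbf{q})\neq 0$ the surviving $D(\mathbf{q})$ term does the same. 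By Lemma~\ref{lem_equalb0} this implies $b_{j,0}=0$ for every $j$.

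The main obstacle will be the precise bookkeeping of the $b_0$-coefficients: the $\pi/r^2$ divergence from Lemma~\ref{poho1_rhs}(i) must cancel exactly against the $\pi/r^2$ generated by expanding $\int_{M_j\setminus U_r^{\sscp M}(x_{n,j}^{(1)})} f_n^*\,d\mu$ in Lemma~\ref{poho1_rhs}(ii), so as to leave the renormalized quantity $D(\mathbf{q})$; and the non-divergent $\alpha_j$ boundary term must combine with the leading $\mathfrak{Z}_n\lambda_{n,1}^{(1)}\alpha_j$ contribution from Lemma~\ref{poho1_rhs}(iii) (where the ``$-2$'' inside the parenthesis of \eqref{poho1} is critical) into a single factor proportional to $\ell(\mathbf{q})\lambda_{n,1}^{(1)}$, entirely consistent with the expansion provided by Theorem~\ref{thm_new}.
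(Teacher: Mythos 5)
Your proposal is correct and follows essentially the same route as the paper. You plug the three pieces of Lemma~\ref{poho1_rhs} into the RHS of \eqref{poho1}, match against Lemma~\ref{poho1_lhs}, use $\int_M f_n^* \mathrm{d}\mu=0$ to split the interior integral and get the $-2A_{n,j}$ cancellation against the $-4A_{n,j}$ of the LHS, conclude (i) by absorbing the self-term $o(e^{-\lambda_{n,j}^{(1)}/2}\sum_l|A_{n,l}|)$, and then sum over $j$ with $\sum_j A_{n,j}=0$ so the surviving quantity reassembles into $b_0\bigl[C_1 D(\mathbf{q}) + C_2 \ell(\mathbf{q})\lambda_{n,1}^{(1)} + O(1)\bigr] + o(1)=0$, which forces $b_0=0$ in either Case 1 or Case 2. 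This is precisely the mechanism of the paper's \eqref{after_poho2}, including the two delicate cancellations you flag: the boundary $\pi/r_j^2$ in Lemma~\ref{poho1_rhs}(i) against the singular part of $\int_{M_j\setminus U^{\sscp M}_{r_j}}e^{\Phi_j}$ in (ii) to produce the renormalized $D(\mathbf{q})$, and the non-divergent $\alpha_j$ piece of (i) combining with the $\mathfrak{Z}_n\lambda_{n,1}^{(1)}\alpha_j$ piece of (iii) into the $\ell(\mathbf{q})\lambda_{n,1}^{(1)}$ factor. The only bookkeeping to watch is that the radii are the rescaled $r_j=r\sqrt{8h(q_j)e^{G_j^*(q_j)}}$, not $r$ itself, for the $\pi/r_j^2$ subtraction to match the definition \eqref{D_q} of $D(\mathbf{q})$; you acknowledge this rescaling, so the argument goes through as claimed.
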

\begin{proof}
$(i)$ By \eqref{poho1}, Lemmas \ref{poho1_lhs}-\ref{poho1_rhs} and \eqref{info_lambda}, we see that,
\begin{equation*}\label{alhs}\begin{aligned}
&-4A_{n,j}{+O(\lambda_{n,j}^{(1)}e^{-\lambda_{n,j}^{(1)}})}
  +o(e^{-\frac{\lambda_{n,j}^{(1)}}{2}}\sum_{l=1}^m|A_{n,l}|) {+o(e^{-\lambda_{n,j}^{(1)}})}
 ={ -2A_{n,j}+  O(\lambda_{n,j}^{(1)}e^{-\lambda_{n,j}^{(1)}})+ O(\frac{e^{-\lambda_{n,j}^{(1)}}}{r^2})+ o(e^{-\lambda_{n,j}^{(1)}}\log R)},
\end{aligned}\end{equation*}
which proves $(i)$.

\noindent $(ii)$  For any $r>0$, let \begin{equation}\label{rj1}
  r_j=r\sqrt{8h(q_j)G_j(q_j)}\ \ \textrm{for}\ \ j=1,\cdots,m.
\end{equation}By \eqref{poho1},
Lemmas \ref{poho1_lhs}-\ref{poho1_rhs} and $(i)$, we have for any $r\in (0,1)$ and $R>1$, 
{\allowdisplaybreaks
\begin{align*}
&\sum_{j=1}^m\Big[-4A_{n,j}{-\frac{256 b_0 e^{-\lambda_{n,1}^{(1)}}h(q_j)e^{G_j^*(q_j)}}{\rho_n (h(q_1))^2e^{G_1^*(q_1)}}
\int_{M_j\setminus U^{\sscp M}_{{{r_j}}}(q_j)} e^{ \Phi_j(y,\mathbf{q})} \mathrm{d}\mu(y)}
{+o(e^{-\frac{\lambda_{n,j}^{(1)}}{2}}\sum_{l=1}^m|A_{n,l}|) + o(e^{-\lambda_{n,j}^{(1)}})}\Big]
\\&= \sum_{j=1}^m\Big[{-\frac{ 128e^{-\lambda_{n,1}^{(1)}}b_0h(q_j)e^{G_j^*(q_j)}}{\rho_n (h(q_1))^2e^{G_1^*(q_1)}}
\frac{\pi}{r_j^2}}{-\frac{32\pi e^{-\lambda_{n,1}^{(1)}}b_0  h(q_j)e^{G_j^*(q_j)} (\Delta_{\sscp M}\log h(q_j)+8\pi m -2K(q_j))    }{\rho_n (h(q_1))^2 e^{G_1^*(q_1)}}}\\
& {-\frac{128b_0 e^{-\lambda_{n,1}^{(1)}}}{\rho_n (h(q_1))^2e^{G_1^*(q_1)}}
\int_{M_j\setminus  U^{\sscp M}_{{{{r_j}}}}(q_j)} h(q_j)e^{G_j^*(q_j)} e^{\Phi_j(x,\mathbf{q})}  \mathrm{d}\mu(x)}\\
& -\frac{32\pi \Big(\int_M\zeta_n\mathrm{d}\mu{-\frac{\|\tilde{u}_n^{(1)}-\tilde{u}_n^{(2)}\|_{L^{\infty}(M)}}{2}(\int_M\zeta_n\mathrm{d}\mu)^2}\Big)
}{\rho_n (h(q_1))^2e^{G_{1}^*(q_1)}}  (\Delta_{\sscp M} \log h(q_j) +8\pi m -2K(q_j))h(q_j)e^{G_{j}^*(q_j)}e^{-\lambda_{n,1}^{(1)}} \\
& {\times \Big(\lambda_{n,1}^{(1)}+\log\Big(\frac{\rho_n (h(q_1))^2e^{G_1(q_1)}}{8h(q_j)e^{G_j(q_j)}}{r^2_j}\Big)-2\Big) } +O(e^{-\lambda_{n,1}^{(1)}})(r+R^{-1})+o(e^{-\lambda_{n,1}^{(1)}})(\log r+\log R)     \Big],
\end{align*}}
\noindent where we used $O(1)$ to denote any quantity uniformly bounded with respect to {$r, R$} and $n$.
Then, in view of  $(i)$,  $\sum_{j=1}^m A_{n,j}=0$ and by the definition of $\ell(\mathbf{q})$ we see that,
\begingroup
\abovedisplayskip=3pt
\belowdisplayskip=3pt
\begin{equation}\label{after_poho2}\begin{aligned}
&{-\frac{256 b_0 e^{-\lambda_{n,1}^{(1)}}}{\rho_n (h(q_1))^2e^{G_1^*(q_1)}}
\sum_{j=1}^m h(q_j)e^{G_j^*(q_j)}\int_{M_j\setminus U^{\sscp M}_{{{r_j}}}(q_j)} {e^{ \Phi_j(y,\mathbf{q})}}
\mathrm{d}\mu(y)}
 \\=~&{-\frac{ 128e^{-\lambda_{n,1}^{(1)}}b_0}{\rho_n (h(q_1))^2e^{G_1^*(q_1)}}\sum_{j=1}^m h(q_j)e^{G_j^*(q_j)}
 {\frac{\pi}{r_j^2}}}{-\frac{32\pi e^{-\lambda_{n,1}^{(1)}}b_0  \ell(\mathbf{q})    }{\rho_n (h(q_1))^2 e^{G_1^*(q_1)}}}
  {-\frac{128b_0 e^{-\lambda_{n,1}^{(1)}}}{\rho_n (h(q_1))^2e^{G_1^*(q_1)}}
\sum_{j=1}^m h(q_j)e^{G_j^*(q_j)}\int_{M_j\setminus  U^{\sscp M}_{{{r_j}}}(q_j)} {e^{ \Phi_j(y,\mathbf{q})}}  \mathrm{d}\mu({y})
}
 \\&{-\frac{32\pi \Big(\int_M\zeta_n\mathrm{d}\mu
 {-\frac{\|\tilde{u}_n^{(1)}-\tilde{u}_n^{(2)}\|_{L^{\infty}(M)}}{2}(\int_M\zeta_n\mathrm{d}\mu)^2}\Big)
 \ell(\mathbf{q})e^{-\lambda_{n,1}^{(1)}}}{\rho_n (h(q_1))^2e^{G_{1}^*(q_1)}} }
{\Big(\lambda_{n,1}^{(1)}+\log\Big(\rho_n (h(q_1))^2e^{{G_1^*}(q_1)}r^2\Big)-2\Big) } \\&{+O(e^{-\lambda_{n,1}^{(1)}})(r+R^{-1})+o(e^{-\lambda_{n,1}^{(1)}})(\log r+\log R)}.    \end{aligned}
\end{equation}
\endgroup

At this point we consider two cases:

\noindent Case 1. $\ell(\mathbf{q})\neq0$.\\
{By \eqref{intequalb0}, \eqref{after_poho2} and in view of Lemma \ref{lem_diff_tilu} we see that,}
$
\ell(\mathbf{q}){(b_0+o(1))}=o(1).$
Therefore, since $\ell(\mathbf{q})\neq0$, then  $b_0=0$.

\noindent Case 2. $\ell(\mathbf{q})=0$ and $D(\mathbf{q})\neq0$.\\
{Since $\ell(\mathbf{q})=0$, then, by using the definition of $D(\mathbf{q})$ and \eqref{after_poho2}, we conclude that,}
\begin{equation*}
{(D(\mathbf{q})+o_r(1))b_0e^{-\lambda_{n,1}^{(1)}}= O(e^{-\lambda_{n,1}^{(1)}}) (r+R^{-1})+o(e^{-\lambda_{n,1}^{(1)}})(\log r+\log R).}
\end{equation*}
Since $r>0$ and $R>0$ are arbitrary, then $D(\mathbf{q})\neq 0$ implies $b_0=0$.

At this point, by $b_0=0$,   Lemma \ref{lem_equalb0} shows that $b_{j,0}=b_0=0$, $j=1,\cdots, m$. This fact concludes the proof of $(ii)$.
\end{proof}

Next we prove that $b_{j,1}=b_{j,2}=0$ by exploiting the following Pohozaev identity.
\begin{lemma}[\cite{ly2}]\label{lem_po2}  We have for $i=1,2$ and  fixed small $r>0$,
\begin{equation}\label{poho2}\begin{aligned}
&\int_{\partial B_r(\un{x}_{n,j}^{(1)})}<\nu, D\zeta_n>D_iv_{n,j}^{(1)} +
<\nu, Dv_{n,j}^{(2)}>D_i\zeta_n\mathrm{d}\sigma
-\frac{1}{2}\int_{\partial B_r(\un{x}_{n,j}^{(1)})}<D(v_{n,j}^{(1)}+v_{n,j}^{(2)}),D\zeta_n>
{\frac{(\un{x}-\un{x}_{n,j}^{(1)})_i}{|\un{x}-\un{x}_{n,j}^{(1)}|}}\mathrm{d}\sigma
\\=&-\int_{\partial B_r(\un{x}_{n,j}^{(1)})}\rho_nh_j(\un{x})\frac{e^{\tilde{u}_n^{(1)}}-e^{\tilde{u}_n^{(2)}}}
{\|\tilde{u}_n^{(1)}-\tilde{u}_n^{(2)}\|_{L^\infty(M)}}\frac{(\un{x}-\un{x}_{n,j}^{(1)})_i}{|\un{x}-\un{x}_{n,j}^{(1)}|}\mathrm{d}\sigma
+\int_{B_r(\un{x}_{n,j}^{(1)})}\rho_n h_j(\un{x})\frac{e^{\tilde{u}_n^{(1)}}-e^{\tilde{u}_n^{(2)}}}
{\|\tilde{u}_n^{(1)}-\tilde{u}_n^{(2)}\|_{L^\infty(M)}}D_i(\phi_{n,j}+\log h_j)\mathrm{d}\un{x}.
\end{aligned}\end{equation}
\end{lemma}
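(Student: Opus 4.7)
The plan is to derive (\ref{poho2}) as the difference of two standard Pohozaev identities, one for each $v_{n,j}^{(i)}$, in complete analogy with Lemma \ref{lem_po1}. The first step is to observe that in $B_r(\un{x}_{n,j}^{(1)})$, for $r$ small enough that no other blow-up point $\un{x}_{n,l}^{(1)}$, $l\neq j$, lies in the ball, the definition (\ref{phij}) combined with (\ref{delr1})--(\ref{delr}) yields $\Delta \phi_{n,j}=\rho_n e^{2\varphi_j}$. Consequently $v_{n,j}^{(i)}=\tilde{u}_n^{(i)}-\phi_{n,j}$ satisfies the local Liouville equation
\begin{equation*}
\Delta v_{n,j}^{(i)}+\rho_n h_j(\un{x})e^{\tilde{u}_n^{(i)}}=0 \quad \textrm{in}\ B_r(\un{x}_{n,j}^{(1)}),\quad i=1,2,
\end{equation*}
where $h_j=h\,e^{2\varphi_j}$, since the $-\rho_n e^{2\varphi_j}$ term in the local coordinate form of (\ref{eq_tilu}) is absorbed by $\Delta \phi_{n,j}$.

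Next, I would apply the divergence identity
\begin{equation*}
\Delta v\cdot D_i v=\textrm{div}\bigl(D_i v\,Dv-\tfrac{1}{2}|Dv|^2 e_i\bigr)
\end{equation*}
to $v=v_{n,j}^{(i)}$ and integrate over $B_r(\un{x}_{n,j}^{(1)})$, obtaining
\begin{equation*}
-\int_{B_r(\un{x}_{n,j}^{(1)})}\rho_n h_j e^{\tilde{u}_n^{(i)}}D_i v_{n,j}^{(i)}\,\mathrm{d}\un{x}=\int_{\partial B_r(\un{x}_{n,j}^{(1)})}\bigl[<\nu,Dv_{n,j}^{(i)}>D_i v_{n,j}^{(i)}-\tfrac{1}{2}|Dv_{n,j}^{(i)}|^2 \nu_i\bigr]\mathrm{d}\sigma,
\end{equation*}
for $i=1,2$. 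Subtracting these two equalities and using the purely algebraic identities $<\nu,Du>D_iu-<\nu,Dw>D_iw=<\nu,D(u-w)>D_iu+<\nu,Dw>D_i(u-w)$ and $|Du|^2-|Dw|^2=<D(u+w),D(u-w)>$ (applied with $u=v_{n,j}^{(1)}$, $w=v_{n,j}^{(2)}$), and then dividing by $\|\tilde{u}_n^{(1)}-\tilde{u}_n^{(2)}\|_{L^\infty(M)}$ (which coincides with $\|v_{n,j}^{(1)}-v_{n,j}^{(2)}\|_{L^\infty(M)}$ since $\phi_{n,j}$ cancels in the difference), the boundary contributions reproduce exactly the left hand side of (\ref{poho2}).

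Finally, I would rewrite the bulk integrand on the left via
\begin{equation*}
\rho_n h_j e^{\tilde{u}_n^{(i)}}D_i v_{n,j}^{(i)}=\rho_n D_i\bigl(h_j e^{\tilde{u}_n^{(i)}}\bigr)-\rho_n h_j e^{\tilde{u}_n^{(i)}}D_i(\log h_j+\phi_{n,j}),
\end{equation*}
obtained from $D_i\tilde{u}_n^{(i)}=D_iv_{n,j}^{(i)}+D_i\phi_{n,j}$. Taking the difference between $i=1$ and $i=2$, integrating over $B_r(\un{x}_{n,j}^{(1)})$, and applying the divergence theorem to the first contribution produces the boundary integral $-\int_{\partial B_r(\un{x}_{n,j}^{(1)})}\rho_n h_j(e^{\tilde{u}_n^{(1)}}-e^{\tilde{u}_n^{(2)}})\nu_i\,\mathrm{d}\sigma$, while the second contribution gives the remaining interior integral on the right hand side of (\ref{poho2}). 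Since the whole argument reduces to an identity derived from the divergence theorem, no analytical obstacle is expected; the only delicate point is the verification of $\Delta \phi_{n,j}=\rho_n e^{2\varphi_j}$ in $B_r(\un{x}_{n,j}^{(1)})$, which requires choosing $r$ smaller than half the minimum distance between the distinct blow-up points and, if relevant, bounded away from the vortex set $\{p_1,\dots,p_N\}$.
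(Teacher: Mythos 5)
Your proposal is correct and arrives at the identity \eqref{poho2} by an argument that is equivalent to, though organised differently from, the paper's. The paper works directly with the bilinear (mixed) Pohozaev divergence identity applied to the pair $(\zeta_n,v_{n,j}^{(i)})$, namely $\textrm{div}(\nabla\zeta_n D_l v_{n,j}^{(i)}+\nabla v_{n,j}^{(i)} D_l\zeta_n-\nabla\zeta_n\cdot\nabla v_{n,j}^{(i)} e_l)=\Delta\zeta_n D_l v_{n,j}^{(i)}+\Delta v_{n,j}^{(i)}D_l\zeta_n$, substitutes $\Delta\zeta_n=-f_n^*$ and $\Delta v_{n,j}^{(i)}=-\rho_n h_j e^{\tilde u_n^{(i)}}$, and sums over $i=1,2$; the $(-1)^i(v_{n,j}^{(1)}-v_{n,j}^{(2)})$ contributions then cancel. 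You instead apply the classical quadratic Pohozaev identity $\Delta v\, D_l v=\textrm{div}(D_l v\, Dv-\tfrac12|Dv|^2 e_l)$ to each $v_{n,j}^{(i)}$ separately, subtract, and only afterwards divide by $\|\tilde u_n^{(1)}-\tilde u_n^{(2)}\|_{L^\infty}$; the two routes are related by polarisation and yield exactly the same boundary and bulk terms. Your version has the advantages of being slightly more elementary (no bilinear form or $(-1)^i$ bookkeeping) and of isolating explicitly the fact that $\Delta\phi_{n,j}=\rho_n e^{2\varphi_j}$ in $B_r(\un{x}_{n,j}^{(1)})$, which the paper uses implicitly; the paper's version makes the role of the linearised operator $\Delta\zeta_n+\rho_n h c_n\zeta_n=0$ more visible. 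Both are correct and equivalent.
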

\begin{proof}
The identity
\eqref{poho2} has been obtained in \cite{ly2}. We prove it here for reader's convenience. We first observe that,
\begingroup
\abovedisplayskip=0pt
\belowdisplayskip=0pt
\begin{equation*}\label{eq_v3}
\Delta v_{n,j}^{(i)}+\rho_nh_j e^{\tilde{u}_n^{(i)}}=0\ \ \textrm{in}\ \ B_r(\un{x}_{n,j}^{(1)}),\ \ \textrm{and}
\end{equation*}
\endgroup
\begingroup
\abovedisplayskip=0pt
\belowdisplayskip=0pt
\begin{equation*}\label{poho2_st1}
  \begin{aligned}
&\textrm{div}\left(\nabla\zeta_nD_lv_{n,j}^{(i)}+\nabla v_{n,j}^{(i)} D_l\zeta_n-\nabla\zeta_n\cdot\nabla v_{n,j}^{(i)} e_l\right)
={\Delta} \zeta_n D_l v_{n,j}^{(i)}+{\Delta} v_{n,j}^{(i)} D_l\zeta_n
  \\&=\textrm{div}\Big(-\rho_nh_j(\un{x})\frac{e^{\tilde{u}_n^{(1)}}-e^{\tilde{u}_n^{(2)}}}{\|\tilde{u}_n^{(1)}-\tilde{u}_n^{(2)}\|_{L^\infty(M)}}e_l\Big)
  +\rho_nh_j(\un{x})\frac{e^{\tilde{u}_n^{(1)}}-e^{\tilde{u}_n^{(2)}}}
  {\|\tilde{u}_n^{(1)}-\tilde{u}_n^{(2)}\|_{L^\infty(M)}}
D_l((-1)^{i}(v_{n,j}^{(1)}-v_{n,j}^{(2)})+\phi_{n,j}+\log h_j),
  \end{aligned}
\end{equation*}
\endgroup
where $e_l=\frac{\un{x}_l}{|\un{x}|}$, $l=1,2$. Therefore we find that,
\begingroup
\abovedisplayskip=3pt
\belowdisplayskip=3pt
\begin{equation*}\label{poho2_st2}
  \begin{aligned}&\textrm{div}(-2\rho_n h_j({\un{x}})\frac{e^{\tilde{u}_n^{(1)}}-e^{\tilde{u}_n^{(2)}}}{\|\tilde{u}_n^{(1)}-\tilde{u}_n^{(2)}\|_{L^\infty(M)}}e_l)
+2\rho_n h_j({\un{x}})\frac{e^{\tilde{u}_n^{(1)}}-e^{\tilde{u}_n^{(2)}}}{\|\tilde{u}_n^{(1)}-\tilde{u}_n^{(2)}\|_{L^\infty(M)}}D_l(\phi_{n,j}+\log h_j)
  \\&=\textrm{div}\left\{2\nabla\zeta_nD_lv_{n,j}^{(1)}+2\nabla v_{n,j}^{(2)} D_l\zeta_n-\nabla\zeta_n\cdot\nabla(v_{n,j}^{(1)}+v_{n,j}^{(2)})e_l\right\},\end{aligned}
\end{equation*}
which proves Lemma \ref{lem_po2}.
\end{proof}
\endgroup

Next, we shall estimate the left and right hand side of the identity \eqref{poho2}.
\begin{lemma}\label{lem_poho2r}
 \begin{equation*}
 \mathrm{(RHS)~of  }\ \eqref{poho2}
  =\tilde{B}_j\Big( \sum_{h=1}^2D^2_{hi}(\phi_{n,j}+\log h_j)(\un{x}_{n,j}^{(1)})
  e^{-\frac{\lambda_{n,j}^{(1)}}{2}}b_{j,h} \Big)+o(e^{-\frac{\lambda_{n,j}^{(1)}}{2}}),\   \textrm{where} \  \tilde{B}_j=
 4\sqrt{\frac{8}{\rho_n  h_j(\un{x}_{n,j}^{(1)})}}\int_{\mathbb{R}^2}\frac{|z|^2}{(1+|z|^2)^3}\mathrm{d} z.
\end{equation*}
 \end{lemma}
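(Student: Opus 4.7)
The plan is to split the right-hand side of \eqref{poho2} into a boundary integral over $\partial B_r(\un{x}_{n,j}^{(1)})$ and a bulk integral over $B_r(\un{x}_{n,j}^{(1)})$, and analyze each separately. For the boundary piece, I would use that $f_n^*=\rho_n h(e^{\tilde{u}_n^{(1)}}-e^{\tilde{u}_n^{(2)}})/\|\tilde{u}_n^{(1)}-\tilde{u}_n^{(2)}\|_{L^\infty(M)}$, by \eqref{est_fn2} together with $b_0=0$ from Lemma~\ref{b0=0}(ii), satisfies $f_n^*=o(e^{-\lambda_{n,j}^{(1)}})$ uniformly on $\partial B_r(\un{x}_{n,j}^{(1)})$ for any fixed small $r>0$. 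The resulting contribution to the boundary integral is therefore $O(r)\cdot o(e^{-\lambda_{n,j}^{(1)}})=o(e^{-\lambda_{n,j}^{(1)}/2})$.

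For the bulk integral, the idea is to rescale via $z=e^{\lambda_{n,j}^{(1)}/2}(\un{x}-\un{x}_{n,j}^{(1)})$ and to read off the limit from Lemma~\ref{lem_est_zetanj}. I would first Taylor expand $D_i(\phi_{n,j}+\log h_j)$ about $\un{x}_{n,j}^{(1)}$. By \eqref{gradatq} and \eqref{rhon}, $D_i(\phi_{n,j}+\log h_j)(\un{x}_{n,j}^{(1)})=O(\lambda_{n,j}^{(1)}e^{-\lambda_{n,j}^{(1)}})$, so the constant Taylor term contributes at most $O(\lambda_{n,j}^{(1)}e^{-\lambda_{n,j}^{(1)}})$ once integrated against $\rho_n h_j e^{\tilde{u}_n^{(1)}}$, while the quadratic remainder yields $O(e^{-\lambda_{n,j}^{(1)}})$ after the rescaling; only the linear Hessian term $e^{-\lambda_{n,j}^{(1)}/2}\sum_h D^2_{ih}(\phi_{n,j}+\log h_j)(\un{x}_{n,j}^{(1)})z_h$ matters. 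After rescaling the measure factor $\rho_n h_j e^{\tilde{u}_n^{(1)}}e^{-\lambda_{n,j}^{(1)}}$ converges (cf.\ \eqref{cnx_exp}) to $\rho_n h(\un{q}_j)/(1+\tfrac{\rho_n h(\un{q}_j)}{8}|z|^2)^2$, while $(e^{\tilde{u}_n^{(1)}}-e^{\tilde{u}_n^{(2)}})/\|\tilde{u}_n^{(1)}-\tilde{u}_n^{(2)}\|_{L^\infty(M)}=e^{\tilde{u}_n^{(1)}}(\zeta_n+O(\|\tilde{u}_n^{(1)}-\tilde{u}_n^{(2)}\|_{L^\infty(M)}))$ by Lemma~\ref{lem_diff_tilu}. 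By Lemma~\ref{lem_est_zetanj} combined with Lemma~\ref{b0=0}(ii), $\overline{\zeta_n}(z)\to b_{j,1}\psi_{j,1}(z)+b_{j,2}\psi_{j,2}(z)$ in $C^0_{\mathrm{loc}}(\mathbb{R}^2)$. The leading contribution to the bulk integral is therefore, up to negligible errors,
\begin{equation*}
e^{-\lambda_{n,j}^{(1)}/2}\int_{\mathbb{R}^2}\frac{8(\pi m h(\un{q}_j))^{3/2}}{(1+\pi m h(\un{q}_j)|z|^2)^3}\sum_{h,h'=1}^{2}b_{j,h'}\,D^2_{ih}(\phi_{n,j}+\log h_j)(\un{x}_{n,j}^{(1)})\,z_h z_{h'}\,\mathrm{d}z.
\end{equation*}
By the standard symmetry identity $\int_{\mathbb{R}^2} z_h z_{h'} f(|z|)\mathrm{d}z=\tfrac{1}{2}\delta_{hh'}\int_{\mathbb{R}^2}|z|^2 f(|z|)\mathrm{d}z$, followed by the substitution $w=\sqrt{\pi m h(\un{q}_j)}z$, this becomes $e^{-\lambda_{n,j}^{(1)}/2}\sum_h b_{j,h} D^2_{ih}(\phi_{n,j}+\log h_j)(\un{x}_{n,j}^{(1)})\cdot\tfrac{4}{\sqrt{\pi m h(\un{q}_j)}}\int_{\mathbb{R}^2}\tfrac{|w|^2}{(1+|w|^2)^3}\mathrm{d}w$, which matches the claimed prefactor $\tilde{B}_j=4\sqrt{8/(\rho_n h_j(\un{x}_{n,j}^{(1)}))}\int_{\mathbb{R}^2}|z|^2/(1+|z|^2)^3\mathrm{d}z$ modulo $o(1)$.

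The main obstacle is the annular region $\{R\le |z|\le \Lambda_{n,j,r}^+\}$ in the rescaled picture, where $\overline{\zeta_n}$ is no longer close to its local limit. On this range the bound \eqref{bdd_of_Zeta}, combined with $A_{n,j}=o(e^{-\lambda_{n,j}^{(1)}/2})$ from Lemma~\ref{b0=0}(i), yields $\overline{\zeta_n}(z)=\int_M\zeta_n\mathrm{d}\mu+o(1)$; since $\int_M\zeta_n\mathrm{d}\mu=-b_0+o(1)=o(1)$ by \eqref{intequalb0}, the tail integrand is controlled by an integrable factor $|z|^2/(1+|z|^2)^3$ and its total contribution is $o(e^{-\lambda_{n,j}^{(1)}/2})$ upon letting $n\to\infty$ and then $R\to\infty$. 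It is precisely the vanishing $b_{j,0}=0$ that rules out a potential logarithmically divergent tail which would otherwise spoil the clean identification of the leading constant.
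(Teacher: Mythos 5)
Your proposal is correct and follows essentially the same route as the paper: split the RHS into boundary and bulk pieces, dispose of the boundary term as $O(e^{-\lambda_{n,j}^{(1)}})$ (your $o(e^{-\lambda_{n,j}^{(1)}})$ via $b_0=0$ is even stronger than needed), Taylor-expand $D_i(\phi_{n,j}+\log h_j)$ and use \eqref{gradatq} plus $\nabla f_m(\mathbf{q})=0$ to kill the zeroth-order term, then rescale and pass to the local limit $\overline{\zeta_n}\to b_{j,1}\psi_{j,1}+b_{j,2}\psi_{j,2}$ using $b_{j,0}=0$; your final change of variables $w=\sqrt{\pi m h(\un{q}_j)}\,z$ correctly reproduces $\tilde{B}_j$ up to $o(1)$. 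Two harmless imprecisions: the quadratic-remainder contribution is $O(\lambda_{n,j}^{(1)}e^{-\lambda_{n,j}^{(1)}})$ rather than $O(e^{-\lambda_{n,j}^{(1)}})$ (still $o(e^{-\lambda_{n,j}^{(1)}/2})$), and on the annulus $R\le|z|\le\Lambda_{n,j,r}^{+}$ the estimate \eqref{bdd_of_Zeta} actually gives $\overline{\zeta_n}(z)=\int_M\zeta_n\mathrm{d}\mu+o(1)+O(|z|^{-1})$ (not just $o(1)$), so the relevant tail integrand is $\sim|z|/(1+|z|^2)^2$ rather than $|z|^2/(1+|z|^2)^3$ — both are integrable and the annular contribution still vanishes as $n\to\infty$, $R\to\infty$. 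You have in fact spelled out the tail control more explicitly than the paper, which passes to the limit tersely; this is a useful elaboration, not a deviation.
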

 \begin{proof}
First of all, in view of \eqref{info_lambda}, we find that,
\begingroup
\abovedisplayskip=3pt
\belowdisplayskip=3pt
\begin{equation}\label{first_term}\begin{aligned}
                                         &\int_{\partial B_r(\un{x}_{n,j}^{(1)})}\rho_n h_j(\un{x})\frac{e^{\tilde{u}_n^{(1)}}-e^{\tilde{u}_n^{(2)}}}
                                         {\|\tilde{u}_n^{(1)}-\tilde{u}_n^{(2)}\|_{L^\infty(M)}}\frac{(\un{x}-\un{x}_{n,j}^{(1)})_i}{|\un{x}-\un{x}_{n,j}^{(1)}|}
                                         \mathrm{d}\sigma
                                      = \int_{\partial B_r(\un{x}_{n,j}^{(1)})}\rho_n h_j(\un{x})e^{\tilde{u}_n^{(1)}}(\zeta_n+o(1))\frac{(\un{x}-\un{x}_{n,j}^{(1)})_i}
                                         {|\un{x}-\un{x}_{n,j}^{(1)}|}\mathrm{d}\sigma
                                         =O(e^{-\lambda_{n,j}^{(1)}}).
                                       \end{aligned}\end{equation}
\endgroup
while by \eqref{eta}, we also see that,
\begingroup
\abovedisplayskip=3pt
\belowdisplayskip=3pt
\begin{equation}\label{sec_term0}
                     \begin{aligned}&\int_{B_r(\un{x}_{n,j}^{(1)})}\frac{\rho_n  h_j(\un{x})(e^{\tilde{u}_n^{(1)}}-e^{\tilde{u}_n^{(2)}})}
                     {\|\tilde{u}_n^{(1)}-\tilde{u}_n^{(2)}\|_{L^\infty(M)}}D_i(\phi_{n,j}+\log h_j)\mathrm{d} \un{x}
=\int_{B_r(\un{x}_{n,j}^{(1)})}\rho_n  h_j(\un{x})e^{\tilde{u}_n^{(1)}}(\zeta_n+o(1))D_i(\phi_{n,j}+\log h_j)\mathrm{d} \un{x}
\\&=\int_{B_r(\un{x}_{n,j}^{(1)})}\frac{\rho_n  h_j(\un{x})e^{\lambda_{n,j}^{(1)}+\eta_{n,j}^{(1)}+
G_j^*(\un{x})-G_j^*(\un{x}_{n,j}^{(1)})}}{(1+\frac{\rho_n h_j(\un{x}_{n,j}^{(1)})}{8}
e^{\lambda_{n,j}^{(1)}}|\un{x}-\un{x}_{n,j,*}^{(1)}|^2)^2}(\zeta_n+o(1))
\\&\quad\times \Big[D_i(\phi_{n,j}+\log h_j)(\un{x}_{n,j}^{(1)})+
\sum_{h=1}^2 D_{ih}^2(\phi_{n,j}+\log h_j)(\un{x}_{n,j}^{(1)})(\un{x}-\un{x}_{n,j}^{(1)})_h + O(|\un{x}-\un{x}_{n,j}^{(1)}|^2)\Big]\mathrm{d} \un{x}
\\&=\int_{B_{\Lambda_{n,j,r}^{+}}(0)}
\frac{\rho_n h_j(e^{-\frac{\lambda_{n,j}^{(1)}}{2}}z+\un{x}_{n,j}^{(1)})e^{\overline{\eta_{n,j}^{(1)}}+
G_j^*(e^{-\frac{\lambda_{n,j}^{(1)}}{2}}z+\un{x}_{n,j}^{(1)})-G_j^*(\un{x}_{n,j}^{(1)})}}
{(1+\frac{\rho_n h_j(\un{x}_{n,j}^{(1)})}{8} |z+ e^{\frac{\lambda_{n,j}^{(1)}}{2}}(\un{x}_{n,j}^{(1)}-\un{x}_{n,j,*}^{(1)})|^2)^2}
(\overline{\zeta_n}+o(1))
\\&\quad\times\Big[D_i(\phi_{n,j}+\log h_j)(\un{x}_{n,j}^{(1)})+\sum_{h=1}^2 D_{ih}^2(\phi_{n,j}+\log h_j)(\un{x}_{n,j}^{(1)})z_h
e^{-\frac{\lambda_{n,j}^{(1)}}{2}} + O(e^{-\lambda_{n,j}^{(1)}}|z|^2)\Big]{\mathrm{d}  z.}
                                        \end{aligned}
                                    \end{equation}
\endgroup

{Next, since} $\mathbf{q}$ is a critical point of $f_m$, then by using \eqref{rhon},  \eqref{rhon8pi}, and \eqref{diffxandq},  we find that,
\begingroup
\abovedisplayskip=3pt
\belowdisplayskip=3pt
                   \begin{equation}\label{deri_phi_h}
                     \begin{aligned}
                    & D_i(\phi_{n,j}+\log h_j)(\un{x}_{n,j}^{(1)})=D_i(G_j^*+\log h_j)(\un{x}_{n,j}^{(1)}) +O(\lambda_{n,j}^{(1)} e^{-\lambda_{n,j}^{(1)}})
                   =O(\lambda_{n,j}^{(1)} e^{-\lambda_{n,j}^{(1)}}).
                      \end{aligned}
                   \end{equation}
   \endgroup

By using \eqref{sec_term0},  \eqref{deri_phi_h}, and \eqref{eta_ets},  we have,
                   \begingroup
\abovedisplayskip=3pt
\belowdisplayskip=3pt
\begin{equation*}\label{sec_term1}
                     \begin{aligned}&\int_{B_r(\un{x}_{n,j}^{(1)})}\frac{\rho_n  h_j(\un{x})(e^{\tilde{u}_n^{(1)}}-e^{\tilde{u}_n^{(2)}})}
                     {\|\tilde{u}_n^{(1)}-\tilde{u}_n^{(2)}\|_{L^\infty(M)}}D_i(\phi_{n,j}+\log h_j)\mathrm{d} \un{x}
     \\&=\int_{B_{\Lambda_{n,j,r}^{+}}(0)}\frac{\rho_n  h_j(\un{x}_{n,j}^{(1)}) \overline{\zeta_n} }
     {(1+\frac{\rho_n h_j(\un{x}_{n,j}^{(1)})}{8} |z|^2)^2}
     \Big( \sum_{h=1}^2D^2_{hi}(\phi_{n,j}+\log h_j)(\un{x}_{n,j}^{(1)})e^{-\frac{\lambda_{n,j}^{(1)}}{2}}z_h \Big)\mathrm{d} z +
     o(e^{-\frac{\lambda_{n,j}^{(1)}}{2}}){,}
                                     \end{aligned}
                   \end{equation*}
                   \endgroup
                   and then, in view of Lemma \ref{lem_est_zetanj} and Lemma \ref{b0=0}, we conclude that,
                   \begingroup
\abovedisplayskip=3pt
\belowdisplayskip=3pt
                   \begin{equation}\label{sec_term2}
                     \begin{aligned}&\int_{B_r(\un{x}_{n,j}^{(1)})}\frac{\rho_n  h_j(\un{x})(e^{\tilde{u}_n^{(1)}}-e^{\tilde{u}_n^{(2)}})}
                     {\|\tilde{u}_n^{(1)}-\tilde{u}_n^{(2)}\|_{L^\infty(M)}}D_i(\phi_{n,j}+\log h_j)\mathrm{d} \un{x}
     \\&=\int_{B_{\Lambda_{n,j,r}^{+}}(0)}\frac{\rho_n  h_j(\un{x}_{n,j}^{(1)}) \sqrt{\frac{\rho_n  h_j(\un{x}_{n,j}^{(1)})}{8}}|z|^2}
     {2(1+\frac{\rho_n h_j(\un{x}_{n,j}^{(1)})}{8} |z|^2)^3}\mathrm{d} z\Big( \sum_{h=1}^2D^2_{hi}(\phi_{n,j}+\log h_j)(\un{x}_{n,j}^{(1)})
     e^{-\frac{\lambda_{n,j}^{(1)}}{2}}b_{j,h} \Big) +o(e^{-\frac{\lambda_{n,j}^{(1)}}{2}})
     \\&=4\sqrt{\frac{8}{\rho_n  h_j(\un{x}_{n,j}^{(1)})}}\int_{\mathbb{R}^2}\frac{|z|^2}{(1+|z|^2)^3}
                            \mathrm{d} z\Big( \sum_{h=1}^2D^2_{hi}(\phi_{n,j}+\log h_j)(\un{x}_{n,j}^{(1)})
                            e^{-\frac{\lambda_{n,j}^{(1)}}{2}}b_{j,h} \Big) +o(e^{-\frac{\lambda_{n,j}^{(1)}}{2}})
                            \\&=\tilde{B}_j\Big( \sum_{h=1}^2D^2_{hi}(\phi_{n,j}+\log y h_j)(x_{n,j}^{(1)})
                            e^{-\frac{\lambda_{n,j}^{(1)}}{2}}b_{j,h} \Big)+o(e^{-\frac{\lambda_{n,j}^{(1)}}{2}}).
     \end{aligned}
     \end{equation}
                   \endgroup
Clearly \eqref{first_term} and \eqref{sec_term2} conclude the proof of Lemma \ref{lem_poho2r}.
 \end{proof}

 \begin{lemma}\label{lem_poho2l} \begin{equation*}\mathrm{(LHS)~of }\ \eqref{poho2}
   =-8\pi\Bigg[\sum_{l\neq j} e^{-\frac{\lambda_{n,l}^{(1)}}{2}}D_i G_{n,l}^*(\un{x}_{n,j}^{(1)})+  e^{-\frac{\lambda_{n,j}^{(1)}}{2}}D_i \sum_{h=1}^2\partial_{\un{y}_h}R(\un{y},\un{x})\Big|_{\un{x}=\un{y}=\un{x}_{n,j}^{(1)}}b_{j,h}\bar{B}_j\Bigg] {+o(e^{-\frac{\lambda_{n,j}^{(1)}}{2}})},
\end{equation*}where $ G_{n,l}^*(\un{x})=\sum_{h=1}^2{\partial_{\un{y}_h}} G(\un{y},\un{x})\Big|_{\un{y}=\un{x}_{n,l}^{(1)}}b_{l,h}\tilde{B}_l.$
 \end{lemma}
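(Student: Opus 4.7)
The plan is to follow the scheme used in the proof of Lemma~\ref{poho1_lhs}. First, by \eqref{grad_til} I would replace $Dv_{n,j}^{(1)}$ and $Dv_{n,j}^{(2)}$ on $\partial B_r(\un{x}_{n,j}^{(1)})$ by $D(\widetilde{G}-\phi_{n,j})$, incurring only an additive error of order $o(e^{-\lambda_{n,j}^{(1)}/2})$. By \eqref{tildegwithphi} together with the asymptotics $\rho_n/(2\pi m)=4+O(\lambda_{n,1}^{(1)}e^{-\lambda_{n,1}^{(1)}})$, this gradient equals $-\frac{4(\un{x}-\un{x}_{n,j}^{(1)})}{r^2}$ on $\partial B_r$. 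Writing $\un{x}-\un{x}_{n,j}^{(1)}=r\nu$, this yields $<\nu,Dv_{n,j}^{(2)}>\simeq-\frac{4}{r}$ and $D_iv_{n,j}^{(1)}\simeq-\frac{4\nu_i}{r}$, so substitution in the left hand side of \eqref{poho2} shows that the term $\int<\nu,D\zeta_n>D_iv_{n,j}^{(1)}\,d\sigma$ is exactly cancelled by $-\frac{1}{2}\int<D(v_{n,j}^{(1)}+v_{n,j}^{(2)}),D\zeta_n>\nu_i\,d\sigma$: both reduce to $\pm\frac{4}{r}\int_{\partial B_r}<\nu,D\zeta_n>\nu_i\,d\sigma$ with opposite signs. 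Hence the whole LHS of \eqref{poho2} reduces to
\[
-\frac{4}{r}\int_{\partial B_r(\un{x}_{n,j}^{(1)})}D_i\zeta_n\,d\sigma+o(e^{-\lambda_{n,j}^{(1)}/2}).
\]

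The second step is to insert into the above expression the sharp representation of $\zeta_n$ furnished by Lemma~\ref{lem_out}. Lemma~\ref{b0=0} provides both $A_{n,l}=o(e^{-\lambda_{n,l}^{(1)}/2})$ and $b_{l,0}=0$, so only the directional modes survive:
\[
\zeta_n(x)-\int_M\zeta_n\,d\mu=\sum_{l=1}^m e^{-\lambda_{n,l}^{(1)}/2}\,G_{n,l}^*(x)+o(e^{-\lambda_{n,j}^{(1)}/2})
\]
in $C^1(M\setminus\cup_{l=1}^m U^{\sscp M}_\theta(x_{n,l}^{(1)}))$, where $G_{n,l}^*$ is as defined in the statement (in particular already incorporating the factor $\tilde{B}_l$). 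Differentiating and integrating on $\partial B_r(\un{x}_{n,j}^{(1)})$, for each $l\neq j$ the smoothness of $G_{n,l}^*$ near $\un{x}_{n,j}^{(1)}$ together with the vanishing of the linear Taylor term by angular symmetry gives $\int_{\partial B_r}D_iG_{n,l}^*\,d\sigma=2\pi r\,D_iG_{n,l}^*(\un{x}_{n,j}^{(1)})+O(r^3)$. For $l=j$ I would split $G(\un{y},\un{x})=-\frac{1}{2\pi}\log|\un{y}-\un{x}|+R(\un{y},\un{x})$; the singular piece contributes an integrand proportional to $\frac{\delta_{ih}}{r^2}-\frac{2(\un{x}-\un{x}_{n,j}^{(1)})_h(\un{x}-\un{x}_{n,j}^{(1)})_i}{r^4}$, which vanishes after integration on $\partial B_r$ by an elementary angular symmetry calculation, so only the smooth contribution $2\pi r\,D_i\partial_{\un{y}_h}R\big|_{\un{y}=\un{x}=\un{x}_{n,j}^{(1)}}$ from $R$ survives. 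Multiplying the outcome by $-4/r$ reproduces precisely the prefactor $-8\pi$ displayed in the claim.

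The main obstacle will be the error bookkeeping in the first step: the gradient $D(\widetilde{G}-\phi_{n,j})$ has size $1/r$ on $\partial B_r$, so when multiplied against the $o(e^{-\lambda_{n,j}^{(1)}/2})$ error produced by \eqref{grad_til} and integrated along $\partial B_r$ one must verify that the outcome remains $o(e^{-\lambda_{n,j}^{(1)}/2})$; this is guaranteed once $r$ is chosen small but independent of $n$. Also one must ensure, as in the proof of Lemma~\ref{poho1_lhs}, that the $C^1$ convergence in Lemma~\ref{lem_out} is strong enough to permit term-by-term differentiation on $\partial B_r$. All remaining manipulations, in particular the angular cancellation of the singular component of $D_iG_{n,j}^*$, are a straightforward symmetry computation.
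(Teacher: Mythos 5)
Your overall scheme is the same as the paper's: reduce the LHS of \eqref{poho2} to $-\frac{4}{r}\int_{\partial B_r(\un{x}_{n,j}^{(1)})}D_i\zeta_n\,\mathrm{d}\sigma+o(e^{-\lambda_{n,j}^{(1)}/2})$ via the cancellation coming from $Dv_{n,j}^{(i)}\simeq -4(\un{x}-\un{x}_{n,j}^{(1)})/|\un{x}-\un{x}_{n,j}^{(1)}|^2$, then insert the Green-representation of $\zeta_n$ from Lemma \ref{lem_out} (with $A_{n,l}=o(\cdot)$ and $b_{l,0}=0$ from Lemma \ref{b0=0}) and evaluate the resulting boundary integrals of $D_iG_{n,l}^*$. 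The cancellation argument in your first step and the splitting of $G_{n,j}^*$ into singular and regular parts (with the angular cancellation of the singular piece) are both correct.

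There is, however, a genuine gap in the last step. You evaluate $\int_{\partial B_r}D_iG_{n,l}^*\,\mathrm{d}\sigma$ by a Taylor expansion at radius $r$ and record an error $O(r^3)$; after multiplying by $-4/r$ and by $e^{-\lambda_{n,l}^{(1)}/2}$ this produces an error $O(r^2e^{-\lambda_{n,l}^{(1)}/2})$. Since $r$ is held \emph{fixed} in the lemma (and in its application in Lemma \ref{bj=0}, where $n\to+\infty$ is taken first), a term of size $O(r^2e^{-\lambda_{n,l}^{(1)}/2})$ is \emph{not} $o(e^{-\lambda_{n,j}^{(1)}/2})$, so the claimed error estimate is not actually established.

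The missing ingredient — and the reason the paper's argument closes — is that $G_{n,l}^*$ is harmonic in $B_r(\un{x}_{n,j}^{(1)})\setminus\{\un{x}_{n,j}^{(1)}\}$: indeed $\Delta_{\un{x}}G(\un{y},\un{x})=e^{2\varphi(\un{x})}$ away from $\un{y}$ does not depend on $\un{y}$, so $\Delta_{\un{x}}\partial_{\un{y}_h}G\big|_{\un{y}=\un{x}_{n,l}^{(1)}}=0$. Hence for $l\neq j$ the function $D_iG_{n,l}^*$ is harmonic in $B_r(\un{x}_{n,j}^{(1)})$ and the mean value property gives $\frac{1}{2\pi r}\int_{\partial B_r}D_iG_{n,l}^*\,\mathrm{d}\sigma = D_iG_{n,l}^*(\un{x}_{n,j}^{(1)})$ \emph{exactly}, with no $O(r^3)$ remainder. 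For $l=j$, the singular part you computed cancels exactly on any $\partial B_s$, while the regular part $D_i\partial_{\un{y}_h}R(\un{y},\un{x})\big|_{\un{y}=\un{x}_{n,j}^{(1)}}$ is likewise harmonic in $\un{x}$ (for the same reason, since $\Delta_{\un{x}}R=e^{2\varphi_j(\un{x})}$ is $\un{y}$-independent), so the mean value property again removes the remainder. Equivalently, the paper reaches the same end by the divergence identity \eqref{feq1}--\eqref{feq2}, which transports the boundary integral from $\partial B_r$ to an arbitrarily small $\partial B_\theta$; since the estimate for $\zeta_n$ is only used on $\partial B_r$ (fixed) and the $G_{n,l}^*$ are fixed harmonic functions, the $o_\theta(1)$ error can be sent to $0$ independently of $n$. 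Once you invoke harmonicity (or the divergence transport trick) in place of the crude Taylor bound, your argument closes and coincides with the paper's.
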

 \begin{proof}By the definition of $G_{n,i}^*$, we have for any $\theta\in(0,r)$,
$   \Delta G_{n,l}^*=0\ \ \textrm{in} \ \ B_r(\un{x}_{n,j}^{(1)})\setminus B_{\theta}(\un{x}_{n,j}^{(1)}).$\\
 Then for $\un{x}\in B_r(\un{x}_{n,j}^{(1)})\setminus B_{\theta}(\un{x}_{n,j}^{(1)})$, and setting $e_i=\frac{\un{x}_i}{|\un{x}|}$, $i=1,2$, we have,
 \begin{equation*}\label{feq1}\begin{aligned}
   0&= \Delta G_{n,l}^*D_i\log\frac{1}{|\un{x}-\un{x}_{n,j}^{(1)}|}+\Delta \log\frac{1}{|\un{x}-\un{x}_{n,j}^{(1)}|}D_iG_{n,l}^*
  \\&=\textrm{div}\Bigg(\nabla G_{n,l}^*D_i\log\frac{1}{|\un{x}-\un{x}_{n,j}^{(1)}|}+\nabla \log\frac{1}{|\un{x}-\un{x}_{n,j}^{(1)}|}
  D_iG_{n,l}^* -\nabla G_{n,l}^*\cdot\nabla \log\frac{1}{|\un{x}-\un{x}_{n,j}^{(1)}|}e_i\Bigg),\end{aligned}
 \end{equation*}
 which readily implies that,
 \begin{equation}\label{feq2}\begin{aligned}
 \int_{\partial B_r(\un{x}_{n,j}^{(1)})} \frac{\nabla_i G_{n,l}^*}{|\un{x}-\un{x}_{n,j}^{(1)}|} \mathrm{d}\sigma
&=\int_{\partial B_\theta(\un{x}_{n,j}^{(1)})} \frac{\nabla_i G_{n,l}^*}{|\un{x}-\un{x}_{n,j}^{(1)}|} \mathrm{d}\sigma.\end{aligned}
 \end{equation}
In view of  Lemma \ref{lem_out} and Lemma \ref{b0=0}, we also have,
\begin{equation}\label{zeta_est1}
  \zeta_n(x)-\int_M\zeta_n\mathrm{d}\mu=
  \sum_{l=1}^m e^{-\frac{\lambda_{n,l}^{(1)}}{2}}G_{n,l}^*(x) +o(e^{-\frac{\lambda_{n,j}^{(1)}}{2}}) \ \ \textrm{in} \ \
  C^1(U^{\sscp M}_r(x_{n,j}^{(1)})\setminus U^{\sscp M}_{\theta}(x_{n,j}^{(1)})).
\end{equation}

By using \eqref{rhon8pi}-\eqref{rhon}, we find
$\frac{\rho_n}{m}=\rho_{n,j}+O(\lambda_{n,j}^{(1)}e^{-\lambda_{n,j}^{(1)}})=8\pi+O(\lambda_{n,j}^{(1)}e^{-\lambda_{n,j}^{(1)}})$,
which, together with \eqref{wnx}, \eqref{est_wn}, and \eqref{diff_x1_x_2}, implies that,
\begin{equation}\label{dvn}
       \begin{aligned}Dv_{n,j}^{(i)}(\un{x})&=
       D(\tilde{u}_n^{(i)}-\phi_{n,j}) =D\Big(\tilde{u}_n^{(i)}-\frac{\rho_n}{m}R(\un{x},\un{x}_{n,j}^{(1)})-
       \frac{\rho_n}{m}\sum_{l\neq j} G(\un{x},\un{x}_{n,j}^{(1)})\Big)
       \\&=D\Big(\tilde{u}_n^{(i)}-{\frac{\rho_n}{m}\sum_{l=1}^m G(\un{x},\un{x}_{n,l}^{(1)})-\frac{\rho_n}{2\pi m}
       \log|\underline{x}-\underline{x}_{n,j}^{(1)}|}\Big)
   =D w_{n}^{(i)}-4\frac{(\un{x}-\un{x}_{n,j}^{(1)})}{|\un{x}-\un{x}_{n,j}^{(1)}|^2}+o(e^{-\frac{\lambda_{n,j}^{(1)}}{2}})
       \\&=-4\frac{(\un{x}-\un{x}_{n,j}^{(1)})}{|\un{x}-\un{x}_{n,j}^{(1)}|^2}+
       o(e^{-\frac{\lambda_{n,j}^{(1)}}{2}}) \ \ \textrm{in} \ \ C^1(B_r(\un{x}_{n,j}^{(1)})\setminus B_{\theta}(\un{x}_{n,j}^{(1)})).\end{aligned}
     \end{equation}
Next, since $
 { D_iD_h(\log |z|)=\frac{\delta_{ih}|z|^2-2z_iz_h}{|z|^4},}
$ we see that, \begin{equation*}\begin{aligned}\label{feq3}
              & \int_{\partial B_{\theta}(\un{x}_{n,j}^{(1)})}\frac{\nabla_i G_{n,j}^*}{|\un{x}-\un{x}_{n,j}^{(1)}|}\mathrm{d}\sigma =2\pi D_i \sum_{h=1}^2\partial_{\un{y}_h}R(\un{y},\un{x})\Big|_{\un{x}=\un{y}=\un{x}_{n,j}^{(1)}}b_{j,h}\bar{B}_j+o_\theta(1),
             \end{aligned}\end{equation*}
    which, together with \eqref{zeta_est1}, \eqref{dvn}, and \eqref{feq2}, implies that, for any $\theta\in(0,r)$,
     \begin{equation*}\label{lsh_second_poho}
       \begin{aligned}
   &\textrm{(LHS) of }\ \eqref{poho2}  \\&=-4\int_{\partial B_r(\un{x}_{n,j}^{(1)})}\sum_{l=1}^me^{-\frac{\lambda_{n,l}^{(1)}}{2}}\frac{\nabla_i G_{n,l}^*}
     {|\un{x}-\un{x}_{n,j}^{(1)}|}\mathrm{d}\sigma+o(e^{-\frac{\lambda_{n,j}^{(1)}}{2}})
   \\& =-8\pi\Bigg[\sum_{l\neq j} e^{-\frac{\lambda_{n,l}^{(1)}}{2}}D_i G_{n,l}^*(\un{x}_{n,j}^{(1)})+  e^{-\frac{\lambda_{n,j}^{(1)}}{2}}D_i \sum_{h=1}^2\partial_{\un{y}_h}R(\un{y},\un{x})\Big|_{\un{x}=\un{y}=\un{x}_{n,j}^{(1)}}b_{j,h}\bar{B}_j\Bigg] +
     o(e^{-\frac{\lambda_{n,j}^{(1)}}{2}})+o_\theta(1) e^{-\frac{\lambda_{n,j}^{(1)}}{2}},
       \end{aligned}
     \end{equation*}
     which proves Lemma \ref{lem_poho2l}.
 \end{proof}

Finally, we have the following,
\begin{lemma}\label{bj=0}$b_{j,1}=b_{j,2}=0$, $j=1,\cdots, m$.
\end{lemma}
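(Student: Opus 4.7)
The plan is to combine the Pohozaev identity \eqref{poho2} with the asymptotic expansions of its two sides provided by Lemma \ref{lem_poho2l} and Lemma \ref{lem_poho2r}. For each $j\in\{1,\ldots,m\}$ and $i\in\{1,2\}$ this produces an identity of the form
\[
-8\pi\!\sum_{l\neq j}\! e^{-\lambda_{n,l}^{(1)}/2}D_i G_{n,l}^{*}(\un{x}_{n,j}^{(1)}) -8\pi\bar{B}_j e^{-\lambda_{n,j}^{(1)}/2}\!\sum_{h=1}^{2}\!\partial^2_{\un{y}_h\un{x}_i}R\big|_{\un{x}=\un{y}=\un{x}_{n,j}^{(1)}}b_{j,h} = \tilde{B}_j e^{-\lambda_{n,j}^{(1)}/2}\!\sum_{h=1}^{2}\!D^2_{hi}(\phi_{n,j}+\log h_j)(\un{x}_{n,j}^{(1)})\,b_{j,h}+o(e^{-\lambda_{n,j}^{(1)}/2}).
\]

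Next, I would divide the identity by $e^{-\lambda_{n,j}^{(1)}/2}$ and use \eqref{relation_lambda} to rewrite every ratio $e^{-\lambda_{n,l}^{(1)}/2}/e^{-\lambda_{n,j}^{(1)}/2}$ as an explicit non-zero constant (depending only on $h(q_\cdot)$ and $G_\cdot^{*}(q_\cdot)$) up to an $o(1)$ error. In combination with $\un{x}_{n,j}^{(1)}\to\un{q}_j$ from \eqref{diffxandq}, with $\rho_n/m\to 8\pi$, and with $\tilde{B}_j,\bar{B}_j$ tending to explicit non-zero limits, passing to $n\to+\infty$ yields a homogeneous linear system $\mathcal{M}\,\mathbf{b}=0$ for the vector $\mathbf{b}=(b_{1,1},b_{1,2},\ldots,b_{m,1},b_{m,2})\in\mathbb{R}^{2m}$.

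The key step is to recognise the limiting matrix $\mathcal{M}$ as a non-singular rescaling of the Hessian $D_{\sscp M}^{2}f_m(\mathbf{q})$. Indeed, recalling the definition \eqref{f_q} of $f_m$, the definition \eqref{phij} of $\phi_{n,j}$, and the relation $R(\un{x},\un{y})=\tfrac{1}{2\pi}\log|\un{x}-\un{y}|+G(\un{x},\un{y})$, the diagonal contribution obtained by combining $D^2(\phi_{n,j}+\log h_j)(\un{x}_{n,j}^{(1)})$ from the right-hand side with the $R$-term in Lemma \ref{lem_poho2l} is expected to reproduce, in the limit, the $(j,j)$ block of $D_{\sscp M}^{2}f_m(\mathbf{q})$, while the terms indexed by $l\neq j$ in Lemma \ref{lem_poho2l}, involving $\partial^2_{\un{y}_h\un{x}_i}G$ at $(\un{x}_{n,l}^{(1)},\un{x}_{n,j}^{(1)})$, are expected to reproduce the off-diagonal $(j,l)$ blocks. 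The hypothesis $\det(D_{\sscp M}^{2}f_m(\mathbf{q}))\neq 0$ then forces $\mathbf{b}=0$, i.e.\ $b_{j,1}=b_{j,2}=0$ for every $j$.

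The main obstacle is exactly this algebraic identification: one has to track carefully the symmetry of $R$ and $G$, the explicit form of the constants $\tilde{B}_j$ and $\bar{B}_j$ (both arising from the same integral $\int_{\mathbb{R}^2}|z|^2(1+|z|^2)^{-3}\mathrm{d}z$ and producing a harmless non-zero global factor), the ratios $e^{-\lambda_{n,l}^{(1)}/2}/e^{-\lambda_{n,j}^{(1)}/2}$ dictated by \eqref{relation_lambda}, and the factor $\rho_n/m\to 8\pi$ that converts $\phi_{n,j}$ into the combination of $R$ and $G$ terms that appears in $f_m$. Once this bookkeeping is settled, the non-degeneracy assumption closes the argument and completes the proof that $b_{j,1}=b_{j,2}=0$ for $j=1,\ldots,m$.
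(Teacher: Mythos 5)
Your proposal matches the paper's argument: combine the Pohozaev identity \eqref{poho2} with Lemmas \ref{lem_poho2l}--\ref{lem_poho2r}, normalize by $e^{-\lambda_{n,j}^{(1)}/2}$, absorb the ratios $e^{(\lambda_{n,j}^{(1)}-\lambda_{n,l}^{(1)})/2}$ (which converge by \eqref{relation_lambda}) and the nonzero constants $\tilde B_l$ into a rescaled vector $\vec b$, pass to the limit using \eqref{diffxandq}, and recognise the limiting system as $D^2 f_m(\mathbf{q})\cdot\vec b=0$, which vanishes by the non-degeneracy hypothesis. This is precisely how the paper concludes, so the proposal is correct and essentially identical in route.
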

\begin{proof} Obviously Lemma \ref{lem_po2},  Lemma \ref{lem_poho2r}, and Lemma \ref{lem_poho2l} together imply, for $i=1,2$,
\begin{equation}\label{to_show_bjh}\begin{aligned}
 &\tilde{B}_j\sum_{h=1}^2(D_{hi}^2(\phi_{n,j}+\log h_j)(\un{x}_{n,j}^{(1)})b_{j,h})e^{-\frac{\lambda_{n,j}^{(1)}}{2}}
\\ &=
  -8\pi\Bigg[\sum_{l\neq j} e^{-\frac{\lambda_{n,l}^{(1)}}{2}}D_i G_{n,l}^*(\un{x}_{n,j}^{(1)})+  e^{-\frac{\lambda_{n,j}^{(1)}}{2}}D_i \sum_{h=1}^2\partial_{\un{y}_h}R(\un{y},\un{x})\Big|_{\un{x}=\un{y}=\un{x}_{n,j}^{(1)}}b_{j,h}\bar{B}_j\Bigg] +
  o(e^{-\frac{\lambda_{n,j}^{(1)}}{2}})
\\&=-8\pi\sum_{l\neq j} e^{-\frac{\lambda_{n,l}^{(1)}}{2}}
\sum_{h=1}^2D_{\un{x}_i}\partial _{\un{y}_h } G(\un{y},\un{x})\Big|_{(\un{y},\un{x})=(\un{x}_{n,j}^{(1)},\un{x}_{n,l}^{(1)})}b_{lh}
\tilde{B}_l-8\pi e^{-\frac{\lambda_{n,j}^{(1)}}{2}}\sum_{h=1}^2 D_{\un{x}_i} \partial_{\un{y}_h}R(\un{y},\un{x})\Big|_{\un{x}=\un{y}=\un{x}_{n,j}^{(1)}}b_{j,h}\bar{B}_j+o(e^{-\frac{\lambda_{n,j}^{(1)}}{2}}).
\end{aligned}\end{equation}

Set $\vec{b}=({\hat{b}}_{1,1}\tilde{B}_1,{\hat{b}}_{1,2}\tilde{B}_1,\cdots,{\hat{b}}_{m,1}\tilde{B}_m,{\hat{b}}_{m,2}\tilde{B}_m)$
{, where $\hat{b}_{lh}=\lim_{n\to+\infty}(e^{ \frac{\lambda_{n,j}^{(1)}-\lambda_{n,l}^{(1)}}{2}}b_{lh})$. Then,
by using \eqref{diffxandq}} and
passing to the limit as $n\to +\infty$, we conclude from \rife{to_show_bjh} that,
$
D^2f_m(\un{q}_1,\un{q}_2,\cdots,\un{q}_m) \cdot \vec{b}= 0,
$
where $f_m(\un{x}_1, \cdots,\un{x}_m)$ is a suitably defined local expression of $f_m(x_1, \cdots,x_m)$.
By using the fact that the rank of isothermal maps is always maximum,
{together with the non degeneracy assumption $\textrm{det}(D_{\sscp M}^2f_m(\mathbf{q}))\neq 0$, we conclude that}
$b_{j,1}=b_{j,2}=0$, $j=1,\cdots, m$. \end{proof}

\noindent {\em Proof of Theorem \ref{thm_concen}.} Let $x_n^*$ be a maximum point of $\zeta_n$, then we have,
\begin{equation}\label{asymp0}
   |\zeta_n(x_n^*)|=1.
 \end{equation}    Therefore, in view of Lemma \ref{lem_equalb0} and Lemma \ref{b0=0}, we find that,
$
   \lim_{n\to+\infty}x_n^*=q_j,
$
 {for some $j$}. Moreover, by Lemma \ref{b0=0} and Lemma \ref{bj=0}, {it holds}
  \begin{equation}\label{asymp2}\lim_{n\to+\infty}e^{\frac{\lambda_{n,j}^{(1)}}{2}}s_n=+\infty, \ \textrm{where} \ \ s_n=|\un{x}_n^*-\un{x}_{n,j}^{(1)}|.
 \end{equation}
 Setting $\tilde{\zeta}_n(\un{x})=\zeta_n(s_n \un{x}+\un{x}_{n,j}^{(1)})$, then \eqref{eq_zeta} and \eqref{eta_ets} imply that $\tilde{\zeta}_n$ satisfies,
 \begin{equation*}\label{eq_tilzeta}\begin{aligned}
  0&= \Delta \tilde{\zeta}_n+\rho_n s_n^2 h(s_n \un{x}+\un{x}_{n,j}^{(1)})c_n(s_n \un{x}+\un{x}_{n,j}^{(1)})\tilde{\zeta}_n
  =\Delta \tilde{\zeta}_n+ \frac{\rho_n h(\un{x}_{n,j}^{(1)}) s_n^2e^{\lambda_{n,j}^{(1)}}(1+ O(s_n|\un{x}|)+o(1))\tilde{\zeta}_n}
  {(1+ \frac{\rho_nh(\un{x}_{n,j}^{(1)})}{8}e^{\lambda_{n,j}^{(1)}}|s_n\un{x}+\un{x}_{n,j}^{(1)}-\un{x}_{n,j,*}^{(1)}|^2)^2}.
\end{aligned} \end{equation*}
On the other side, by \eqref{asymp0}, we also have,
\begin{equation}\label{tilzeta0}\Big|\tilde{\zeta}_n\Big(\frac{\un{x}_n^*-\un{x}_{n,j}^{(1)}}{s_n}\Big)\Big|=|\zeta_n(\un{x}_n^*)|=1.\end{equation}
In view of \eqref{asymp2} and $|\tilde{\zeta}_n|\le 1$ we see that $\tilde{\zeta}_n\to
\tilde{\zeta}_0$ on any compact subset of $\mathbb{R}^2\setminus\{0\}$, where   $\tilde{\zeta}_0$ satisfies $\Delta \tilde{\zeta}_0=0$
in $\mathbb{R}^2\setminus\{0\}$. Since $|\tilde{\zeta}_0|\le 1$, we have $\Delta \tilde{\zeta}_0=0$ in $\mathbb{R}^2$, whence
$\tilde{\zeta}_0$ is a constant.
At this point, since $ \frac{|\un{x}_n^*-\un{x}_{n,j}^{(1)}|}{s_n}=1$ and in view of \eqref{tilzeta0}, we find that,
$\tilde{\zeta}_0\equiv1$ or $\tilde{\zeta}_0\equiv-1$. As a consequence we conclude that,
$
  |\zeta_n(\un{x})|\ge \frac{1}{2}\ \ \textrm{if}\ \ \ s_n\le |\un{x}-\un{x}_{n,j}^{(1)}|\le 2s_n,
$
which contradicts \eqref{outside_zeta}, \eqref{result3}, and \eqref{result4} since
$e^{-\frac{\lambda_{n,j}^{(1)}}{2}}\ll s_n$, $\lim_{n\to+\infty}s_n=0$, and $b_0=b_{j,0}=0$.
This fact concludes the proof of Theorem \ref{thm_concen}.\hfill$\Box$

\section{The Dirichlet problem}\label{sec_Dir}
Let $\om$ be an open and bounded two dimensional domain, $\om\subset \mathbb{R}^2$.
As in \cite{CCL}, we say that $\om$ is \underline{regular} if its boundary $\partial \om$ is of class $C^2$ but
for a finite number of points $\{Q_1 , . . . , Q_{N_0}\}\subset \partial \om$ such that the following conditions holds at each $Q_j$.\\
(i) The inner angle $\theta_j$ of $\partial \om$ at $Q_j$ satisfies $0 < {\theta_j \neq \pi} < 2\pi$;\\
(ii) At each $Q_j$ there is an univalent conformal map from $B_\delta (Q_j) \cap \overline{\om}$ to
the complex plane $\mathbb{C}$ such that $\partial \om \cap B_\delta (Q_j)$ is mapped to a $C^2$ curve.\\
Obviously any non degenerate polygon is regular according to this definition.

In this section we are concerned with the uniqueness result for the mean field equation with Dirichlet boundary conditions on regular domains,
\begin{equation}
\label{mf}
\Delta u_n+\rho_n\frac{h(x)e^{u_n(x)}}{\int_\om h e^{u_n }\mathrm{d}x }=0\ \ \textrm{in}\ \ \Omega,\quad  u_n=0\ \ \textrm{on}\ \ \partial\Omega,
\end{equation} where $h(x)=h_0(x)e^{-4\pi\sum_{j=1}^N\alpha_j G_{\sscp \om}(x,p_j)}\ge 0$, $p_j$
are   distinct points in $\om$,  $\alpha_j>-1$,  $h_0>0$, $h_0\in C^{2,\sigma}(\overline{\om})$,  and $G_{\sscp \om}$
is the Green function uniquely defined as follows,
$
-\Delta G_{\sscp \om}(x,p)=\delta_p~\mathrm{in}~\om,~\ \ ~G_{\sscp \om}(x,p)=0~\mathrm{on}~\partial\om.
$

\begin{definition}
Let $u_n$ be a sequence of solutions of  \eqref{mf}. We say that $u_n$ blows up at the points $q_j\notin\{p_1,\cdots,p_N\}$, $j=1,\cdots,m$, if,
$
\frac{h(x)e^{u_n(x)}}{\int_\om h e^{u_n }\mathrm{d}x }\rightharpoonup 8\pi\sum\limits_{j=1}^m\delta_{q_j},
$
weakly in the sense of measure in $\om$.
\end{definition}

Let $R_{\sscp \om}(x,y)=\frac{1}{2\pi}\log |x-y|+G_{\sscp \om}(x,y),
$ be the regular part of $G_{\sscp \om}(x,y)$. For $\mathbf{q}=(q_1,\cdots,q_m)\in \om\times\cdots\times \om$, we denote by,
$
G_{j, {\sscp \om}}^*(x)=8\pi R_{\sscp \om}(x,q_j)+8\pi\sum^{1,\cdots,m}_{l\neq j}G_{\sscp \om}(x,q_l),\ \ \textrm{and}
\ \ \ell_{\sscp \om}(\mathbf{q})=\sum_{j=1}^m[\Delta \log h(q_j)]h(q_j)e^{G_{j, {\sscp \om}}^*(q_j)}.
$

If $m\geq 2$, let us fix a constant $r_0\in(0,\frac{1}{2})$, and a family of open sets $\om_j$ satisfying, $\om_l\cap \om_j=\emptyset$
if $l\neq j$, $\bigcup_{j=1}^m\overline{\om}_j=\overline{\om}$, $B_{r_0}(q_j)\subseteq \om_j$, $j=1,\cdots m$. Then let us define,
\begin{equation}\begin{aligned}\label{D_qD} D_{\sscp \om}(\mathbf{q}) &=\lim_{r\to0}
                 \Bigg[\sum_{j=1}^m h(q_j)e^{{G_{j,{\sscp \om}}}^*(q_j)} \Big(\int_{\om_j\setminus  B_{{{r_j}}}(q_j)}
                 e^{ \sum_{l=1}^m 8\pi  {G_{\sscp\om}}(x,q_l)-{G_{j,{\sscp \om}}}^*(q_j)+\log h(x)-\log h(q_j)}
                 \mathrm{d}x
-\frac{\pi}{r^2_j}\Big)\Bigg],\end{aligned}
\end{equation}{where $r_j=r\sqrt{8h(q_j)e^{{G_{j,{\sscp \om}}}^*(q_j)}} $ and $\om_1\equiv \om$ if $m=1$.} For $(x_1,\cdots, x_m)\in \om\times \cdots \om$, we also define,
\begin{align}\label{f_qD}
&f_{m, {\sscp \om}}(x_1,x_2,\cdots,x_m)=\sum_{j=1}^{m}\big[\log(h(x_j))+4\pi R_{\sscp \om}(x_j,x_j)\big]+4\pi\sum_{l\neq j}^{1,\cdots,m}G_{\sscp \om}(x_l,x_j).
\end{align}

Of course, even in this situation we first need to derive the following improvement of Theorem 6.2 in \cite{cl1}.

\begin{theorem}\label{thm_newD}{
Let $u_n$ be a sequence of solutions of  \eqref{mf} which blows up at the points $q_j\notin\{p_1,\cdots,p_N\}$, $j=1,\cdots,m$, $\delta>0$ be a fixed constant and }
$
{\lambda_{n,j}=\max_{B_\delta(q_j)}\left(u_n-\log\left( \;\int\limits_\om h e^{u_n}\right)\right)\
\textrm{for}\  j=1,\cdots, m.}
$ \\ {Then, for any $n$ large enough, the following estimate holds,}
 \begin{equation}
\label{a5D}\begin{aligned}
{\rho_n-8\pi m}=~&{\frac{2 \ell_{\sscp \om}(\mathbf{q})e^{-\lambda_{n,1}}}{m h^2(q_1)e^{G_{1,\sscp \om}^*(q_1)}}
 \Big(\lambda_{n,1}+ \log  \rho_n h^2(q_1)e^{G_{1,\sscp \om}^*(q_1)} \delta^2 -2\Big)}
+ \frac{8e^{-\lambda_{n,1}} }{h^2(q_1)e^{G_{1,\sscp \om}^*(q_1)}\pi m}\Big( D_{\sscp \om}(\mathbf{q}) +O(\delta^\sigma)\Big)
\\&+O(\lambda_{n,1}^2e^{-\frac{3}{2}\lambda_{n,1}})+O(e^{-(1+\frac{\sigma}{2})\lambda_{n,1}}),\ \ \textrm{where}\ \ \sigma>0 \ \ \textrm{is defined by}\ \  h_0\in C^{2,\sigma}(M).
\end{aligned}\end{equation}
\end{theorem}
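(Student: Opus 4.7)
The plan is to mirror, step by step, the proof of Theorem \ref{thm_new} that is carried out in Section \ref{apppendix}, replacing the compact-surface objects by their Dirichlet analogs on the regular domain $\om$. In the flat setting the Gaussian curvature vanishes ($K\equiv 0$) and in Cartesian coordinates the conformal factor is trivial ($\varphi\equiv 0$), so every formula becomes cleaner while the strategy is identical. First I would import from \cite[Theorem 6.2]{cl1} the Dirichlet sharp blow-up analysis, giving the analogs of \eqref{eta_ets}, \eqref{info_lambda}, \eqref{relation_lambda}, \eqref{lambda_exp} and \eqref{rhon8pi}. Setting $\tilde{u}_n=u_n-\log\int_\om h e^{u_n}$ and using the bubble profile
\begin{equation*}
U_{n,j}(x)=\log\frac{e^{\lambda_{n,j}}}{\bigl(1+\tfrac{\rho_n h(x_{n,j})}{8}e^{\lambda_{n,j}}|x-x_{n,j,*}|^2\bigr)^2},
\end{equation*}
we get $\tilde{u}_n=U_{n,j}+(G_{j,\sscp \om}^*(x)-G_{j,\sscp \om}^*(x_{n,j}))+\eta_{n,j}$ in $B_\delta(x_{n,j})$, with the pointwise estimate for $\eta_{n,j}$ in which the $-2K(q_j)$ term is now absent.

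The refinement consists in expanding one order beyond \cite{cl1}. I would split
\begin{equation*}
1=\int_\om h e^{\tilde{u}_n}\,dx=\sum_{j=1}^m\int_{B_\delta(q_j)}h e^{\tilde{u}_n}\,dx+\sum_{j=1}^m\int_{\om_j\setminus B_\delta(q_j)}h e^{\tilde{u}_n}\,dx,
\end{equation*}
and expand each piece up to $o(e^{-\lambda_{n,1}})$. For the inner part, the scaling $x=x_{n,j}+e^{-\lambda_{n,j}/2}z$ together with the formula for $\eta_{n,j}$ and the Taylor expansion of $\log h+G_{j,\sscp \om}^*$ to second order (the remainder being $O(|x-q_j|^{2+\sigma})$ thanks to $h_0\in C^{2,\sigma}$) produces, via the model integrals $\int_0^R\frac{s^3}{(1+s^2)^2}\,ds=\tfrac12[\log(1+R^2)-1+(1+R^2)^{-1}]$ and $\int\frac{|z|^2}{(1+|z|^2)^3}dz$, the quantity
\begin{equation*}
\rho_n\int_{B_\delta(q_j)}h e^{\tilde{u}_n}\,dx=8\pi+\frac{16\pi[\Delta\log h(q_j)+8\pi m]}{\rho_n h(q_j)}e^{-\lambda_{n,j}}\bigl[\lambda_{n,j}+\log\bigl(\tfrac{\rho_n h(q_j)}{8}\delta^2\bigr)-2\bigr]+O(\delta^\sigma e^{-\lambda_{n,j}})+O(\lambda_{n,j}^2 e^{-3\lambda_{n,j}/2}).
\end{equation*}
For the outer part, using \eqref{est_wn} (Dirichlet version) one has $h e^{\tilde{u}_n}=\tfrac{8e^{-\lambda_{n,j}}}{\rho_n h(q_j)}e^{\Phi_{j,\sscp \om}(x,\mathbf{q})}(1+o(1))$ on $\om_j\setminus B_\delta(q_j)$, where $\Phi_{j,\sscp \om}$ is the Dirichlet version of \eqref{P_q}. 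Rewriting the outer integrals on $\om_j\setminus B_{r_j}(q_j)$ and subtracting/adding $\pi/r_j^2$ shows that summing the outer contributions over $j$ yields precisely $D_\om(\mathbf{q})$ as defined in \eqref{D_qD}, modulo $O(\delta^\sigma)$ and $o(e^{-\lambda_{n,1}})$ errors.

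The main obstacle, as in Section \ref{apppendix}, is to see that the two halves glue together cleanly, i.e.\ that the $\log\delta$ singularity coming from the inner integral and the $\log(1/r_j)$ singularity implicit in the outer integral before the renormalization $-\pi/r_j^2$ match up to the desired error. This requires carrying out the inner expansion up to scale $\delta$ and the outer expansion from scale $\delta$ down to the bubble scale $r_j\to 0$, and checking that the $\delta$-dependent logarithms recombine into the factor $\lambda_{n,1}+\log(\rho_n h^2(q_1)e^{G_{1,\sscp \om}^*(q_1)}\delta^2)-2$ multiplying $\ell_\om(\mathbf{q})$, leaving behind exactly $\tfrac{8e^{-\lambda_{n,1}}}{\pi m h^2(q_1)e^{G_{1,\sscp \om}^*(q_1)}}D_\om(\mathbf{q})$. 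The remainders $O(\lambda_{n,1}^2 e^{-3\lambda_{n,1}/2})$ and $O(e^{-(1+\sigma/2)\lambda_{n,1}})$ respectively track the quadratic contribution of $\eta_{n,j}$ (paired with $|z|^2$ from the Taylor expansion) and the Hölder-$\sigma$ remainder of $h_0$. Finally, I would use \eqref{relation_lambda} (Dirichlet version) to convert every $e^{-\lambda_{n,j}}$ and $\lambda_{n,j}$ occurring in the $j$-th summand into the pivot $\lambda_{n,1}$, sum over $j$, and recognize the resulting coefficients as $\ell_\om(\mathbf{q})$ and $D_\om(\mathbf{q})$, which yields \eqref{a5D}.
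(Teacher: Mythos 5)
Your proposal takes the same route as the paper's (very terse) proof of Theorem~\ref{thm_newD}, which simply transcribes the argument of Section~\ref{apppendix} to the Dirichlet setting under the simplifications $K\equiv 0$, $\varphi_j\equiv 0$ and no local isothermal chart, importing the Dirichlet analogues of the sharp estimates of \cite{cl1}. Two points need fixing. First, the coefficient in your inner-integral formula should be $[\Delta\log h(q_j)]$, not $[\Delta\log h(q_j)+8\pi m]$: in the Dirichlet case $-\Delta G_{\sscp \om}(\cdot,p)=\delta_p$ with no constant term on the right (unlike $-\Delta_{\sscp M} G=\delta_p-1$ on a closed surface), so $\Delta G_{j,{\sscp \om}}^*\equiv 0$ away from the blow-up points and the harmonic regular part $R_{\sscp \om}$ contributes nothing either; this is exactly why $\ell_{\sscp \om}(\mathbf{q})$ is defined with $\Delta\log h(q_j)$ alone. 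With the spurious $8\pi m$ the $\log\delta$ singularities from the inner and outer pieces would not recombine into the stated $\ell_{\sscp \om}(\mathbf{q})$ coefficient, so the slip is not harmless. Second, your use of the Dirichlet analogue of \eqref{est_wn} on all of $\om_j\setminus B_\delta(q_j)$ reaches up to $\partial\om$; to justify this one should invoke, as the paper does, a moving-plane argument (cf.\ \cite{CCL}) giving uniform bounds for solutions of \eqref{mf} in a fixed neighborhood of the regular boundary, so that the boundary contributions do not affect the expansion. With these two corrections your argument coincides with the paper's.
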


Then we have,

\begin{theorem}\label{thm_concenD}Let $u_{n}^{(1)}$ and $u_{n}^{(2)}$ be  two sequence of solutions of \eqref{mf},
with $\rho^{(1)}_n=\rho_n=\rho^{(2)}_n$ and blowing up at the points $q_j\notin\{p_1,
\cdots,p_N\}$, $j=1,\cdots,m$, where $\mathbf{q}=(q_1,\cdots,q_m)$ is a critical point of $f_{m, {\sscp \om}}$ and
$\textrm{det}(D^2f_{m, {\sscp \om}}(\mathbf{q}))\neq 0$.
Assume that, either,
\begin{enumerate}
\item $\ell_{\sscp \om}(\mathbf{q})\neq0$, or,
\item  $\ell_{\sscp \om}(\mathbf{q})=0$ and $D_{\sscp \om}(\mathbf{q})\neq0$.
\end{enumerate}
Then there exists $n_0\ge1$ such that $u_{n}^{(1)}=u_{n}^{(2)}$ for all $n\ge n_0$.
\end{theorem}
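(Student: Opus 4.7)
\medskip

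\noindent\emph{Proof proposal for Theorem \ref{thm_concenD}.} The plan is to follow, step by step, the strategy developed for the compact case in Sections \ref{sec_est}--\ref{sec_poho}, exploiting the fact that the Dirichlet setting is in several respects simpler than the Riemannian one. First I would establish Theorem \ref{thm_newD}, the Dirichlet analogue of Theorem \ref{thm_new}, by repeating the argument used in the appendix for \eqref{a5}: on $\om\subset\mathbb{R}^2$ the isothermal chart is trivial, so the conformal factor $\varphi_j$ vanishes and there is no Gauss curvature term $K$; this is precisely why $\ell_{\sscp\om}(\mathbf{q})$ contains only $\Delta\log h(q_j)$ rather than $\Delta_{\sscp M}\log h(q_j)+8m\pi-2K(q_j)$, and why $D_{\sscp\om}(\mathbf{q})$ in \eqref{D_qD} replaces the volume form by Lebesgue measure. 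Once \eqref{a5D} is in hand, the argument then runs by contradiction: assume $u_n^{(1)}\neq u_n^{(2)}$ are two sequences of blow up solutions of \eqref{mf} with the same $\rho_n$ and the same blow up set $\mathbf{q}$, and define
\[
\zeta_n(x)=\frac{\tilde u_n^{(1)}(x)-\tilde u_n^{(2)}(x)}{\|\tilde u_n^{(1)}-\tilde u_n^{(2)}\|_{L^\infty(\om)}},\quad \tilde u_n^{(i)}(x)=u_n^{(i)}(x)-\log\!\int_\om he^{u_n^{(i)}}\mathrm{d}x,
\]
so that $\zeta_n$ satisfies a linearized equation with Dirichlet boundary condition $\zeta_n=-\log\frac{\int_\om he^{u_n^{(1)}}}{\int_\om he^{u_n^{(2)}}}/\|\cdot\|$, which is a constant on $\partial\om$.

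Next I would prove the Dirichlet analogue of Lemma \ref{lem_diff_tilu}, which bounds $|\lambda_{n,j}^{(1)}-\lambda_{n,j}^{(2)}|$ by $O(\sum_i(\lambda_{n,1}^{(i)})^{-1})$ (using \eqref{a5D} in place of \eqref{rhon}, exactly as in Case~1 when $\ell_{\sscp\om}(\mathbf{q})\neq 0$, and using the improved expansion of Theorem \ref{thm_newD} when $\ell_{\sscp\om}(\mathbf{q})=0$ and $D_{\sscp\om}(\mathbf{q})\neq 0$), and controls $\|\tilde u_n^{(1)}-\tilde u_n^{(2)}\|_{L^\infty(\om)}$ by $|\lambda_{n,1}^{(1)}-\lambda_{n,1}^{(2)}|$ up to exponentially small errors. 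The Green's representation step now uses the Dirichlet Green function $G_{\sscp\om}$ and is actually cleaner since there is no $-1$ term and no normalization condition $\int_M\tilde u_n\mathrm{d}\mu=0$ to carry around: the constant additive ambiguity here is measured by the boundary value of $\zeta_n$. After a suitable scaling $z=(\un{x}-x_{n,j}^{(1)})e^{\lambda_{n,j}^{(1)}/2}$, one obtains that $\zeta_{n,j}(z)$ converges in $C^0_{\mathrm{loc}}(\mathbb{R}^2)$ to a linear combination $b_{j,0}\psi_{j,0}+b_{j,1}\psi_{j,1}+b_{j,2}\psi_{j,2}$ of kernel elements of the linearized Liouville operator, reproducing Lemma \ref{lem_est_zetanj}.

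I would then prove the Dirichlet analogues of Lemmas \ref{lem_out} and \ref{lem_equalb0}: via the Green representation formula with $G_{\sscp\om}$ one obtains, away from the blow up set, the expansion
\[
\zeta_n(x)=\sum_{j=1}^m A_{n,j}G_{\sscp\om}(x_{n,j}^{(1)},x)+\sum_{j=1}^m e^{-\lambda_{n,j}^{(1)}/2}\sum_{h=1}^{2}\frac{4\sqrt{8}\, b_{j,h}}{\sqrt{\rho_n h(q_j)}}\partial_{y_h}G_{\sscp\om}(y,x)\big|_{y=x_{n,j}^{(1)}}\!\int_{\mathbb{R}^2}\!\!\frac{|z|^2\mathrm{d}z}{(1+|z|^2)^3}+o(e^{-\lambda_{n,1}^{(1)}/2}),
\]
plus the constant boundary correction; since $G_{\sscp\om}$ vanishes on $\partial\om$ this extra constant is pinned down by the boundary. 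The argument based on comparison with the radially symmetric kernel $\psi_{n,j}(\un{x})$ and on the Green's representation (Case 1 and Case 2 in the proof of Lemma \ref{lem_equalb0}) then shows $b_{j,0}=b_0$ for all $j$, with $\zeta_n\to -b_0+\mathrm{const}$ locally uniformly away from the $q_j$'s.

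The final step is the analysis of the two Pohozaev-type identities of Lemmas \ref{lem_po1} and \ref{lem_po2}, which are purely local and extend verbatim to the Dirichlet setting with $\phi_{n,j}$ built from the Dirichlet Green function $G_{\sscp\om}$ and its regular part $R_{\sscp\om}$. Repeating the cancellation scheme of Lemmas \ref{poho1_lhs}, \ref{poho1_rhs}, \ref{b0=0}, \ref{lem_poho2r}, \ref{lem_poho2l} and \ref{bj=0} (with the trivial conformal factor simplifying several expressions), the first identity forces $A_{n,j}=o(e^{-\lambda_{n,j}^{(1)}/2})$ and, in either alternative $(1)$ or $(2)$, forces $b_0=0$; the second identity then reads
\[
D^2 f_{m,\sscp\om}(q_1,\ldots,q_m)\cdot \vec b=0,
\]
and the non degeneracy assumption $\det D^2 f_{m,\sscp\om}(\mathbf{q})\neq 0$ yields $b_{j,1}=b_{j,2}=0$ for all $j$. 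A final blow up at the maximum point of $\zeta_n$ produces the same contradiction as in the proof of Theorem \ref{thm_concen}, completing the argument. The main obstacle I expect is precisely the derivation of Theorem \ref{thm_newD} in case $\ell_{\sscp\om}(\mathbf{q})=0$: this demands an expansion one order beyond the classical one in \cite{cl1}, with error terms carefully tracked against $\delta^\sigma$; all the subsequent machinery is, by contrast, a routine translation of the compact arguments to the Euclidean Dirichlet framework, once the boundary terms arising in the Pohozaev identities are handled via the vanishing of $u_n^{(i)}$ on $\partial\om$ and the decay of the Green function's regular part.
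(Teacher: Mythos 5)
Your proposal matches the paper's own argument, which also treats Theorem \ref{thm_concenD} as a step-by-step transcription of Sections \ref{sec_est}--\ref{sec_poho} (and of Theorem \ref{thm_new} into Theorem \ref{thm_newD}) to the flat Dirichlet setting, using that $\varphi_j\equiv 0$, $K\equiv 0$ and that the sharp local estimates of \cite{cl1} hold verbatim away from the vortex points. The one ingredient you leave implicit is how the boundary contributions are controlled: the paper invokes a moving-plane argument after \cite{CCL} to obtain uniform bounds for solutions of \eqref{mf} in a fixed neighbourhood of $\partial\om$, which, together with the fact that the $q_j$'s lie in the interior, guarantees that the extra boundary terms in the Green's representation and Pohozaev identities are negligible.
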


\noindent {\it Proof of Theorems \ref{thm_newD} and \ref{thm_concenD}}. The proof of Theorems \ref{thm_newD} and \ref{thm_concenD} can be worked out by a step by step adaptation of the one of Theorems \ref{thm_new} and \ref{thm_concen} with minor changes. Actually the arguments are somehow easier in this case, since we don't need to pass to local isothermal coordinates around each blow up point. In particular it is readily seen that the subtle part of the estimates obtained in section \ref{sec_est} and \ref{sec_poho} relies on the local estimates for blow up solutions of \rife{mfd} listed in section \ref{sec_pre}. The corresponding estimates for the Dirichlet problem was already obtained in \cite{cl1} and have the same form just with minor changes, as for example concerning the fact that here we have $\varphi_j\equiv 0$ and $K\equiv 0$. Actually the estimates about the Dirichlet problem in \cite{cl1} are worked out with $\alpha_j=0$, $j=1,\cdots,N$ but since $\{q_1,\cdots,q_m\}\cap \{p_1,\cdots,p_N\}=\emptyset$, then it is straightforward to check that they still hold as they stand possibly with few changes about the regularity of solutions, see also Remark \ref{remreg}. We refer the reader to \cite{cl1} for more details concerning this point. Actually, by our regularity assumption about $\partial \om$, it can be shown by a moving plane argument (see \cite{CCL}) that solutions of \rife{mf} are uniformly bounded in a fixed neighborhood of $\partial \om$. Therefore, since also $\{q_1,\cdots,q_m\}$ are far from $\partial \om$ by assumption, then it is straightforward to check that all the additional terms coming from the boundary do not affect the estimates needed to conclude the proof. We skip the details to avoid repetitions. \hfill$\Box$

\section{A refined estimate of $\rho_n-8\pi m$ in case $\ell(\mathbf{q})=0$.}\label{apppendix}
\noindent {\em Proof of  Theorem \ref{thm_new}.} In this section we prove Theorem \ref{thm_new}, that is \eqref{a5}. In view of \eqref{int_tilu}, we  see that,
\begin{equation}
\label{ap1}\begin{aligned}
\rho_n&=\rho_n\int_M he^{\tilde{u}_n}\mathrm{d}\mu=\rho_n\Big(\int_{\cup_{j=1}^m{U}^{\sscp M}_{\delta }(q_j)}+
\int_{M\setminus \cup_{j=1}^m{U}^{\sscp M}_{\delta }(q_j)} \Big)he^{\tilde{u}_n}\mathrm{d}\mu
 =\sum_{j=1}^m\rho_{n,j}+\int_{M\setminus \cup_{j=1}^m{U}^{\sscp M}_{\delta }(q_j)} he^{\tilde{u}_n}\mathrm{d}\mu.\end{aligned}\end{equation}

\noindent \textit{Step 1.} In step 1 we provide and estimate about $\int_{M\setminus \cup_{j=1}^m{U}^{\sscp M}_{\delta }(q_j)} he^{\tilde{u}_n}\mathrm{d}\mu.$\\
In view of  \eqref{lambda_exp}, \eqref{wnx} and \eqref{est_wn}, we see that,
\begin{equation}
\label{st1}\begin{aligned}
 \rho_n\int_{M\setminus \cup_{j=1}^m {U}^{\sscp M}_{\delta }(q_j)} he^{\tilde{u}_n}\mathrm{d}\mu
 &=\rho_n\sum_{j=1}^m\int_{M_j\setminus {U}^{\sscp M}_{\delta }(q_j)} he^{w_n+\sum_{l=1}^m\rho_{n,l}G(x,x_{n,l})+\int_M\tilde{u}_n\mathrm{d}\mu}\mathrm{d}\mu
\\&=\rho_n\sum_{j=1}^m\int_{M_j\setminus U^{\sscp M}_{\delta }(q_j)} he^{\sum_{l=1}^m\rho_{n,l}G(x,x_{n,l})+\int_M\tilde{u}_n\mathrm{d}\mu}(1+o(e^{-\frac{\lambda_{n,1}}{2}}))
\mathrm{d}\mu
\\&=\rho_n\sum_{j=1}^m\int_{M_j\setminus U^{\sscp M}_{\delta }(q_j)} he^{\sum_{l=1}^m\rho_{n,l}G(x,x_{n,l})-\lambda_{n,j}-2\log(\frac{\rho_n h(x_{n,j})}{8})-G_j^*(x_{n,j})}(1+o(e^{-\frac{\lambda_{n,1}}{2}}))
\mathrm{d}\mu
.\end{aligned}\end{equation}
Therefore, by using \eqref{diffxandq}, \eqref{rhon8pi} and \eqref{st1}, we conclude that,
\begin{equation}
\label{st2}\begin{aligned}
 \rho_n\int_{M\setminus \cup_{j=1}^m{U}^{\sscp M}_{\delta }(q_j)} he^{\tilde{u}_n}\mathrm{d}\mu
 &=\sum_{j=1}^m\frac{64e^{-\lambda_{n,j}}}{h(q_j)\rho_n}\int_{M_j\setminus U^{\sscp M}_{\delta }(q_j)} e^{\sum_{l=1}^m8\pi G(x,q_l)-G_j^*(q_j)+\log h(x)-\log h(q_j)}(1+o(e^{-\frac{\lambda_{n,1}}{2}}))
\mathrm{d}\mu,
\end{aligned}\end{equation}
and then, in view of the definition of $\Phi_j(x,\mathbf{q})$ (see  \eqref{P_q}) and \eqref{relation_lambda}, we find that,
\begin{equation}
\label{st3}\begin{aligned}
 \rho_n\int_{M\setminus \cup_{j=1}^m {U}^{\sscp M}_{\delta }(q_j)} he^{\tilde{u}_n}\mathrm{d}\mu
 &=\sum_{j=1}^m\frac{64e^{-\lambda_{n,j}}}{h(q_j)\rho_n}\int_{M_j\setminus U^{\sscp M}_{\delta }(q_j)} e^{\Phi_j(x,\mathbf{q})}(1+o(e^{-\frac{\lambda_{n,1}}{2}}))
\mathrm{d}\mu
\\&=\sum_{j=1}^m\frac{64e^{-\lambda_{n,1}}h(q_j)e^{G_j^*(q_j)}}{h^2(q_1)e^{G_1^*(q_1)}\rho_n}(1+O(e^{-\frac{\lambda_{n,1}}{2}}))\int_{M_j\setminus U^{\sscp M}_{\delta }(q_j)} e^{\Phi_j(x,\mathbf{q})}(1+o(e^{-\frac{\lambda_{n,1}}{2}}))
\mathrm{d}\mu
\\&=\sum_{j=1}^m\frac{64e^{-\lambda_{n,1}}h(q_j)e^{G_j^*(q_j)}}{h^2(q_1)e^{G_1^*(q_1)}\rho_n}\int_{M_j\setminus U^{\sscp M}_{\delta }(q_j)} e^{\Phi_j(x,\mathbf{q})}
\mathrm{d}\mu+O(e^{-\frac{3\lambda_{n,1}}{2}}).
\end{aligned}\end{equation}

\noindent \textit{Step 2.} In this step we provide an estimate about $\rho_{n,j}$ (see \eqref{rhonj}). \\
First of all let us set,
\begin{equation}\label{hjs}
h_j(\un{x})=h(\un{x})e^{2\varphi_j(\un{x})}.
\end{equation}By using \eqref{eta} and setting $\tau_{n,j}=e^{\sscp \frac{\lambda_{n,j}}{2}}$ and
$$
I_{n,j}^*=\int_{B_{\delta }(\un{q}_j)}\frac{\rho_nh_j(\un{x}_{n,j})e^{\lambda_{n,j}}
(e^{G_j^*(\un{x})-G_j^*(\un{x}_{n,j})+\log h_j(\un{x})-\log h_j(\un{x}_{n,j})+\eta_{n,j}(\un{x})}-1)}
{(1+\frac{\rho_n h(\un{x}_{n,j})}{8}e^{\lambda_{n,j}}|\un{x}-\un{x}_{n,j,*}|^2)^2}\mathrm{d}\un{x},
$$
we find that,
\begin{equation}
\label{ap2}\begin{aligned}
\rho_{n,j}&=\int_{ {U}^{\sscp M}_{\delta }(q_j)} \rho_nhe^{\tilde{u}_n}\mathrm{d}\mu=
\int_{B_{\delta }(\un{q}_j)}\frac{\rho_nh_je^{\lambda_{n,j}+G_j^*(\un{x})-G_j^*(\un{x}_{n,j})+\eta_{n,j}(\un{x})}}
{(1+\frac{\rho_n h_j(\un{x}_{n,j})}{8}e^{\lambda_{n,j}}|\un{x}-\un{x}_{n,j,*}|^2)^2}\mathrm{d}\un{x}
\\&=
\int_{B_{\delta }(\un{q}_j)}\frac{\rho_nh_j(\un{x}_{n,j})e^{\lambda_{n,j}}}{(1+\frac{\rho_n h(\un{x}_{n,j})}
{8}e^{\lambda_{n,j}}|\un{x}-\un{q}_j+\un{q}_j-\un{x}_{n,j,*}|^2)^2}\mathrm{d}\un{x}+I_{n,j}^*
\\&=
\int_{B_{\sqrt{\frac{\rho_n h(\un{x}_{n,j})}{8}}\delta  \tau_{n,j}}(0)}\frac{8}{(1+|z+\sqrt{\frac{\rho_n h(\un{x}_{n,j})}{8}}
\tau_{n,j}(\un{q}_j-\un{x}_{n,j,*})|^2)^2}\mathrm{d}z
+I_{n,j}^*
\\&=8\pi
-\int_{\mathbb{R}^2\setminus B_{\sqrt{\frac{\rho_n h(\un{x}_{n,j})}{8}}\delta  \tau_{n,j}}(0)}\frac{8}{(1+|z+\sqrt{\frac{\rho_n h(\un{x}_{n,j})}{8}}
\tau_{n,j}(\un{q}_j-\un{x}_{n,j,*})|^2)^2}\mathrm{d}z
+I_{n,j}^*.
\end{aligned}\end{equation}
On the other side, by \eqref{diff_x_nj} and \eqref{diffxandq}, we see that,
\begin{equation}
\label{ap3}\begin{aligned}
&\int_{\mathbb{R}^2\setminus B_{\sqrt{\frac{\rho_n h(\un{x}_{n,j})}{8}}\delta  \tau_{n,j}}(0)}
\frac{8}{(1+|z+\sqrt{\frac{\rho_n h(\un{x}_{n,j})}{8}} \tau_{n,j}(\un{q}_j-\un{x}_{n,j,*})|^2)^2}\mathrm{d}z
\\&=\int_{\mathbb{R}^2\setminus B_{\sqrt{\frac{\rho_n h(\un{x}_{n,j})}{8}}
\delta  \tau_{n,j}}(0)}\frac{8}{(1+|z|^2)^2}\mathrm{d}z
+\int_{\mathbb{R}^2\setminus B_{\sqrt{\frac{\rho_n h(\un{x}_{n,j})}{8}}\delta  \tau_{n,j}}(0)}
\Bigg(\frac{8}{(1+|z+O(\lambda_{n,j}e^{-\frac{\lambda_{n,j}}{2}})|^2)^2}-\frac{8}{(1+|z|^2)^2}\Bigg)\mathrm{d}z
\\&=\int^{\infty}_{\sqrt{\frac{\rho_n h(\un{x}_{n,j})}{8}}\delta  \tau_{n,j}}\frac{16\pi r}{(1+r^2)^2}\mathrm{d}r
+\int_{\mathbb{R}^2\setminus B_{\sqrt{\frac{\rho_n h(\un{x}_{n,j})}{8}}\delta  \tau_{n,j}}(0)}
\frac{O(\lambda_{n,j}e^{-\frac{\lambda_{n,j}}{2}})}{(1+|z|)^5}\mathrm{d}z
 =\frac{64\pi e^{-\lambda_{n,j}}}{\rho_n h(\un{x}_{n,j})\delta ^2 }+O(\lambda_{n,j}e^{-2\lambda_{n,j}}),
\end{aligned}\end{equation}
where we used the identity,  $\int_R^\infty \frac{16\pi r}{(1+r^2)^2}\mathrm{d}r=\frac{8\pi}{R^2+1}=\frac{8\pi}{R^2}+O(R^{-4})$ for $R\gg1$. \\
Therefore \eqref{ap3} and \eqref{relation_lambda}, \eqref{diffxandq}, show that,
\begin{equation}
\label{0ap3}\begin{aligned}
&\int_{\mathbb{R}^2\setminus B_{\sqrt{\frac{\rho_n h(\un{x}_{n,j})}{8}}\delta  e^{\frac{\lambda_{n,j}}{2}}}(0)}\frac{8}{(1+|z+\sqrt{\frac{\rho_n h(\un{x}_{n,j})}{8}} e^{\frac{\lambda_{n,j}}{2}}(\un{q}_j-\un{x}_{n,j,*})|^2)^2}\mathrm{d}z
 =\frac{64\pi h(q_j)e^{G_j^*(q_j)} e^{-\lambda_{n,1}}}{\rho_n h^2(q_1)e^{G_1^*(q_1)}\delta ^2 }+O(\lambda_{n,j}e^{-2\lambda_{n,j}}).
\end{aligned}\end{equation}

The estimate of the term $I_{n,j}^*$ is more delicate. Toward this goal we have to work out a refined version of an argument first introduced in \cite{cl1}.\\
First of all, let us recall that (see \eqref{eta}),
\[\eta_{n,j}(\un{x})=\tilde{u}_n(\un{x})-U_{n,j}(\un{x})-(G_{j}^{*}(\un{x})-G_{j}^{*}(\un{x}_{n,j})),\; x\in B_{\delta }(\un{q}_j).\]
Then, {in view of \eqref{delr}}, for $x\in B_{\delta }(\un{q}_j)$ we have,
\begin{equation}\label{ap5}\begin{aligned}
\Delta\eta_{n,j}
&=-\rho_ne^{2\varphi_j}(he^{\tilde{u}_n}-1)+\frac{\rho_nh(\un{x}_{n,j})e^{\lambda_{n,j}}}
{(1+\frac{\rho_nh(\un{x}_{n,j})}{8}e^{\lambda_{n,j}}|\un{x}-\un{x}_{n,j,*}|^2)^2}
-8\pi m e^{2\varphi_j}
\\&=-\frac{\rho_nh_j(\un{x}_{n,j})e^{\lambda_{n,j}}(e^{G_j^*(\un{x})-G_j^*(\un{x}_{n,j})+\log h_j(\un{x})-\log h_j(\un{x}_{n,j})+\eta_{n,j}(\un{x})}-1)}
{(1+\frac{\rho_nh(\un{x}_{n,j})}{8}e^{\lambda_{n,j}}|\un{x}-\un{x}_{n,j,*}|^2)^2} -(8\pi m-\rho_n) e^{2\varphi_j},
\end{aligned}\end{equation}
which immediately implies that,
\begin{equation}\label{ap6}\begin{aligned}
 I^*_{n,j}=-\int_{\partial B_{\delta }(\un{q}_j)}\frac{\partial \eta_{n,j}}{\partial \nu}\mathrm{d}\sigma-\int_{B_{\delta }(\un{q}_j)}(8\pi m-\rho_n)
 e^{2\varphi_j}\mathrm{d}\un{x}.
\end{aligned}\end{equation}

Next we obtain an estimate about $\int_{\partial B_{\delta }(\un{q}_j)}\frac{\partial \eta_{n,j}}{\partial \nu}\mathrm{d}\sigma$.
Let us define,
$$
\mathcal{A}_{n,j}(\un{x})=\frac{\rho_nh_j(\un{x}_{n,j})e^{\lambda_{n,j}}(e^{G_j^*(\un{x})-G_j^*(\un{x}_{n,j})+\log h_j(\un{x})-\log h_j(\un{x}_{n,j})+\eta_{n,j}(\un{x})}-1)}
{(1+\frac{\rho_nh(\un{x}_{n,j})}{8}e^{\lambda_{n,j}}|\un{x}-\un{x}_{n,j,*}|^2)^2},
$$
$$
\mathcal{B}_{n,j}(\un{x})=\frac{\rho_nh_j(\un{x}_{n,j})e^{\lambda_{n,j}}(e^{G_j^*(\un{x})-G_j^*(\un{x}_{n,j})+\log h_j(\un{x})-\log h_j(\un{x}_{n,j})
+\eta_{n,j}(\un{x})}-1-\eta_{n,j}(\un{x}))}
{(1+\frac{\rho_nh(\un{x}_{n,j})}{8}e^{\lambda_{n,j}}|\un{x}-\un{x}_{n,j,*}|^2)^2},\ \ \textrm{and}
$$
 \begin{align}\label{ap7}
\psi_{n,j}(\un{x})=\frac{1-\frac{\rho_nh(\un{x}_{n,j})}{8}e^{\lambda_{n,j}}|\un{x}-\un{x}_{n,j,*}|^2}
{1+\frac{\rho_nh(\un{x}_{n,j})}{8}e^{\lambda_{n,j}}|\un{x}-\un{x}_{n,j,*}|^2},
\end{align}
which satisfies $
\Delta \psi_{n,j}+\frac{\rho_nh(\un{x}_{n,j})e^{\lambda_{n,j}}}{(1+\frac{\rho_nh(\un{x}_{n,j})}{8}e^{\lambda_{n,j}}|\un{x}-\un{x}_{n,j,*}|^2)^2}\psi_{n,j}=0.
$\\
In view of \eqref{ap5}   and integrating by parts, we find that,
\begin{equation}\label{ap9}\begin{aligned}
  &\int_{\partial B_{\delta }(\un{q}_j)}\Bigg(\psi_{n,j}\frac{\partial \eta_{n,j}}{\partial \nu}-\eta_{n,j}\frac{\partial \psi_{n,j}}{\partial \nu}\Bigg)
 \mathrm{d}\sigma
 =\int_{B_{\delta }(\un{q}_j)}\Bigg(\psi_{n,j}\Delta \eta_{n,j}-\eta_{n,j}\Delta \psi_{n,j}\Bigg)\mathrm{d}\un{x}\\
 &=\int_{B_{\delta }(\un{q}_j)}\Big(-\psi_{n,j}\mathcal{A}_{n,j}-\psi_{n,j}(8\pi m-\rho_n) e^{2\varphi_j} +
\frac{\eta_{n,j}\psi_{n,j}\rho_nh(\un{x}_{n,j})e^{\lambda_{n,j}}}{(1+\frac{\rho_nh(\un{x}_{n,j})}{8}e^{\lambda_{n,j}}|\un{x}-\un{x}_{n,j,*}|^2)^2}\Big)
 \mathrm{d}\un{x}\end{aligned}
\end{equation}
In the same time, for  $x\in\partial B_{\delta }(\un{q}_j)$, we have,
\begin{equation}\label{ap10}
  \begin{aligned}
  \psi_{n,j}(\un{x})=-1+\frac{2}{1+\frac{\rho_nh(\un{x}_{n,j})}{8}e^{\lambda_{n,j}}|\un{x}-\un{x}_{n,j,*}|^2}=-1+O(e^{-\lambda_{n,j}}), \  \nabla\psi_{n,j}=O(e^{-\lambda_{n,j}}).
  \end{aligned}
\end{equation}
In view of \eqref{eta_ets} and \cite[Lemma 4.1]{cl1}, we also have for $x\in\partial B_{\delta }(\un{q}_j)$,
\begin{equation}\label{ap11}
  \begin{aligned}
  |\eta_{n,j}|+|\nabla\eta_{n,j}|=O(\lambda_{n,j}^2e^{-\lambda_{n,j}}).  \end{aligned}
\end{equation}
At this point, by using \eqref{ap9}-\eqref{ap11} and \eqref{rhon}, we see that,
\begin{equation}\label{ap12}\begin{aligned}
&-\int_{\partial B_{\delta }(\un{q}_j)}\frac{\partial \eta_{n,j}}{\partial \nu}\mathrm{d}\sigma
 =  \int_{B_{\delta }(\un{q}_j)} (1-\frac{2}{1+\frac{\rho_nh(\un{x}_{n,j})}{8}e^{\lambda_{n,j}}
|\un{x}-\un{x}_{n,j,*}|^2})(8\pi m-\rho_n) e^{2\varphi_j}  - \psi_{n,j}\mathcal{B}_{n,j}(\un{x})\mathrm{d}\un{x}+O(\lambda_{n,j}^2e^{-2\lambda_{n,j}})\\
=&\int_{B_{\delta }(\un{q}_j)}(8\pi m-\rho_n) e^{2\varphi_j}\mathrm{d}\un{x}-\int_{B_{\delta }(\un{q}_j)}\psi_{n,j}\mathcal{B}_{n,j}(\un{x})\mathrm{d}\un{x} +O(|8\pi m-\rho_n|\lambda_{n,j}e^{-\lambda_{n,j}})+O(\lambda_{n,j}^2e^{-2\lambda_{n,j}})\\
=&\int_{B_{\delta }(\un{q}_j)}(8\pi m-\rho_n) e^{2\varphi_j}\mathrm{d}\un{x}-\int_{B_{\delta }(\un{q}_j)}\psi_{n,j}\mathcal{B}_{n,j}(\un{x})\mathrm{d}\un{x}
 +O(\lambda_{n,j}^2e^{-2\lambda_{n,j}}).
 \end{aligned}
\end{equation}
Next, let us observe that, since $h\in C^{2,\sigma}(M)$ and in view of \eqref{diff_x_nj}, \eqref{eta_ets} and \eqref{gradatq}, for $x\in B_{\delta }(\un{q}_j)$ we have,
\begin{equation}\label{ap14}
\begin{aligned}
&e^{G_j^*(\un{x})-G_j^*(\un{x}_{n,j})+\log h_j(\un{x})-\log h_j(\un{x}_{n,j})+\eta_{n,j}(\un{x})}-1-\eta_{n,j}(\un{x})
 \\&=\nabla_x (G_j^*(\un{x})+\log h_j(\un{x}))\Big|_{x=\un{x}_{n,j}}\cdot(\un{x}-\un{x}_{n,j,*})
  +\frac{1}{2}\sum_{1\le k,l\le 2}\nabla^2_{x_kx_l} (G_j^*(\un{x})+\log h_j(\un{x}))\Big|_{x=\un{x}_{n,j}}(\un{x}-\un{x}_{n,j,*})_k(\un{x}-\un{x}_{n,j,*})_l\\& +O(|\un{x}-\un{x}_{n,j,*}|^{2+\sigma})
+O(\lambda_{n,j}^4e^{-2\lambda_{n,j}})+O(\lambda_{n,j}^2e^{-\lambda_{n,j}}|\un{x}-\un{x}_{n,j,*}|).
  \end{aligned}
\end{equation}
Clearly \eqref{diff_x_nj} and \eqref{diffxandq} imply that,
\begin{equation}\label{ap15}
  \begin{aligned}
  &|B_{\delta }(\un{q}_j)\setminus B_{\delta }(\un{x}_{n,j,*})|+|B_{\delta }(\un{x}_{n,j,*})\setminus B_{\delta }(\un{q}_j)|=O(|\un{x}_{n,j,*}-\un{q}_j|)=O(\lambda_{n,j}e^{-\lambda_{n,j}}).\end{aligned}
\end{equation}
At this point we use \eqref{ap12}, \eqref{ap14}, and \eqref{ap15}, to conclude that,
{\allowdisplaybreaks
\begin{align}\label{ap16}
 &\int_{B_{\delta }(\un{q}_j)}\psi_{n,j}\mathcal{B}_{n,j}(\un{x})\mathrm{d}\un{x}\nonumber\nonumber\\
 &=\int_{B_{\delta }(\un{x}_{n,j,*})}\psi_{n,j}\frac{\rho_nh_j(\un{x}_{n,j})e^{\lambda_{n,j}}}
{(1+\frac{\rho_nh(\un{x}_{n,j})}{8}e^{\lambda_{n,j}}|\un{x}-\un{x}_{n,j,*}|^2)^2}
 \Big[(\nabla_x (G_j^*(\un{x})+\log h_j(\un{x}))\Big|_{x=\un{x}_{n,j}}\cdot(\un{x}-\un{x}_{n,j,*})\nonumber\\
&+\frac{1}{2}\sum_{1\le k,l\le 2}\nabla^2_{x_kx_l} (G_j^*(\un{x})+\log h_j(\un{x}))
\Big|_{x=\un{x}_{n,j}}(\un{x}-\un{x}_{n,j,*})_k(\un{x}-\un{x}_{n,j,*})_l\nonumber\\
&+O(|\un{x}-\un{x}_{n,j,*}|^{2+\sigma})+O(\lambda_{n,j}^4e^{-2\lambda_{n,j}})+O(\lambda_{n,j}^2e^{-\lambda_{n,j}}|\un{x}-\un{x}_{n,j,*}|))\Big]\mathrm{d}\un{x}
 +O(\lambda_{n,j}e^{-2\lambda_{n,j}})\nonumber\\
&=\int_{B_{\sqrt{\frac{\rho_nh(\un{x}_{n,j})}{8}}\delta  e^{\frac{\lambda_{n,j}}{2}}}(0)}\frac{8(1-|z|^2)}{(1+|z|^2)^3} \Big[(\nabla_x (G_j^*(\un{x})+\log h_j(\un{x}))\Big|_{x=\un{x}_{n,j}}\cdot z (\sqrt{\frac{8}{\rho_nh(\un{x}_{n,j})}} e^{-\frac{\lambda_{n,j}}{2}})\nonumber\\
&+\frac{1}{2}\sum_{1\le k,l\le 2}\nabla^2_{x_kx_l} (G_j^*(\un{x})+\log h_j(\un{x}))\Big|_{x=\un{x}_{n,j}}z_kz_l(\frac{8}{\rho_nh(\un{x}_{n,j})}e^{-\lambda_{n,j}})+O(e^{-(1+\frac{\sigma}{2})\lambda_{n,j}}|z|^{2+\sigma})+O(\lambda_{n,j}^2e^{-\frac{3}{2}\lambda_{n,j}}|z|))\Big]\mathrm{d}z
\nonumber\\&+O(\lambda_{n,j}^4e^{-2\lambda_{n,j}})\nonumber\\
&=\frac{16(\Delta \log h(\un{x}_{n,j})-2K(\un{x}_{n,j})+8\pi m)e^{-\lambda_{n,j}}}{\rho_nh(\un{x}_{n,j})}\int_{B_{\sqrt{\frac{\rho_nh(\un{x}_{n,j})}{8}}\delta  e^{\frac{\lambda_{n,j}}{2}}}(0)}\frac{|z|^2(1-|z|^2)}{(1+|z|^2)^3}\mathrm{d}z\\&+O(e^{-\lambda_{n,j}}\delta ^\sigma)+O(\lambda_{n,j}^2e^{-\frac{3}{2}\lambda_{n,j}})+O(e^{-(1+\frac{\sigma}{2})\lambda_{n,1}}),
 \end{align}}
{where in the last equality we used \eqref{g*}, \eqref{delr1}, \eqref{delr} and $\varphi_j(\un{x}_{n,j})=0$}. Therefore, by using \eqref{ap6}, \eqref{ap12}  and \eqref{ap16},  we conclude that,
\begin{equation}\label{ap17}\begin{aligned}
I_{n,j}^*= &-\frac{32\pi(\Delta \log h(\un{x}_{n,j})-2K(\un{x}_{n,j})+8\pi m)e^{-\lambda_{n,j}}}{\rho_nh(\un{x}_{n,j})}
\int_0^{\sqrt{\frac{\rho_nh(\un{x}_{n,j})}{8}}\delta  e^{\frac{\lambda_{n,j}}{2}}}\frac{r^3(1-r^2)}{(1+r^2)^3}\mathrm{d}r\\
&+O(e^{-\lambda_{n,j}}\delta ^\sigma)+O(\lambda_{n,j}^2e^{-\frac{3}{2}\lambda_{n,j}})+O(e^{-(1+\frac{\sigma}{2})\lambda_{n,1}})\\
= &\frac{16\pi(\Delta \log h(\un{x}_{n,j})-2K(\un{x}_{n,j})+8\pi m)e^{-\lambda_{n,j}}}{\rho_nh(\un{x}_{n,j})}\Big(\lambda_{n,j}+ \log {\frac{\rho_nh(\un{x}_{n,j})}{8}}\delta ^2-2\Big)\\& +O(e^{-\lambda_{n,j}}\delta ^\sigma)+O(\lambda_{n,j}^2e^{-\frac{3}{2}\lambda_{n,j}})+O(e^{-(1+\frac{\sigma}{2})\lambda_{n,1}}),
\end{aligned}\end{equation}
where we used the identity for large $R\gg1$,
\begin{align*}
\int_0^R\frac{r^3(1-r^2)}{(1+r^2)^3}\mathrm{d}r=~&\frac{2R^4+R^2}{2(R^2+1)^2}-\frac{1}{2}\log (R^2+1)
 =-\log R+1+O(R^{-2}).
\end{align*}
By using \eqref{relation_lambda} and \eqref{diffxandq} we can write this estimate in the following form,
\begin{equation}\label{ap18}\begin{aligned}
I^*_{n,j}=~&\Bigg\{ \frac{16\pi h(q_j)e^{G_j^*(q_j)}(\Delta \log h(q_j)-2K(q_j)+8\pi m)e^{-\lambda_{n,1}}}{\rho_nh^2(q_1)e^{G_1^*(q_1)}} \Big(\lambda_{n,1}+ \log \frac{\rho_n h^2(q_1)e^{G_1^*(q_1)}\delta ^2}{8h(q_j)e^{G_j^*(q_j)}}-2\Big)\Bigg\}\\&+O(e^{-\lambda_{n,j}}\delta ^\sigma)
+O(\lambda_{n,j}^2e^{-\frac{3}{2}\lambda_{n,j}})+O(e^{-(1+\frac{\sigma}{2})\lambda_{n,1}}),
\end{aligned}\end{equation}
and eventually use it with \eqref{ap2} and \eqref{0ap3} to obtain that,
\begin{equation}
\label{ap19}\begin{aligned}
\rho_{n,j}=~&8\pi-\frac{64\pi h(q_j)e^{G_j^*(q_j)} e^{-\lambda_{n,1}}}{\rho_n h^2(q_1)e^{G_1^*(q_1)}\delta ^2 }+O(e^{-\lambda_{n,j}}\delta ^\sigma)
+O(\lambda_{n,j}^2e^{-\frac{3}{2}\lambda_{n,j}})+O(e^{-(1+\frac{\sigma}{2})\lambda_{n,1}})\\
&+\Bigg\{ \frac{16\pi h(q_j)e^{G_j^*(q_j)}(\Delta \log h(q_j)-2K(q_j)+8\pi m)e^{-\lambda_{n,1}}}{\rho_nh^2(q_1)e^{G_1^*(q_1)}}
 \Big(\lambda_{n,1}+ \log \frac{\rho_n h^2(q_1)e^{G_1^*(q_1)}\delta ^2}{8h(q_j)e^{G_j^*(q_j)}}-2\Big)\Bigg\}.
\end{aligned}\end{equation}
\textit{Step 3.}  In view of \eqref{ap1}, \eqref{st3} and \eqref{ap19}, we find that,
\begin{equation}
\label{fap1}\begin{aligned}
\rho_n&=\sum_{j=1}^m\rho_{n,j}+\rho_n\int_{M\setminus \cup_{j=1}^m {U}^{\sscp M}_{\delta }(q_j)} he^{\tilde{u}_n}\mathrm{d}\mu
\\&=8\pi m+\sum_{j=1}^m
\Bigg\{ \frac{16\pi h(q_j)e^{G_j^*(q_j)}(\Delta \log h(q_j)-2K(q_j)+8\pi m)e^{-\lambda_{n,1}}}{\rho_nh^2(q_1)e^{G_1^*(q_1)}}
 \Big(\lambda_{n,1}+ \log \frac{\rho_n h^2(q_1)e^{G_1^*(q_1)}\delta ^2}{8h(q_j)e^{G_j^*(q_j)}}-2\Big)\Bigg\}
\\&+\sum_{j=1}^m\frac{64e^{-\lambda_{n,1}}h(q_j)e^{G_j^*(q_j)}}{h^2(q_1)e^{G_1^*(q_1)}\rho_n}\int_{M_j\setminus U^{\sscp M}_{\delta }(q_j)} e^{\Phi_j(x,\mathbf{q})}
\mathrm{d}\mu-\sum_{j=1}^m\frac{64\pi h(q_j)e^{G_j^*(q_j)} e^{-\lambda_{n,1}}}{\rho_n h^2(q_1)e^{G_1^*(q_1)}\delta ^2 }
\\&+O(e^{-\lambda_{n,1}}\delta ^\sigma)+O(\lambda_{n,1}^2e^{-\frac{3}{2}\lambda_{n,1}})+O(e^{-(1+\frac{\sigma}{2})\lambda_{n,1}}),
\end{aligned}\end{equation}
where we used \eqref{info_lambda}.

By using \eqref{fap1}, \eqref{rhon} and the definition of $\ell(\mathbf{q})$, we see that,
\begin{equation}
\label{apresult}\begin{aligned}
 \rho_n-8\pi m &=\frac{2 \ell(\mathbf{q})e^{-\lambda_{n,1}}}{m h^2(q_1)e^{G_1^*(q_1)}}
\Big(\lambda_{n,1}+ \log \rho_n h^2(q_1)e^{G_1^*(q_1)}\delta ^2-2\Big)
\\&-\sum_{j=1}^m
 \frac{2 h(q_j)e^{G_j^*(q_j)}(\Delta \log h(q_j)-2K(q_j)+8\pi m)e^{-\lambda_{n,1}}}{m h^2(q_1)e^{G_1^*(q_1)}}
 \Big( \log {8h(q_j)e^{G_j^*(q_j)}}\Big)
\\&+\sum_{j=1}^m\frac{8e^{-\lambda_{n,1}}h(q_j)e^{G_j^*(q_j)}}{h^2(q_1)e^{G_1^*(q_1)}\pi m}\Bigg(\int_{M_j\setminus U^{\sscp M}_{\delta }(q_j)} e^{\Phi_j(x,\mathbf{q})}
\mathrm{d}\mu-\frac{\pi }{\delta ^2 }+O(\delta^\sigma)\Bigg)
 +O(\lambda_{n,1}^2e^{-\frac{3}{2}\lambda_{n,1}})+O(e^{-(1+\frac{\sigma}{2})\lambda_{n,1}}).
\end{aligned}\end{equation}
For small $r>0$, let $r_j=r\sqrt{8h(q_j)e^{G_j^*(q_j)}} $ and observe that,
\begin{equation}
\label{apresult1}\begin{aligned}
 &\sum_{j=1}^mh(q_j)e^{G_j^*(q_j)}\Big(\int_{M_j\setminus U^{\sscp M}_{\delta }(q_j)} e^{\Phi_j(x,\mathbf{q})}
\mathrm{d}\mu\Big)
\\&=\sum_{j=1}^mh(q_j)e^{G_j^*(q_j)}\Big(\int_{M_j\setminus U^{\sscp M}_{r_j}(q_j)}e^{\Phi_j(x,\mathbf{q})}
\mathrm{d}\mu-\int_{B_{\delta }(\un{q}_j)\setminus B_{r_j}(\un{q}_j)}
\frac{e^{G_j^*(\un{x})-G_j^*(\un{q}_j)+\log h(\un{x})-\log h(\un{q}_j)+2\varphi_j(\un{x})}}{|\un{x}-\un{q}_j|^4}
\mathrm{d}\un{x}\Big).\end{aligned}\end{equation}
Since $\nabla f_m(\mathbf{q})=0$ and $\varphi_j(\un{x}_{n,j})=\nabla\varphi_j(\un{x}_{n,j})=0$,
 for $x\in B_{\delta }(\un{q}_j)\setminus B_{r_j}(\un{q}_j)$, we see from \eqref{diffxandq} that,
\begin{equation}\begin{aligned}
&G_j^*(\un{x})-G_j^*(\un{q}_j)+\log h(\un{x})-\log h(\un{q}_j)+2\varphi_j(\un{x})
\\&=2\varphi_j(\un{q}_j)+(\nabla_{x_j} f_m(\mathbf{q})+\nabla 2\varphi_j(\un{q}_j))
\cdot(\un{x}-\un{q}_j)+\frac{1}{2}\sum_{1\le j,k\le 2}(\nabla^2_{x_{j_k} x_{j_l}}f_m(\mathbf{q})+\nabla^2_{j k}
2\varphi_j(\un{q}_j))(\un{x}_{j_k}-\un{q}_{j_k})(\un{x}_{j_l}-\un{q}_{j_l})\\&+O(|\un{x}-\un{q}_j|^3)
=\frac{1}{2}\sum_{1\le j,k\le 2}(\nabla^2_{x_{j_k} x_{j_l}}f_m(\mathbf{q})+\nabla^2_{j k}2\varphi_j(\un{q}_{j}))(\un{x}_{j_k}-\un{q}_{j_k})(\un{x}_{j_l}-\un{q}_{j_l}) +O(|\un{x}-\un{q}_j|^3)
+O(\lambda_{n,1}e^{-\lambda_{n,1}}),\label{apresul2}
\end{aligned}\end{equation}
which implies that,
\begin{equation}\begin{aligned}\label{apresult3}
&\int_{B_{\delta }(\un{q}_j)\setminus B_{r_j}(\un{q}_j)}
\frac{e^{G_j^*(\un{x})-G_j^*(\un{q}_j)+\log h(\un{x})-\log h(\un{q}_j)+2\varphi_j(\un{x})}}{|\un{x}-\un{q}_j|^4}
\mathrm{d}\un{x}
\\&=\int_{B_{\delta }(\un{q}_j)\setminus B_{r_j}(\un{q}_j)} \frac{1+\frac{(\Delta_{x_j} f_m(\mathbf{q})+\Delta 2\varphi_j
(\un{q}_{j}))}{4}|\un{x}-\un{q}_{j}|^2+O(|\un{x}-\un{q}_j|^3)+O(\lambda_{n,1}e^{-\lambda_{n,1}})}{|\un{x}-\un{q}_j|^4}
\mathrm{d}\un{x}
\\&=\int_{B_{\delta }(\un{q}_j)\setminus B_{r_j}(\un{q}_j)} \frac{1+\frac{(\Delta \log(q_j)+8\pi m-2K(\un{q}_{j}))}{4}|\un{x}-\un{q}_j|^2}{|\un{x}-\un{q}_j|^4}
\mathrm{d}\un{x}
+O(\delta)+O(\lambda_{n,1}e^{-\lambda_{n,1}})
\\&=\frac{\pi}{r_j^2}-\frac{\pi}{\delta^2}+ \frac{\pi(\Delta \log(q_j)+8\pi m-2K(\un{q}_{j}))}{2}
(\log \delta-\log r_j)
 +O(\delta)+O(\lambda_{n,1}e^{-\lambda_{n,1}}),
\end{aligned}\end{equation}
where we used \eqref{delr1}, \eqref{delr}.

In view of \eqref{apresult}, \eqref{apresult1}, \eqref{apresult3}, we obtain
\begin{equation}
\label{apresult4}\begin{aligned}
&\rho_n-8\pi m
\\&=\frac{2 \ell(\mathbf{q})e^{-\lambda_{n,1}}}{m h^2(q_1)e^{G_1^*(q_1)}}
 \Big(\lambda_{n,1}+ \log  \rho_n h^2(q_1)e^{G_1^*(q_1)} r^2 -2\Big)
\\&+\sum_{j=1}^m\frac{8e^{-\lambda_{n,1}}h(q_j)e^{G_j^*(q_j)}}{h^2(q_1)e^{G_1^*(q_1)}\pi m}\Bigg(\int_{M_j\setminus U^{\sscp M}_{r_j}(q_j)} e^{\Phi_j(x,\mathbf{q})}
\mathrm{d}\mu-\frac{\pi }{r_j ^2 }+O(\delta^\sigma)\Bigg)
+O(\lambda_{n,1}^2e^{-\frac{3}{2}\lambda_{n,1}})+O(e^{-(1+\frac{\sigma}{2})\lambda_{n,1}})\  \textrm{for}\ \forall r>0,
\end{aligned}\end{equation}
where we used the explicit form of $r_j$ to cancel out the second line of \eqref{apresult4}.
Therefore, we conclude that
\begin{equation}
\label{apresult5}\begin{aligned}
\rho_n-8\pi m=~&\frac{2 \ell(\mathbf{q})e^{-\lambda_{n,1}}}{m h^2(q_1)e^{G_1^*(q_1)}}
 \Big(\lambda_{n,1}+ \log  \rho_n h^2(q_1)e^{G_1^*(q_1)} r^2 -2\Big)
 + \frac{8e^{-\lambda_{n,1}} }{h^2(q_1)e^{G_1^*(q_1)}\pi m}\Big( D(\mathbf{q})+O(\delta^\sigma)\Big)
\\&+O(\lambda_{n,1}^2e^{-\frac{3}{2}\lambda_{n,1}})+O(e^{-(1+\frac{\sigma}{2})\lambda_{n,1}}),
\end{aligned}\end{equation}
which is just the estimate \eqref{a5}.\hfill$\Box$

\section*{Acknowledgements}
The first author is partially supported by FIRB project "{\em Analysis and Beyond}",  by PRIN project 2015, ”Variational methods, with applications to problems in mathematical physics and geometry” and by the Consolidate the Foundations project 2015 (sponsored by Univ. of Rome "Tor Vergata"),
"{\em Nonlinear Differential Problems and their Applications}".

\end{document}